\documentclass[12pt,oneside,reqno]{amsart}
\usepackage{inputenc}
\usepackage{amsmath,amssymb,latexsym,natbib,amsthm,amsfonts}
\usepackage{fullpage}
\usepackage{color}
\usepackage{graphicx}
\usepackage{microtype}
\usepackage[colorlinks=true,allcolors=blue]{hyperref}
\usepackage{enumitem}

\usepackage{tikz}
\usetikzlibrary{backgrounds}
\usetikzlibrary{shapes.multipart}
\usetikzlibrary{
  graphs,
  graphs.standard
}
\usetikzlibrary{fit}
\usepackage{subfigure}
\usepackage[T1]{fontenc}

\newcommand{\semantics}[1]{[\![\mbox{\em $ #1 $\/}]\!]}

\usepackage{nicefrac}
\usepackage{verbatim}
\usepackage{hhline}

\usepackage{thmtools}
\usepackage{thm-restate}

\newtheorem{fact}{Fact}
\newtheorem{conjecture}{Conjecture}
\newtheorem{proposition}{Proposition}
\newtheorem{corollary}{Corollary}
\newtheorem{lemma}{Lemma}
\newtheorem{theorem}{Theorem}
\theoremstyle{definition}
\newtheorem{definition}{Definition}
\newtheorem{example}{Example}
\newtheorem{remark}{Remark}

\usepackage{setspace}
%\onehalfspacing

\title{Interleaving Logic and Counting}

\author{Johan van Benthem and Thomas Icard}

\date{\today. This is a corrected version of \href{https://www.cambridge.org/core/journals/bulletin-of-symbolic-logic/article/interleaving-logic-and-counting/E8C36FFD28218F157BE9D150D7B55AA6}{`Interleaving Logic and Counting'}, an article that appeared in the \emph{Bulletin of Symbolic Logic}, 29:4, pp. 503--587, 2023.}

\begin{document}

\maketitle %\vspace{-.3in} 

\begin{abstract} Reasoning with quantifier expressions in natural language combines logical and arithmetical features, transcending strict divides between qualitative and quantitative. Our topic is this cooperation of styles as it occurs in common linguistic usage and its extension into the broader practice of natural language plus ``grassroots mathematics''. 

We begin with a brief review of $\mathsf{FO}(\#)$, first-order logic with counting operators and cardinality comparisons. This system is known to be of very high complexity, and drowns out finer aspects of the combination of logic and counting. We therefore move to a small fragment that can represent numerical syllogisms and basic reasoning about comparative size: monadic first-order logic with counting, $\mathsf{MFO}(\#)$. We provide normal forms that allow for axiomatization, determine which arithmetical notions can be defined on finite and on infinite models, and conversely, we discuss which logical notions can be defined out of purely arithmetical ones, and what sort of (non-)classical logics can be induced.

Next, we investigate a series of strengthenings of $\mathsf{MFO}(\#)$, again using  normal form methods. The monadic second-order version is close, in a precise sense, to additive Presburger Arithmetic, while versions with the natural device of tuple counting take us to Diophantine equations, making the logic undecidable. We also define a system $\mathsf{ML}(\#)$ that combines basic modal logic over binary accessibility relations with counting, needed to formulate ubiquitous reasoning patterns such as the Pigeonhole Principle. We prove decidability of $\mathsf{ML}(\#)$, and provide a new kind of bisimulation matching the expressive power of the  language. 

As a complement to the fragment approach pursued here, we also discuss two other ways of lowering the complexity of $\mathsf{FO}(\#)$ by changing the semantics of counting in natural ways. A first approach  replaces cardinalities by abstract but well-motivated values of ``mass'' or other mereological aggregating notions. A second approach keeps the cardinalities but generalizes the meaning of counting to work in models that allow dependencies between variables.
  
Finally, we return to our starting point in natural language, confronting the architecture of our formal systems with linguistic quantifier vocabulary and syntax, as well as with natural reasoning modules such as the monotonicity calculus. In addition to these encounters with  formal semantics, we discuss the role of counting in semantic evaluation procedures for quantifier expressions and determine, for instance, which binary quantifiers are computable by finite ``semantic automata''. We conclude with some general thoughts on yet further entanglements of logic and counting in formal systems, on rethinking the qualitative/quantitative divide, and on connecting our analysis to empirical findings in  cognitive science.
\end{abstract} %\vspace{-.1in} 

\newpage

\tableofcontents

\begin{comment}

\section{Introduction and Motivation}

\begin{enumerate}
    \item Begin simple: Venn diagrams, interplay between logical, numerical reasoning.
    \item History of attempts at reduction: \begin{itemize}
        \item Hobbes, Boole: reasoning as reckoning
        \item Frege (and neo-Fregean program): reduce arithmetic to logic
        \item Modern versions. Measurement theory: utility (von Neumann \& Morgenstern), probability (Scott, etc.), and semiorders (Luce). 
        \item Boole is an interesting case. Modern view: are (physical) digital circuits doing logic or arithmetic? 
    \end{itemize}
    \item Entanglement of counting and logical syntax: \begin{itemize}
        \item Syntax involves counting, but counting is syntax.
        \item Quine, etc.: the theory is the same.
        \item Mason on the metatheory of propositional logic (``The Metatheory of the Classical Propositional Calculus is not Axiomatizable'' \emph{JSL} 1985).
    \end{itemize}
    \item Piaget and the developmental angle. Correlation between speed of learning number words and quantifiers like `some' and `all' (Barner et al. 2009). More recent work on psychologically distinct strategies for verification of expressions like `most': directly comparing cardinalities of sets, or one-by-one matching procedures (as in semantic automata)? Both are employed for quantifiers.
    \item Here we pursue usual logical methodology. Systems and their expressive power. 
%    \item Start out with $\mathsf{FO}(\#)$, full first-order logic with a counting operator. Easily define the natural numbers. We move to fragments.
\end{enumerate}

\end{comment}

\newpage

\section{Introduction: Inference and Computing}

Here is the archetypal logical inference with a basic quantifier: 

\begin{quote} From `All $A$ are $B$' and `All $B$ are $C$', conclude that `All $A$ are $C$'. \end{quote}
Next, here are two slightly modified premises in natural language. 

\begin{quote} `All $A$ \emph{except one} are $B$ and all $B$ \emph{except two} are $C$'. \end{quote}
This time, one may need to think just a little bit more to conclude that \begin{quote} `All $A$ \emph{except at most three} are $C$'. \end{quote}
That extra bit of thought involves considering possible exceptions, or more generally: \emph{counting}. In fact, the very term \emph{quantifier} suggests quantities, and the semantics of quantifier expressions in logic and linguistics  involves numbers by its emphasis on  permutation invariance, which abstracts away from every feature of predicates except their size. This mix of logic and counting is not just about absolute numbers, it also extends to size comparisons. From \begin{quote}`Most $A$ are $B$' and `All $B$ are $C$',\end{quote} we may safely draw the conclusion that  \begin{quote} `Most $A$ are $C$' \end{quote} and similar simple inference patterns govern explicitly comparative expressions such as `More $A$ than $B$ are $C$'. But valid reasoning patterns with comparatives can also be more challenging, as in the following  inference, which may require drawing a Venn diagram:
\vspace{0.5ex}
\begin{quote}
`More $A$ than $B$ are $C$', `More $B$ than $C$ are $A$',  \vspace{0.5ex}  \\
 Therefore: `More $A$ than $C$ are $B$'.
\vspace{0.5ex}
    \end{quote}

\noindent This has echoes of the mathematical \emph{Triangle Inequality} underlying metric geometry.

Numerical comparisons in natural language can even occur between \emph{proportions}, as happens in the relative sense of `Many $A$ are $B$', comparing the numbers of $B$s among the $A$s with the number of $B$s overall, defined more precisely in \S\ref{section:basicmonadic}, and a running example later on.

%\vspace{1ex}
%Reasoning patterns with both logical and numerical aspects are ubiquitous in our daily practice. Piaget's famous experiments showed how at an early age, children come to see that, if there are more $A$ than $B$ on this plate, while all $A$ are $C$, then there are more $C$ than $B$ on this plate. The same mixture occurs in scientific reasoning. Logic and counting seem to lie at the same level of complexity in common patterns of valid reasoning. For instance, the Pigeonhole Principle says that, if we have more objects than boxes, and we put all objects into boxes, then at least one box will contain more than one objects. This insight seems as immediate as the validity of basic logical principles concerning quantification.
%
%\vspace{1ex}
%
%This mixture of logic and counting is a rich empirical area that can be charted in many ways. In this paper, we will explore simple mixtures of logic and counting, to understand more about how these two abilities work together. But before we go there, some background may be helpful, to motivate our interest in these matters.
%
\vspace{1ex}

Qualitative logical analyses are sometimes  seen as replacing quantitative theories by ``more basic'' qualitative ones, for instance, in   the foundations of probability or in measurement theory. This can be illuminating, and  success  can be measured by  representation theorems.
And yet, historically, logic and quantitative reasoning, for instance with probability,  went together  in the pioneering work of  Bolzano and Boole. It is hard to say whether Boole's propositional logic is a qualitative basic form of binary arithmetic, or a way of  making logical inference a form of counting. In a sense, it is both. A  divide arose only in the time of Frege, when logicism insisted that logical notions come first, and arithmetical ones are constructed  out of these. To be sure, this  reductionist program has yielded many fundamental notions and results, and we owe a lot of modern logic to its arrival. But in this paper, we will follow the linguistic practice that we started with, and treat logic and \emph{counting}, taken as the realm of numerical comparisons and basic arithmetic, on a par. 

In what follows we will take this linguistic practice in a broad sense, including ubiquitous forms of reasoning that might be called ``grassroots mathematics'' rather than pure natural language inference. A typical example underlies the  following pattern:
\vspace{0.5ex}
\begin{itemize}
\item[] `Twenty farmers own at most 15 cows each'. Therefore: 
\item[] `At least two farmers own the same number of cows'.
\end{itemize}
\vspace{0.5ex}
The reader may find it difficult to see how this would follow as a straightforward matter of overt logical or linguistic form. Instead, what is needed is the following
\vspace{-1ex}
\begin{itemize}
    \item[]\emph{Pigeonhole Principle} \quad If one puts $n$ objects into $k$ boxes, with $n > k$, then  at least one box must contain at least two objects.
\end{itemize}
\vspace{-1ex}
Here $k$ is the number of cows owned, which runs from 0 to 15,  $n$ the number of farmers. The Pigeonhole Principle occurs in elementary mathematics where it can have non-trivial consequences when applied imaginatively, but it is also of interest in cognitive science as a benchmark in reasoning ability including finding the right representation of problems \citep{Sperber}. In this paper, the  principle 
will occur at various places  as we determine its position in combined systems of logic and counting. 

\vspace{1ex}

Where should we start with our investigation of logic and counting?  It is well-known that combining a standard system like first-order logic with counting syntax and cardinality comparisons leads to a system $\mathsf{FO}(\#)$ of very high complexity. Therefore, for our purpose, this ``view from above'' is not that illuminating, and after just a quick look at $\mathsf{FO}(\#)$ and its properties, we will start work ``from below'', exploring very simple combinations of logic and counting, and only then move to more complex systems.

\vspace{1ex}

Our presentation follows mainstream  practice in offering a sequence of formal systems of increasing expressive strength. We will prove many results about these systems that demonstrate their precise mixture of logic and counting. Toward the end of the paper, we return to the naturally occurring practice of mixed qualitative and quantitative reasoning that we started with here, linking up with Generalized Quantifier Theory for natural language, and touching on empirical issues in cognitive science. Finally, in a sequence of appendices, we broaden the context, and point out yet further entanglements of logic and counting that show the ubiquity of the phenomenon we are after. True understanding of how logical systems work involves numbers and counting from manipulating syntax to proofs by formula induction, but also semantically, e.g., in the use of numerical invariants in Ehrenfeucht games. 

\vspace{1ex}
%While many of our systems and results in this paper seem new, with their emphasis on a detailed low-level mixture of logic and counting, it has to be said that the entanglement we want to highlight is of course all around us. Numbers and counting play an essential role in our understanding of the semantics of even the most basic logical systems such as propositional logic. Moreover, since understanding syntax is essential to any grasp of logical systems, the ability to see and manipulate syntactic patterns is essential, and that combinatorial ability again often amounts to a form of counting, as we shall show in a survey of some not so widely known older results at the end of this paper. Indeed, working logicians are of course aware that their own professional understanding of logical systems involves numbers and counting at every step of the way: from syntax and semantics  to proof methods like formula induction, or when measuring expressive power, in the importance of numerical invariants in Ehrenfeucht games. Throughout this paper, we will highlight such encounters also for the systems we will introduce.

There are several ways of looking at the topics and results presented in this paper. Simple combinations of logic and counting can often be seen as fragments of richer \emph{logics of generalized quantifiers} \citep{BarwiseFeferman,PetersWesterstahl}. In this sense, we are looking at  fine-structure of fragments of well-known systems from mathematical logic. Moreover, the interplay of logic and counting  has long been studied in  \emph{computational logic} \citep{Otto,Schweikardt}. Accordingly, themes and results from the literature in theoretical computer science will appear at many places in this paper. We have added an appendix with references to a wide, and hopefully representative, swath of the preceding literature, though a full overview is beyond our capacity.

\vspace{1ex}

Against this background, the technical main novelty of this paper is the series of simple combined systems that we define and study. However, a further contribution may be the more empirical perspective we are adding of \emph{connections with natural language and cognition}. In addition to our technical results about logic and counting, we see this stance in between logic, computation and cognition, as fruitful and worth pursuing. 

\vspace{1ex}

In the next section, we first present a higher-end combination of logic and counting, as a first pass through our main themes. After that, we give more detail on the lower-end systems that will be the focus of our analysis in the core of the paper.

%\begin{itemize}
%    \item motivation, background, natural combinations of logic and counting.
%    \item G\"{o}del, Quine, finite model theory and descriptive complexity
%    \item short discussion of counting in finite vs. infinite?
%    \item overview of the systems and main results
%\end{itemize}
%Systems: \begin{enumerate}
%    \item $\mathsf{MFO}(\#)$ and $\mathsf{MFO}(\#)$.
%    \item $\mathsf{MSO}(\#)$ and $\mathsf{MSO}(\#)$.
%    \item $\mathsf{MFO}(\#_\times)$ and $\mathsf{MSO}(\#_\times)$.
%\end{enumerate} 
%Main results (which to highlight?)

\section{First-Order Logic with Counting} \label{section:first-order}
Perhaps the obvious starting point is to consider a counting operator $\#$ on top of standard first-order logic, allowing us to count the number of objects satisfying a given formula. Where $x$ is a first-order variable and $\varphi$ a first-order formula, in a first-order model $\mathcal{M}$ with variable assignment $s$, the term $\#_x\varphi$ denotes the cardinality of the set of $x$'s satisfying $\varphi$: \begin{eqnarray*} \semantics{\#_x\varphi}^{\mathcal{M},s} & = &  |\{d \in D: \mathcal{M},s^x_d \models \varphi\}| \end{eqnarray*} 
Count terms thus denote cardinal numbers. What kinds of assertions would we want to make about cardinal numbers to formalize interesting reasoning about counting? Here we start with a basic and fundamental capacity, namely \emph{comparison}. We inductively define count comparison formulas $\#_x\varphi \succsim \#_y\psi$, with the obvious interpretation according to which: \begin{eqnarray*} \mathcal{M},s \vDash \#_x\varphi \succsim \#_y\psi & \mbox{ iff }  & \semantics{\#_x\varphi}^{\mathcal{M},s} \geq \semantics{\#_y\psi}^{\mathcal{M},s}. \end{eqnarray*}
%[Interestingly, all of the observations about this system appear in Antonelli 2010 ``Numerical abstraction via the Frege quantifier,'' \emph{NDJFL}.]
%Such a system has been studied \dots $\mathsf{FO}$ plus a generalized quantifier (``Frege quantifier'' or sometimes called the ``Rescher quantifier''). 
Call this language $\mathcal{L}_\#$ and call the logical system $\mathsf{FO}(\#)$.

This system has been studied thoroughly. It is natural to construe $\mathsf{FO}(\#)$ as first-order logic with a generalized quantifier, sometimes known in the literature as the \emph{Rescher quantifier} \citep{hartig,Otto} after a related extension considered in \cite{rescher}; in the philosophical literature it has sometimes been called the \emph{Frege quantifier} \citep{antonelli2010}. Other well known quantifiers, such as the so called \emph{H\"{a}rtig quantifier} and the \emph{Chang quantifier}, are easily definable in $\mathsf{FO}(\#)$ (see, e.g., \citealt{PetersWesterstahl}). It will be convenient to abbreviate  the H\"{a}rtig quantifier $(\#_x\varphi \succsim \#_y\psi) \wedge (\#_y\psi \succsim \#_x\varphi)$ by $\#_x\varphi \approx \#_y \psi$; likewise, we abbreviate $(\#_x\varphi \succsim \#_y\psi) \wedge \neg (\#_y\psi \succsim \#_x\varphi)$ by $\#_x\varphi \succ \#_y \psi$.

Typical of extensions of first-order logic,  we have the following:
\begin{proposition} \label{prop:compactness}
$\mathsf{FO}(\#)$ fails to be compact and it lacks the L\"{o}wenheim-Sk\o lem property down to any cardinality below $\aleph_{\omega}$.
\end{proposition}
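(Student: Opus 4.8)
The plan is to exhibit, for each claim, a single explicit sentence or set of sentences of $\mathcal{L}_\#$ witnessing the failure, using the counting operator to express cardinality facts that first-order logic cannot.

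For compactness, first I would observe that the sentence $\lambda_n :\equiv \neg(\#_x(x \approx x) \succsim \#_x(x \approx x)) \vee \top$ — no, more carefully: using a fresh unary predicate $P$, the sentence ``there are at least $n$ objects'' is already first-order definable, but the point is that $\mathcal{L}_\#$ can say ``$P$ is finite'' or ``$D$ is infinite'' in a single sentence. Concretely, note that $\#_x(x\approx x) \succ \#_x Px$ asserts that the complement of $P$ is nonempty in a way sensitive to infinite cardinalities; better yet, the sentence $\mathsf{Inf} :\equiv \#_x(x\approx x) \approx \#_x(x \approx x \wedge \neg(x \approx c))$ (with $c$ a constant) expresses that removing one element does not change the cardinality of the domain, which holds exactly in infinite models. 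Now take $\Sigma = \{\neg\mathsf{Inf}\} \cup \{\lambda_n : n \in \mathbb{N}\}$, where $\lambda_n$ is the ordinary first-order sentence ``there exist at least $n$ distinct elements.'' Every finite subset of $\Sigma$ is satisfiable (in a sufficiently large finite model), but $\Sigma$ itself is not, since any model must be infinite by the $\lambda_n$'s yet finite by $\neg\mathsf{Inf}$. This refutes compactness.

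For the Löwenheim–Skolem failure, the key is that $\mathcal{L}_\#$ can pin down cardinality comparisons that force large models once satisfiable, or conversely cap model size. I would use the Härtig-quantifier abbreviation $\approx$ to write a sentence $\chi$ that is satisfiable only in models of cardinality at least $\aleph_\omega$ — for instance, by using two predicates and asserting a strictly increasing chain of cardinalities $\#_x P_0 x \prec \#_x P_1 x \prec \#_x P_2 x \prec \cdots$ is not expressible in one sentence, so instead I would recall the standard fact (attributable to the expressive power of the Härtig quantifier, cf. \citealt{hartig}) that $\mathsf{FO}(\#)$ can express ``the domain is Dedekind-finite relative to a named infinite predicate'' and iterate: the cleanest route is to cite that $\#_x\varphi \approx \#_y\psi$ lets one say ``$\varphi$ and $\psi$ have equal cardinality,'' and combine this with first-order machinery (a successor-like relation together with a predicate counted against its own proper initial segments) to force the domain to realize every cardinal $\aleph_n$ for each finite $n$, hence to have size at least $\aleph_\omega$, while the sentence has no model of any smaller infinite cardinality. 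Thus there is a satisfiable sentence with no model below $\aleph_\omega$, and a routine dualization (negating a sentence that caps cardinality) gives failure of upward Löwenheim–Skolem as well.

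The main obstacle is the second part: writing down — or more realistically, citing in enough detail to be self-contained — an explicit $\mathcal{L}_\#$-sentence whose spectrum of infinite models is exactly $\{\kappa : \kappa \geq \aleph_\omega\}$. The compactness half is genuinely routine once one has any single sentence distinguishing finite from infinite models, which $\#$ supplies immediately; the delicate point is engineering the $\aleph_\omega$ threshold, for which I would lean on the known expressive power of the Härtig and Rescher quantifiers as developed in \citealt{hartig} and \citealt{Otto}, adapting their constructions rather than reinventing them, and I would make sure the witnessing sentence genuinely has no model in any cardinality strictly between $\aleph_0$ and $\aleph_\omega$, not merely no countable model.
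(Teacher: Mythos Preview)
Your compactness argument is fine and essentially matches the paper: define a single sentence expressing ``the domain is Dedekind-infinite'' (the paper packages this as the definable infinity quantifier $\exists^\infty$), then combine its negation with the first-order sentences $\exists^{\geq n}$ to get a finitely satisfiable, unsatisfiable set.

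For the L\"owenheim--Sk\o lem half, you have misread the target and made your life much harder than necessary. The claim is that, for \emph{each} cardinal $\kappa < \aleph_\omega$, downward LS to $\kappa$ fails. This does not ask for one sentence whose models all have size at least $\aleph_\omega$; it asks that, for every finite $k$, there be some satisfiable sentence with no model of size $\leq \aleph_k$. The paper dispatches this in one line: take predicates $P_0,\dots,P_k$ and write
\[
\exists^\infty x.\,P_0(x)\ \wedge\ \bigwedge_{i<k}\bigl(\#_x P_{i+1}(x)\succ \#_x P_i(x)\bigr),
\]
which forces $|P_0|\geq\aleph_0$, $|P_1|\geq\aleph_1$, \dots, $|P_k|\geq\aleph_k$, hence the domain has size at least $\aleph_k$. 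Varying $k$ gives the full statement. Your proposal instead gestures at engineering a single sentence with spectrum $\{\kappa:\kappa\geq\aleph_\omega\}$, acknowledges this is the ``main obstacle,'' and then defers to citations without supplying a construction. Even if such a sentence exists in $\mathsf{FO}(\#)$ (it would require genuinely exploiting binary relations, e.g.\ coding an $\omega$-sequence of strictly increasing cardinalities along a definable well-order of type $\omega$), you have not written it down, and it is not needed for what the proposition actually asserts.
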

\begin{proof} First, note that the infinity quantifier is easily definable in $\mathsf{FO}(\#)$: \begin{eqnarray}
  \exists^\infty y.\varphi & \equiv & \exists y. \big(\varphi \wedge \#_x(\varphi[\nicefrac{x}{y}] \wedge x\neq y) \succsim \#_y(\varphi) \big),  \label{infinity-quantifier}
\end{eqnarray} %where substitution $\varphi^y_x$ is defined as usual. 
where the substitution $\varphi[\nicefrac{x}{y}]$ is defined as usual. 
Then we can force the domain to have size at least $\aleph_k$ simply by stating, for instance, $\exists^\infty x. P_0(x) \wedge \bigwedge_{i \leq k}\big(\#_x P_{i+1}(x) \succ \#_x P_i(x)\big)$, for $k+1$ predicate symbols $P_0,\dots,P_k$.

Compactness also fails easily: abbreviating $\bigwedge_{i,j \leq n} x_i\neq x_j$ by $\mathsf{diff}(\mathbf{x})$, and using $\exists^{\geq n}x.P(x)$ to abbreviate $\exists x_1\dots x_{n}.\big(\mathsf{diff}(\mathbf{x}) \wedge \bigwedge_{i \leq n} P(x_i) \big)$, %$\exists x_1 \dots x_n. \big(\mathsf{diff}(\mathbf{x}) \wedge \#_xP(x) \succsim \#_y (\bigvee_{i\leq n} x_i = y)\big)$, 
the set \begin{equation} \{\neg \exists^\infty x. P(x)\} \cup \{ \exists^{\geq n}x.P(x): n < \omega \}
    \label{incompact}
\end{equation} is unsatisfiable, but finitely satisfiable. \end{proof}

To see just how much stronger $\mathsf{FO}(\#)$ is than ordinary $\mathsf{FO}$, note the following:
\begin{fact} We can enforce in $\mathsf{FO}(\#)$ that a binary relation $R$ is a well-order of order type $\omega$. \label{ordertype}
\end{fact}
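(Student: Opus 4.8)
The plan is to axiomatize "$R$ is a well-order of type $\omega$" by combining three ingredients: (i) $R$ is a strict linear order, (ii) every element has only finitely many $R$-predecessors, and (iii) the domain is infinite. Conditions (i) and (iii) are routine: (i) is a conjunction of ordinary first-order sentences (irreflexivity, transitivity, totality), and (iii) is available since the infinity quantifier $\exists^\infty$ was already shown definable in $\mathsf{FO}(\#)$ in equation~\eqref{infinity-quantifier}, so we simply assert $\exists^\infty x.(x = x)$. The real content is condition (ii), and here is where counting does the work: for each element $y$, I want to say that the set $\{x : R(x,y)\}$ is finite. Using the negation of the infinity quantifier relativized to a formula with a free variable, this is $\forall y.\, \neg\exists^\infty x.\, R(x,y)$, which expands to a single $\mathcal{L}_\#$ sentence.

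**Next I would verify** that these three conditions jointly pin down the order type. On the one hand, $(\omega, <)$ clearly satisfies all of them. Conversely, suppose $(D, R)$ satisfies (i)--(iii). A strict linear order in which every element has finitely many predecessors must be a well-order: any nonempty subset $S$ has an element, say $a$, and the $R$-predecessors of $a$ lying in $S$ form a finite, hence $R$-minimal-element-possessing, set, so $S$ has an $R$-least element; thus $R$ is a well-order. Its order type is then an ordinal $\alpha$ every proper initial segment of which is finite. If $\alpha$ were finite, the whole order would be finite, contradicting (iii); if $\alpha > \omega$, then the initial segment determined by the $\omega$-th element would be infinite, contradicting (ii). Hence $\alpha = \omega$, as required.

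**The main obstacle** — really the only subtlety — is making sure the "finitely many predecessors" clause is correctly and faithfully expressible, i.e.\ that $\exists^\infty x.\,\varphi(x,y)$ behaves as intended when $\varphi$ has the extra free variable $y$; but this is immediate from the defining equivalence~\eqref{infinity-quantifier}, which is stated for an arbitrary formula $\varphi$ and goes through verbatim with a parameter. One should also note that the whole point of Fact~\ref{ordertype} is the contrast with ordinary $\mathsf{FO}$: in pure first-order logic, "$R$ is a well-order of type $\omega$" is not expressible, by compactness together with the fact that $\neg\exists^\infty$ is not first-order definable — so this fact is another witness, beyond Proposition~\ref{prop:compactness}, to the failure of compactness and of the downward Löwenheim–Skolem property already recorded above. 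I would close by remarking that, via the $\#$-definability of $\exists^\infty$, essentially the same device lets one enforce well-orders of any fixed type $\omega + n$, or more elaborate countable order types, should that be wanted later.
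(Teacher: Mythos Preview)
Your proposal is correct and follows essentially the same approach as the paper: assert that $R$ is a strict linear order, and that every element has only finitely many $R$-predecessors via $\forall y.\,\neg\exists^{\infty}x.\,R(x,y)$. The one minor difference is in how finite orders are excluded: the paper adds \emph{seriality} (a purely first-order condition) rather than your $\exists^{\infty}x.(x=x)$; either choice works, since in a strict linear order with all initial segments finite, seriality and infinitude of the domain are equivalent.
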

\begin{proof} Let $\sigma$ be the statement that $R$ is a serial, strict total order (i.e., serial, irreflexive, transitive, total), and conjoin $\sigma$ with the statement $\forall x. \neg \exists^{\infty} y. R(y,x)$, saying that each element has only finitely many $R$-predecessors. %Every model of $\sigma \wedge \phi$ has $<$-order type $\omega$.
\end{proof}

It follows that the validity problem for $\mathsf{FO}(\#)$ is not arithmetical; in fact it is $\Pi^1_1$-hard. If we do not allow embedding $\#$ comparisons, then we can also show that the satisfiability problem is in $\Sigma^1_1$: every comparison amounts to the existence of an injective function.
\begin{fact} The set of validities of $\mathsf{FO}(\#)$ without embedded $\#$ terms is $\Pi^1_1$-complete.
\end{fact}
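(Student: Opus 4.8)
The plan is to establish both a $\Pi^1_1$ upper bound and $\Pi^1_1$-hardness for the validity problem of the ``flat'' fragment, i.e.\ $\mathsf{FO}(\#)$ in which $\#$-comparisons may not be nested inside other $\#$-comparisons (equivalently, count terms occur only in atomic comparison formulas, and those atoms are not themselves inside the scope of another $\#$). Hardness already follows from the material above: Fact~\ref{ordertype} enforces that $R$ is a well-order of type $\omega$, and the defining sentence there uses $\exists^\infty$ only once, without embedding, so the standard reduction from (non-)well-foundedness / the $\Pi^1_1$-completeness of the set of $\mathsf{FO}$-sentences true in $(\mathbb{N},+,\times)$-style arithmetic coding gives $\Pi^1_1$-hardness of validity already for the flat fragment. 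So the real content is the matching upper bound: flat-$\mathsf{FO}(\#)$ validity is in $\Pi^1_1$, equivalently flat-$\mathsf{FO}(\#)$ satisfiability is in $\Sigma^1_1$.

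For the upper bound I would argue that a flat sentence $\varphi$ is satisfiable iff there is a countable (indeed at most countable) structure together with second-order witnesses certifying each cardinality comparison, and that this whole assertion is $\Sigma^1_1$ over $\mathbb{N}$. First I would push negations inward so that every count atom appears positively or negatively as $\#_x\alpha \succsim \#_y\beta$ or its negation, with $\alpha,\beta$ pure first-order. The key semantic observation, already hinted in the excerpt, is that $\#_x\alpha \succsim \#_y\beta$ holds in $\mathcal M$ under $s$ iff there is an injection from $\{d : \mathcal M, s^y_d\models\beta\}$ into $\{d:\mathcal M, s^x_d\models\alpha\}$, and its negation holds iff there is an injection the other way that is moreover not surjective — both being second-order existential statements about functions on the domain. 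Since the fragment is flat, these witnessing functions never need to be ``counted'' in turn, so a single block of second-order existential quantifiers over subsets/functions of the domain suffices. I would then invoke a downward Löwenheim–Skolem argument for $\Sigma^1_1$: a satisfiable flat sentence has a countable model (this uses that $\Sigma^1_1$ sentences, viewed as $\mathsf{FO}$ over an expanded vocabulary with the Skolem/witness functions, have countable models), so we may take the domain to be an initial segment of $\mathbb{N}$, encode the relations and the finitely many witness functions as subsets of $\mathbb{N}$, and write ``there exist such relations and functions making the first-order matrix and all the injectivity/non-surjectivity clauses true'' as a $\Sigma^1_1$ formula of second-order arithmetic. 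Dualizing gives validity in $\Pi^1_1$.

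The step I expect to be the main obstacle is handling the interaction between the first-order matrix and the cardinality witnesses cleanly — in particular making sure that after replacing each comparison atom by an existential second-order claim about an injection, the resulting sentence is genuinely $\Sigma^1_1$ and not $\Sigma^1_2$: one must verify that the parameters $s$ at which the comparison is evaluated range over domain elements (first-order), so the ``for the assignment $s$ there exists an injection'' does not introduce an unwanted universal second-order quantifier. The standard fix is to Skolemize: replace each comparison subformula, wherever it occurs under first-order quantifiers binding the free variables of $\alpha,\beta$, by an atomic formula $G(\cdots)$ for a fresh function symbol $G$ whose intended meaning is the witnessing injection (depending on those free variables as parameters), and then assert in a single outer $\exists$-second-order block that all the $G$'s are total/injective/(non-surjective) on the appropriate parameter-definable slices. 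Because the fragment is flat, no $G$ ever appears inside a $\#$, so this expansion stays first-order in the expanded vocabulary, and one application of $\Sigma^1_1$-Löwenheim–Skolem plus arithmetization finishes it. I would also double-check the edge cases where $\alpha$ or $\beta$ defines an infinite set in the model, since then ``injection exists'' must still be the right equivalent of $\geq$ (it is, by Cantor–Schröder–Bernstein-style reasoning for the $\approx$ case and by a straightforward injection argument otherwise), and the non-surjectivity clause correctly captures strict inequality including the finite-vs-infinite cases.
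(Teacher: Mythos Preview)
Your hardness argument is correct and matches the paper's: Fact~\ref{ordertype} already uses only unnested count comparisons, so the $\Pi^1_1$ lower bound transfers to the flat fragment. Your overall plan for the upper bound also tracks the paper's one-line justification (``every comparison amounts to the existence of an injective function''). The problem is in how you flesh out the treatment of \emph{negated} comparison atoms.

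You propose to render $\neg(\#_x\alpha\succsim\#_y\beta)$, i.e.\ $|\alpha|<|\beta|$, as ``there is an injection $\alpha\to\beta$ that is moreover not surjective,'' and you assert that ``the non-surjectivity clause correctly captures strict inequality including the finite-vs-infinite cases.'' It does not: for infinite sets of equal cardinality there are always non-surjective self-injections. Concretely, take the flat sentence
\[
\varphi \;=\; \neg\big(\#_xP(x)\succsim\#_xQ(x)\big)\;\wedge\;\neg\big(\#_xQ(x)\succsim\#_xP(x)\big),
\]
which is unsatisfiable since $\succsim$ is total. Under your translation, $\varphi'$ asserts the existence of a non-surjective injection $P\to Q$ and one $Q\to P$; this is satisfied in any model with $P^{\mathcal M}=Q^{\mathcal M}=\omega$, taking $n\mapsto 2n$ in both directions. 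So $\varphi'$ is satisfiable while $\varphi$ is not, and the crucial equivalence ``$\varphi$ satisfiable $\Leftrightarrow$ $\varphi'$ satisfiable'' fails in the backward direction. Your subsequent L\"owenheim--Sk\o lem and arithmetization steps are fine on their own, but they only establish that \emph{$\varphi'$-satisfiability} is $\Sigma^1_1$, not $\varphi$-satisfiability. Note too that flat $\mathsf{FO}(\#)$ itself lacks countable models in general---the formulas in Proposition~\ref{prop:compactness} are flat---so one cannot shortcut by going straight to a countable model of $\varphi$.

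The paper's own justification is terse and does not spell out how negated atoms are dealt with; your attempt to fill this in is exactly where the argument breaks. A correct $\Sigma^1_1$ upper bound must express strict cardinality inequality in a way that is both existential and genuinely satisfiability-preserving, which the non-surjectivity clause is not.
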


However, for the general case the situation is much worse. \cite{hartig} showed the following result for first-order logic with the H\"{a}rtig quantifier:\footnote{In fact, the hardness result in Theorem \ref{pi12} holds even if we take the \emph{negationless} fragment of $\mathsf{FO}(\#)$. \cite{Mayer} has given a computable reduction from the negationless fragment to the full fragment with negation.} %, which is even less expressive than $\mathsf{FO}(\#)$ (cf. \citealt{PetersWesterstahl}):
\begin{theorem}[\citealt{hartig}] The set of validities of $\mathsf{FO}(\#)$ is neither in $\Pi^1_2$ nor in $\Sigma^1_2$. \label{pi12}
\end{theorem}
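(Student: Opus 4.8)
\emph{The plan.} I would show that $\mathrm{Val}(\mathsf{FO}(\#))$ is hard, under computable many-one reductions, for the truth set $\mathrm{Th}_2(\mathbb{N})$ of full second-order arithmetic, i.e.\ of the structure $\langle \mathbb{N}, \mathcal{P}(\mathbb{N}); +, \cdot, \in\rangle$. Since $\mathrm{Th}_2(\mathbb{N})$ lies in no $\Sigma^1_n$ or $\Pi^1_n$, and the preimage of a $\Sigma^1_2$ (resp.\ $\Pi^1_2$) set under a computable map is again $\Sigma^1_2$ (resp.\ $\Pi^1_2$), this at once yields the theorem, indeed a stronger statement. (Alternatively, one reduces separately from a $\Pi^1_2$-complete and a $\Sigma^1_2$-complete set to get $\Pi^1_2$-hardness and $\Sigma^1_2$-hardness, hence non-membership in $\Sigma^1_2$ and in $\Pi^1_2$; the mechanics are the same.) Concretely: build a single \emph{framework sentence} $\Phi \in \mathcal{L}_\#$ over a relational signature with auxiliary symbols $R, G, E, Z, \dots$, whose models all contain a faithful isomorphic copy of the intended two-sorted structure, and then relativize: given a second-order sentence $\theta$, set $\theta^{\ast} := \Phi \to \theta^{\mathrm{rel}}$, where $\theta^{\mathrm{rel}}$ relativizes number quantifiers to the $\mathbb{N}$-part $N$, relativizes set quantifiers to the $Z$-part, reads each atom $n \in X$ as $E(n,X)$, and replaces $+, \cdot$ by the definitions below. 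If $\Phi$ genuinely pins down the intended structure, then $\theta^{\ast}$ is valid iff $\theta \in \mathrm{Th}_2(\mathbb{N})$.

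\emph{Building $\Phi$, in three layers.} (1) By Fact~\ref{ordertype}, a sentence of $\mathsf{FO}(\#)$ forces $R$ to be a well-order of order type $\omega$; taking its field $N$, the reduct $(N, R)$ is isomorphic to $(\mathbb{N}, <)$. Write $x \prec y$ for $R(x,y)$ and $x \preceq y$ for $R(x,y) \vee x = y$. (2) On $N$ one defines $+$ and $\cdot$ using only cardinality comparison. Put $\mathrm{Add}(a,b,c) := (a \preceq c) \wedge \big(\#_x(a \preceq x \wedge x \prec c) \approx \#_x(x \prec b)\big)$, since the interval $[a,c)$ and the initial segment $[0,b)$ are equinumerous exactly when $c = a+b$. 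For multiplication, force an auxiliary ternary $G$ to be the graph of a total injective map $N \times N \to N$ — a first-order condition, satisfiable since $|\omega \times \omega| = |\omega|$ — and set $\mathrm{Mult}(a,b,c) := \#_z\big(\exists x\exists y\,(x \prec a \wedge y \prec b \wedge G(x,y,z))\big) \approx \#_x(x \prec c)$, using that the $G$-image of $\{x \prec a\} \times \{y \prec b\}$ has cardinality $a \cdot b$. (All cardinalities here are finite, so equicardinality is ordinary numerical equality.) (3) One adds conjuncts forcing a ``set sort'' $Z$ with a membership relation $E \subseteq N \times Z$ that represents \emph{all} of $\mathcal{P}(N)$: one demands $E$ extensional (distinct elements of $Z$ have distinct $E$-sections), and then exploits cardinality comparisons — comparing $Z$ against $N$ and against initial-segment powers — together with the first-order-definable lexicographic ordering $<'$ of $E$-sections induced by $R$ and suitable closure/completeness conditions on $(Z, <')$, so that no subset of $N$ is omitted. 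A clean target is to characterize $(Z, <')$ up to isomorphism as the lexicographic order of the full Cantor space $2^{N}$, whence, with extensionality, $E$ is forced to be genuine membership. (It is in this third layer that \emph{embedded} $\#$-comparisons become essential, consistent with the preceding Fact that the fragment without them is merely $\Pi^1_1$-complete.)

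\emph{The main obstacle} is exactly layer (3): $\mathsf{FO}(\#)$ has no quantifier over subsets of the domain, so ``every subset of $N$ is realized in $Z$'' cannot be said outright and must be squeezed out of cardinality constraints plus order-theoretic conditions on $(Z, <')$. This is where the counting strength of the H\"{a}rtig quantifier — able, via Fact~\ref{ordertype}, to pin down $\omega$ and to compare cardinalities of definable sets — has to be pushed to its limit, and where care is needed to see the reduction is sound in \emph{both} directions: soundness wants $\Phi$ to force enough closure on $\mathcal{P}(N)$ (e.g.\ under the Skolem functions implicit in $\theta^{\mathrm{rel}}$), while completeness wants that for each actual $X \subseteq \mathbb{N}$ some model of $\Phi$ realizes $X$, which is easy since the framework is otherwise ``soft''. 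If pinning down the full power set turns out to be more than $\mathsf{FO}(\#)$ can do, the fallback is to force instead a Henkin-style $\omega$-model of a sufficiently strong second-order theory (arithmetic comprehension together with choice) and to check — this too is delicate — that $\Pi^1_2$- and $\Sigma^1_2$-truth are nonetheless reflected correctly, all quantification inside $\mathsf{FO}(\#)$ still ranging only over $N$, $Z$, and the auxiliary relations.

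\emph{Wrapping up.} The map $\theta \mapsto \theta^{\ast}$ is plainly computable and, by the design of $\Phi$, $\theta^{\ast}$ is valid iff $\theta$ holds in (the intended model of) second-order arithmetic; hence $\mathrm{Th}_2(\mathbb{N}) \le_m \mathrm{Val}(\mathsf{FO}(\#))$, and so $\mathrm{Val}(\mathsf{FO}(\#)) \notin \Sigma^1_2 \cup \Pi^1_2$. Finally, the footnote's refinement — hardness already for the negationless fragment — would follow by writing $\Phi$, $\mathrm{Add}$, $\mathrm{Mult}$, and the relativization clauses with negations driven inward (exploiting $\succ$ and $\approx$) so that no essential negation falls on a $\succsim$-comparison, or simply by invoking the computable reduction of \cite{Mayer}.
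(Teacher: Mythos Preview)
The paper does not give its own proof; the theorem is simply attributed to \cite{hartig}, so there is no in-paper argument to compare against.

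As to your plan: layers (1) and (2) are correct, and you rightly flag layer (3) as the real obstacle. The trouble is that neither of your two routes for (3) can be completed as sketched. In route (a), the ``initial-segment powers'' condition---that $\{z\in Z: E\text{-section}(z)\subseteq[0,n)\}$ have exactly $2^n$ elements for each $n$---is indeed expressible and forces every \emph{finite} subset of $N$ to be represented in $Z$; but to get all of $\mathcal{P}(N)$ you would further need $(Z,<')$ to be Dedekind complete, and completeness quantifies over cuts, i.e.\ over arbitrary subsets of $Z$, with no evident reformulation in terms of cardinality comparison alone. In route (b), $\omega$-models of arithmetical comprehension plus choice simply do not reflect $\Sigma^1_2$ truth (a true $\exists X\,\forall Y\,\varphi$ may have every witness $X$ lying outside a given $\omega$-model), and while $\beta$-models \emph{do} reflect $\Pi^1_2$ truth, writing down ``$Z$ is a $\beta$-model'' amounts to saying that every $Z$-coded linear order believed by $Z$ to be well-founded really \emph{is} well-founded---again a $\Pi^1_1$ condition that needs quantification over arbitrary infinite subsets of $N$. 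Both routes run into the same circularity: pinning down enough of $\mathcal{P}(N)$ seems to require expressing genuine well-foundedness, and expressing genuine well-foundedness seems to require already having $\mathcal{P}(N)$. The argument in \cite{hartig} takes a different tack and does not attempt to internalize the full two-sorted standard model of second-order arithmetic inside a single $\mathsf{FO}(\#)$-model.
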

%Appealing to Proposition \ref{ordertype} we can even define the natural numbers up to isomorphism using appropriate axioms for $+$ and $\times$, which shows:
%\begin{proposition} The validity problem for $\mathsf{FO}(\#)$ is $\Pi^1_1$-hard. 
%\end{proposition}
%\begin{theorem} The validity problem for %$\mathsf{FO}(\#)$ is $\Pi^1_1$-hard. %\label{thm:pi11}
%\end{theorem}
%\begin{proof}[Proof Sketch] Suppose we have $+,\times$ and at least one more predicate (or function) symbol in the language. Adding the usual axioms for $+,\times$ to the statement in the proof of Lemma \ref{ordertype} (for the  definable relation ``less than'') defines the natural numbers up to isomorphism, and validity requires that we universally quantify over all further predicates. This gives $\Pi^1_1$-hardness. 
%For an upper bound, note that satisfiability of a formula $\#_x\varphi \succsim \#_y\psi$ amounts to the existence of an injective map from the set of $y$'s satisfying $\psi$ to the set of $x$'s satisfying $\varphi$. This can be encoded by means of existential second-order quantifiers. As the satisfiability problem is therefore in $\Sigma^1_1$, validity is in $\Pi^1_1$. 
%\end{proof}

$\mathsf{FO}(\#)$ clearly brings a potent combination of logical expressive power and explicit count comparison.  To what degree can we tease apart the separate contributions of  logic and counting in this rich setting? Specifically, how much do $\#$ comparisons add to the counting repertoire native to first-order logic; and vice versa, how much logic could we already extract from counting alone? We begin with the second question. 

\subsection{From Counting to Logic} \label{section:countinglogic}
%Formulate a tiny subsystem with just variable inequality (and perhaps predications as additional atomic statements) and cardinality comparison. Reconstruct all the Booleans, plus first order quantifiers and infinity quantifier. 
Let us restrict attention to a very small fragment of the language $\mathcal{L}_\#$ described above. Given some variables $\mathsf{Var}$ and predicate symbols $\mathsf{Pred}$, we only allow two types of atomic formulas and one operation for building complex formulas. Let $\mathcal{L}_\#^-$ be generated by the grammar: \begin{eqnarray*}
\varphi \quad::=\quad\quad   P(x_1,\dots,x_n) \quad\mid\quad x\neq y \quad\mid\quad \#_x\varphi \succsim \#_y\varphi
\end{eqnarray*} Aside from predication and variable inequality, we can only compare cardinalities.

A first observation is that Boolean implication can already be defined in $\mathcal{L}_\#^-$. Where $x$ occurs free in neither $\varphi$ nor $\psi$, we can take: 
\begin{eqnarray}
 \psi \rightarrow \varphi & \equiv & \#_x\varphi \succsim \#_x\psi. \label{implication}
\end{eqnarray}
Boolean negation can also be defined. Where $\mathsf{0}$ is an abbreviation for $\#_x (x \neq x)$ (cf. Frege), and again $x$ is a variable that does not occur free in $\varphi$, we can define: \begin{eqnarray}
\neg \varphi & \equiv & \mathsf{0} \succsim \#_x\varphi. \label{negation}
\end{eqnarray} With these we recover any other Boolean connective, as well as variable equality. In some respect, count comparison already incorporates Boolean structure, and familiar Boolean laws emerge as principles of count comparisons. For instance, the pattern $\varphi \rightarrow (\psi \rightarrow \varphi)$ is encoded simply as $\#_x(\#_x\varphi \succsim \#_x\psi) \succsim \#_x\varphi$. 

Going further, first-order quantification is expressible in $\mathcal{L}_\#^-$:
\begin{eqnarray}
\exists x. \varphi & \equiv & \#_x\varphi \succ \mathsf{0}. \label{existential}
\end{eqnarray} This thus brings us back to full $\mathsf{FO}(\#)$, in which we can again define the infinity quantifier $\exists^\infty$ in (\ref{infinity-quantifier}), its dual $\forall^\infty$, and so on. From rather austere (atomic) primitives, count comparisons already encode a significant amount of logic, provided of course that we allow iteration of comparisons within comparisons.

\begin{remark}[Extended logical vocabulary] Counting can also define non-first-order quantifiers that are often considered logical in an extended sense. An example is the binary quantifier `Most $\varphi$ are $\psi$', which is definable as $\#_x(\varphi \land \psi) \succ \#_x(\varphi \land \neg\psi)$. But even closer to first-order logic, counting suggests different kinds of universal quantifier, depending on how we extend the standard meaning on finite sets to infinite ones. One option is $\neg \exists x. \neg\varphi$, the dual of the existential quantifier defined in (\ref{existential}), which expresses exceptionless  universal quantification. But there are also interesting weaker variants, such as  $\#_x\varphi \approx \#_x\top \wedge \#_x\varphi\succ \#_x\neg\varphi$. This says that the set of objects satisfying $\varphi$ has the size of the universe, while the possible exceptions have a smaller size. This is a  version of the quantifier `almost all' which has elegant mathematical properties and interesting measure-theoretic applications \citep{STEINHORN1985161}.
\end{remark}

\begin{remark}[Non-classical Logics] In addition to options qua expressive power, counting also offers options for deductive power. The definitions (\ref{implication}), (\ref{negation}), and (\ref{existential}) above show that we can reconstruct classical logic from $\#$ comparisons. Is $\mathsf{FO}(\#)$ in some way inherently classical, or could we instead naturally extract \emph{non-classical} connectives?

One route would be to keep the same implication in (\ref{implication}), but to redefine negation in terms of an arbitrary predicate, say, $G(x)$. If we then let $\neg \varphi$ stand for the sentence $\#_xG(x) \succsim \#_x\varphi$, where again $x$ is not free in $\varphi$, we lose one direction of the law of double negation, namely, $\neg \neg \varphi \rightarrow \varphi$, while the other direction remains valid. We retain the converse of contraposition, $(\varphi \rightarrow \psi) \rightarrow (\neg \psi \rightarrow \neg \varphi)$, while losing contraposition, $(\neg \psi \rightarrow \neg \varphi) \rightarrow (\varphi \rightarrow \psi)$. The resulting logic has some intuitionistic flavor, which would be worth determining exactly. 

%\textcolor{red}{TI: Observations from Thomas Mayer about various validities of IL that fail here.}

A more dramatic route to non-classical logics would be to change the semantics of $\#$ terms altogether.  We explore this route further in \S\ref{section:beyond}.
\end{remark}
%{\bf This might be a place to  point out the sweeping consequences of this abstract move. E.g., FO translations in this general case. But perhaps this radical consequence would fit better in the Conclusion, when stating how we could also interface logic with `weighing' and the like.

%TI: I quite like the latter idea.}

\subsection{From Logic to Counting}
While pure $\mathsf{FO}$ is also capable of encoding facts about counting and arithmetic, it is far less extensive. As already mentioned, first-order logic can define the simple counting quantifiers like $\exists ^{\geq n}x$; however, first-order logic does so by means of \emph{counting in the syntax}. That is, the formula expressing that there are at least $n$ objects satisfying a given condition achieves this by concatenating $n$ existential quantifiers and adding $n$ conjuncts. Basic arithmetic principles like $\exists^{\geq m}x.\varphi \rightarrow \exists^{\geq n}x.\varphi$ for $m\geq n$, thus follow from elementary logical patterns like distribution of existential quantification over conjunction, applied the requisite number of times (e.g., $m-n$ times). This style of counting in the syntax also produces a case-by-case formulation of the Pigeonhole Principle:\footnote{Even more simply, the Pigeonhole Principle has a natural encoding in propositional logic where complexity theorists have been interested in lower bounds on the lengths of proofs for instances of the principle across different proof systems \citep{cook_reckhow_1979,Krajicek}.}
\begin{example} \label{example:php} Suppose we have $k$ monadic predicate symbols $P_1,\dots,P_k$ and let $n>k$. Then, 
 \begin{equation} \label{eq:fophp}\big( \exists^{= n}x. \bigvee_{i \leq k} P_i(x) \wedge \forall x. \bigwedge_{i \neq j} \neg (P_i(x) \wedge P_j(x)) \big)\rightarrow \bigvee_{i \leq k} \exists^{\geq 2}x. P_i(x) \end{equation}
says that if these $k$ predicates together include $n$ objects, then at least one must include at least two objects. This schema is of course valid for every choice of $k$ and $n>k$.
\end{example}
We will see more examples of counting in the syntax with subsequent sections  (see especially  Remark \ref{remark:mlsr} and Appendix \ref{appendix:syntax}). 

\begin{remark} \label{remark:finvar} The fact that $\mathsf{FO}$ can only count in the syntax reverberates in interesting ways when we consider \emph{finite variable} fragments of $\mathsf{FO}$.  While the two-variable fragment is known to have the (bounded) finite-model property \citep{Mortimer}, which in turn establishes its  decidability, this fragment with counting quantifiers $\exists ^{\geq n}$ can easily enforce infinite models: 
$$ \forall x. \exists^{=1}y. R(x,y) \wedge \forall y .\exists^{\leq 1} x. R(x,y) \wedge \exists y . \forall x. \neg R(x,y) . $$
Such a language is in fact decidable \citep{GradelOtto}: like the two-variable fragment without counting, its satisfiability problem is \textsc{NExpTime}-complete \citep{Pratt-Hartmann}. However, the complexity analysis of this system and its extensions \citep{Kieronski}  reveals arithmetical content that does not appear in analyses of the plain two-variable fragment, witness connections to integer programming (\S\ref{integer}) and to semi-linear sets (\S\ref{section:finiteSO}). \end{remark}

\subsection{Finite Models} It is natural to consider a related system in the same language, but with interpretations restricted to \emph{finite} models. Call such a system $\mathsf{FO}^\phi(\#)$. As $\mathcal{L}_\#$ extends the language of first-order logic, Trakhtenbrot's Theorem tells us that the validities of $\mathsf{FO}^\phi(\#)$ are still not computably enumerable. Nonetheless, $\mathsf{FO}^\phi(\#)$ and variations on it have also been intensely studied in the literature on finite model theory.  See, e.g., \cite{Otto} or \cite{Schweikardt} for summaries of relevant work.

As an example of distinctive issues that come up in the finitary setting, one might ask about the \emph{asymptotic probabilities} of formulas in $\mathsf{FO}^\phi(\#)$ over finite structures. It was shown in \cite{Grumbach}
that $\mathsf{FO}$ with the H\"{a}rtig quantifier in fact possesses a zero-one law, just as pure $\mathsf{FO}$ does. As possession of a zero-one law is commonly interpreted as evidence that a logic cannot formalize any non-trivial counting, this can be taken as justification for our choice of comparison rather than  equality as a primitive. Indeed, $\mathsf{FO}^\phi(\#)$ lacks a zero-one law; e.g., $\#_x P(x) \succsim \#_x \neg P(x)$ has asymptotic probability $1/2$. It is conjectured in \cite{Grumbach} that (an extension of) $\mathsf{FO}^\phi(\#)$ nonetheless possesses a limit law, and that the limits are all rational numbers between $0$ and $1$. 

For many purposes in finite model theory (e.g., descriptive complexity) authors have been motivated to consider proper \emph{extensions} of our language $\mathcal{L}_\#$, a notable example being \emph{fixed point logic} with counting \citep{Immerman1992}. Our purpose here is different: we aim to isolate weaker fragments of this language that might further reveal the subtle interplay between logic and counting in elementary reasoning, also pinpointing differences and commonalities between finitary and infinitary patterns in counting. 

\subsection{Fragments of $\mathcal{L}_\#$} While full first-order logic with counting may be a natural starting point for exploring our subject, the above observations invite the search for natural fragments and weaker variants of $\mathsf{FO}(\#)$. It may be desirable, for example, to identify \emph{decidable} fragments of $\mathcal{L}_\#$. From this perspective it is noteworthy that some familiar ways of taming complexity are less effective here. For example, finite-variable fragments do not result in decidability: as shown by \cite{Gradel1999}, the two-variable fragment of $\mathsf{FO}(\#)$ is still undecidable ($\Pi^1_1$-complete, so we do observe a reduction in complexity, compared to Theorem \ref{pi12}). The two-variable fragment of $\mathsf{FO}^\phi(\#)$ is also undecidable. Evidently, a significant source of the complexity is the potent combination of counting and arbitrary quantificational-relational reasoning, witness Lemma \ref{ordertype}. The undecidability proof in \citealt{Gradel1999} for the two-variable fragment crucially involves counting successors along binary relations.

A more dramatic route would be to move to a much tamer syllogistic or propositional fragment \citep{Moss2016,Ding2020}. For instance, if we let $\mathcal{L}_\#^0$ be the language of propositional logic with count comparisons, the resulting system $\mathsf{PL}(\#)$ is easily shown to be decidable (e.g., it will follow immediately from our results below). This route at once eliminates relational reasoning and first-order quantification. 

\vspace{1ex}

An alternative route is to put relational reasoning to the side, but still retain first-order quantification. The \emph{monadic} fragment of $\mathcal{L}_\#$, which we will call $\mathcal{L}_\#^1$, does not allow counting along relations, but it otherwise preserves the counting content of $\mathsf{FO}(\#)$. Observe, for example, that our definition of the infinity quantifier in (\ref{infinity-quantifier}) and our reconstruction of logical connectives from count comparisons (\S\ref{section:countinglogic}) depend in no way on the arity of available predicates. We will thus use $\mathsf{MFO}(\#)$, monadic first-order logic with counting, as a base system to explore richer combinations (\S\ref{section:basicmonadic}). In this context we will consider adding second order quantification (system $\mathsf{MSO}(\#)$ in \S\ref{section:second-order}), as well as the ability to count not just individuals but \emph{sequences} of individuals (systems $\mathsf{MFO}(\sharp)$ and $\mathsf{MSO}(\sharp)$ in \S\ref{section:sequences}). 

Of course, counting along relations is also common and natural. We therefore explore a tractable \emph{modal} fragment of $\mathcal{L}_\#$, which we call $\mathcal{L}_\#^{\mathsf{ml}}$, as a way of taming the interaction among counting, quantification, and relational reasoning.  A summary appears in Table \ref{table:overview}.

\begin{table}
    \centering
    \begin{tabular}{c | l | l}
       \textbf{Language} \;\quad & \quad \textbf{Logical System} \;\quad & \quad \textbf{Typical Expression} \\ \hhline{===}
       $\mathcal{L}_\#$ \quad & \quad $\mathsf{FO}(\#)$ \quad & \quad $\forall x. \#_y R(x,y) \succ \#_y \big(R(y,x) \wedge P(y)\big)$ \\ \hhline{---} 
       $\mathcal{L}^2_\sharp$ \quad & \quad $\mathsf{MSO}(\sharp)$ \quad&\quad $\exists Y. \big( \sharp_x P(x) \approx \sharp_{x,u,v}(Y(x) \wedge Y(u) \wedge Y(v)) \big)$ \\ \hhline{---} 
       $\mathcal{L}^1_\sharp$ \quad & \quad $\mathsf{MFO}(\sharp)$ \quad&\quad  $\sharp_{x,y} \big(P(x) \wedge P(y)\big) \succsim \sharp_{x,y,z}\big(Q(x) \wedge Q(y) \wedge Q(z)\big)$ \\ \hhline{---} 
       $\mathcal{L}_\#^2$  \quad & \quad $\mathsf{MSO}(\#)$ \quad & \quad $\exists Y.\big(\#_x Y(x) \approx \#_x Q(x) \wedge \#_x P(x) \succ \#_xY(x)\big) $ \\  \hhline{---} 
       $\mathcal{L}_\#^1$  \quad & \quad $\mathsf{MFO}(\#)$ \quad & \quad $\exists y.\big( P(y) \wedge \#_x(P(x) \wedge x\neq y) \succ \#_x Q(x)\big) $ \\ \hhline{---}  
        $\mathcal{L}_\#^{\textsf{ml}}$ \quad & \quad  $\mathsf{ML}(\#)$ \quad & \quad $\#(\#\neg p \succ \# p) \succ \#(\#p \succsim \#\neg p)$ \\ \hhline{---}
       $\mathcal{L}_\#^0$ \quad & \quad $\mathsf{PL}(\#)$ \quad & \quad  $\#\neg p \succsim \#(p \vee q)$ \\ \hhline{---}
    \end{tabular} \vspace{.1in} 
    \vspace{1ex}
    \caption{A hierarchy of counting languages and logics, covered in  \S\ref{section:first-order}-\S\ref{section:modal}. For each logical system $\mathsf{L}(\#)$ we also have a version $\mathsf{L}^\phi(\#)$, where we restrict to finite models. In these systems terms can only denote natural numbers.}
    \label{table:overview}
\end{table}

Following this work we consider a different route altogether, namely changing the semantics of $\mathcal{L}_\#$. Relaxing either the logical interpretation (relativizing to sets of ``admissible'' variable assignments; cf. \citealt{Nemeti}) or the numerical content of the $\#$ terms again results in systems that retain much of the character of $\mathsf{FO}(\#)$, while gaining in tractability.

\section{Monadic First-Order Counting Logic} \label{section:basicmonadic} %Let $\mathcal{L}_\#^1$ be the monadic fragment of $\mathcal{L}_\#$. Apart from the complexity lower-bounds in Lemma \ref{ordertype} and Theorem \ref{pi12}, the observations above about $\mathsf{FO}(\#)$ hold also in the purely monadic setting. The system will be non-compact (as Eq. (\ref{infinity-quantifier}) only involves unary predicates), and we can again enforce uncountable models. The argument that we can reconstruct all of the usual logical operators merely from $\#$ comparisons (that is, $\mathcal{L}_\#^-$ with only monadic predicates) also holds in the purely monadic case. Thus, much of the conceptual interest of $\mathsf{FO}(\#)$ is preserved in the monadic fragment. At the same time, this fragment will be significantly more tractable. How much numerical reasoning can we perform in such a system, and how do logic and counting interact here? 
The system $\mathsf{MFO}(\#)$ of monadic first-order logic with identity and cardinality comparisons, though restricted in its expressive power, still captures a good deal of the natural reasoning mentioned in our Introduction. It is easy to see that \emph{numerical syllogisms} can be represented, and so can simple comparative reasoning with quantifiers like `most'. But $\mathsf{MFO}(\#)$ can also represent the earlier more complex inference
$$\begin{array}{l l l}
 \mbox{from }    & \mbox{`More }A\mbox{ than }B\mbox{ are }C\mbox{'} \quad\quad & (\#_x\big(A(x) \wedge C(x)\big) \succ \#_x \big(B(x) \wedge C(x)\big))\\
 \mbox{and}    & \mbox{`More }B\mbox{ than }C\mbox{ are }A\mbox{'} \quad\quad & (\#_x\big(B(x) \wedge A(x)\big) \succ \#_x \big(C(x) \wedge A(x)\big))\\
  \mbox{to }    & \mbox{`More }A\mbox{ than }C\mbox{ are }B\mbox{'} \quad\quad & (\#_x\big(A(x) \wedge B(x)\big) \succ \#_x \big(C(x) \wedge B(x)\big)).
\end{array}$$
The underlying Venn diagram-style reasoning will be analyzed more generally below.

Beyond the basic linguistic inference repertoire, $\mathsf{MFO}(\#)$ can also represent some of what we called ``grassroots mathematics''. Note, for instance, that Example \ref{example:php} encoding the Pigeonhole Principle only involved monadic predicates (and in fact did not even need $\#$-terms). In $\mathsf{MFO}(\#)$ we can also express a natural infinitary generalization:
 \begin{equation} \label{eq:fophpinf}\big( \exists^{\infty}x. \bigvee_{i \leq k} P_i(x) \wedge \forall x. \bigwedge_{i \neq j} \neg (P_i(x) \wedge P_j(x)) \big)\rightarrow \bigvee_{i \leq k} \exists^{\infty}x. P_i(x), \end{equation} stating that infinitely many objects in finitely many disjoint boxes (``pigeonholes'') must result in at least box  having infinitely many objects.

We will now look more systematically at what this monadic counting logic can express. 
Suppose $\mathsf{Pred} = \{P_1,\dots,P_n\}$ is  finite, and list the $2^n$ possible state-descriptions over $\mathsf{Pred}$ as $S_1,\dots,S_{2^n}$, so that each $S_i(x)$ is of the form $\bigwedge_{j \in J} P_j(x) \wedge \bigwedge_{j \notin J} \neg P_j(x)$. Call the extension of a state-description $S_i$ in a model a \emph{region}.  In $\mathcal{L}_\#^1$ we can easily state count comparisons between regions. A count comparison, such as a statement $\#_x S_i(x) \succsim \#_xS_j(x)$, can be succinctly written with numerical  variables replacing cardinalities: $\mathsf{s}_i \geq \mathsf{s}_j$. As the $S_i$ are pairwise disjoint we can more generally encode constraints involving sums of (cardinalities of) regions by disjunctions of state-descriptions. For instance, a sentence like $\#_x \bigvee_{i} S_i(x) \succsim \#_x \bigvee_j S_j(x)$ encodes a typical linear inequality between sums of variables $\mathsf{s}_1,\dots,\mathsf{s}_{2^n}$: \begin{eqnarray}
    \sum_{i} \mathsf{s}_i & \geq & \sum_j \mathsf{s}_j. \label{simple-inequality}
\end{eqnarray} By closing under Booleans we can of course express equality and strict inequality versions of (\ref{simple-inequality}). When restricting to finite models call the resulting logical system $\mathsf{MFO}^\phi(\#)$. In this case ``solutions'' to such (in)equations will always be natural numbers. However, if we allow models of arbitrary cardinality then solutions may involve infinite cardinal numbers. This is the system that we call  $\mathsf{MFO}(\#)$.

How much more can we express in $\mathsf{MFO}^\phi(\#)$ or $\mathsf{MFO}(\#)$ than the simple linear inequalities in (\ref{simple-inequality})? We have already seen an instructive example in the formula (\ref{infinity-quantifier}) defining the infinity quantifier. The encoding of $\exists^\infty x. S(x)$ for a state description $S$ is essentially an inequality statement $\mathsf{s} \geq \mathsf{s}+1$. The use of individual variables here is an instance of a more general pattern, also relevant in the finite case. Indeed, everything we say in the present section will apply equally to $\mathsf{MFO}^\phi(\#)$ and $\mathsf{MFO}(\#)$.

As above, consider two non-overlapping sets $T_1=\{S_i\}_i$, $T_2=\{S_j\}_j$ of state-descriptions, whose respective cardinalities we will label $\{\mathsf{s}_i\}_{i}$ and $\{\mathsf{s}_j\}_j$. Then we can encode not only inequalities like those in (\ref{simple-inequality}), but also those such as \begin{eqnarray}
    \sum_{i} \mathsf{s}_i & = & \sum_j \mathsf{s}_j + k \label{inequality-1} \\
    \sum_{i} \mathsf{s}_i & > & \sum_j \mathsf{s}_j + k. \label{inequality-2}
\end{eqnarray} For instance, to express (\ref{inequality-1}) we can assert the existence of $k$ distinct variables $\mathbf{y}$ all of which satisfy one of $T_1$, such that ``removing'' these elements from the regions spanned by $T_1$ results in the same cardinality as the regions spanned by $T_2$: \begin{eqnarray*}
\exists \mathbf{y}.\Big( \mathsf{diff}(\mathbf{y}) \wedge \bigwedge_{y \in \mathbf{y}} T_1(y) \wedge \#_x\big(\bigwedge_{y \in \mathbf{y}} x \neq y \wedge T_1(x)\big) \approx \#_xT_2(x)\Big).
\end{eqnarray*} Here $T_1(y)$ is shorthand for $\bigvee_i S_i(y)$, and similarly for $T_2(x)$. 

Meanwhile (\ref{inequality-2}) is expressed by replacing the equality with a strict inequality. In fact, with $k$ variables $\mathbf{y}$ (in addition to the variable $x$ used in the count comparisons) we can already encode (\ref{inequality-1}) and (\ref{inequality-2}) with a constant $2k$, simply by taking these variables $\mathbf{y}$ and ``adding'' them to the regions spanned by the $T_2$ (see Figure \ref{fig:variables} for visualization):
\begin{eqnarray}
\exists \mathbf{y}.\Big( \mathsf{diff}(\mathbf{y}) \wedge \bigwedge_{y \in \mathbf{y}} T_1(y) \wedge \#_x\big(\bigwedge_{y \in \mathbf{y}} x \neq y \wedge T_1(x)\big) \approx \#_x\big( \bigvee_{y \in \mathbf{y}} x = y \vee T_2(x)\big)\Big). \label{main-inequality}
\end{eqnarray} We are effectively stating that $|T_1| \geq k$, and that $|T_1|-k = |T_2|+k$; in other words, $|T_1| = |T_2|+2k$. Again, the same argument extends to inequality statements.  
\begin{figure} \centering 
   \begin{tikzpicture}
\draw [black,fill=blue!10] (0,0) rectangle (3,1.5);
\node at (.5,.5) {$P$};
\node at (3.75,.75) {\Large{}$=$};
\draw [black,fill=green!10] (7.5,0) rectangle (10,1.5);
\node at (8,.5) {$\neg P$};
\draw [black,fill=white,label=m1] (1.75,1) ellipse (.8cm and .25cm);
\draw [black,fill=blue!10,label=m2] (5.25,.75) ellipse (.8cm and .25cm);
\node at (6.75,.75) {\Large{}$+$};
\node at (5.25,-.75) {$\exists \mathbf{y}.\big(\mathsf{diff}(\mathbf{y}) \wedge \bigwedge_{i \leq k} P(y_i) \wedge \#_x(P(x) \wedge \bigwedge_{i \leq k}x \neq y_i) \approx \#_x (\bigvee_{i \leq k} x=y_i \vee \neg P(x))  \big)$};
 \path (2,1) edge[->,thick,dotted,bend left] (5.2,.8);
\end{tikzpicture} 
    \caption{A visualization of the formula expressing that the number of $P$ points (blue) is exactly $2k$ greater than the numbers non-$P$ points (green), where $k$ is the size of the ``extracted'' set of $P$ points (i.e., the size of $\mathbf{y}$). }\label{fig:variables}
\end{figure}

%\textcolor{blue}{TFI: It might be nice to include some reasoning examples here. What can we do with statements like these? Also perhaps a schematic diagram.}

\subsection{Some Core Principles} \label{section:coreprinciples} Both systems, $\mathsf{MFO}^\phi(\#)$ and $\mathsf{MFO}(\#)$, are evidently invariant under automorphisms. In the monadic setting automorphisms are precisely the maps that permute elements within a region: all the points that satisfy a given state-description are indistinguishable. This means that if a property holds for one point in a region, it holds for every point in that region. This theme of \emph{permutation invariance} is characteristic of counting, and it will return when we discuss generalized quantifiers in \S\ref{section:gq}.

As demonstrated above, use of individual variables essentially allows manipulating regions---removing or adding points. We can correspondingly state a more general \emph{invariance principle}. Fix some variables $\mathbf{y}$ and a fixed (finite) set $\mathbf{P}$ of predicate letters, and let $\alpha^{\mathbf{y}}(x)$ specify a state-description for $x$ as well as which of the variables $\mathbf{y}$ are (un)equal to $x$. Then, for any formula $\varphi$ (in predicates $\mathbf{P}$), if there is at least one $x$ satisfying $\alpha^{\mathbf{y}}$ and $\varphi$, then \emph{every} $x$ satisfying $\alpha^{\mathbf{y}}$ also satisfies $\varphi$. Codified in a general invariance principle:
 \begin{equation} \tag{$\mathsf{INV}$} \exists x.\big(\alpha^{\mathbf{y}}(x) \wedge \varphi(x)\big) \rightarrow \#_x\big(\alpha^{\mathbf{y}}(x) \wedge \varphi(x)\big) \approx \#_x\big(\alpha^{\mathbf{y}}(x)\big). \label{invariance-main} \end{equation} Since either none of the $\alpha$'s satisfy $\varphi$ or all of them do, once we have specified $\alpha$ in a count formula, reference to $\varphi$ becomes redundant. In fact, (\ref{invariance-main}) follows from an even stronger statement (that is, stronger provided we admit infinite models): 
 \begin{equation} \tag{$\mathsf{INV2}$} \#_x\big(\alpha^{\mathbf{y}}(x) \wedge \varphi(x)\big) \succ \mathsf{0} \rightarrow \#_x\big(\alpha^{\mathbf{y}}(x) \wedge \neg \varphi(x)  \big) \approx \mathsf{0}. \label{invariance-infinite} \end{equation} 

 \vspace{0.5ex}
 
 A further useful observation about terms $\#_x\varphi$ is the following Extraposition Principle (\ref{uniform-replace}) for `uncaptured subformulas'. Subformulas of $\varphi$ that do not involve $x$ do not contribute any fine-grained information to the term's denotation. If the free variables of $\psi$ are not among the bound variables of $\#_x\varphi$, then the following is valid: \begin{equation} \tag{$\mathsf{SUB}$} (\psi \wedge \#_x\varphi \approx \#_x\varphi[\nicefrac{\top}{\psi}]) \vee (\neg\psi \wedge \#_x\varphi \approx \#_x\varphi[\nicefrac{\bot}{\psi}]).
    \label{uniform-replace}
 \end{equation} Here $\alpha[\nicefrac{\beta}{\gamma}]$ is the result of substituting $\beta$ for every occurrence of $\gamma$ in $\alpha$.  
 
% \textcolor{blue}{More discussion here?}

\subsection{Normal Forms} \label{section:normalform}
The principles recorded in (\ref{invariance-main}) and (\ref{uniform-replace}), together with basic propositional reasoning and a few other elementary principles (see \S\ref{section:axioms} for the others), allow derivation of a normal form result, which works uniformly for $\mathsf{MFO}^\phi(\#)$ and $\mathsf{MFO}(\#)$. As a first step, we can show that any formula is equivalent to one with no embedded $\#$-terms or quantifiers within $\#$-terms, as these terms can always be replaced by stacks of unembedded existential quantifiers. This is already a dramatic departure from full relational $\mathsf{FO}(\#)$, where embedding is non-trivial. (Recall that $\mathsf{FO}(\#)$ with no embedded count comparisons was $\Pi^1_1$-complete, in stark contrast to Theorem \ref{pi12}.)

Define \emph{depth} $\mathsf{d}(\varphi)$ by recursion, with $\mathsf{d}(\alpha) = 0$ for $\alpha$ atomic, $\mathsf{d}(\varphi \wedge \psi) = \mbox{max}(\mathsf{d}(\varphi),\mathsf{d}(\psi))$, $\mathsf{d}(\neg \varphi) = \mathsf{d}(\varphi)$, while $\mathsf{d}(\#_x\varphi \succsim \#_y\psi) = \mbox{max}(\mathsf{d}(\varphi),\mathsf{d}(\psi))+1$ and $\mathsf{d}(\exists x. \varphi) =\mathsf{d}(\varphi)+1$.

\vspace{1ex}

Generically, a monadic formula with free variables $\mathbf{y},x$ can be written in disjunctive normal form $\bigvee_i \big(\alpha_i(\mathbf{y}) \wedge \alpha^{\mathbf{y}}_i(x) \wedge \varphi_i(\mathbf{y},x)\big)$, where $\alpha_i(\mathbf{y})$ specifies state-descriptions for $\mathbf{y}$ and which of these variables are (un)equal, $\alpha^{\mathbf{y}}_i(x)$ is as in the previous subsection, and $\varphi_i(\mathbf{y},x)$ is some other formula that may in general have positive depth. We want to show that any formula \begin{equation*}
    \#_x\bigvee_{i\in I} \big(\alpha_i(\mathbf{y}) \wedge \alpha^{\mathbf{y}}_i(x) \wedge \varphi_i(\mathbf{y},x)\big)\succsim \#_x\bigvee_{j\in J} \big(\alpha_j(\mathbf{y}) \wedge \alpha^{\mathbf{y}}_j(x) \wedge \varphi_j(\mathbf{y},x)\big)
\end{equation*} is equivalent to one with no embedded count comparisons inside the $\#_x$ terms. %.  or quantifiers. 
%In other words this formula is equivalent to one of depth $1$. 
First, by (\ref{uniform-replace}) we can take the subformulas  $\alpha_i(\mathbf{y}),\alpha_j(\mathbf{y})$ outside the count comparisons, which leaves
\begin{equation*}
    \#_x\bigvee_{i\in I} \big(\alpha^{\mathbf{y}}_i(x) \wedge \varphi_i(\mathbf{y},x)\big) \succsim \#_x\bigvee_{j\in J} \big( \alpha_j^{\mathbf{y}}(x) \wedge \varphi_j(\mathbf{y},x)\big) \label{toward-normal}
\end{equation*} to analyze. Let $\kappa_k$ range over formulas $\exists x.\big(\alpha_k^{\mathbf{y}}(x) \wedge \varphi_k(\mathbf{y},x)\big)$ for $k \in I \cup J$. Then by appeal to (\ref{invariance-main}), we have the equivalent formula:
\begin{equation*}
    \bigvee_{K \subseteq I \cup J}\big(\bigwedge_{k \in K}\kappa_k \wedge \bigwedge_{k \notin K} \neg \kappa_k \wedge \#_x\bigvee_{i\in I\cap K} \alpha^{\mathbf{y}}_i(x) \succsim \#_x\bigvee_{j\in J \cap K}  \alpha_j^{\mathbf{y}}(x) \big)
\end{equation*}
Note that we have traded one level of $\#$ embedding for one existential quantifier (and since the $\varphi_k(\mathbf{y},x)$ subformulas have  a lower  nesting of count operators, they are taken care of by the inductive hypothesis). Since $\alpha_i^{\mathbf{y}},\alpha_j^{\mathbf{y}}$ are of depth $0$, this concludes the argument for:
\begin{lemma} Every $\mathcal{L}_\#^1$ formula is equivalent %, in both $\mathsf{MFO}^\phi(\#)$ and $\mathsf{MFO}(\#)$, 
to one in which every count comparison subformula has depth exactly $1$. \label{lemma:depth1}
\end{lemma}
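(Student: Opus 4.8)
The plan is to prove Lemma~\ref{lemma:depth1} by induction on the depth $\mathsf{d}(\varphi)$ of an $\mathcal{L}_\#^1$ formula, exactly following the sketch already laid out in the paragraphs preceding the statement, but filling in the bookkeeping. Base case: if $\mathsf{d}(\varphi) \le 1$ there is nothing to do (formulas of depth $0$ are Boolean combinations of atoms, and any count comparison subformula they contain has depth $1$ by definition of $\mathsf{d}$). For the inductive step, take a formula of depth $d+1 \ge 2$. Using ordinary propositional reasoning we may assume it is a Boolean combination of count comparison subformulas and atoms; by the inductive hypothesis applied to the immediate subformulas inside each $\#_x$-term, it suffices to show that a single count comparison
$$
\#_x\bigvee_{i\in I} \big(\alpha_i(\mathbf{y}) \wedge \alpha^{\mathbf{y}}_i(x) \wedge \varphi_i(\mathbf{y},x)\big)\succsim \#_x\bigvee_{j\in J} \big(\alpha_j(\mathbf{y}) \wedge \alpha^{\mathbf{y}}_j(x) \wedge \varphi_j(\mathbf{y},x)\big),
$$
in which every $\varphi_k$ already has count comparison subformulas only at depth $1$, is equivalent to a Boolean combination of existential quantifications and depth-$1$ count comparisons over atoms. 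Here I first put the matrix of each $\#_x$-term into disjunctive normal form so that each disjunct isolates a state-description $\alpha^{\mathbf{y}}_k(x)$ for $x$ (specifying the $\mathbf{P}$-type of $x$ and which of the $\mathbf{y}$ equal $x$), a formula $\alpha_k(\mathbf{y})$ not involving $x$, and a remainder $\varphi_k$.

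The two substantive moves are then exactly (\ref{uniform-replace}) and (\ref{invariance-main}). By (\ref{uniform-replace}), since the $\alpha_k(\mathbf{y})$ do not contain $x$, I can pull each such conjunct out of the $\#_x$-terms at the cost of a top-level Boolean case split on the truth values of the $\alpha_k(\mathbf{y})$; this leaves count comparisons of the shape $\#_x\bigvee_{i}(\alpha^{\mathbf{y}}_i(x)\wedge\varphi_i)\succsim\#_x\bigvee_j(\alpha^{\mathbf{y}}_j(x)\wedge\varphi_j)$. Now observe that $\alpha^{\mathbf{y}}_i(x)$ is a state-description-style formula, so distinct $\alpha^{\mathbf{y}}_i$ are mutually exclusive; hence each $\#_x$-term is the sum of the cardinalities of the individual disjuncts $\#_x(\alpha^{\mathbf{y}}_k(x)\wedge\varphi_k)$. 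By the invariance principle (\ref{invariance-main}), for each such region-bound disjunct either no $x$ satisfies $\alpha^{\mathbf{y}}_k\wedge\varphi_k$ (in which case that summand is $\mathsf{0}$) or every $x$ in the region $\alpha^{\mathbf{y}}_k$ does (in which case the summand equals $\#_x\alpha^{\mathbf{y}}_k(x)$). So, splitting on which of the sentences $\kappa_k := \exists x.(\alpha^{\mathbf{y}}_k(x)\wedge\varphi_k(\mathbf{y},x))$ hold, the original count comparison is equivalent to
$$
\bigvee_{K \subseteq I \cup J}\Big(\bigwedge_{k \in K}\kappa_k \wedge \bigwedge_{k \notin K} \neg \kappa_k \wedge \#_x\!\!\bigvee_{i\in I\cap K} \!\alpha^{\mathbf{y}}_i(x) \succsim \#_x\!\!\bigvee_{j\in J \cap K}\! \alpha_j^{\mathbf{y}}(x) \Big),
$$
in which the surviving count comparison compares disjunctions of depth-$0$ formulas, so it has depth exactly $1$. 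The $\kappa_k$ are existential quantifications whose matrices $\alpha^{\mathbf{y}}_k\wedge\varphi_k$ have, by the inductive hypothesis, only depth-$1$ count comparisons; applying the hypothesis once more to these matrices (which have strictly smaller depth once the outer $\#_x$ has been removed) discharges them, and repeating this treatment for every count comparison subformula of the original formula completes the induction.

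The main obstacle, and the place requiring the most care, is the bookkeeping that guarantees the induction actually decreases the relevant measure: one must check that after the rewrite every newly introduced piece — the $\kappa_k$ with their $\exists x$, and the inner $\varphi_k$ — has count-nesting strictly below that of the formula we started with, so that the inductive hypothesis legitimately applies, and that iterating the procedure over all (finitely many) count comparison subformulas terminates. A secondary point to spell out is that the step invoking (\ref{invariance-main}) genuinely needs the hypothesis that the free variables of the $\varphi_k$ lie among $\mathbf{y},x$ and that $\alpha^{\mathbf{y}}_k$ is a full region-plus-equality-type of $x$; this is exactly the ``$\alpha$ fixes everything, so $\varphi$ becomes redundant'' phenomenon, and it is where monadicity is essential — the same manipulation fails for genuinely relational $\#_x$-terms, which is why this normal form is unavailable for full $\mathsf{FO}(\#)$.
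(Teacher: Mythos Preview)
Your proof is correct and follows essentially the same route as the paper's own argument: the paper also writes the matrix in DNF, uses (\ref{uniform-replace}) to extract the $\alpha_i(\mathbf{y})$, then applies (\ref{invariance-main}) to split on the sentences $\kappa_k$, arriving at exactly the displayed formula you obtain. Your write-up in fact fills in more of the bookkeeping (the depth-decrease check and the role of monadicity) than the paper makes explicit.
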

Using Lemma \ref{lemma:depth1}, the main result of this section is: 
%We can go further, to derive a more informative normal form. In fact, we can show that every sentence in $\mathcal{L}_\#^1$ is equivalent to a Boolean combinations of basic inequality statements (\ref{inequality-1}) and (\ref{inequality-2}). In that direction, define the depth of a formula by recursion so that quantifiers and count comparisons increase depth by 1. We would then like to show: 
\begin{theorem} Every depth $k+1$ sentence is equivalent, in $\mathsf{MFO}^\phi(\#)$ as well as in $\mathsf{MFO}(\#)$, to a disjunction of conjunctions of sentences specifying $T_1 = T_2 +m$ or $T_1 > T_2+m$, for $T_1,T_2$ sums of (cardinalities of) state-descriptions, and $m\leq 2k$. \label{normal}
\end{theorem}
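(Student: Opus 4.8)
The plan is to prove this by induction on the depth $k+1$ of the sentence, using Lemma~\ref{lemma:depth1} to reduce to the case where count comparisons have depth exactly $1$, and then carefully tracking the constants $m$ that arise from the extraposed existential quantifiers. The base case $k=0$ (depth $1$) is essentially the analysis already carried out in \S\ref{section:normalform}: a depth-$1$ count comparison, after pulling out subformulas not involving $x$ via (\ref{uniform-replace}) and applying (\ref{invariance-main}), becomes a Boolean combination of formulas of the form $\exists x.\,\alpha^{\mathbf{y}}(x)$ (which, having lower count-depth, are handled separately) and pure comparisons $\#_x\bigvee_{i}S_i(x)\succsim\#_x\bigvee_{j}S_j(x)$ between sums of state-descriptions, i.e.\ statements ``$T_1 = T_2$'' or ``$T_1 > T_2$'' with constant $m=0\leq 2\cdot 0$.

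For the inductive step, suppose every depth-$k$ sentence has the claimed normal form with constants bounded by $2(k-1)$. Take a depth-$(k+1)$ sentence $\chi$. By Lemma~\ref{lemma:depth1} I may assume its top-level count comparisons have depth exactly $1$, so they involve $\#_x$ terms over quantifier-free monadic formulas in predicates $\mathbf{P}$ and free individual variables $\mathbf{y}$; the variables $\mathbf{y}$ themselves are bound by existential (or, via negation, universal) quantifiers somewhere in $\chi$, and the matrices under those quantifiers are themselves of depth $\le k$. Writing each such matrix in disjunctive normal form $\bigvee_i(\alpha_i(\mathbf{y})\wedge\alpha_i^{\mathbf{y}}(x)\wedge\varphi_i(\mathbf{y},x))$ and running the extraposition-plus-invariance argument from \S\ref{section:normalform}, each $\#_x$-comparison becomes a Boolean combination of sentences $\exists x.(\alpha_k^{\mathbf{y}}(x)\wedge\varphi_k)$ and of pure region-sum comparisons. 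The key move is then the construction already illustrated around (\ref{main-inequality}) and Figure~\ref{fig:variables}: a block of $k$ existentially quantified individual variables $\mathbf{y}$ can be absorbed into the region sums at the cost of an additive constant of at most $2k$ — each variable either gets ``removed'' from a $T_1$-sum (contributing $-1$, net effect up to $k$) or ``added'' to a $T_2$-sum (contributing $+1$, up to $k$ more), for a worst case of $2k$. Conjoining these across the relevant variables and distributing over the Boolean structure, while folding in the normal forms of the depth-$\le k$ subformulas $\varphi_k$ supplied by the inductive hypothesis, yields a disjunction of conjunctions of the required shape with $m\le 2k$.

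The main obstacle, and the step deserving the most care, is the bookkeeping of the constants under nesting: one must check that absorbing the $\mathbf{y}$-block contributes $2k$ \emph{additively} rather than multiplicatively, i.e.\ that quantifier blocks at different depths do not compound — the point is that at the top level there are at most $k$ relevant individual variables in play once the inductive hypothesis has already reduced the deeper structure to pure region-sum (in)equalities with no remaining free individual variables, so there is a single absorption step with a single block of size $\le k$. I also need to verify that negation is handled cleanly: negating a disjunction of conjunctions of ``$T_1 = T_2+m$'' / ``$T_1 > T_2+m$'' statements, and re-distributing, stays within the class (negation of $T_1>T_2+m$ is $T_2\geq T_1-m$, i.e.\ $T_2 = T_1-m-1 \vee T_2 > T_1 - m - 1$ after rewriting, or more simply $T_2+m \geq T_1$, all expressible as finite disjunctions of the allowed atomic forms with constants still bounded appropriately) — here the finiteness of the number of state-descriptions and closure of the allowed atomic forms under Boolean combination with bounded constants is what makes it work. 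Finally I should remark that the argument is insensitive to whether models are finite or arbitrary: the only place infiniteness could matter is in comparisons like $\mathsf{s}\geq\mathsf{s}+1$, but these are already among the allowed normal-form atoms (``$T_1 > T_2 + m$'' with $T_1,T_2$ overlapping appropriately, or handled as unsatisfiable/trivial in the finite case), so a single uniform normal form serves both $\mathsf{MFO}^\phi(\#)$ and $\mathsf{MFO}(\#)$.
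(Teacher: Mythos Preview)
Your overall plan is right in spirit, but the induction as you have set it up does not close. Your hypothesis is stated for \emph{sentences} only (``every depth-$k$ sentence has the claimed normal form with constants bounded by $2(k-1)$''). In the inductive step you then want to ``fold in the normal forms of the depth-$\le k$ subformulas $\varphi_k$ supplied by the inductive hypothesis'' --- but these $\varphi_k$ carry the free individual variables $\mathbf{y}$, so your IH says nothing about them. The workaround you sketch in the third paragraph (``a single absorption step with a single block of size $\le k$'') is also not justified as stated: after Lemma~\ref{lemma:depth1} the formula need not be prenex, and prenexing can produce strictly more than $k$ quantified variables (already $(\exists y_1.\psi_1)\wedge(\exists y_2.\psi_2)$ with $\psi_i$ depth-$1$ count comparisons has depth~$2$, i.e.\ $k=1$, but two bound variables). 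What \emph{is} true is that any single count comparison sees at most $k$ free variables along its path to the root, but turning that observation into a proof requires exactly the bookkeeping you flag as the ``main obstacle'' and then do not actually carry out.

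The paper handles this by strengthening the induction hypothesis to formulas with free variables: a depth-$(k+1)$ formula in $n$ free variables $\mathbf{y}$ is shown equivalent to a disjunction $\bigvee\big(\alpha(\mathbf{y})\wedge(\sigma)_{\alpha(\mathbf{y})}\big)$ where each $\sigma$ specifies $T_1=T_2+m$ or $T_1>T_2+m$ with $m\le 2(n+k)$. The base case (depth~$1$, $n$ free variables) gives $m\le 2n$, essentially by the computation you describe. For the inductive step one peels off a single quantifier: the body of $\exists z.\psi$ has depth $k$ and $n+1$ free variables, so the IH supplies constants $\le 2((n+1)+(k-1))=2(n+k)$, and quantifying out $z$ merely disjoins over the possible $\alpha^{\mathbf{y}}(z)$ without increasing the bound. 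The theorem is then the case $n=0$. This strengthened IH is precisely what makes your correct intuition (``the contribution is additive rather than multiplicative'') into an argument: each quantifier traded for a free variable shifts one unit from the $k$-column to the $n$-column in $2(n+k)$.
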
 
\begin{proof} We show more generally that a formula of depth $k+1$ over predicates $\mathbf{P}$ with free variables $\mathbf{y} = y_1,\dots,y_n$ is equivalent to a disjunction \begin{equation}\bigvee \big( \alpha(\mathbf{y}) \wedge  (\sigma)_{\alpha(\mathbf{y})} \big), \label{main-normalform}\end{equation} where $\alpha(\mathbf{y})$ ranges over possible descriptions of $\mathbf{y}$, and $\sigma$ is a complete description of the regions over $\mathbf{P}$, i.e., specifying $T_1 = T_2 +m$ or $T_1 > T_2+m$ for all $m\leq 2(n+k)$. The notation $(\sigma)_{\alpha(\mathbf{y})}$ denotes a formula that specifies the description $\sigma$ on the assumption of $\alpha(\mathbf{y})$. 

Now, we claim that for each disjunct of (\ref{main-normalform}), for all variable assignments $s$: \begin{eqnarray} \mathcal{M},s \vDash \alpha(\mathbf{y}) \wedge  (\sigma)_{\alpha(\mathbf{y})} & \Rightarrow & \mathcal{M}\mbox{ satisfies the }2(n+k)\mbox{ description }\sigma. \label{eq:normalform-equiv}
\end{eqnarray}
%That is, if there is some variable assignment $s$ such that $\mathcal{M},s \vDash \alpha(\mathbf{y}) \wedge  (\sigma)_{\alpha(\mathbf{y})}$, then the regions of $\mathcal{M}$ satisfy the description $\sigma$. 
The statement in the theorem will be the special case of (\ref{eq:normalform-equiv}) with no free variables ($n=0$). 

\begin{example} For an example of such a disjunct over one predicate letter $P$, see the formula inside the existential quantifier in Fig. \ref{fig:variables}. This formula has $k$ free variables and $\#$-depth $1$. Here $\alpha(\mathbf{y})$ is the formula $\mathsf{diff}(\mathbf{y}) \wedge \bigwedge_{i \leq k} P(y_i)$, while $(\sigma)_{\alpha(\mathbf{y})}$ is the count comparison. Note that $(\sigma)_{\alpha(\mathbf{y})}$ has free variables and it ``means'' that $|P| = |\neg P| + 2k$ provided $\alpha(\mathbf{y})$ holds.  \end{example} 

To show that depth $k+1$ formulas are always equivalent to formulas (\ref{main-normalform}) satisfying (\ref{eq:normalform-equiv}), we proceed by inducting on $k$, starting with the case of depth $1$ formulas ($k=0$) in free variables $\mathbf{y}=y_1,\dots, y_n$. The critical case is a count comparison: $$\#_x \bigvee \alpha(\mathbf{y},x) \succsim \#_x \bigvee \beta(\mathbf{y},x).
$$ As before, by (\ref{uniform-replace}) we can separate out the descriptions of $\mathbf{y}$ to obtain a formula \begin{equation}\bigvee \big(\gamma(\mathbf{y}) \wedge \#_x\bigvee \alpha^{\mathbf{y}}(x) \succsim \#_x\bigvee\beta^{\mathbf{y}}(x) \big),\label{eq:next} \end{equation} where we have a $\gamma(\mathbf{y})$ disjunct exactly when $\gamma(\mathbf{y}) = \alpha(\mathbf{y}) = \beta(\mathbf{y})$; that is, all disjuncts inside the $\#$ terms must agree on the characterization of variables $\mathbf{y}$. It is then routine to check, by exhaustive check of all cases, that the count comparison in each disjunct of (\ref{eq:next}), in context $\gamma(\mathbf{y})$, asserts $T_1 = T_2 +m$ or $T_1 > T_2+m$ for $m\leq 2n$ (or a disjunction of such comparisons). So this fits the form in (\ref{main-normalform}), and (\ref{eq:normalform-equiv}) is satisfied. 

\vspace{0.5ex}

In  general, the normal forms (\ref{main-normalform}) for a fixed $k$ and $n$ are closed under Boolean combinations, so we only need to consider the case of depth $k+1$ and $n$ variables. By Lemma \ref{lemma:depth1} we can assume all count comparison subformulas have depth $1$, so it suffices to consider an existential quantification, which by induction we assume is \begin{equation*} \exists z. \bigvee \big( \alpha(\mathbf{y},z) \wedge (\sigma)_{\alpha(\mathbf{y},z)}\big). \label{ex1} \end{equation*}  Such a formula will be equivalent to $$\bigvee \exists z. \big(\alpha(\mathbf{y},z) \wedge (\sigma)_{\alpha(\mathbf{y},z)}\big)$$ and indeed  to 
\begin{equation}\bigvee \Big(\alpha(\mathbf{y}) \wedge \exists z. \big(\alpha^{\mathbf{y}}(z) \wedge (\sigma)_{\alpha(\mathbf{y},z)}\big)\Big).\label{equation-now} \end{equation} It remains to be seen that (\ref{equation-now}) is of the form (\ref{main-normalform}) with each disjunct satisfying (\ref{eq:normalform-equiv}). 
%constraints $T_1 = T_2 +m$ or $T_1 > T_2+m$ for $m\leq 2(n+k)$, under the assumption $\sigma(\mathbf{y})$. 
By the inductive assumption we know that for any $s$, if $\mathcal{M},s \vDash \alpha(\mathbf{y},z) \wedge (\sigma)_{\alpha(\mathbf{y},z)}$ then $\mathcal{M}$ satisfies the $2(n+k+1)$ description $\sigma$. But if $\mathcal{M},s \vDash \alpha(\mathbf{y}) \wedge \exists z. \big(\alpha^{\mathbf{y}}(z) \wedge (\sigma)_{\alpha(\mathbf{y},z)}\big)$, then there is a $z$-variant $s'$ of $s$ such that $\mathcal{M},s' \vDash \alpha(\mathbf{y},z) \wedge (\sigma)_{\alpha(\mathbf{y},z)}$, which establishes the result. 
\end{proof}

\subsubsection{Connection to Integer Programming} \label{integer} As with ordinary monadic first-order logic, putting a sentence into normal form may result in a significantly longer formula. The satisfiability problem for monadic first-order logic (as for the two-variable fragment) is \textsc{NExpTime}-complete \citep{Lewis1980}, even though checking satisfiability of normal forms is in NP. As with monadic logic, checking satisfiability of a normal form in $\mathsf{MFO}^\phi(\#)$ is of relatively low complexity. In fact, it is of the same complexity. A set of (in)equalities of types   (\ref{inequality-1}) and (\ref{inequality-2}) give us an \emph{integer program}, whose solvability is known to be decidable in NP-time \citep{borosh}. Meanwhile, the special case of integer programming in which all coefficients are $1$ or $0$---in other words, the special case of inequalities like those in (\ref{simple-inequality})---was already included in Karp's \citeyearpar{Karp1972} original list of NP-complete problems. With this lower bound we can conclude that the satisfiability problem for normal forms in $\mathsf{MFO}^\phi(\#)$ is NP-complete. 

\subsection{Questions of Definability} \label{section:mfodef} Theorem \ref{normal} affords a refined understanding of the numerical relations that can be defined in $\mathsf{MFO}^\phi(\#)$, as well as $\mathsf{MFO}(\#)$. % With this dramatic decrease in complexity, we expect a corresponding decrease in what can be expressed in these systems. Theorem \ref{normal} provides a bridge to showing \emph{undefinability}. 

Where $T$ is a set of state-descriptions, let $|T|_\mathcal{M}$ denote the sum of cardinalities of extensions in $\mathcal{M}$ of state-descriptions in $T$. We will say that $\mathcal{M} \sim_k \mathcal{M}'$ if for all $T_1,T_2$ and all $m \leq k$: \vspace{0.5mm} \begin{eqnarray*} |T_1|_\mathcal{M} \geq |T_2|_{\mathcal{M}} + m & \mbox{ iff } & |T_1|_{\mathcal{M}'} \geq |T_2|_{\mathcal{M}'} + m \end{eqnarray*} \vspace{0.5mm} Then, where $\mathcal{M} \equiv_k \mathcal{M}'$ signifies that $\mathcal{M}$ and $\mathcal{M}'$ agree on all sentences up to depth $k$, Theorem \ref{normal} immediately gives:\vspace{0.5mm} 
\begin{corollary} $\mathcal{M} \sim_{2k} \mathcal{M'}$ iff $\mathcal{M} \equiv_{k+1} \mathcal{M}'$. \label{undefinability}
\end{corollary}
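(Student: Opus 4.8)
The plan is to read Corollary~\ref{undefinability} directly off Theorem~\ref{normal}, by unpacking what both sides of the biconditional assert in terms of the normal form. The key observation is that $\mathcal{M}\equiv_{k+1}\mathcal{M}'$ means the two models agree on every sentence of depth $\leq k+1$, and by Theorem~\ref{normal} (taking the free-variable-free case $n=0$) every such sentence is equivalent to a Boolean combination of the atomic ``descriptions'' $T_1 = T_2 + m$ and $T_1 > T_2 + m$ with $m \leq 2k$. On the other hand, $\mathcal{M}\sim_{2k}\mathcal{M}'$ is exactly the statement that $\mathcal{M}$ and $\mathcal{M}'$ agree on all the atomic facts of the form $|T_1| \geq |T_2| + m$ for $m\leq 2k$. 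So the whole content of the corollary is that agreement on these atomic cardinality-comparison facts is equivalent to agreement on all depth-$(k+1)$ sentences.

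First I would establish the direction ($\Rightarrow$): assume $\mathcal{M}\sim_{2k}\mathcal{M}'$. By Theorem~\ref{normal}, an arbitrary sentence $\varphi$ of depth $k+1$ is equivalent to a disjunction of conjunctions of sentences of the form $T_1 = T_2 + m$ or $T_1 > T_2 + m$ with $m\leq 2k$. Each such building-block sentence is a Boolean combination of the atomic comparisons $|T_1|_\mathcal{N} \geq |T_2|_\mathcal{N} + m$ (note $T_1 = T_2 + m$ is $(|T_1|\geq|T_2|+m)\wedge(|T_2|+m\geq|T_1|)$, i.e. $(|T_1|\geq|T_2|+m)\wedge\neg(|T_1|\geq|T_2|+m+1)$ — here one should double-check that the ``$+1$'' stays within the bound, which it does since the relevant depth gives room up to $2k$, or else absorb the strictness directly; I'd state the small bookkeeping carefully). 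Since $\mathcal{M}\sim_{2k}\mathcal{M}'$ says precisely that all these atoms have the same truth value in $\mathcal{M}$ and $\mathcal{M}'$, the whole Boolean combination, hence $\varphi$, has the same truth value; thus $\mathcal{M}\equiv_{k+1}\mathcal{M}'$.

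For the converse ($\Leftarrow$), assume $\mathcal{M}\equiv_{k+1}\mathcal{M}'$. Each atomic fact ``$|T_1|_\mathcal{N}\geq|T_2|_\mathcal{N}+m$'' for $m\leq 2k$ is itself expressible, via the constructions in \S\ref{section:basicmonadic} (formulas~(\ref{inequality-1})--(\ref{main-inequality}) and their inequality variants), by a sentence of depth $\leq k+1$: indeed $|T_1|\geq|T_2|+m$ is captured by a formula with roughly $\lceil m/2\rceil$ existential witnesses plus one count comparison, and $\lceil m/2\rceil \leq k$, so the depth is at most $k+1$. Since $\mathcal{M}$ and $\mathcal{M}'$ agree on all sentences of depth $\leq k+1$, they agree on each such atom, which is exactly $\mathcal{M}\sim_{2k}\mathcal{M}'$.

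The main obstacle I anticipate is purely a matter of bookkeeping with the ``$\pm 1$'' and ``$\pm 2k$'' bounds: one has to verify that the translation of an equality $T_1 = T_2 + m$ into ``$\geq$''-atoms, and the reverse translation of ``$\geq$''-atoms into low-depth sentences, both stay honestly within the stated ranges ($m\leq 2k$ on one side, depth $\leq k+1$ on the other), matching the factor-of-2 between depth and the arithmetic constant that is the whole point of Theorem~\ref{normal}. There is no conceptual difficulty beyond this; the corollary is essentially a restatement of the normal form theorem, and indeed the excerpt itself flags it as an immediate consequence.
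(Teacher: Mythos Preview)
Your approach is exactly the paper's: it offers no proof beyond the phrase ``Theorem~\ref{normal} immediately gives,'' so reading the corollary off the normal-form theorem is the intended argument, and your sketch of the $(\Leftarrow)$ direction is fine.

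However, the bookkeeping concern you flag for the $(\Rightarrow)$ direction is not merely cosmetic; with $\sim_k$ defined via weak $\geq$ as in the paper, that direction actually fails by one. Take $k=1$, a single predicate $P$, and let $\mathcal{M}$ have $|P|=3,\ |\neg P|=0$ while $\mathcal{M}'$ has $|P|=2,\ |\neg P|=0$. Every comparison $|T_1|\geq|T_2|+m$ with $m\leq 2$ has the same truth value in both models (the possible values of $|T|$ are $\{0,3\}$ and $\{0,2\}$ respectively, and the $\geq$-pattern is identical), so $\mathcal{M}\sim_2\mathcal{M}'$. Yet the depth-$2$ sentence
$\exists y\big(P(y)\wedge \#_x(P(x)\wedge x\neq y)\succ \#_x(\neg P(x)\vee x=y)\big)$,
which expresses $|P|>|\neg P|+2$, holds in $\mathcal{M}$ and fails in $\mathcal{M}'$. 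The normal-form atoms are $=$ and strict $>$; the atom $T_1>T_2+2k$ is not a Boolean combination of $\geq$-atoms with constants $\leq 2k$ (your proposed rewriting via $\neg(|T_1|\geq|T_2|+m+1)$ needs $m+1=2k+1$, and over infinite cardinals that rewriting is wrong anyway). The repair is harmless---define $\sim_k$ using strict $>$, or state the corollary with $\sim_{2k+1}$---and the paper's undefinability applications (Facts~\ref{fact:twice}, \ref{fact:many}, \ref{fact:successor}) are unaffected, since the models there are built with ample slack. But your parenthetical assurance that the bound ``does'' accommodate the $+1$ is mistaken.
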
 As an initial example, we can characterize precisely the binary logical quantifiers definable in $\mathsf{MFO}^\phi(\#)$ (see \S\ref{section:gq} for a proof, and for further discussion of generalized quantifiers):\vspace{0.5mm}
\begin{restatable}{theorem}{binaryq} The binary quantifiers definable in $\mathsf{MFO}^\phi(\#)$ correspond exactly to  those expressible in the first-order theory of $\langle\mathbb{N}\,;>\rangle$. \label{binary-quant}\vspace{0.5mm}
\end{restatable}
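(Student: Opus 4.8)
The goal is to match two classes of binary generalized quantifiers: those definable in $\mathsf{MFO}^\phi(\#)$, and those definable in the first-order theory of $\langle\mathbb{N};>\rangle$. A binary quantifier $Q$ on a domain assigns a truth value to each pair of sets $(A,B)$, and by permutation invariance this depends only on the four cardinalities $|A \cap B|$, $|A \setminus B|$, $|B \setminus A|$, $|\overline{A \cup B}|$. In the monadic setting with predicates $P,Q$, these are exactly the cardinalities of the four state-descriptions over $\{P,Q\}$, i.e.\ the region sizes $\mathsf{s}_1,\dots,\mathsf{s}_4$. So the plan is to show: a subset of $\mathbb{N}^4$ (the ``truth set'' of $Q$, restricted to finite models) is the region-size specification of an $\mathsf{MFO}^\phi(\#)$-definable quantifier if and only if it is first-order definable in $\langle\mathbb{N};>\rangle$ (as a relation, after the standard encoding of a quadruple of naturals — or, using the fact that in $\langle\mathbb{N};>\rangle$ one has addition-free but pairing-enabled access, one reduces to a statement about a single relation symbol $>$ on $\mathbb{N}$).

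\textbf{Direction 1: $\mathsf{MFO}^\phi(\#)$-definable $\Rightarrow$ $\langle\mathbb{N};>\rangle$-definable.} Here I would invoke Theorem \ref{normal} directly. A binary quantifier is defined by a sentence of $\mathcal{L}_\#^1$ in predicates $P,Q$ (relativized appropriately so that $A = \{x : P(x)\}$, $B = \{x : Q(x)\}$ range over the domain). By Theorem \ref{normal} this sentence is equivalent over finite models to a disjunction of conjunctions of statements of the form $T_1 = T_2 + m$ or $T_1 > T_2 + m$, where $T_1,T_2$ are sums of region cardinalities $\mathsf{s}_i$ and $m \leq 2k$ is a fixed constant. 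Each such atomic statement is plainly expressible in the first-order language of $\langle\mathbb{N};>\rangle$: a constant $m$ is $0$ successor'd $m$ times (and successor is $>$-definable: $y = x+1$ iff $x < y \wedge \neg\exists z(x < z \wedge z < y)$), and a bounded sum $\mathsf{s}_{i_1} + \dots + \mathsf{s}_{i_r}$ can be expressed by introducing $r-1$ existentially quantified intermediate values together with $>$-definable successor chains — but more carefully, since addition of unbounded variables is \emph{not} first-order definable in $\langle\mathbb{N};>\rangle$, I should observe that what actually appears after normalization are comparisons between two such sums \emph{offset by a constant}, and such comparisons reduce to a Boolean combination of comparisons of individual variables and constants once one case-splits on the relative order of the summands. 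I would spell out this elimination: $a_1 + \dots + a_p > b_1 + \dots + b_q + m$ is, for fixed $m$, equivalent to a quantifier-free $\langle\mathbb{N};<,0,\mathrm{succ}\rangle$-formula in the $a$'s and $b$'s? — this is false in general (e.g.\ $a_1 + a_2 > b_1$), so the correct statement is subtler, and in fact the right move is the reverse: since Theorem \ref{normal} only guarantees a \emph{bounded} number of comparison types survive, and since the quantifier is invariant, I rely on the fact that a subset of $\mathbb{N}^4$ definable by a Boolean combination of linear inequalities with integer coefficients and a bounded constant term is \emph{semilinear}, hence (by the correspondence between semilinear sets and Presburger-definable sets) Presburger-definable; and one then checks that the particular sets arising — defined only by comparisons $T_1 \gtrsim T_2 + m$ with $0/1$ coefficients and $m$ bounded — happen already to lie within the $\langle\mathbb{N};>\rangle$-definable fragment. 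This reduction to $\langle\mathbb{N};>\rangle$ (rather than full Presburger) is precisely where one must use that \emph{sums} $T_1,T_2$ only ever occur compared to \emph{each other} with a constant offset, never scaled, which constrains the shape of the region.

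\textbf{Direction 2: $\langle\mathbb{N};>\rangle$-definable $\Rightarrow$ $\mathsf{MFO}^\phi(\#)$-definable.} Conversely, given a first-order formula $\theta(x_1,\dots,x_n)$ over $\langle\mathbb{N};>\rangle$ defining the truth set of the desired quantifier in the four region-size variables, I would translate $\theta$ into $\mathcal{L}_\#^1$ step by step: an atomic $x_i > x_j$ becomes the count comparison $\#_x S_i(x) \succ \#_x S_j(x)$ (using state-descriptions $S_i$); Booleans translate homomorphically; and the crucial step is existential quantification $\exists x_i.\,\theta'$. For this I would use exactly the machinery developed in \S\ref{section:basicmonadic}: an existentially quantified numerical variable ranging over $\mathbb{N}$ can be simulated by a \emph{fresh monadic predicate} — but that is $\mathsf{MSO}(\#)$, not $\mathsf{MFO}^\phi(\#)$! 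So instead I must exploit that quantifier elimination for $\langle\mathbb{N};>\rangle$ is available (the theory of $\langle\mathbb{N};<\rangle$ admits elimination of quantifiers down to Boolean combinations of formulas $x < y$, $x < n$, $x = n$, $x > n$): every $\langle\mathbb{N};>\rangle$-definable relation is already a Boolean combination of $\mathsf{s}_i > \mathsf{s}_j$, $\mathsf{s}_i > m$, $\mathsf{s}_i < m$, $\mathsf{s}_i = m$ for constants $m$. Each such atom is directly expressible in $\mathsf{MFO}^\phi(\#)$ — the comparisons $\mathsf{s}_i \gtrsim \mathsf{s}_j$ are literal count comparisons, and the bounded statements $\mathsf{s}_i \gtrless m$ are just $\exists^{\geq m}x.\,S_i(x)$ and its negation, counting in the syntax. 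Taking the Boolean combination gives the defining sentence. This is clean and involves no infinite-model subtlety, since everything is over finite models.

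\textbf{Main obstacle.} The delicate point is Direction 1: pinning down why the comparison statements produced by Theorem \ref{normal} land exactly in the $\langle\mathbb{N};>\rangle$-definable class and not merely in Presburger arithmetic — i.e.\ verifying that no genuine use of \emph{addition of two unbounded quantities} is needed to express ``$T_1 \gtrsim T_2 + m$'' once one realizes that, by permutation invariance, the quantifier constrains the whole configuration and these sums can be disentangled into individual-region comparisons via an exhaustive (but finite) case analysis on the ordering of the regions involved. Alternatively — and this may be the cleaner route — one proves Direction 1 by appealing to quantifier elimination for $\langle\mathbb{N};<\rangle$ \emph{in reverse}, showing that the normal-form sentences of Theorem \ref{normal}, read as conditions on $(\mathsf{s}_1,\mathsf{s}_2,\mathsf{s}_3,\mathsf{s}_4)$, define relations invariant under the relevant symmetries and expressible with only order and bounded constants, hence fall within the eliminated fragment. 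I would present the argument through this QE lens on both sides, making the equivalence transparent: $\mathsf{MFO}^\phi(\#)$-definable binary quantifiers $=$ Boolean combinations of region-order comparisons and bounded-cardinality conditions $=$ $\langle\mathbb{N};>\rangle$-definable relations on the four region sizes.
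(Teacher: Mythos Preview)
Your Direction 2 is essentially correct and matches the paper: quantifier elimination for $\langle\mathbb{N};>\rangle$ (with definable $0$ and successor) reduces everything to Boolean combinations of atoms $x_i > x_j$, $x_i = s^n(0)$, $x_i > s^n(0)$, each of which is directly expressible in $\mathcal{L}_\#^1$.

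Your Direction 1, however, has a genuine gap, and it stems from setting up the problem with \emph{four} region sizes $\mathsf{s}_1,\dots,\mathsf{s}_4$. With four regions the normal forms from Theorem~\ref{normal} produce inequalities like $\mathsf{s}_1 + \mathsf{s}_2 > \mathsf{s}_3 + m$, and such relations are \emph{not} in general definable in $\langle\mathbb{N};>\rangle$ (they require genuine addition). No finite case-split on the ordering of the $\mathsf{s}_i$ will eliminate that sum; your suspicion that ``$a_1 + a_2 > b_1$ is false in general'' as a $\langle\mathbb{N};>\rangle$-formula is exactly right, and it does not go away.

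The paper's resolution is that ``binary quantifier'' here means a type $\langle 1,1\rangle$ quantifier in the generalized-quantifier sense, assumed Conservative and satisfying Extension (this is stated explicitly in \S\ref{section:quantifiers}). Such a quantifier is defined by a \emph{relativized} formula $\varphi^A(A,B)$ (Fact~\ref{fact:relativization}), and after relativization there are only \emph{two} relevant state descriptions, $A\cap B$ and $A-B$. The normal form of Theorem~\ref{normal} then yields inequalities in just the two terms $\#(A\cap B)$ and $\#(A-B)$. The only possible sums are therefore trivial or equal to the full domain $\#(A\cap B)+\#(A-B)$, and an inequality such as $k \succsim \#(A\cap B)+\#(A-B)$ can be rewritten as a finite disjunction over all splittings $i+j \leq k$; similarly $\#(A\cap B)+\#(A-B) \succsim k$ unwinds into a finite Boolean combination of atoms $\#(A\cap B)\succsim i$, $\#(A-B)\succsim j$. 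After this elimination every conjunct is a comparison between two of $s^{n}(0)$, $s^{n}(x)$, $s^{n}(y)$, which is exactly the quantifier-free fragment of $\langle\mathbb{N};>\rangle$ in two variables. So the missing idea is not a clever disentangling of sums in four variables---that cannot be done---but the reduction to two variables via Conservativity/relativization, which makes the sums harmless.
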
 This includes many of the standard logical quantifiers: `most', `all', `some', `all but one', `at least two', etc. %\begin{example} An exceptive phrase such as `Most did not pass the test, unless they were prepared' can be rendered as $\exists $ \end{example}
The following gives an example of a statement that cannot be expressed. 
\begin{fact} \emph{`There are twice as many $P$s as $Q$s'} cannot be expressed in $\mathsf{MFO}^\phi(\#)$. \label{fact:twice}
\end{fact}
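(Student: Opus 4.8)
The plan is to use Corollary \ref{undefinability} as the main lever. Suppose, for contradiction, that the statement ``there are twice as many $P$s as $Q$s'' — i.e.\ the condition $|P|_\mathcal{M} = 2\cdot|Q|_\mathcal{M}$ — were expressible by some sentence $\varphi$ of $\mathsf{MFO}^\phi(\#)$, say of depth $k+1$. Then $\varphi$ would be invariant under the equivalence $\equiv_{k+1}$, and hence, by Corollary \ref{undefinability}, under $\sim_{2k}$. So it suffices to exhibit two finite models $\mathcal{M}$ and $\mathcal{M}'$, over the signature $\{P,Q\}$ (so with four state-descriptions, or just two if we work with $P,Q$ interpreted so that only the cardinalities $|P|$ and $|Q|$ matter), such that $\mathcal{M} \sim_{2k} \mathcal{M}'$ while $|P|_\mathcal{M} = 2|Q|_\mathcal{M}$ holds but $|P|_{\mathcal{M}'} = 2|Q|_{\mathcal{M}'}$ fails.

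The key step is choosing the cardinalities. I would take $|Q|$ to be some large number $N$ (with $N$ much bigger than $2k$, say $N > 2k$), and in $\mathcal{M}$ set $|P| = 2N$, while in $\mathcal{M}'$ set $|P| = 2N+1$ (keeping $|Q| = N$ in both, and making all other regions — e.g.\ $\neg P \wedge \neg Q$ — empty, or equal and large in both models). Then $\mathcal{M}$ satisfies ``twice as many'' and $\mathcal{M}'$ does not. It remains to check $\mathcal{M} \sim_{2k} \mathcal{M}'$: for any two sums $T_1, T_2$ of region-cardinalities and any $m \le 2k$, we need $|T_1|_\mathcal{M} \ge |T_2|_\mathcal{M} + m$ iff $|T_1|_{\mathcal{M}'} \ge |T_2|_{\mathcal{M}'} + m$. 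The point is that the only cardinality that differs between the two models is $|P|$, and it differs by exactly $1$; since $|P| \ge 2N > 2k \ge m$ in both models, and every other region agrees, any inequality of the allowed form either does not mention the region $P$ on the side where it would matter, or the gap between the two sides is already larger than $2k$ in both models, so a shift by $1$ cannot flip any comparison with threshold $m \le 2k$. Making this last observation precise — enumerating the possible shapes of $T_1, T_2$ relative to whether they contain the $P$-region — is the routine but slightly fiddly part.

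The main obstacle is getting the bookkeeping on $\sim_{2k}$ exactly right, in particular making sure that when $P$ occurs in $T_2$ rather than $T_1$, the relevant inequality $|T_1| \ge |T_2| + m$ still cannot be flipped by the unit change in $|P|$: here one wants $|P|$ large enough that $|T_2|$ already dominates, or else $P$ occurs on neither relevant side. Choosing the empty (or a large common) value for the remaining regions and taking $N > 2k$ handles this uniformly, but one must state it carefully. An alternative, perhaps cleaner, route would be to appeal directly to Theorem \ref{binary-quant}: ``twice as many'' as a binary quantifier $\mathsf{Q}(P,Q)$ would, if definable, be definable in the first-order theory of $\langle \mathbb{N}; > \rangle$, i.e.\ in Presburger-free order arithmetic; but the relation $\{(a,b) : a = 2b\}$ is not first-order definable from $>$ alone (it is not even eventually periodic in the required sense — a standard Ehrenfeucht–Fra\"iss\'e argument on $\langle \mathbb{N};>\rangle$ shows that multiplication-by-$2$ is undefinable). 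Either route works; I would present the first as primary since it is self-contained given Corollary \ref{undefinability}, and mention the second as a remark.
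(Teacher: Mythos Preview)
Your proposal is correct and follows essentially the same route as the paper: the paper too invokes Corollary~\ref{undefinability}, constructs two models differing only in the size of the $P$-region by one (specifically $|P|=6k,\,|Q|=3k$ versus $|P|=6k+1,\,|Q|=3k$, i.e.\ your construction with $N=3k$), and asserts $\mathcal{M}\sim_{2k}\mathcal{M}'$ without spelling out the case analysis you sketch. Your alternative via Theorem~\ref{binary-quant} is also sound in spirit, though note that theorem is stated for $\langle 1,1\rangle$ quantifiers in the variables $|A\cap B|,\,|A-B|$, so invoking it for ``$|P|=2|Q|$'' would require a small reformulation.
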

\begin{proof} Supposing it could, such a sentence would have some depth $k+1$. In light of Corollary \ref{undefinability}, it suffices to show that, for any $k$, we can find $\mathcal{M},\mathcal{M}'$ that disagree on the statement and yet $\mathcal{M} \sim_{2k} \mathcal{M}'$. Define a first model $\mathcal{M}$ with $9k$ elements, such that $|P^{\mathcal{M}}| = 6k$ while $|Q^{\mathcal{M}}| = 3k$. The statement clearly holds of $\mathcal{M}$. But now define $\mathcal{M'}$ with $9k+1$ elements, such that $|P^{\mathcal{M}'}| = 6k+1$ and again $|Q^{\mathcal{M}'}| = 3k$. The statement fails in $\mathcal{M}'$, yet  $\mathcal{M} \sim_{2k} \mathcal{M}'$.
\end{proof}

For a second example, consider a natural rendering of the natural language expression `many', often taken to refer to a number above some contextual threshold. On a more sophisticated, but not uncommon, reading (cf. \citealt{Westerstahl1985,Rett}), `Many $Q$s are $P$' amounts to a comparison between the \emph{proportion} of $P$s among the $Q$s and the proportion of $P$s overall, which we might symbolize as \vspace{1mm} \begin{eqnarray} \label{eq:many}
\frac{\#_x\big(P(x) \wedge Q(x)\big)}{\#_x Q(x)} & \succ & \frac{\#_x P(x)}{\#_x \top} .
\end{eqnarray}\vspace{-1mm}
\begin{fact} \emph{`Many $Q$s are $P$'} cannot be expressed in $\mathsf{MFO}^\phi(\#)$. \label{fact:many}
\end{fact}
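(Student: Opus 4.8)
The plan is to run the same kind of argument as in Fact~\ref{fact:twice}, via Corollary~\ref{undefinability}. Suppose toward a contradiction that some sentence of depth $k+1$ expresses `Many $Q$s are $P$'; it then suffices to produce, for this $k$, two finite models $\mathcal{M}\sim_{2k}\mathcal{M}'$ that disagree on the statement. The first step is to put (\ref{eq:many}) into region arithmetic. With $\mathsf{Pred}=\{P,Q\}$ write $a=|P\wedge Q|$, $b=|P\wedge\neg Q|$, $c=|\neg P\wedge Q|$, $d=|\neg P\wedge\neg Q|$; on any model with $Q$ and the universe nonempty we may clear denominators in (\ref{eq:many}), and $a(a+b+c+d)>(a+b)(a+c)$ simplifies, after cancellation, to the clean condition $ad>bc$. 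The crux is that this is a genuinely \emph{multiplicative} constraint, which the linear invariant $\sim_{2k}$ of Corollary~\ref{undefinability} cannot detect.

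The second step is to sit right on the boundary $ad=bc$ and nudge across it. Put $N=2k+1$ and let $\mathcal{M}$ be the finite model with $|P\wedge Q|=N$, $|P\wedge\neg Q|=|\neg P\wedge Q|=3N$ and $|\neg P\wedge\neg Q|=9N$; then $ad=9N^2=(3N)^2=bc$, so `Many $Q$s are $P$' is \emph{false} in $\mathcal{M}$. Let $\mathcal{M}'$ be identical except that $|P\wedge Q|=N+1$; then $ad-bc=(N+1)\cdot 9N-9N^2=9N>0$, so `Many $Q$s are $P$' is \emph{true} in $\mathcal{M}'$. It remains to check $\mathcal{M}\sim_{2k}\mathcal{M}'$. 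Since the two models differ only in the one region $P\wedge Q$, and only by $1$, a comparison $|T_1|\ge|T_2|+\ell$ with $\ell\le 2k$ can flip only when that region occurs (after cancelling shared regions) in exactly one of $T_1,T_2$. In that case the relevant difference is $\pm N+(x-y)$, where $x-y$ ranges over $\sum X-\sum Y$ for disjoint $X,Y\subseteq\{3N,3N,9N\}$; since every nonzero such $x-y$ has absolute value a positive multiple of $3N$, one checks that $\pm N+(x-y)$ is always $\ge 2k$ or $\le -2$ (resp. avoids $\{0,\dots,2k\}$), which is exactly what preservation under a $\pm 1$ shift requires. Hence $\mathcal{M}\sim_{2k}\mathcal{M}'$, and by Corollary~\ref{undefinability} they agree on all sentences of depth $k+1$---contradicting that one such sentence separates them. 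As $k$ was arbitrary, no sentence of $\mathsf{MFO}^\phi(\#)$ expresses `Many $Q$s are $P$'.

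The only real obstacle is this last verification, i.e.\ confirming that bumping the $P\wedge Q$ region by $1$ leaves all $\ell\le 2k$ comparisons intact. It reduces to a finite inspection: over the multiset $\{3N,3N,9N\}$ the signed subset-sum differences take values in $\{0,\pm 3N,\pm 6N,\pm 9N,\pm 12N,\pm 15N\}$, and translating any of these by $\pm N=\pm(2k+1)$ never lands in $\{-1,0,\dots,2k\}$. The design choices---making the two mixed regions $P\wedge\neg Q$ and $\neg P\wedge Q$ equal and putting the others in geometric ratio, all as multiples of $N=2k+1$---are precisely what keeps $ad=bc$ achievable with $|P\wedge Q|$ distinct from every other region while keeping the case analysis short; everything else is routine.
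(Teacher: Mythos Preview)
Your proof is correct and follows the same overall strategy as the paper's: reduce `Many $Q$s are $P$' to a numerical condition on region sizes (your simplification to $ad>bc$ is a nice touch) and then exhibit, for each $k$, two finite models $\mathcal{M}\sim_{2k}\mathcal{M}'$ disagreeing on that condition, invoking Corollary~\ref{undefinability}.

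The execution differs in an instructive way. You perturb the intersection region $|P\wedge Q|$ by $1$ while sitting on the boundary $ad=bc$; this forces the $\sim_{2k}$ verification into a (short but genuine) signed-subset-sum case analysis over the multiset $\{3N,3N,9N\}$. The paper instead fixes $r=|P\cap Q|=k$, $q=|Q\cap\overline{P}|=3k$, $p=|P\cap\overline{Q}|=4k$ and varies only $s=|\overline{P\cup Q}|$, taking $s=15k$ versus $s=11k$. Because $s>p+q+r+2k$ in both models, the check that $\mathcal{M}\sim_{2k}\mathcal{M}'$ becomes a one-line observation: any comparison either omits the $s$-region on both sides (and is then identical in the two models) or has $s$ on exactly one side, which then dominates regardless of the value of $s$. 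So your argument works, but the paper's choice of varying the large ``outside'' region buys a cheaper invariance check at the cost of a slightly less transparent calculation of where the `Many' boundary is crossed.
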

\begin{proof} Again, for any $k$, we must find two models $\mathcal{M},\mathcal{M}'$ that disagree on the statement and yet $\mathcal{M} \sim_{2k} \mathcal{M}'$. It suffices to specify the cardinalities of four regions within the model: $p = |P \cap \overline{Q}|$, $q = |Q \cap \overline{P}|$, $r = |P \cap Q|$, $s = |\overline{P \cup Q}|$.

In both models let $r=k$, $q=3k$, and $p=4k$. In $\mathcal{M}$ let $s = 15k$, while in $\mathcal{M}'$ let $s=11k$. In both cases $s > p+q+r+2k$, so $\mathcal{M} \sim_{2k} \mathcal{M}'$, and $\mathcal{M} \equiv_{k+1} \mathcal{M}'$. However, in $\mathcal{M}$ we have $\frac{r}{r+q} > \frac{p+r}{p+q+r+s}$, while in $\mathcal{M}'$ the inequality fails. 
\end{proof}

We will return to more analysis of natural language constructions  in \S\ref{section:gq}. Note that Corollary \ref{undefinability} can be used to derive undefinability results in $\mathsf{MFO}(\#)$ as well:
\begin{fact} \label{fact:successor} The successor function on infinite cardinals is not expressible in $\mathsf{MFO}(\#)$.
\end{fact}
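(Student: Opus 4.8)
The plan is to invoke Corollary~\ref{undefinability}. Any candidate $\mathsf{MFO}(\#)$ formula that defined the successor relation on infinite cardinals would have some fixed depth $k+1$, and Corollary~\ref{undefinability} says that any two models with $\mathcal{M}\sim_{2k}\mathcal{M}'$ satisfy $\mathcal{M}\equiv_{k+1}\mathcal{M}'$. So it suffices to produce a \emph{single} pair $\mathcal{M},\mathcal{M}'$ with $\mathcal{M}\sim_m\mathcal{M}'$ for \emph{every} $m$ (a fortiori for every $2k$) that nonetheless disagree on whether the cardinality of one region is the cardinal successor of the cardinality of another. No formula can then separate $\mathcal{M}$ from $\mathcal{M}'$, hence none defines the successor function. (I will read ``the successor function is expressible'' as: there is a formula $\varphi$ over unary predicates such that $\mathcal{M}\models\varphi$ iff a designated infinite region has cardinality the successor of another designated region's cardinality; the argument is insensitive to exactly which predicates occur, since any extra ones may be interpreted as $\varnothing$ in the witnesses.)

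Concretely I would work with one unary predicate $P$. Let $\mathcal{M}$ have $|\neg P|=\aleph_0$ and $|P|=\aleph_1$, and let $\mathcal{M}'$ have $|\neg P|=\aleph_0$ and $|P|=\aleph_2$. Since $\aleph_1=\aleph_0^+$ while $\aleph_2\neq\aleph_0^+$, the statement ``$|P|$ is the successor of $|\neg P|$'' holds in $\mathcal{M}$ and fails in $\mathcal{M}'$; so any sentence $\varphi$ that did define the successor function would have $\mathcal{M}\models\varphi$ but $\mathcal{M}'\not\models\varphi$.

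It then remains to verify $\mathcal{M}\sim_m\mathcal{M}'$ for all $m$, and this is where the only genuinely conceptual point sits: in the infinite regime the comparisons $|T_1|_{\mathcal{M}}\geq|T_2|_{\mathcal{M}}+m$ are far coarser than in the finite case treated in Facts~\ref{fact:twice} and~\ref{fact:many}, because $|T_2|+m=|T_2|$ once $|T_2|$ is infinite, and $|T_1|\geq|T_2|+m$ holds automatically once $|T_1|$ is infinite and $|T_2|$ finite. Every sum $T$ of the two state-descriptions $P,\neg P$ takes a cardinality in $\{0,\aleph_0,\aleph_1\}$ in $\mathcal{M}$ and in $\{0,\aleph_0,\aleph_2\}$ in $\mathcal{M}'$, with the top value occurring for exactly the same $T$'s (those containing the state-description $P$). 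A routine case split over whether each of $T_1,T_2$ contains $P$, contains $\neg P$ but not $P$, or contains neither, shows every comparison gets the same truth value in $\mathcal{M}$ and $\mathcal{M}'$; hence $\mathcal{M}\sim_m\mathcal{M}'$ for all $m$, so $\mathcal{M}\equiv_{k+1}\mathcal{M}'$ for all $k$, and we have our contradiction. The main obstacle is thus not the combinatorics but forming the right picture, namely that $\mathsf{MFO}(\#)$ cannot even distinguish $\aleph_1$ from $\aleph_2$ once the remaining regions are held fixed --- so in particular it cannot locate where successor cardinals sit --- together with phrasing the claim so that the reduction to Corollary~\ref{undefinability} is clean.
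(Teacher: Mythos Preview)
Your proposal is correct and follows essentially the same approach as the paper. The paper's proof is a single sentence stating the general principle you unpack: ``Every two models that agree on the \emph{order} of cardinalities for infinite definable sets will stand in the relation $\sim_k$ for all $k$''; your argument is a concrete instantiation of this, exhibiting the pair $(\aleph_0,\aleph_1)$ versus $(\aleph_0,\aleph_2)$ and carrying out the case analysis that the paper leaves implicit.
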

\begin{proof} Every two models that agree on the \emph{order} of cardinalities for infinite definable sets will stand in the relation $\sim_k$ for all $k$. 
\end{proof}

\subsubsection{Interpolation Failure} Another consequence of   Theorem \ref{normal} is a particularly simple normal form  result for the ``letterless'' fragment of $\mathcal{L}_\#^1$, that is, the fragment with no predicate symbols, built up from atomic formulas $\top$ and $\bot$. In fact, the  normal forms are identical to those for monadic first-order logic with the infinity quantifier \citep{Carreiro}:
\begin{lemma} Every letterless sentence is equivalent in $\mathsf{MFO}(\#)$ to a disjunction of formulas having one of the following forms $\exists^\infty x. \top$, 
$\forall^\infty x. \bot \wedge \exists^{\geq k} \top$, or  $\exists^{=k} x. \top$. \label{letterless-normalform}
\end{lemma}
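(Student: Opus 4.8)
The plan is to read the normal form off from Theorem~\ref{normal} by specializing it to the letterless case. In the letterless fragment there are no predicate symbols, so over the empty signature there is exactly one state-description, the empty conjunction, which is equivalent to $\top$; hence every model has exactly one region, namely its whole domain $D$, and the only parameter of a letterless model that the language can see is the cardinal $|D|$. In particular, the only ``sums of cardinalities of state-descriptions'' available are the empty sum $\mathsf{0}$ and $|D|$ itself.

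First I would invoke Theorem~\ref{normal} (in its $\mathsf{MFO}(\#)$ form): a letterless sentence of depth $k+1$ is equivalent to a disjunction of conjunctions of atoms $T_1 = T_2 + m$ or $T_1 > T_2 + m$ with $m \le 2k$ and $T_1, T_2 \in \{\mathsf{0}, |D|\}$. Then I would tabulate what each atom-shape says about $|D|$, using only that in cardinal arithmetic $\kappa > \kappa + m$ is always false and $\kappa = \kappa + m$ for $m \ge 1$ holds iff $\kappa$ is infinite. Concretely: $|D| = |D| + m$ is $\top$ when $m = 0$ and is $\exists^\infty x.\top$ when $m \ge 1$; $|D| > |D| + m$ is $\bot$; $|D| = \mathsf{0} + m$ is $\exists^{=m}x.\top$; $|D| > \mathsf{0} + m$ is $\exists^{\ge m+1}x.\top$; and the atoms with $T_1 = \mathsf{0}$ are $\top$ (for the atom $\mathsf{0} = \mathsf{0} + 0$) or $\bot$ (otherwise, assuming as usual that domains are nonempty). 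Thus every letterless sentence is equivalent to a Boolean combination of the sentences $\exists^\infty x.\top$, $\exists^{\ge m}x.\top$ and $\exists^{=m}x.\top$.

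To finish I would describe the model classes definable by such Boolean combinations. Identify a model with the value $|D| \in \{1, 2, 3, \dots\} \cup \{\infty\}$, which is legitimate because by Corollary~\ref{undefinability} any two infinite models agree on all letterless sentences, so all infinite cardinalities collapse to one point. Each of $\exists^\infty x.\top$, $\exists^{\ge m}x.\top$ and $\exists^{=m}x.\top$ defines a singleton or a ``final segment'' (possibly including $\infty$) of this line, so a Boolean combination defines a set $B \cup C$ with $B$ a finite set of positive integers and $C$ either $\emptyset$, $\{\infty\}$, a final segment of the positive integers, or a final segment of the positive integers together with $\{\infty\}$. Each $k \in B$ is captured by $\exists^{=k}x.\top$; the case $C = \{\infty\}$ by $\exists^\infty x.\top$; a finite final segment $\{N, N+1, \dots\}$ by $\forall^\infty x.\bot \wedge \exists^{\ge N}x.\top$ (note $\forall^\infty x.\bot$, the dual of $\exists^\infty x.\top$, says ``$|D|$ is finite''); and the remaining case by the disjunction $\exists^\infty x.\top \vee (\forall^\infty x.\bot \wedge \exists^{\ge N}x.\top)$. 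Taking the disjunction of all these pieces yields a sentence of exactly the stated shape, with the unsatisfiable case corresponding to the empty disjunction.

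The mathematical content here is light once Theorem~\ref{normal} is in hand, so the only real care needed is bookkeeping on two points. First, the threshold sentence $\exists^{\ge N}x.\top$ is \emph{not} itself one of the three allowed forms and must be split as $\exists^\infty x.\top \vee (\forall^\infty x.\bot \wedge \exists^{\ge N}x.\top)$, so one has to track separately whether a definable set contains large finite cardinalities and whether it contains the ``infinite point''. Second, one should note in passing that $\exists^\infty$, $\exists^{\ge m}$ and $\exists^{=m}$ are all genuinely expressible within $\mathcal{L}_\#^1$ (the first via (\ref{infinity-quantifier}), the others via (\ref{existential}) and ordinary first-order syntax), so the resulting normal form indeed lies in the language.
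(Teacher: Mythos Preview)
Your proposal is correct and follows exactly the approach the paper intends: the paper does not give a detailed proof of this lemma but simply presents it as a consequence of Theorem~\ref{normal} (citing \cite{Carreiro} for the analogous result in monadic logic with the infinity quantifier), and you have carefully filled in precisely those details. One minor remark: your appeal to Corollary~\ref{undefinability} to collapse all infinite cardinalities is fine but slightly roundabout, since once you have reduced to Boolean combinations of $\exists^\infty x.\top$, $\exists^{\ge m}x.\top$, $\exists^{=m}x.\top$ it is immediate that no two infinite models can be distinguished.
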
 For the restriction $\mathsf{MFO}^\phi(\#)$ to finite models, this simplifies even further to include only statements of the form $\exists^{>k}.\top$ and $\exists^{=k} x. \top$. As a consequence we can show: 
\begin{proposition} Neither $\mathsf{MFO}^\phi(\#)$ nor $\mathsf{MFO}(\#)$ enjoys the interpolation property. \label{prop:interpolation}
\end{proposition}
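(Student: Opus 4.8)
The strategy is the classical Craig-interpolation-failure recipe: exhibit an implication $\varphi \to \psi$ valid in the logic, where $\varphi$ and $\psi$ share no predicate symbols, such that no sentence $\theta$ in the (empty) common vocabulary can serve as an interpolant. Because the shared language is letterless, Lemma~\ref{letterless-normalform} gives us a complete inventory of candidate interpolants, so refuting all of them reduces to a short counting argument. I would first set up the pieces: on the left take a sentence $\varphi$ over predicates $\{P\}$ that forces $P$ and $\neg P$ to have equal size (e.g.\ a conjunction built from the template in Figure~\ref{fig:variables}, or simply $\#_x P(x) \approx \#_x \neg P(x)$ together with $\exists x.\top$), and on the right take a sentence $\psi$ over a disjoint predicate $\{Q\}$ asserting that the \emph{total} domain has even size --- note that ``the domain has even size'' is itself expressible, since by Theorem~\ref{normal} / the definability results it is equivalent to $\exists \mathbf{y}.(\mathsf{diff}(\mathbf{y}) \wedge |{\top}| = 2k)$-type statements aggregated appropriately; more carefully, I would phrase $\psi$ over $Q$ as $\#_x Q(x) \approx \#_x \neg Q(x)$ as well, so that both $\varphi$ and $\psi$ say ``the domain splits into two equinumerous halves'', i.e.\ ``$|{\top}|$ is even (or infinite)''.

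\textbf{Key steps.} (1) Check $\models \varphi \to \psi$: in any model of $\varphi$ the domain has even finite cardinality or is infinite, and in either case $\psi$ is satisfiable-preservingly forced --- in fact $\varphi$ and $\psi$ are each equivalent, over their own vocabularies, to the \emph{same} letterless statement $\chi :=$ ``$|{\top}|$ is even or infinite'', so the implication is immediate. (2) Observe that any interpolant $\theta$ must be letterless, and by Lemma~\ref{letterless-normalform} (or its finite-model simplification for $\mathsf{MFO}^\phi(\#)$) is equivalent to a disjunction of sentences of the forms $\exists^{\infty}x.\top$, $\forall^\infty x.\bot \wedge \exists^{\ge k}x.\top$, $\exists^{=k}x.\top$ (resp.\ $\exists^{>k}x.\top$ and $\exists^{=k}x.\top$ in the finite case). (3) Derive a contradiction: since $\varphi \models \theta$, $\theta$ holds in every even-sized model, in particular in arbitrarily large finite even-sized models, so $\theta$'s normal form must contain a disjunct satisfied by infinitely many finite cardinalities --- hence a disjunct of the form $\exists^{\ge k}x.\top$ (or $\exists^{>k}$) or $\exists^\infty x.\top$. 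But such a disjunct is also satisfied by some \emph{odd}-sized finite model $\mathcal{N}$ of size $> k$; then $\mathcal{N} \models \theta$ yet $\mathcal{N} \not\models \psi$ (an odd domain cannot split into two equinumerous halves and is not infinite), contradicting $\theta \models \psi$. This handles both $\mathsf{MFO}^\phi(\#)$ and $\mathsf{MFO}(\#)$ uniformly; in the infinite case one additionally notes that if the chosen witnessing disjunct were $\exists^\infty x.\top$ one instead takes $\mathcal{N}$ to be a large odd finite model where that disjunct fails too, so really only the $\exists^{\ge k}$/$\exists^{>k}$ disjuncts can occur, and those are killed exactly as above.

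\textbf{Main obstacle.} The only delicate point is making sure the letterless normal form of Lemma~\ref{letterless-normalform} is genuinely \emph{exhaustive} for the common vocabulary --- i.e.\ that an interpolant really cannot smuggle in any counting behavior beyond ``size $= k$'' or ``size $\ge k$ (or $\infty$)'', since if, say, ``the domain has even size'' were itself letterless-expressible the argument would collapse. This is exactly what Lemma~\ref{letterless-normalform} rules out, so the proof hinges on invoking it correctly and on the elementary observation that every disjunct-form in that lemma is either satisfied by only finitely many cardinalities ($\exists^{=k}$) or by a co-finite set of cardinalities including odd ones ($\exists^{\ge k}$, $\exists^{>k}$, $\exists^\infty$) --- never by exactly the even numbers. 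I would present the argument as: assume an interpolant exists, put it in letterless normal form, case on whether it is satisfied in arbitrarily large finite models, and in each case produce an odd-sized (or otherwise $\psi$-failing) model that still satisfies $\theta$, contradiction.
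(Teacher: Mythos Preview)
There is a genuine gap in step (1): your implication $\varphi \to \psi$ is not valid. With $\varphi := \#_x P(x) \approx \#_x \neg P(x) \wedge \exists x.\top$ and $\psi := \#_x Q(x) \approx \#_x \neg Q(x)$, take any model of size $2$ in which $|P|=1$ but $Q=\varnothing$. Then $\varphi$ holds (since $1=1$) while $\psi$ fails (since $0\neq 2$). The mistake is conflating ``$P$ bisects the domain'' with the letterless property ``the domain \emph{can} be bisected (i.e., has even or infinite size)''. The former is strictly stronger: it depends on the fixed interpretation of $P$. Your sentence $\psi$ likewise says ``$Q$ bisects the domain'', not ``the domain has even size'', so there is no reason the two should stand in an entailment relation. (In fact the same counterexample shows that $\theta \models \psi$ would force $\theta$ to fail in every nonempty model, since for each $n\ge 1$ there is a model of size $n$ with $Q=\varnothing$; so even if you had a valid left premise, the right side is far too strong to admit \emph{any} nontrivial interpolant.)

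The repair is exactly what the paper does: keep $\varphi(P)$ forcing the domain to be (finite and) even, but on the right use the \emph{negation} of a sentence $\psi(Q)$ whose truth forces the domain to be odd, e.g.\ $\psi(Q):=\exists x.\,\#_y(y\neq x\wedge Q(y))\approx \#_y(y\neq x\wedge \neg Q(y))$. Then $\varphi(P)\models \neg\psi(Q)$ genuinely holds (even domain cannot be odd), and your step (3) argument via Lemma~\ref{letterless-normalform} goes through essentially as you outlined: any letterless interpolant is a disjunction of size-threshold and size-equality sentences, hence is either bounded (so not entailed by $\varphi$, which holds at arbitrarily large even sizes) or eventually true at all sizes (so true at some odd size, where $\neg\psi(Q)$ can fail).
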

\begin{proof} Let $\varphi(P)$ be the formula: $$\forall^\infty x. \bot \wedge \#_x(P(x)) \approx \#_x(\neg P(x)),$$ which is only true in finite models of even sizes. Let $\psi(Q)$ be the formula: $$\exists x.\#_y( y \neq x \wedge Q(y) ) \approx \#_y( y \neq x \wedge \neg Q(y) ),$$ which in finite models requires the domain to be odd. Evidently $\varphi(P) \vDash \neg \psi(Q)$. Let $\chi$ be a purported interpolant: $\varphi(P) \vDash \chi \vDash \neg \psi(Q)$. As $\chi$ must be letterless, Lemma \ref{letterless-normalform} implies that it must be a disjunction of sentences with one of the three specified forms. Furthermore, as it is entailed by $\varphi(P)$ we can assume that $\exists^\infty x. \top$ is not a disjunct. A straightforward case analysis shows that $\chi$ must either be true only in models up to some fixed size---in which case it cannot be entailed by $\varphi(P)$---or it is true in all finite models from some finite size onward---in which case it cannot entail $\neg \psi(Q)$. 
\end{proof}

A familiar way of extending a language to guarantee interpolation is to allow second-order quantification. We will  turn to such an extension below in \S\ref{section:second-order}. 

\vspace{1ex} 

But first, we analyze the reasoning content of our normal form analysis a bit further, since this will illustrate the sort of combined system with counting and logic that we are after.

\subsection{Questions of Axiomatization} \label{section:axioms} What is the calculus of valid reasoning suggested by our current systems? For both of our basic monadic systems, $\mathsf{MFO}^\phi(\#)$ and $\mathsf{MFO}(\#)$, we can locate a kind of separation between two components: 

\vspace{1ex}

(a) general, more ``logical'' principles that allow our normal form result (Theorem \ref{normal}) and

\vspace{0.5ex}

(b) more specific numerical reasoning  for solving systems of inequalities. 

\vspace{1ex}

We discuss each component in turn for the  system $\mathsf{MFO}^\phi(\#)$ which allows only finite domains.  The general system $\mathsf{MFO}(\#)$ involves one more component for dealing with infinite sets that we will remark on at the end.

\subsubsection*{Step I} The \emph{normal form principles} underlying our normal form result are as follows:\vspace{0.5ex}
\begin{enumerate}[label=(\alph*)]
    \item General validities of propositional and first-order predicate logic, \label{item:logic}\vspace{0.5mm}
    \item The two general principles (\ref{invariance-main}) and (\ref{uniform-replace}) highlighted earlier, \label{item:special}\vspace{0.5mm}
    \item The linear order properties of the relation $\succsim$. \label{item:linear}\vspace{0.5ex}
\end{enumerate}
Here the linearity in Principle \ref{item:linear}, used in our case distinctions, is worth high-lighting: \vspace{1mm}
\begin{equation} \tag{$\mathsf{COMP}$} \#_x\varphi \succsim \#_x\psi \vee \#_x\psi \succsim \#_x\varphi \label{eq:comp}\vspace{1mm}
\end{equation}
The soundness of (\ref{eq:comp}) in $\mathsf{MFO}(\#)$ depends on the axiom of choice. Indeed, (\ref{eq:comp}) is equivalent to the axiom of choice \citep{Hartogs}. Significantly, in the generalized semantics discussed in Section \ref{section:beyond}, Principles \ref{item:logic} and \ref{item:special} will remain valid, 
while the strong reasoning principle \ref{item:linear} is naturally replaced by just the pre-order properties for $\succsim$.

\subsubsection*{Step II} As a result of the normal form analysis, we are left with a satisfiability problem for inequalities all of whose variables denote natural numbers. This system can be solved effectively, e.g., using the well-known \emph{Fourier-Motzkin Algorithm} \citep[\S12.2]{Schrijver}.

\vspace{1ex}

At this stage, we might say that we have solved the reasoning problem in the spirit of this paper, having used a simple combination of logic and counting. The above calculus uses logic to reduce a reasoning problem to a numerical one that is most elegantly solved on its own terms. This is precisely the sort of combination that we find natural and insightful.

\begin{remark} \label{rem:fourier} Even so, we could go further in \emph{Step III}, and determine the exact arithmetical principles that drive the Fourier-Motzkin algorithm. Here is a sketch. 

The algorithm works as follows. One picks a variable $\mathsf{s}$ as long as still possible, and then considers one of three cases. \begin{enumerate}[label=(\roman*)]
\item \label{fm-1} The variable $\mathsf{s}$ occurs only to the right in inequalities of the system. Then $\mathsf{s}$ can be dropped from all inequalities: setting its value to 0 will always suffice. \item \label{fm-2} The variable $\mathsf{s}$ occurs only to the left in inequalities. Then all inequalities where $\mathsf{s}$ occurs can be dropped, since they can be made true at the end by choosing some suitably large value for $\mathsf{s}$. \item \label{fm-3} In case $\mathsf{s}$ occurs both to the left and to the right in inequalities, one groups the inequalities of the form $\mathsf{u} \geq \mathsf{s} + \mathsf{v}, \mathsf{w}> \mathsf{s} + \mathsf{z}$ and those of the forms $\mathsf{y} + \mathsf{s}\geq \mathsf{t}$, $\mathsf{r} + \mathsf{s}>\mathsf{x}$, and forms all sums as follows:
$\mathsf{u}\geq \mathsf{s} + \mathsf{v}$ with $\mathsf{y} + \mathsf{s}\geq \mathsf{t}$ gives $\mathsf{u}+\mathsf{y} \geq \mathsf{v} + \mathsf{t}$; $\mathsf{u} \geq \mathsf{s} + \mathsf{v}$ with $\mathsf{r} + \mathsf{s} > \mathsf{x}$ gives $\mathsf{u}+\mathsf{r} > \mathsf{v} + \mathsf{x}$, and so on. \end{enumerate}
In the end, a set of variable-free statements about concrete natural numbers remains, which can be inspected immediately for truth or falsity.

Now each step of this algorithm can be checked for the principles that guarantee its soundness. Here are a few representative illustrations. All steps involve evident  principles for inequalities, such as symmetry and associativity of addition, and  monotonicity inferences such as the implication from $\mathsf{u} \geq \mathsf{v}$ to $\mathsf{u} + \mathsf{z} \geq \mathsf{v}$. Step \ref{fm-1} also involves the equality $\mathsf{v} = \mathsf{v} + 0$, while step \ref{fm-2} involves $\mathsf{u} + \mathsf{v} \geq \mathsf{v}$. The key step \ref{fm-3}  involves principles like the equivalence of $\mathsf{z} + \mathsf{u} \geq \mathsf{v} + \mathsf{u}$ and $\mathsf{z} \geq \mathsf{v}$ and  addition principles such as the implication from $\mathsf{u} \geq \mathsf{v}$ and $\mathsf{w} \geq \mathsf{z}$ to $\mathsf{u}+\mathsf{w} \geq \mathsf{v} + \mathsf{z}$. The final inspection step involves some simple principles for the successor function, if we think of numbers as encoded in a unary format.
\end{remark}

The preceding observations amount to what one might call a  ``mixed''  axiomatization of the system $\mathsf{MFO}^\phi(\#)$, letting the logic do what it is good at: reducing assertions to normal form, and then letting the arithmetical  component do what \emph{it} is good at: solving  equational problems involving numbers. By itself, a two-stage 
analysis with reductions to syntactic normal forms 
plus a separate combinatorial analysis of the latter 
is a common practice in logic, e.g., in quantifier 
elimination arguments. However, the above 
specific division of labor between logic and counting is a perfect fit with the methodological spirit of this paper, and with the general empirical reasoning practices that we started with. We will return to such combinations of logic and (explicit) arithmetic a bit more systematically in  \S\ref{section:explicitarithmetic} below.

\vspace{1ex}

Even so, it is also natural to explore the road of greater purity, and ask for a purely logical axiomatization, or a purely numerical one. We consider each of these roads in turn.

\vspace{1ex}

Can the arithmetical steps in the Fourier-Motzkin algorithm be replaced by an illuminating \emph{purely logical proof system} that goes beyond routine transcription? There is an interesting conceptual issue here. The variable-elimination step in the algorithm typically forms sums of single variables in its step \ref{fm-3}, and these sums have no direct interpretation in our logical systems: in particular, $|P| + |P|$ has no defining expression  in our logical languages (Fact \ref{fact:twice}). There are ways of dealing with this problem, for instance, by adding special inference rules as is done in \cite{Ding2020}, which essentially axiomatizes the slightly smaller system $\mathsf{PL}(\#)$ (cf. the discussion in Appendix \ref{app:related}). Such inference rules can be seen as expressing the admissibility of certain \emph{model constructions} for the logic, such as taking disjoint unions. This makes sense in our case, since, while |$P$| + |$P$| may not be definable in a given model $\mathcal{M}$,  it does denote the extension of $P$ in the disjoint union of $\mathcal{M}$ with itself. Even so, we are not aware of obvious model constructions matching the invariants needed for $\mathsf{MFO}(\#)$, and therefore leave this issue as an open problem.

%If one insists that each numerical expression involved in the counting component be interpretable in the logical semantics (something we have not assumed in our advocacy of logic-counting combinations), more work is needed, perhaps extending the analysis with special modal inference rules analogous to those in \cite{Ding2020} (cf. the discussion in Appendix \ref{app:related}). We leave this as an open problem here. 

Equally well, in terms of purity, one could go to the opposite side and ask for a \emph{purely numerical calculus} for our systems. We could restrict ourselves to (the monadic fragment of) the small sublanguage $\mathcal{L}_\#^-$ consisting only of predication, variable inequality, and count comparison (\S\ref{section:countinglogic}). As all logical operators are definable there, such an axiomatization would be possible in principle. For example, the following numerical claims (suppressing individual variables as all are chosen fresh) capture the basic principles of propositional logic:
\vspace{0.5ex}
    \begin{enumerate}
        \item $\#(\#\varphi \succsim \#\psi) \succsim \#\varphi$
        \vspace{1ex}
        \item $\#\big( \#(\#\chi \succsim \#\varphi) \succsim \#(\#\psi \succ \#\varphi) \big) \succsim \#\big( \#(\#\chi \succsim \#\psi) \succsim \#\varphi \big)$
        \vspace{1ex}
        \item $\#(\#\varphi \succsim \#\psi) \succsim \#\big( \#(\mathsf{0} \succsim \#\psi) \succsim \#(\mathsf{0} \succsim \varphi) \big)$
    \end{enumerate}\vspace{0.5ex}
%        \item Comparability: $\#_x\varphi \succsim \#_x\psi \vee \#_x\psi \succsim \#_x\varphi$.
%\end{enumerate} 
However, what one wants is not transcription, but an independently motivated numerical system that generates the logic. As with the purely logical axiomatization, we leave  providing an illuminating purely numerical  axiomatization of our systems as an open problem.

\subsubsection*{Infinite cardinalities} In the general system $\mathsf{MFO}(\#)$, we must also deal with infinite cardinalities. This makes no difference to the principles producing our normal forms, but it changes the subsequent phase of solving equations. The key observation is that there is a simple expression distinguishing the infinite from the finite extensions, namely $\mathsf{s} \geq \mathsf{s} + 1$. We can thus completely separate reasoning about inequalities among finite variables from reasoning about the variables denoting infinite sets (cf. \citealt{Ding2020}). This is done systematically below in \S\ref{section:soinf} for the second-order version of $\mathsf{MFO}(\#)$, to which we now turn. 

%For instance,  the equation $x = x + 1$ is unsolvable in the finite case, 
%where applying the  valid $x + z \leq y + z \leftrightarrow x \leq y$  derives a contradiction. 
%but solvable in infinite numbers.

%A solution is to separate  finite and infinite components here, in a style explained in more detail in the separation analysis of validity for second-order $\mathsf{MSO}(\#)$  in Section 4.2. Let $Inf(\varphi)$ be the earlier formula stating that the infinitely many objects satisfy   $\varphi$. By linearity, we can arrange all cardinalities of regions in a linear order. This description splits into a finite and an infinite part, and we can analyze solutions in terms of  
% \emph{finite/infinite principles} resembling those used in (Ding, Harrison-Trainor, Holliday JSL). 
%Here are two examples:

%\vspace{1.5ex}
%(e) $(Inf(\#x. \varphi) \land \psi \succsim \varphi) \rightarrow Inf(\#x. \psi)$,

%\vspace{1.5ex}

%(f) $Inf(\#x. \varphi \rightarrow \#x. (\varphi \lor \psi) = max (\#x. \varphi, \#x. \psi))$, 

%\quad\quad with $max$  the maximum in the cardinality order.

%\vspace{1.5ex}

\section{Monadic Second-Order Counting Logic} \label{section:second-order} From a logical point of view, a natural extension of $\mathcal{L}_\#^1$ is to allow quantification over unary predicates denoting subsets of the domain.  Call the resulting language $\mathcal{L}_\#^2$, and the finitary and general systems $\mathsf{MSO}^\phi(\#)$ and $\mathsf{MSO}(\#)$, respectively. 

One immediate observation is that, while in the first-order language $\mathsf{MFO}$ count comparisons $\#_x \varphi \succsim \#_x \psi$ are not definable from the H\"{a}rtig quantifier $\#_x\varphi \approx \#_x\psi$ alone (see, e.g.,  \citealt{PetersWesterstahl}, p. 470), with second-order quantification present, providing such a definition is straightforward: \vspace{-0.5ex}
\begin{equation*}
    \exists X . \big(\#_x\varphi \approx \#_x(\psi \vee X(x))\big).
\end{equation*} How much more powerful will $\mathsf{MSO}^\phi(\#)$ and $\mathsf{MSO}(\#)$ be in comparison to $\mathsf{MFO}^\phi(\#)$ and $\mathsf{MFO}(\#)$? The question is of some interest, since it is known (at least since \citealt{Ackermann}) that adding second-order quantification to monadic first-order logic does not increase expressive power. At the same time, if we add quantification over \emph{finite} sets to $\mathsf{MFO}$ this becomes equivalent to monadic logic with the infinity quantifier (see \citealt{Vaananen} for the case without equality, or Appendix \ref{app:infinityquantifier} for the case of including equality).

The failure of interpolation (Proposition \ref{prop:interpolation} above) shows that we could not expect a similar collapse when adding monadic second-order quantification to our counting extensions of $\mathsf{MFO}$. We saw that $\mathsf{MFO}(\#)$ can already distinguish between finite and infinite, so in effect we automatically gain access to quantification over finite sets. In fact we gain much more in terms of arithmetical expressive power, as will be shown in what follows.

\begin{example} As a preview, within the finite setting, in contrast to $\mathsf{MFO}^\phi(\#)$ (Fact \ref{fact:twice} above), in $\mathsf{MSO}^\phi(\#)$ the statement `There are twice as many $P$s as $Q$s' now becomes expressible: \begin{equation*}
    \exists X . \big(\#_y (X(y) \wedge \neg Q(y)) \approx \#_y Q(y) \wedge \#_y P(y) \approx \#_y (X(y)\vee Q(y)) \big).
\end{equation*} This essentially asserts the existence of a set whose extension outside of $Q$ is the same size as $Q$, and that $P$ is the same size as the union of these two. \label{example:twice} \end{example}

It turns out that Example \ref{example:twice} is just the tip of the iceberg. In addition to obviously  guaranteeing interpolants, there is another sense in which these second-order systems, $\mathsf{MSO}^\phi(\#)$ and $\mathsf{MSO}(\#)$, ``fill in the gaps'' of $\mathsf{MFO}^\phi(\#)$ and  $\mathsf{MFO}(\#)$. While the latter systems could enforce a certain type of inequality between sums, namely those in Eqns.  (\ref{inequality-1}) and (\ref{inequality-2}) above, the second-order versions are capable of enforcing arbitrary linear constraints over cardinalities which involve addition of numbers.

We now proceed to make this more precise, first in the finitary case, then infinitary. \vspace{-1ex}

\subsection{Finitary Case} \label{section:finiteSO} We saw above that normal forms in $\mathsf{MFO}^\phi(\#)$ correspond to (disjunctions of) sets of inequality constraints, a class whose solvability problem is already NP-complete. In the general setting of integer programming, there is a close correspondence between sets of linear inequalities and quantifier-free formulas of \emph{Presburger Arithmetic}, that is, first-order logic with addition over the natural numbers (see, e.g., \citealt{OPPEN1978323}). The sets of solutions to such inequalities (or equivalently, assignments satisfying Presburger formulas with free variables) are exactly the \emph{semi-linear} sets \citep{Ginsburg}, a generalization of the ``ultimately periodic'' sets of numbers:

Here are the relevant definitions and more precise results.

 \begin{definition}A set $\mathcal{V}\subseteq \mathbb{N}^n$ of $n$-ary vectors is called \emph{linear} if there is a system of equations over variables $\mathsf{v}_1,\dots,\mathsf{v}_n,\mathsf{u}_1,\dots,\mathsf{u}_m$ and constants $b_1,\dots,b_n,a_{1,1},\dots,a_{n,m}$,  \begin{equation}
 \begin{pmatrix}
\;\mathsf{v}_1\;  \\
\vdots  \\
\mathsf{v}_n  
\end{pmatrix} \hspace{.1in}= \hspace{.1in}
 \begin{pmatrix}
b_1 + a_{1,1}\mathsf{u}_1 + & \dots  & + a_{1,m}\mathsf{u}_m  \\
 \vdots & & \vdots \\
b_n + a_{n,1}\mathsf{u}_1 + & \dots & +  a_{n,m}\mathsf{u}_m
\end{pmatrix}
\label{first-semi} \end{equation} such that $\mathbf{x} \in \mathcal{V}$ if and only if there exist values of $\mathsf{u}_1,\dots,\mathsf{u}_m$ for which $\mathbf{v}=\mathbf{x}$ is a solution to (\ref{first-semi}). We say $\mathcal{V} \subseteq \mathbb{N}^n$ is \emph{semi-linear} if it is a finite union of linear sets.\label{def:linear} \end{definition}

\begin{restatable}{definition}{defdef} \label{define:define}%{definition} \label{define:define}
Suppose $S_1,\dots,S_n$ are some state-descriptions over predicates $\mathbf{P}$, and that $\varphi$ is an $\mathcal{L}_\#^2$ sentence in these same predicates $\mathbf{P}$. We say that $\varphi$ \emph{defines} a set $\mathcal{V} \subseteq \mathbb{N}^n$ just in case, for any model $\mathcal{M}$, we have \begin{eqnarray}
  \mathcal{M} \vDash \varphi & \mbox{ iff } & [|S_1|_{\mathcal{M}},\dots,|S_n|_{\mathcal{M}}] \in \mathcal{V}. \label{eq:df}
\end{eqnarray}
\end{restatable}
\begin{remark} Other notions of definability will make sense for our analysis of later systems. See Section~\ref{twodefinitions} for an alternative notion of `trace-definability' and a comparison.

%For instance, we might say that $\varphi$ defines $\mathcal{V}$ just in case the ``trace'' of $\varphi$ is $\mathcal{V}$: for every tuple $[m_1,\dots,m_n] \in \mathcal{V}$ there is \emph{some} model $\mathcal{M}$ such that $\mathcal{M} \vDash \varphi$ with $m_i = |S_i|_{\mathcal{M}}$ for all $i \leq n$. Much of our subsequent discussion would not be affected by this change, but we restrict attention to Def. \ref{define:define} for now. 
\end{remark}
\begin{lemma} Every semi-linear set is definable in $\mathsf{MSO}^\phi(\#)$. \label{lemma:semi-linear}
\end{lemma}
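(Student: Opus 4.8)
The plan is to show that each building block in the definition of a semi-linear set can be captured by an $\mathcal{L}_\#^2$ sentence, and then close under the relevant operations. Fix state-descriptions $S_1,\dots,S_n$ over predicates $\mathbf{P}$, and write $\mathsf{s}_i$ for $|S_i|_{\mathcal{M}}$. First I would handle a single linear set given by a system as in (\ref{first-semi}): we must say that there exist values $\mathsf{u}_1,\dots,\mathsf{u}_m$ such that $\mathsf{s}_i = b_i + \sum_{j} a_{i,j}\mathsf{u}_j$ for each $i$. The key observation is that in $\mathsf{MSO}^\phi(\#)$ we can quantify over witnessing sets to simulate these sums, exactly as in Example \ref{example:twice}: to express that a cardinality equals $a \cdot w$ for a known constant $a$ and a quantified ``value'' $w$, we introduce $a$ fresh second-order variables $X_1,\dots,X_a$, each disjoint from everything relevant and from each other, assert they all have the same size via the H\"artig quantifier, and identify that common size with $w$. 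Summing across $j$ and adding the constant $b_i$ (a fixed finite number, handled with $\exists^{\geq b_i}$-style counting or an extra set of size $b_i$) is then just a further conjunction of H\"artig comparisons of the disjoint union of the appropriate witnessing sets against $S_i$.

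Concretely, the plan is: for each multiplier $\mathsf{u}_j$ take a ``master'' set $U_j$, and for each coefficient $a_{i,j}$ take $a_{i,j}$ disjoint copies all equinumerous with $U_j$; then assert that $S_i$ is equinumerous with the disjoint union (expressed by a big disjunction of the membership predicates over pairwise-disjoint sets, as in the running examples in \S\ref{section:basicmonadic}) of all those copies together with an extra set $B_i$ of size exactly $b_i$. All of these sets can be taken to live ``outside'' the $S_i$ — since the ambient model can always be enlarged, finiteness is no obstacle — so the existence of the witnessing second-order objects corresponds precisely to the existence of the $\mathsf{u}_j$. This gives a sentence $\varphi_{\mathcal{L}}$ with $\mathcal{M}\vDash \varphi_{\mathcal{L}}$ iff $[\mathsf{s}_1,\dots,\mathsf{s}_n]\in\mathcal{L}$, i.e. $\varphi_{\mathcal{L}}$ defines the linear set $\mathcal{L}$ in the sense of Definition \ref{define:define}.

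Finally, a semi-linear set is by definition a finite union $\mathcal{L}_1\cup\dots\cup\mathcal{L}_r$ of linear sets, so $\bigvee_{t\le r}\varphi_{\mathcal{L}_t}$ defines it (taking care, as usual, to rename the quantified set variables apart in the different disjuncts so they do not interfere). Because definability as in (\ref{eq:df}) is visibly closed under disjunction, this completes the argument.

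The main obstacle, and the part that needs the most care rather than real ingenuity, is the bookkeeping that makes the quantified sets genuinely independent: one must ensure that the $a_{i,j}$ copies used for a given $\mathsf{u}_j$ are mutually disjoint and disjoint from the copies for other multipliers, so that their disjoint union really has cardinality $\sum_j a_{i,j}\mathsf{u}_j + b_i$ and not something smaller through accidental overlap, while still allowing the \emph{same} value $\mathsf{u}_j$ to be reused across different equations $i$. This is exactly the disjoint-union encoding already used to pass from simple inequalities (\ref{simple-inequality}) to the sum constraints (\ref{inequality-1})–(\ref{inequality-2}) and in Example \ref{example:twice}; the only new feature is iterating it with a fixed, constant number of copies dictated by the coefficients $a_{i,j}$, which is harmless since the coefficients are fixed constants of the target semi-linear set. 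No genuinely infinitary reasoning is needed here, which is why the argument is confined to the finitary system $\mathsf{MSO}^\phi(\#)$; the converse direction and the infinitary analogue are presumably treated separately.
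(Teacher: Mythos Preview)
Your overall strategy---introduce, for each parameter $\mathsf{u}_j$, coefficient-many pairwise disjoint second-order witnesses of a common size, add a further witness of size $b_i$, and equate the disjoint union with $S_i$---is exactly the paper's construction. The reduction of the semi-linear case to the linear case via disjunction is also the same.

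There is, however, a genuine gap in your justification. You write that ``all of these sets can be taken to live `outside' the $S_i$ --- since the ambient model can always be enlarged.'' This is not compatible with Definition~\ref{define:define}: the sentence $\varphi$ must satisfy the biconditional~(\ref{eq:df}) for \emph{every} model $\mathcal{M}$, and you are not free to enlarge $\mathcal{M}$. In particular, when $S_1,\dots,S_n$ exhaust the state-descriptions over $\mathbf{P}$ they partition the domain, so there is no ``outside'' at all. If your formula literally requires the witnessing sets to be disjoint from the $S_i$, it will fail on such models even when $[|S_1|_\mathcal{M},\dots,|S_n|_\mathcal{M}]$ lies in the linear set, and hence will not define that set.

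The fix is simple and is what the paper (implicitly) does: drop any requirement that the witnesses live outside the $S_i$, and observe that they always fit \emph{inside} the given model. Indeed, for each $i$ the equation $|S_i| = b_i + \sum_j a_{i,j}\mathsf{u}_j$ means that $S_i$ itself has exactly enough room to host $b_i$ distinguished points together with $a_{i,j}$ pairwise disjoint blocks of size $\mathsf{u}_j$; placing the row-$i$ witnesses inside $S_i$ makes the existential claims true in $\mathcal{M}$ as given. With that correction your argument goes through and coincides with the paper's.
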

\begin{proof} As $\mathcal{L}_\#^2$ closes under disjunction, it suffices to show that every linear set is definable. So we describe how to encode any linear set of the form in (\ref{first-semi}) by an $\mathcal{L}_\#^2$ sentence.  In words: \begin{enumerate}[label=(\roman*)]
    \item \label{eq:stepone} For all $i\leq n$, assert the existence of:\vspace{0.5mm} \begin{itemize} %\item Sets $U_1,\dots,U_m$
    \item Sets $Z_{i,j,1} \dots, Z_{i,j,a_{i,j}}$ (none if $a_{i,j}=0$) for all $j \leq m$,\vspace{0.5mm}
    \item Individuals $z_{i,1}, \dots, z_{i,b_i}$ (none if $b_i=0$). \end{itemize}\vspace{1mm}
    \item Add conjuncts for:\vspace{0.5mm} \begin{itemize} \item  $\#_x\big(Z_{i,j,p}(x) \wedge Z_{i,j',p'}(x)\big) \approx \mathsf{0}$, whenever $j\neq j'$ or $p \neq p'$\vspace{0.5mm} \item \label{basic-conditions} $\neg Z_{i,j,p}(z_{i,l})$, for all $i,j,p,l$\vspace{0.5mm}
    \item $\#_xZ_{i,j,p}(x) \approx \#_xZ_{i',j,p'}(x)$, for all $j$ and all $i,i',p,p'$ \end{itemize}\vspace{1mm}
    \item \label{eq:mainseme} Finally, conjoin these together with the claim that each state-description $S_i$ has the same cardinality as the union of all $Z_{i,j,p}$, together with $z_{i,1},\dots,z_{i,b_i}$: \begin{eqnarray*}\#_xS_i(x) & \approx & \#_x \big( \bigvee_{j\leq m \atop p \leq a_{i,j}} Z_{i,j,p}(x) \vee \bigvee_{l \leq b_i} x = z_{i,l}  \big).
    \end{eqnarray*} 
\end{enumerate} For a given $i\leq n$ and $j \leq m$, the sets $Z_{i,j,p}$ correspond to $a_{i,j}$-many  copies of the variable $\mathsf{u}_j$ in (\ref{first-semi}). The individual variables $z_{i,l}$ count the constant ``base'' number $b_i$. The numerical equalities stated in \ref{eq:mainseme} guarantee that each ``variable'' $|S_i|_\mathcal{M}$ has the right cardinality according to (\ref{first-semi}), under the conditions specifies by \ref{basic-conditions}. By existentially quantifying all variables \ref{eq:stepone} the resulting formula defines the linear set in (\ref{first-semi}) in the sense of (\ref{eq:df}). 
\end{proof}
We now want to show the other direction, that all $\mathcal{L}_\#^2$ sentences in fact define semi-linear sets. Toward this result we first note that $\mathsf{MSO}(\#)$ possesses a prenex normal form. 
\begin{lemma} \label{lemma:prenex}
Every $\mathcal{L}_\#^2$ sentence in predicates $\mathbf{P}$ is equivalent---in $\mathsf{MSO}(\#)$ as well as in $\mathsf{MSO}^\phi(\#)$---to one in prenex form, that is, of the form $Q_1X_1,\dots,Q_nX_n. \varphi(\mathbf{P},X_1,\dots,X_n)$, where $\varphi(\mathbf{P},X_1,\dots,X_n)$ is a first-order $\mathcal{L}_\#^1$ sentence (treating $X_1,\dots,X_n$ as additional predicates) and $Q_1,\dots,Q_n$ are second-order quantifiers.
\end{lemma}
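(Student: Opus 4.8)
The plan is to push all second-order quantifiers to the front by the usual prenexing manoeuvres, taking care that the two features special to our setting — Boolean negation behaving classically, and the presence of count-comparison terms — do not obstruct the standard moves. First I would record the two basic quantifier-shift equivalences that are needed: for a second-order variable $X$ not free in $\psi$,
\begin{equation*}
  (Q X.\,\varphi) \wedge \psi \;\equiv\; Q X.\,(\varphi \wedge \psi),
  \qquad
  (Q X.\,\varphi) \vee \psi \;\equiv\; Q X.\,(\varphi \vee \psi),
\end{equation*}
and the dual for negation, $\neg \exists X.\,\varphi \equiv \forall X.\,\neg\varphi$ and $\neg\forall X.\,\varphi \equiv \exists X.\,\neg\varphi$. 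These are valid in full classical second-order logic and hence a fortiori in $\mathsf{MSO}(\#)$ and $\mathsf{MSO}^\phi(\#)$; renaming of bound second-order variables (to guarantee the non-occurrence side conditions) is likewise unproblematic. Then I would argue by induction on the structure of an $\mathcal{L}_\#^2$ sentence.

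The subtle point — and the step I expect to be the main obstacle — is the case of a count-comparison term $\#_x \varphi \succsim \#_y \psi$ whose immediate subformulas $\varphi,\psi$ already contain second-order quantifiers. Here one cannot in general pull a $Q X$ out through the $\#_x(\cdot)$ binder, since $\#_x$ is not a Boolean context and second-order quantifiers interact with the counted set. The way around this is to invoke the first-order normal form machinery of Section~\ref{section:normalform}: treating each second-order variable occurring in $\varphi$ or $\psi$ as an additional \emph{unary predicate symbol}, Lemma~\ref{lemma:depth1} and Theorem~\ref{normal} (applied relative to the expanded signature $\mathbf{P}\cup\{X_1,\dots,X_n\}$) let us rewrite the body so that every count-comparison subformula has depth exactly $1$, i.e. no second-order quantifier sits \emph{inside} a $\#$-term or an individual quantifier. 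Concretely, one first replaces every second-order subformula $Q X.\,\theta$ — by the inductive hypothesis already in prenex form internally — by a fresh unary predicate letter, reduces the resulting genuinely first-order (over the enlarged signature) formula to the Theorem~\ref{normal} normal form, and then reinstates the abbreviations; because the normal form is a Boolean combination of count comparisons of depth $1$ in the enlarged predicates, each occurrence of a second-order-quantified block now sits in a purely Boolean (and individual-quantifier) context around depth-$1$ count comparisons.

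With that preprocessing done, the second-order quantifiers occur only in Boolean combinations (together with first-order quantifiers $\exists z$, through which a $QX$ with $X$ not free in the matrix also shifts freely, $\exists z.\,QX.\chi \equiv QX.\,\exists z.\chi$ after renaming). Repeated application of the shift equivalences above, always renaming second-order variables apart first, drives every $Q_iX_i$ to the front, leaving a matrix $\varphi(\mathbf{P},X_1,\dots,X_n)$ that is a first-order $\mathcal{L}_\#^1$ sentence over the predicates $\mathbf{P},X_1,\dots,X_n$. Since every equivalence used is valid in classical second-order logic over arbitrary structures and the reductions of Section~\ref{section:normalform} were established uniformly for $\mathsf{MFO}^\phi(\#)$ and $\mathsf{MFO}(\#)$, the whole argument goes through identically for $\mathsf{MSO}^\phi(\#)$ and $\mathsf{MSO}(\#)$, which gives the claim.
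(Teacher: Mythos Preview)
Your extraction of second-order quantifiers from $\#$-terms via the invariance machinery of Section~\ref{section:normalform} is essentially what the paper does (it simply says ``the soundness of (\ref{invariance-main}) allows us to extract any first-order or second-order quantifier from the scope of a $\#$-term''), so that part is fine, if somewhat roundabout in your phrasing about substituting predicate letters for subformulas.

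The genuine gap is your claimed interchange $\exists z.\,QX.\,\chi \equiv QX.\,\exists z.\,\chi$. This is simply false when $Q=\forall$ and $z$ occurs free in $\chi$: $\exists z.\,\forall X.\,\chi$ is in general strictly stronger than $\forall X.\,\exists z.\,\chi$. (The side condition you state, ``$X$ not free in the matrix'', is automatic and does not help; the obstruction is the free occurrence of $z$ under $QX$.) After your depth-reduction step you \emph{will} face configurations like $\forall z.\,\exists Y.\,\theta(z,Y)$ or $\exists z.\,\forall Y.\,\theta(z,Y)$, and these cannot be prenexed by naive commutation.

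The paper's proof handles exactly this point: when a first-order quantifier scopes directly over a second-order one, it \emph{replaces} the first-order quantifier by a second-order quantifier over singleton sets, e.g.\ rewriting $\forall x.\,QY.\,\varphi$ as $\forall X.\,QY.\,\forall x.\big(\forall z(X(z)\leftrightarrow z=x)\to\varphi\big)$. Now both outer quantifiers are second-order and the standard prenex moves apply. This trick is the missing ingredient in your argument.
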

\begin{proof}[Proof Sketch] The argument is as usual for prenex normal forms in first-order logic. As in case of $\mathsf{MFO}(\#)$, the soundness of (\ref{invariance-main}) allows us to extract any first-order or second-order quantifier from the scope of a $\#$-term. The only case we need to consider here is a first-order (universal) quantifier scoping directly over a second-order quantifier. The point is to convert the first-order universal quantifier into a universal second-order quantifier restricted to singleton sets. That is, a formula $\forall x. QY. \varphi$, with $Q$ a second-order quantifier, will be equivalent to $\forall X. QY. \forall x. \big(\forall z(X(z) \leftrightarrow z=x) \rightarrow \varphi  \big)$.
\end{proof}

By Theorem \ref{normal} we know that $\varphi(\mathbf{P},X_1,\dots,X_n)$ has a normal form involving  expressions $T_1=T_2+m$ and $T_1 > T_2+m$, where $T_1,T_2$ are cardinalities of state-descriptions over $\mathbf{P}$ and additional predicates $X_1,\dots,X_n$. Such formulas are thus easily seen to be semi-linear, and indeed linear. Now, as semi-linear sets are closed under Boolean combinations, and second-order quantifiers distribute over disjunction, the main goal is to show the following closure property for second-order quantifiers:
\begin{lemma} Let $X$ be a predicate variable, and $O_1,\dots,O_n$ be either predicate letters or predicate variables. Suppose that $\varphi(O_1,\dots,O_n,X)$ defines a linear set in state-descriptions $S_1,\dots,S_{2^{n+1}}$ over $O_1,\dots,O_n,X$. Then $\exists X. \varphi(O_1,\dots,O_n,X)$ defines a linear set in state-descriptions $S_1',\dots,S_{2^n}'$ over just $O_1,\dots,O_n$.
\end{lemma}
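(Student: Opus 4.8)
The plan is to analyze what the existential second-order quantifier $\exists X$ does to the defining system of linear equations. By hypothesis, $\varphi(O_1,\dots,O_n,X)$ defines a linear set $\mathcal{V}$ in the $2^{n+1}$ state-descriptions $S_1,\dots,S_{2^{n+1}}$ over $O_1,\dots,O_n,X$. Note that the $2^n$ state-descriptions $S'_1,\dots,S'_{2^n}$ over $O_1,\dots,O_n$ each split into exactly two of the finer state-descriptions: $S'_k = S_{k,0} \vee S_{k,1}$, where $S_{k,0}$ adds the conjunct $\neg X$ and $S_{k,1}$ adds the conjunct $X$. Consequently, for any model $\mathcal{M}$ with any interpretation of $X$, we have $|S'_k|_\mathcal{M} = |S_{k,0}|_\mathcal{M} + |S_{k,1}|_\mathcal{M}$. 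The key point is that $\mathcal{M} \vDash \exists X.\varphi$ iff there is \emph{some} way of writing each $|S'_k|_\mathcal{M}$ as a sum $a_k + b_k$ of two natural numbers such that the resulting vector $(a_1,b_1,\dots,a_{2^n},b_{2^n})$ lies in $\mathcal{V}$; conversely, given any point of $\mathcal{V}$, one can realize it by some interpretation of $X$ inside a suitable model, since in the (finite) monadic setting the regions can be populated freely.

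So the statement reduces to a purely arithmetical fact: the image of a linear set $\mathcal{V} \subseteq \mathbb{N}^{2^{n+1}}$ under the map that sends $(a_1,b_1,\dots,a_{2^n},b_{2^n})$ to $(a_1+b_1,\dots,a_{2^n}+b_{2^n})$ is again a linear set (in $\mathbb{N}^{2^n}$). First I would record the general principle that linear sets are closed under images of $\mathbb{N}$-linear maps: if $\mathcal{V}$ is generated by base vector $\mathbf{b}$ and period vectors $\mathbf{p}_1,\dots,\mathbf{p}_m$, and $L\colon \mathbb{N}^N \to \mathbb{N}^{N'}$ is induced by a nonnegative integer matrix, then $L(\mathcal{V})$ is the linear set generated by $L(\mathbf{b})$ and $L(\mathbf{p}_1),\dots,L(\mathbf{p}_m)$ — this is immediate from the definition in Definition~\ref{def:linear} and linearity of $L$. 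Applying this with the coordinate-summing map $L$ yields linearity of the projection of the solution set, hence $\exists X.\varphi$ defines a linear set over $O_1,\dots,O_n$. One must also check the converse inclusion — that every point in $L(\mathcal{V})$ is actually attained by $\exists X.\varphi$ over some model — which follows because for any target cardinalities of the coarse regions and any chosen split, one can build a (finite) model realizing exactly those finer region sizes and an $X$ witnessing the split.

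The main obstacle, and the point needing the most care, is the bookkeeping that identifies $\mathcal{M} \vDash \exists X.\varphi$ with ``there exists a coordinate-wise splitting of the region sizes lying in $\mathcal{V}$'': one has to verify that ranging over all interpretations of $X$ in all models corresponds exactly to ranging over all ways of splitting each $|S'_k|$ into two summands, using the monadic permutation-invariance observations from \S\ref{section:coreprinciples} (all points in a region are interchangeable, so only the region cardinalities and the size of $X \cap \text{region}$ matter). Once this correspondence is pinned down, the linear-algebra step is routine. A minor subtlety is that the definition of ``defines'' in Definition~\ref{define:define} fixes the state-descriptions in advance, so one must be careful that the $S'_i$ are genuinely the coarsenings of the $S_j$; but this is exactly how the finer and coarser state-description lists are related, so no real difficulty arises there.
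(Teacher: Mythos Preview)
Your proposal is correct and follows essentially the same approach as the paper's proof: both identify the coarsening $S'_k = S_{k,0} \vee S_{k,1}$, reduce the problem to showing that summing pairs of coordinates sends the linear set $\mathcal{V}$ to a linear set, and verify the two directions of the model-theoretic correspondence (the $(\Leftarrow)$ direction by choosing, within each region $S'_k$ of the given model, a subset of the appropriate size to serve as $X$). Your framing via ``linear sets are closed under images of $\mathbb{N}$-linear maps'' is a slightly more abstract packaging of the same computation the paper does explicitly by adding the two rows for $S_i$ and $S_j$; the content is identical.
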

\begin{proof} As $\varphi(O_1,\dots,O_n,X)$ is linear, we can assume it defines the solutions to  \vspace{0.5ex} \begin{equation}
 \begin{pmatrix}
\;|S_1|\;  \\
\vdots  \\
|S_{2^{n+1}}|  
\end{pmatrix} \hspace{.1in}= \hspace{.1in}
\begin{pmatrix}
b_1 + a_{1,1}\mathsf{u}_1 + & \dots  & + a_{1,m}\mathsf{u}_m  \\
 \vdots & & \vdots \\
b_{2^{n+1}} + a_{2^{n+1},1}\mathsf{u}_1 + & \dots & +  a_{2^{n+1},m}\mathsf{u}_m
\end{pmatrix}. \label{semi-lin}
\end{equation}\vspace{0.5ex} To show that $\exists X. \varphi(O_1,\dots,O_n,X)$, too, is linear, we define another linear set of equations by ``projecting out'' the variable $X$. 

\vspace{1ex}

Specifically, note for each state-description $S_k'$, both $S_k' \wedge X$ and $S_k' \wedge \neg X$ are (equivalent to) some state-descriptions, $S_i$ and $S_j$, and in fact $S_k'$ is equivalent to $S_i \vee S_j$. The new linear system in $2^n$ variables is then as follows for each $k\leq 2^n$: \begin{eqnarray} |S'_k| & = &  b_i + b_j + (a_{i,1}+a_{j,1})\mathsf{u}_1+ \dots + (a_{i,m} + a_{j,m})\mathsf{u}_m . \label{ex-line}\end{eqnarray} It remains only to show that:\vspace{0.5ex} \begin{eqnarray}
   \mathcal{M} \vDash \exists X. \varphi(O_1,\dots,O_n,X) 
   & \Leftrightarrow & [|S_1'|_{\mathcal{M}},\dots,|S_{2^n}'|_{\mathcal{M}}] \mbox{ is a solution to equations }(\ref{ex-line}).\vspace{1ex} \nonumber
\end{eqnarray} 

$(\Rightarrow)$: If $\mathcal{M} \vDash \exists X. \varphi(O_1,\dots,O_n,X)$ then for some subset $A$ of the domain, we have $\mathcal{M},s^X_A \vDash \varphi(O_1,\dots,O_n,X)$. Treating $X$ now as a predicate constant, we have a model $\mathcal{M}'$ for which $X^{\mathcal{M}'}=A$, and by assumption this gives a solution $[|S_1|_{\mathcal{M}'},\dots,|S_{2^{n+1}}|_{\mathcal{M}'}]$ to (\ref{semi-lin}). But each state-description $S_k'$ is equivalent to a disjunction $S_i \vee S_j$, whose cardinality is the sum $|S_i|+|S_j|$. Therefore $\mathcal{M}'$ will satisfy each of the constraints in (\ref{ex-line}). As the state-descriptions $S_1',\dots,S_{2^n}'$ are independent of $X$, this means  $[|S_1'|_{\mathcal{M}},\dots,|S_{2^n}'|_{\mathcal{M}}]$ also gives a solution to (\ref{ex-line}).

\vspace{1ex}

$(\Leftarrow)$: Suppose $[|S_1'|_{\mathcal{M}},\dots,|S_{2^n}'|_{\mathcal{M}}]$  gives a solution to (\ref{ex-line}) for some particular choices $\mathsf{u}_1,\dots,\mathsf{u}_m$. We need to find a set $A$ such that $\mathcal{M},s^X_A \vDash \varphi(O_1,\dots,O_n,X)$. Since the extensions of $S_1',\dots,S_{2^n}'$ are all disjoint, to define $A$ it suffices to identify subsets of each $\semantics{S_k'}_{\mathcal{M}}$. As above, suppose $S'_k$ is equivalent to $S_i \vee S_j$, so that $S_i$ is equivalent to $S'_k \wedge X$ and $S_j$ is equivalent to $S_k' \wedge \neg X$. Then let $B_k$ be any subset of $\semantics{S_k'}_{\mathcal{M}}$ of size $b_i + a_{i,1}\mathsf{u}_1 + \dots + a_{i,m}\mathsf{u}_m$, such that the complement $\semantics{S'_k}_{\mathcal{M}} - B_k$ has size $b_j + a_{j,1}\mathsf{u}_1 + \dots + a_{j,m}\mathsf{u}_m$. This is always possible since $|S'_k|$ is simply the sum of these two numbers. Finally let $A = \bigcup_{k\leq 2^n} B_k$. 

Once again absorbing $X$ into the language and defining $\mathcal{M}'$ to be just like $\mathcal{M}$ but with $X^{\mathcal{M}'} = A$, the tuple $[|S_1|_{\mathcal{M}'},\dots,|S_{2^{n+1}}|_{\mathcal{M}'}]$ gives a solution to (\ref{semi-lin}) with the same choices $\mathsf{u}_1,\dots,\mathsf{u}_m$. Hence, $\mathcal{M}'\vDash \varphi(O_1,\dots,O_n,X)$, from which it easily follows that $\mathcal{M},s^X_A \vDash \varphi(O_1,\dots,O_n,X)$ and finally $\mathcal{M} \vDash \exists X. \varphi(O_1,\dots,O_n,X)$.
\end{proof}

The foregoing thus establishes:
\begin{theorem} The numerical relations definable in $\mathsf{MSO}^\phi(\#)$ are the semi-linear sets. In other words, $\mathsf{MSO}^\phi(\#)$ expresses the same numerical relations as Presburger Arithmetic. \label{thm:semi-lin} 
\end{theorem}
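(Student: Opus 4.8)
The plan is to prove both inclusions. For the ``semi-linear $\Rightarrow$ definable'' direction, this is already done by Lemma~\ref{lemma:semi-linear}, so the real work is the converse: every $\mathcal{L}_\#^2$ sentence defines a semi-linear set. First I would put an arbitrary sentence into prenex form via Lemma~\ref{lemma:prenex}, reducing to a block of second-order quantifiers $Q_1X_1,\dots,Q_nX_n$ over a first-order $\mathcal{L}_\#^1$ matrix. Using the normal form from Theorem~\ref{normal}, that matrix (now in predicates $\mathbf{P}$ together with $X_1,\dots,X_n$ treated as extra predicates) is a disjunction of conjunctions of constraints of the forms $T_1 = T_2 + m$ and $T_1 > T_2 + m$ on sums of cardinalities of state-descriptions. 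Each such system of linear constraints over nonnegative integer unknowns defines a semi-linear (indeed, a finite union of linear) set of cardinality-vectors, by the classical equivalence between Presburger-definable sets, quantifier-free linear constraint systems, and semi-linear sets \citep{Ginsburg,GinsburgSpanier}; so the matrix defines a semi-linear set in the state-descriptions over $\mathbf{P}, X_1,\dots,X_n$.

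It then remains to push the second-order quantifiers through while staying within the semi-linear sets. Since semi-linear sets are closed under Boolean combinations, a universal second-order quantifier is handled by complementation--projection--complementation, so it suffices to treat a single existential second-order quantifier $\exists X$. I would invoke the key closure lemma proved just above: if $\varphi(O_1,\dots,O_n,X)$ defines a linear set in the state-descriptions over $O_1,\dots,O_n,X$, then $\exists X.\varphi$ defines a linear set in the state-descriptions over $O_1,\dots,O_n$, obtained concretely by ``projecting out'' $X$---merging the pair of state-descriptions $S_i = S_k' \wedge X$ and $S_j = S_k' \wedge \neg X$ into $S_k'$ and adding their generator columns. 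Since an existential over a \emph{disjunction} of linear sets is the disjunction of the existentials, this lifts from linear to semi-linear, and iterating through the prenex block shows the whole sentence defines a semi-linear set.

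Combining the two directions gives that the numerical relations definable in $\mathsf{MSO}^\phi(\#)$ are exactly the semi-linear sets. The final sentence of the theorem---that this coincides with the relations expressible in Presburger Arithmetic---is then immediate from the Ginsburg--Spanier theorem identifying Presburger-definable subsets of $\mathbb{N}^n$ with the semi-linear sets.

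The main obstacle is the existential second-order step. The conceptual content is entirely in the closure lemma, and the delicate point there is the $(\Leftarrow)$ direction: given a solution to the projected system, one must actually \emph{construct} a witnessing subset $A$ of the domain by carving each region $\semantics{S_k'}_{\mathcal{M}}$ into two pieces of the prescribed sizes $b_i + \sum_j a_{i,j}\mathsf{u}_j$ and $b_j + \sum_j a_{j,j'}\mathsf{u}_{j'}$ summing to $|S_k'|$, and then checking that the resulting model still satisfies the original linear system. Everything else---prenexing, invoking Theorem~\ref{normal}, the Boolean closure of semi-linear sets, and the Presburger/semi-linear dictionary---is either quoted or routine.
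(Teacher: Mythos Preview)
Your proposal is correct and follows essentially the same route as the paper: Lemma~\ref{lemma:semi-linear} for one inclusion, then prenex form (Lemma~\ref{lemma:prenex}), Theorem~\ref{normal} on the matrix, and the projection lemma for $\exists X$ to handle the quantifier block, with universal quantifiers reduced to existential via complementation and closure of semi-linear sets under Booleans. You have also correctly isolated the only nontrivial step, namely the $(\Leftarrow)$ direction of the projection lemma where one must carve each region $\semantics{S_k'}_{\mathcal{M}}$ into two pieces of the prescribed sizes; this is exactly where the paper's argument does its work.
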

\begin{remark} \label{remark:soc} In $\mathcal{L}_\#^2$ we allow arbitrary second-order quantification. However, we saw in Lemma \ref{lemma:semi-linear} that we only needed an initial block of \emph{existential} second-order quantifiers to encode any (semi-)linear set. The fact that every sentence in $\mathsf{MSO}^\phi(\#)$ defines a semi-linear set demonstrates a collapse of $\mathsf{MSO}^\phi(\#)$ into its purely existential fragment. 
\end{remark}

As in the first-order case, numerous undefinable results again follow. For example: 
\begin{corollary} The expression `many' is still not definable in $\mathsf{MSO}^\phi(\#)$.
\end{corollary}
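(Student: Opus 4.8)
The plan is to leverage Theorem~\ref{thm:semi-lin}, which identifies the numerical relations definable in $\mathsf{MSO}^\phi(\#)$ with the semi-linear sets. Recall that semi-linear sets are closed under Boolean operations, coordinate projections, and translations (mapping $S$ to $(S+v)\cap\mathbb{N}^k$), and that a semi-linear subset of $\mathbb{N}$ is eventually periodic; in particular $\{n^2 : n\in\mathbb{N}\}$ is \emph{not} semi-linear, since the gaps $2n+1$ between consecutive squares are unbounded. So it suffices to show that the $4$-ary relation that `many' would define is not semi-linear.

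First I would identify that relation. With $\mathbf{P}=\{P,Q\}$, write the cardinalities of the four state-descriptions as $r=|P\cap Q|$, $p=|P\cap\overline{Q}|$, $q=|\overline{P}\cap Q|$, $s=|\overline{P\cup Q}|$. Whenever the defining formula (\ref{eq:many}) is true, both denominators $\#_xQ(x)=q+r$ and $\#_x\top=p+q+r+s$ are positive, so the inequality can be cleared, and a short computation shows that (\ref{eq:many}) is then equivalent to $rs>pq$. Hence, if `many' were definable in $\mathsf{MSO}^\phi(\#)$, the set it defines would agree with $\mathcal{V}=\{(r,p,q,s):rs>pq\}$ on every tuple with $q+r\geq 1$, and by Theorem~\ref{thm:semi-lin} $\mathcal{V}$ would be semi-linear.

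Next I would pass to a carefully chosen \emph{linear slice} so that the multiplicative content survives instead of cancelling. Intersecting $\mathcal{V}$ with the linear set $\{q=1,\ r=s\}$ and projecting onto the $(p,r)$-coordinates yields $A=\{(p,r):r^2>p\}$, which would then be semi-linear. The point of the slice $r=s$ is precisely that $rs$ becomes $r^2$ while the right-hand side stays linear; slices such as $r=1$ or $p=r$ would only produce linear sets and prove nothing. From $A$ semi-linear one then extracts the graph $\{(p,r):p=r^2\}$ as semi-linear --- using complementation to get $\{r^2\leq p\}$, a unit shift to get $\{r^2<p\}$ (i.e.\ $\{r^2\leq p-1\}$), and a further complementation and intersection --- and projecting onto the $p$-coordinate gives $\{r^2:r\in\mathbb{N}\}$ as a semi-linear subset of $\mathbb{N}$, contradicting eventual periodicity.

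The only delicate points are the bookkeeping around degenerate tuples (those with $q+r=0$, where the truth of (\ref{eq:many}) depends on a division-by-zero convention) and getting the off-by-ones right when turning $\{r^2\leq p\}$ into the exact graph $\{p=r^2\}$; neither affects the argument, since the slice $q=1$ already avoids all degenerate tuples and the shift manipulations only invoke standard closure properties of semi-linear sets. The genuine content --- and the step I regard as the crux --- is the choice of slice that converts `many' into a recognizable parabola; everything else is routine, and indeed the result can equally be read off from the fact, implicit in Theorem~\ref{thm:semi-lin}, that Presburger Arithmetic cannot define multiplication.
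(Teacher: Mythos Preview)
Your proposal is correct and follows the paper's approach exactly---invoke Theorem~\ref{thm:semi-lin} and argue that the constraint $\frac{r}{r+q} > \frac{p+r}{p+q+r+s}$ (equivalently $rs>pq$) is not semi-linear---though you supply the full slice-and-project argument that the paper's one-line proof leaves as an exercise. The only imprecision is the clause ``$\mathcal{V}$ would be semi-linear'': strictly, only the \emph{definable} set $\mathcal{W}$ is semi-linear and it may differ from $\mathcal{V}$ on tuples with $q+r=0$, but as you note your slice $q=1$ sits entirely inside the region where $\mathcal{W}=\mathcal{V}$, so the argument is unaffected.
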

\begin{proof} Adopting the notation from the proof of Fact \ref{fact:many}, the constraint on state-descriptions, $\frac{r}{r+q} > \frac{p+r}{p+q+r+s}$, is not semi-linear. 
\end{proof}

Indeed, the theory of definability for Presburger Arithmetic carries over exactly to $\mathsf{MSO}^\phi(\#)$, thanks to Theorem \ref{thm:semi-lin}. Moreover, since there is an algorithmic means of putting a formula of $\mathsf{MSO}^\phi(\#)$ into normal form and finding a suitable semi-linear form, decidability follows from the decidability of Presburger Arithmetic.
\begin{corollary} The validity problem for $\mathsf{MSO}^\phi(\#)$ is decidable. 
\end{corollary}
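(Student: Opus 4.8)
The plan is to reduce decidability of the validity problem for $\mathsf{MSO}^\phi(\#)$ to the decidability of Presburger Arithmetic, using the structural results already in hand. First I would note that validity of a sentence $\varphi$ is the complement of satisfiability of $\neg\varphi$, so it suffices to give a decision procedure for satisfiability. Given an arbitrary $\mathcal{L}_\#^2$ sentence $\psi$, the first step is to apply Lemma \ref{lemma:prenex} to put it into prenex form $Q_1X_1\dots Q_nX_n.\,\chi(X_1,\dots,X_n)$ with $\chi$ a first-order $\mathcal{L}_\#^1$ sentence in the $X_i$ treated as predicates; this step is effective since the proof is a syntactic rewriting using the soundness of (\ref{invariance-main}) and the singleton-set trick.

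Next I would recall that, by Theorem \ref{normal} applied to $\chi$, one can effectively compute a disjunction of conjunctions of constraints of the form $T_1 = T_2 + m$ and $T_1 > T_2 + m$ over sums of cardinalities of state-descriptions in the $X_i$; this is precisely a quantifier-free Presburger formula (with the state-description cardinalities as variables). Then, using Lemma \ref{lemma:semi-linear} and the two directions of the projection lemma (the one whose statement begins ``Let $X$ be a predicate variable\dots''), existential second-order quantification corresponds to existential Presburger quantification over the relevant blocks of variables, and by Remark \ref{remark:soc} the universal second-order quantifiers can be handled either by the same projection machinery applied to $\neg$ (pushing negations through, since semi-linear sets are closed under complement) or simply by observing that the whole hierarchy collapses to an existential prefix. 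Iterating this, one effectively computes a Presburger sentence (with the constraint that all variables range over $\mathbb{N}$, plus the extra bookkeeping that the sum of all state-description cardinalities equals the domain size, which is itself just another Presburger-definable condition) that is true iff $\psi$ is satisfiable by a finite model. Decidability of Presburger Arithmetic \citep{OPPEN1978323} then finishes the argument.

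The main obstacle, and the point that deserves care rather than being waved through, is making the reduction genuinely \emph{algorithmic} at every stage: Lemma \ref{lemma:semi-linear} and the projection lemma as stated are phrased as definability (existence) results, so I would need to check that each is accompanied by an explicit construction — computing the linear/semi-linear representation from the normal form, and computing the projected system (\ref{ex-line}) from (\ref{semi-lin}) — so that the composition is a total computable function from $\mathcal{L}_\#^2$ sentences to Presburger sentences. A secondary subtlety is the treatment of the universal second-order quantifiers: one must either verify that complementation commutes appropriately with the projection step (using closure of semi-linear sets under complement, which is effective via Presburger quantifier elimination) or invoke the collapse in Remark \ref{remark:soc} with enough justification that it is effective. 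Once those effectiveness checks are in place, the decidability claim is immediate from Theorem \ref{thm:semi-lin} together with the decidability of Presburger Arithmetic, so I would keep the writeup short and lean on the earlier lemmas.
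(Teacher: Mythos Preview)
Your proposal is correct and takes essentially the same approach as the paper: the paper's argument is the one-sentence remark that ``there is an algorithmic means of putting a formula of $\mathsf{MSO}^\phi(\#)$ into normal form and finding a suitable semi-linear form, [so] decidability follows from the decidability of Presburger Arithmetic,'' and you have simply spelled out that sentence in detail, including the effectiveness checks the paper leaves implicit.
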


\subsection{Infinitary Case} \label{section:soinf} Allowing second-order quantification does increase the expressive power of our initial system $\mathsf{MFO}^\phi(\#)$. While the latter essentially amounts to a proper fragment of Presburger Arithmetic, $\mathsf{MSO}^\phi(\#)$ gave us precisely Presburger Arithmetic. How does this look for the system $\mathsf{MSO}(\#)$ over models of arbitrary cardinality? One immediate difference is that, in contrast to $\mathsf{MFO}(\#)$ (Fact \ref{fact:successor}), the \emph{successor} function on cardinal numbers can now be easily expressed: $$\forall X.\big( \#_yP(y) \succ \#_y X(y) \rightarrow \#_y Q(y) \succsim \#_yX(y)\big).$$ This formula states that there is no cardinality strictly in between that of $P$ and that of $Q$. How much more cardinal arithmetic does $\mathsf{MSO}(\#)$ encode? 

As in the case of $\mathsf{MSO}^\phi(\#)$, we can calibrate this by appeal to additive first-order (now cardinal) arithmetic. Consider the elementary theory of the structure $\langle C_{\aleph_\omega};+ \rangle$, addition over the cardinals numbers less than $\aleph_\omega$. This is the theory of cardinal numbers in a first-order language with one binary function symbol, namely addition. 

We show in Appendix \ref{app:infinityaddition} that this theory admits quantifier elimination provided we augment the language with constants for the (definable in $\mathsf{MSO}(\#)$) functions and relations:\vspace{1ex}
\begin{itemize}
    \item $\{0\}$ and $\{\aleph_0\}$\vspace{1mm}
    \item $s$ --- the successor function\vspace{1mm}
    \item $>$ --- the ``greater than'' relation \vspace{1mm}
    \item $\equiv_k$ --- equivalence modulo $k$ for each $k>1$
\end{itemize} 
Furthermore, we can derive a normal form result for this language:
\begin{proposition} \label{cardinal:normalform} Every first-order sentence is equivalent over the structure $\langle C_{\aleph_\omega};+ \rangle$ to a disjunction of conjunctions $\delta \wedge \iota \wedge \phi$ each specifying:\vspace{-0.3ex} \end{proposition}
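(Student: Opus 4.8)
The plan is to mimic the quantifier-elimination argument for ordinary Presburger Arithmetic, tracking what new atomic predicates are needed once infinite cardinals below $\aleph_\omega$ enter the picture. First I would recall that a cardinal $\kappa < \aleph_\omega$ splits uniquely as $\kappa = \lambda + n$ where $\lambda \in \{0, \aleph_0, \aleph_1, \dots, \aleph_{k}, \dots\}$ is either $0$ or some $\aleph_i$ ($i<\omega$) and $n \in \mathbb{N}$; addition is componentwise in the sense that $(\lambda + n) + (\mu + m) = \max(\lambda,\mu) + (n+m)$ when at least one of $\lambda,\mu$ is infinite (absorbing the finite part on the infinite side), and is ordinary addition when both are finite. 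So an atomic formula $t_1 = t_2$ or $t_1 > t_2$ built from $+$ decomposes, after case analysis on which variables are infinite and which is the largest infinite one, into (a) a statement $\delta$ about the infinite indices (an order/equality constraint among the $\aleph_i$'s, including the degenerate ``is finite'' case, i.e. the index is $0$ or $\aleph_0$) and (b) a residual Presburger-style statement $\phi$ about the finite parts, which by classical Presburger QE needs $<$, $s$, the constants $0$ and $\aleph_0$ (here $\aleph_0$ plays the role of the least infinite cardinal, i.e. ``$x$ is infinite''), and the congruences $\equiv_k$. The term $\iota$ in the displayed conjunction is presumably the clause pinning down exactly which variables are infinite (equivalently, which have value $\geq \aleph_0$), mediating between $\delta$ and $\phi$.

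The key steps, in order, are: (1) state precisely the decomposition $\kappa = \lambda + n$ and the arithmetic of $\langle C_{\aleph_\omega}; + \rangle$ under it; (2) observe that the augmented language (with $0$, $\aleph_0$, $s$, $>$, $\equiv_k$) admits quantifier elimination — this is the content cited as proved in Appendix~\ref{app:infinityaddition}, so here I would only need to invoke it; (3) from quantifier elimination, conclude that every sentence is equivalent to a Boolean combination of atomic and congruence formulas over the free (here: no) variables, hence — pushing to disjunctive normal form and grouping conjuncts by what they constrain — to a disjunction of conjunctions $\delta \wedge \iota \wedge \phi$, where $\delta$ collects the constraints purely among infinite-cardinal indices, $\iota$ records which terms are finite versus infinite, and $\phi$ collects the finite/congruence constraints on the finite parts; (4) check that nothing else can appear, i.e. that the three listed categories exhaust the atomic types after normalization, using the componentwise arithmetic from step (1) to absorb mixed finite/infinite terms.

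I would expect the main obstacle to be step (1) done carefully enough that step (4) is genuinely routine: one must handle the absorption law $\aleph_i + n = \aleph_i$ cleanly so that any term mentioning an infinite variable collapses its finite summands, and one must handle the subtlety that ``$>$'' between two sums requires knowing which infinite index dominates — this is where the case split feeding $\delta$ comes from. A secondary subtlety is the treatment of subtraction-free Presburger: since we only have $+$ (no $-$), the finite-part constraints $\phi$ are really the ``positive'' Presburger formulas, and one should make sure the normal form for $\langle\mathbb{N};+\rangle$-fragments that actually arises (inequalities $T_1 = T_2 + m$, $T_1 > T_2 + m$, and congruences, exactly as in Theorem~\ref{normal} and the Presburger QE) is what gets recorded as $\phi$. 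Given that Appendix~\ref{app:infinityaddition} supplies the QE, the proof in the body is essentially: apply QE, put in DNF, and partition each conjunct's literals into the three groups; the real work is the bookkeeping lemma about $C_{\aleph_\omega}$ arithmetic, which I would state explicitly and then let the normal form fall out.
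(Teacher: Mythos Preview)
Your high-level plan (invoke the quantifier elimination from Appendix~\ref{app:infinityaddition}, put in DNF, and sort the literals) matches the paper, but two concrete errors need fixing.

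First, you have the roles of $\delta$ and $\iota$ reversed. In the paper's statement, $\delta$ is the clause declaring which variables are finite and which are infinite, while $\iota$ is the description of the \emph{infinite} variables (in the language $0,s,>$ over $\aleph$-indices). You have $\delta$ doing the infinite-index work and $\iota$ doing the finite/infinite split. Since the Proposition assigns these symbols fixed meanings, your proof as written does not establish the statement as given.

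Second, and more substantively, your ``unique decomposition $\kappa = \lambda + n$'' is not well-defined in cardinal arithmetic: $\aleph_0 = \aleph_0 + 0 = \aleph_0 + 1 = \aleph_0 + 17$, so there is no finite residue attached to an infinite cardinal, and there is no componentwise arithmetic to track. The correct picture---and the one the paper uses---is simpler: once $\delta$ has fixed which variables are finite, every term is either a sum of finite variables (hence finite) or contains an infinite variable (hence equals the $\max$ of its infinite summands, absorbing all finite ones). Consequently each atomic formula, after the $\delta$-case-split, is either purely about finite variables (and goes into $\phi$) or purely about infinite variables (and goes into $\iota$, where addition collapses to $\max$ and one is left with the order structure on $\aleph$-indices). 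An atom does \emph{not} decompose into a finite part and an infinite part simultaneously, as your step~(1) suggests; mixed atoms simply trivialize. If you replace your componentwise bookkeeping with this absorption-and-trivialization observation, the rest of your outline goes through.
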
 \begin{itemize}
    \item which (ordinary first-order) variables in the disjunct are finite or infinite ($\delta$)
    \item a description of a linear set for the finite variables ($\phi$)
    \item a description of a set of infinite cardinals using $0$, $s$, and $>$ over $\aleph$-number indices, for the infinite variables ($\iota$).\vspace{0.5ex}
\end{itemize}
This can be understood as a kind of separation result. The finitary part, Presburger Arithmetic, is simply ordinary addition. As for the infinitary part, observe that there is an isomorphism from $\langle \mathbb{N};0,s,>\rangle$ onto $\langle \{\aleph_k\}_{k\in\mathbb{N}}; \aleph_0,s,> \rangle$, sending $k$ to $\aleph_k$. In other words, the additive structure of cardinals less than $\aleph_\omega$ amounts to a  ``product'' of $\langle \mathbb{N};+ \rangle$ and $\langle \mathbb{N};> \rangle$.\footnote{A version of this result can be traced back at least to \cite{MostowskiTarski}. A general notion of product that subsumes this case is that of \cite{FefermanVaught}.}

Our aim is to show that $\mathsf{MSO}(\#)$ possesses the same normal forms as in Proposition \ref{cardinal:normalform}.  To see that any statement of the form  $\delta \wedge \iota \wedge \phi$ can be expressed, note that $\delta$ merely requires distinguishing finite and infinite sets (recall Eq. (\ref{infinity-quantifier})), while definability of any linear set (specified by $\phi$) was shown already in Lemma \ref{lemma:semi-linear}. Meanwhile, $\iota$ is a conjunction of formulas of types $\mathsf{v}=s^k(\mathsf{u})$, $\mathsf{v}>s^k(\mathsf{u})$, $\mathsf{v} = \aleph_k$, and $\mathsf{v}>\aleph_k$. We noted above that successor is expressible, and, for instance, we can assert that $P$ has cardinality $\aleph_0$ simply by stating that $P$ is infinite and there is no infinite set with smaller cardinality. Thus, any such statement is expressible.

To show that this exhausts what is definable in $\mathsf{MSO}(\#)$, given Lemma \ref{lemma:prenex}, it remains to observe that the $\mathcal{L}_\#^2$-definable sets are closed under ``projection'' by existentially quantifying one of the variables. Thus, suppose we have an $\mathcal{L}_\#^2$ formula $\varphi(\mathbf{O},Y)$ with $Y$ a predicate variable and $\mathbf{O}=O_1,\dots,O_n$ all either predicate variables or letters. We will assume $\varphi(\mathbf{O},Y)$ has the form  $\delta(\mathbf{O},Y) \wedge \iota(\mathbf{O},Y) \wedge \phi(\mathbf{O},Y)$, analogously to the additive language: $\delta(\mathbf{O},Y)$ describes which state descriptions over variables $O_1,\dots,O_n,Y$ are (in)finite, $\iota(\mathbf{O},Y)$ characterizes the infinite state descriptions, while $\phi(\mathbf{O},Y)$ describes a linear set. We need to analyze $\exists Y.\big(\delta(\mathbf{O},Y) \wedge \iota(\mathbf{O},Y) \wedge \phi(\mathbf{O},Y)\big)$. %, and for this we need a case distinction. If $\delta$ asserts that all state descriptions are finite, then the whole formula is equivalent to $\exists Y. \phi(\mathbf{O},Y)$ and we can run the argument from the finitary case to obtain an equivalent formula $\phi'(\mathbf{O})$.
%So suppose $\delta$ asserts that at least one state description is infinite. 

We can replace $\delta(\mathbf{O},Y)$ with a formula $\delta'(\mathbf{O})$ specifying that a state description $S$ over $O_1,\dots,O_n$ is finite iff $S \wedge Y$ and $S \wedge \neg Y$ were both finite according to $\delta(\mathbf{O},Y)$. List the finite state descriptions according to $\delta(\mathbf{O},Y)$ as $S_1,\dots,S_k$. The subformula $\phi(\mathbf{O},Y)$ defines a linear set over the possible (finite) cardinalities:
\begin{equation}
 \begin{pmatrix}
\;|S_1|\;  \\
\vdots  \\
|S_{k}|  
\end{pmatrix} \hspace{.1in}= \hspace{.1in}
\begin{pmatrix}
b_1 + a_{1,1}\mathsf{u}_1 + & \dots  & + a_{1,m}\mathsf{u}_m  \\
 \vdots & & \vdots \\
b_{k} + a_{k,1}\mathsf{u}_1 + & \dots & +  a_{k,m}\mathsf{u}_m
\end{pmatrix} \label{semi-lin-6}
\end{equation} Suppose $S_i = S \wedge Y$ is finite but $S \wedge \neg Y$ is infinite. Then the constraint in (\ref{semi-lin-6}) on $|S_i|$ is no constraint at all: since $|S|$ must be infinite, carving out a finite portion $S \wedge Y$ of any size will always be possible. So in this case we can simply drop the equation for $S_i$. Otherwise, if $S_i = S\wedge Y$ and $S_j=S \wedge \neg Y$ are both finite, then we can repeat the argument from \S\ref{section:finiteSO} above, again combining these two equations into a single equation for $|S|$. The result is a set of equations in (cardinalities of) state descriptions over $\mathbf{O}$, all asserted finite in $\delta'(\mathbf{O})$.

The subformula $\iota(\mathbf{O},Y)$ represents constraints of the form $\mathsf{v}=\mathsf{w}$ and $\mathsf{v}>\mathsf{w}$, where $\mathsf{v}$ and $\mathsf{w}$ are either ``infinite'' state descriptions over $\mathbf{O},Y$, $k$-fold successors of such  state descriptions, or aleph-numbers. In view of the isomorphism between $\langle \mathbb{N};0,s,> \rangle$ and 
$\langle \{\aleph_k\}_{k \in \mathbb{N}};\aleph_0,s,>\rangle$, we can construe these conjuncts as describing relations on natural numbers. As these relations are a special case of linear sets and can thus be encoded as in (\ref{semi-lin-6}), we once again run the argument to ``merge'' the equations for $S \wedge Y$ and $S \wedge \neg Y$ into a single equation for $S$, provided all of these are infinite. (If only one of $S \wedge Y$ and $S \wedge \neg Y$ is infinite, then that equation remains as before since $S$ will have the same cardinality.) 

The resulting formula will in general involve addition. But as discussed further in Appendix \ref{app:infinityaddition}, since all variables are infinite we can eliminate all explicit sums, using equivalences such as $\mathsf{t} = \mathsf{v}+\mathsf{w} \Leftrightarrow  (\mathsf{t}=\mathsf{v} \wedge \mathsf{v} \geq \mathsf{w})\vee (\mathsf{t}=\mathsf{w} \wedge \mathsf{w} \geq \mathsf{v}) $.

%The foregoing shows:

\begin{theorem} The definable relations on cardinal numbers in $\mathsf{MSO}(\#)$ are exactly the same as those definable in additive first-order logic. 
\end{theorem}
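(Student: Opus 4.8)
The plan is to prove the two inclusions separately, leaning on the normal form of Proposition~\ref{cardinal:normalform} for additive cardinal arithmetic and on the prenex normal form of Lemma~\ref{lemma:prenex} for $\mathcal{L}_\#^2$. For the easy direction---every relation on cardinals definable in additive first-order logic over $\langle C_{\aleph_\omega};+\rangle$ is definable in $\mathsf{MSO}(\#)$---I would first put the additive sentence into the normal form of Proposition~\ref{cardinal:normalform}, making it a finite disjunction of conjunctions $\delta\wedge\iota\wedge\phi$. Since $\mathcal{L}_\#^2$ is closed under Boolean combinations, it suffices to express each conjunct: $\delta$ (the finite/infinite classification of state-descriptions) via the infinity quantifier of Eq.~(\ref{infinity-quantifier}); $\phi$ (a linear set on the finite variables) via Lemma~\ref{lemma:semi-linear}; and $\iota$ (a conjunction of constraints $\mathsf{v}=s^k(\mathsf{u})$, $\mathsf{v}>s^k(\mathsf{u})$, $\mathsf{v}=\aleph_k$, $\mathsf{v}>\aleph_k$ on the infinite variables) using definability of successor---via $\forall X.(\#_yP(y)\succ\#_yX(y)\rightarrow\#_yQ(y)\succsim\#_yX(y))$---together with the analogous ``no cardinality strictly between'' tricks that define $\aleph_0$, the $\aleph_k$, and $>$. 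This yields an $\mathcal{L}_\#^2$ sentence defining the same relation, in the sense of Definition~\ref{define:define} adapted to cardinals.

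For the converse---every $\mathsf{MSO}(\#)$-definable relation on cardinals is additive first-order definable---I would begin by applying Lemma~\ref{lemma:prenex} to write an arbitrary $\mathcal{L}_\#^2$ sentence in prenex form $Q_1X_1\dots Q_nX_n.\,\varphi$ with $\varphi$ a first-order $\mathcal{L}_\#^1$ formula in the predicates $\mathbf{P},X_1,\dots,X_n$. By Theorem~\ref{normal} the matrix $\varphi$ is equivalent to a Boolean combination of constraints $T_1=T_2+m$, $T_1>T_2+m$ on cardinalities of state-descriptions; refining these with the infinity test $\mathsf{s}\geq\mathsf{s}+1$ we may present $\varphi$ in the shape $\delta\wedge\iota\wedge\phi$ of Proposition~\ref{cardinal:normalform}. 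The substantive work is to eliminate the second-order quantifiers one at a time, i.e.\ to show that sets in this shape are closed under existential projection of a single predicate variable. This is exactly the argument sketched just before the theorem statement: writing each state-description $S$ over the remaining variables as $(S\wedge Y)\vee(S\wedge\neg Y)$, one merges the pair of equations for $S\wedge Y$ and $S\wedge\neg Y$ into one equation for $S$---dropping the constraint entirely when $S\wedge Y$ is finite but $S\wedge\neg Y$ is infinite, combining coefficients when both are finite as in \S\ref{section:finiteSO}, and combining them when both are infinite, using the isomorphism $\langle\mathbb{N};0,s,>\rangle\cong\langle\{\aleph_k\};\aleph_0,s,>\rangle$ to treat the $\iota$-constraints as linear constraints on $\aleph$-indices. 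Since each projection step again produces a formula defining a set described by some $\delta'\wedge\iota'\wedge\phi'$, after removing all $n$ quantifiers we land in the normal-form language of Proposition~\ref{cardinal:normalform}, which translates back into an equivalent additive first-order sentence.

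The main obstacle I expect is the bookkeeping in the projection step: making the case split on the finite/infinite status of $S\wedge Y$ and $S\wedge\neg Y$ watertight, and verifying that merging coefficients preserves the meaning of the linear system. This is where one must argue carefully in both directions---that carving a finite (or infinite, of prescribed $\aleph$-index) portion of an infinite set of any prescribed size is always possible, so the dropped constraints are genuinely vacuous, and conversely that any solution to the projected system lifts to an actual witnessing subset $A\subseteq D$. A secondary delicate point is the final elimination of explicit addition among infinite variables via equivalences such as $\mathsf{t}=\mathsf{v}+\mathsf{w}\Leftrightarrow(\mathsf{t}=\mathsf{v}\wedge\mathsf{v}\geq\mathsf{w})\vee(\mathsf{t}=\mathsf{w}\wedge\mathsf{w}\geq\mathsf{v})$, whose soundness rests on the absorptive behavior of infinite cardinal addition; one must confirm that after this step the output really lies in the augmented language of Proposition~\ref{cardinal:normalform}.
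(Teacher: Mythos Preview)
Your proposal is correct and follows essentially the same approach as the paper: both directions proceed exactly as you describe, with the easy direction reading off expressibility of each conjunct $\delta,\iota,\phi$ from the normal form of Proposition~\ref{cardinal:normalform}, and the hard direction reducing via Lemma~\ref{lemma:prenex} and Theorem~\ref{normal} to closure under existential projection of a single predicate variable, handled by the finite/infinite case split on $S\wedge Y$ versus $S\wedge\neg Y$ together with the absorptive elimination of infinite sums. The obstacles you flag---the bookkeeping in the projection step and the final elimination of addition among infinite variables---are precisely the points the paper treats, and your sketch of how to resolve them matches the paper's argument.
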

In effect, we have shown how to reduce a sentence $\varphi(\mathbf{P})$ in $\mathcal{L}_\#^2$ to an additive first-order formula $\alpha(\mathbf{x})$, with a variable $x_i$ in $\mathbf{x}$ corresponding to each state-description over $\mathbf{P}$. Moreover, $\varphi(\mathbf{P})$ is satisfiable if and only if $\exists \mathbf{x}.\alpha(\mathbf{x})$ is true in $\langle C_{\aleph_\omega};+ \rangle$. Thus, from decidability of the elementary theory of $\langle C_{\aleph_\omega};+ \rangle$ (see Theorem \ref{thm:decidablecardinal} in Appendix \ref{app:infinityaddition}) we obtain: 
\begin{corollary} \label{cor:mso} The validity problem for $\mathsf{MSO}(\#)$ is decidable.\vspace{-1ex}
\end{corollary}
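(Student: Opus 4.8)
The plan is to convert the semantic reduction already sketched in \S\ref{section:soinf} into an explicit decision procedure, and then appeal to decidability of the elementary theory of $\langle C_{\aleph_\omega};+\rangle$. Given an $\mathcal{L}_\#^2$ sentence $\varphi(\mathbf{P})$, I would first apply Lemma \ref{lemma:prenex} to obtain an equivalent prenex sentence $Q_1X_1\dots Q_nX_n.\,\psi(\mathbf{P},X_1,\dots,X_n)$ with $\psi$ a first-order $\mathcal{L}_\#^1$ sentence in the predicates $\mathbf{P},X_1,\dots,X_n$. By Theorem \ref{normal}, $\psi$ is effectively equivalent to a disjunction of conjunctions of constraints $T_1 = T_2 + m$ and $T_1 > T_2 + m$ over state-descriptions in these predicates; introducing one first-order variable $x_S$ (ranging over $\langle C_{\aleph_\omega};+\rangle$) for each such state-description $S$ turns $\psi$ into an additive first-order formula $\alpha_\psi(\mathbf{x})$.

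Next I would eliminate the second-order quantifier block from the inside out. An existential $\exists X_i$ corresponds exactly to the ``projection'' operation analyzed in \S\ref{section:soinf}: after first recording in a $\delta'$-part which state-descriptions $S$ over the remaining predicates are forced to be finite (namely, those with both $S\wedge X_i$ and $S\wedge\neg X_i$ finite), one merges the constraint on $|S\wedge X_i|$ and the constraint on $|S\wedge\neg X_i|$ into a single constraint on $|S|$ — dropping the constraint outright when one of the two parts is infinite — thereby obtaining an additive formula over state-descriptions not mentioning $X_i$; this is precisely $\exists x_{S\wedge X_i}\dots$ in the additive theory. A universal $\forall X_i$ is handled dually, e.g. by pushing negations through and treating it as $\neg\exists X_i\neg$. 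Iterating over all $n$ second-order quantifiers, and using the isomorphism between $\langle\{\aleph_k\}_{k\in\mathbb{N}};\aleph_0,s,>\rangle$ and $\langle\mathbb{N};0,s,>\rangle$ together with the observation (as in Fact \ref{fact:successor}) that only the order type of infinite cardinalities is ever detectable, yields an additive first-order sentence $\alpha_\varphi$ such that $\varphi(\mathbf{P})$ is valid if and only if $\alpha_\varphi$ holds in $\langle C_{\aleph_\omega};+\rangle$. Finally, by Theorem \ref{thm:decidablecardinal} the elementary theory of $\langle C_{\aleph_\omega};+\rangle$ is decidable, so $\alpha_\varphi$ can be decided, and hence so can validity of $\varphi$.

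I expect the main obstacle to be purely one of bookkeeping rather than of ideas: checking that every step — prenexing, the normal-form rewrite of Theorem \ref{normal}, and above all the ``merge-or-drop'' implementation of second-order projection — is genuinely algorithmic and preserves (in)validity, and that restricting attention to cardinalities below $\aleph_\omega$ loses nothing. The delicate interface is between the finitary component (Presburger constraints, which must be carried along in a form that retains explicit sums) and the infinitary component (where sums can be removed using the equivalence $\mathsf{t}=\mathsf{v}+\mathsf{w}\Leftrightarrow(\mathsf{t}=\mathsf{v}\wedge\mathsf{v}\geq\mathsf{w})\vee(\mathsf{t}=\mathsf{w}\wedge\mathsf{w}\geq\mathsf{v})$): when a variable $X_i$ is projected out, each state-description must be correctly reclassified as finite or infinite and its constraint routed into the appropriate component, exactly as in \S\ref{section:soinf}. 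Once that correctness check is in hand, decidability follows immediately from the decidability of additive cardinal arithmetic.
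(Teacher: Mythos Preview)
Your proposal is correct and follows essentially the same route as the paper: reduce an $\mathcal{L}_\#^2$ sentence $\varphi(\mathbf{P})$ to an additive first-order formula $\alpha(\mathbf{x})$ (one variable per state-description), so that satisfiability of $\varphi$ amounts to truth of $\exists\mathbf{x}.\alpha(\mathbf{x})$ in $\langle C_{\aleph_\omega};+\rangle$, and then invoke Theorem~\ref{thm:decidablecardinal}. The only cosmetic difference is that the paper phrases the reduction via satisfiability rather than validity, and it does not spell out the quantifier-by-quantifier bookkeeping you describe, instead simply citing the projection argument of \S\ref{section:soinf} as having already effected the translation.
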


\vspace{0.5ex}

\subsection{Addendum: Notions of Definability}\label{twodefinitions} In the preceding results for the systems $\mathsf{MFO}(\#)$ and $\mathsf{MSO}(\#)$ we adopted an intuitive notion of what might be called \emph{direct definability} of arithmetical predicates by formulas of our combined count logics. Stated more precisely, recall that this notion works as follows:

\defdef*%{definition} \label{define:define}

Adopting this definition has some general consequences. The following observation holds for all logical systems considered in this paper, whether first-order or second-order, and more generally, for \emph{any counting logic having a decidable model-checking problem}. 

\begin{fact} All directly definable arithmetical predicates are decidable. \label{fact1} \end{fact}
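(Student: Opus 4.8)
The plan is to unwind the definition of direct definability and observe that membership in a directly definable set reduces to a model-checking query. Suppose $\varphi$ is a sentence in one of our counting languages that defines $\mathcal{V}\subseteq\mathbb{N}^n$ in the sense of Definition \ref{define:define}, with respect to state-descriptions $S_1,\dots,S_n$ over predicates $\mathbf{P}$. To decide whether a given tuple $[c_1,\dots,c_n]\in\mathbb{N}^n$ lies in $\mathcal{V}$, the idea is simply to \emph{build a witnessing finite model} $\mathcal{M}_{\mathbf c}$: take a domain partitioned into regions, one region of size $c_i$ realizing each state-description $S_i$, and all other state-descriptions empty. By the defining equivalence (\ref{eq:df}), $[c_1,\dots,c_n]\in\mathcal{V}$ if and only if $\mathcal{M}_{\mathbf c}\vDash\varphi$.

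The remaining step is to note that $\mathcal{M}_{\mathbf c}$ is a finite structure that can be constructed effectively from the input $[c_1,\dots,c_n]$ (its size is $\sum_i c_i$, encoded in unary or binary as one prefers), and that checking $\mathcal{M}_{\mathbf c}\vDash\varphi$ is exactly an instance of the model-checking problem for the logic in question. By hypothesis the model-checking problem is decidable, so composing ``construct $\mathcal{M}_{\mathbf c}$'' with ``check $\mathcal{M}_{\mathbf c}\vDash\varphi$'' yields a decision procedure for membership in $\mathcal{V}$. Hence every directly definable arithmetical predicate is decidable. Essentially the same argument applies in the infinitary systems $\mathsf{MFO}(\#)$ and $\mathsf{MSO}(\#)$: since the state-descriptions whose cardinalities we are constraining range over natural-number values in Definition \ref{define:define}, the witnessing model is still finite, and decidability of model-checking suffices.

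There is no real obstacle here; the only point requiring a word of care is that one must exhibit a \emph{canonical} finite model realizing a prescribed vector of region-sizes, and check that it is unique up to the equivalence that matters (any two models with the same region cardinalities agree on $\varphi$, since $\varphi$ defines $\mathcal{V}$ purely through those cardinalities). That uniqueness is immediate from (\ref{eq:df}) itself: the truth value of $\varphi$ is pinned down by the tuple of cardinalities, so the particular choice of witnessing model is irrelevant. The mild subtlety worth flagging — and the reason the statement is phrased for \emph{any} counting logic with decidable model-checking rather than proved case by case — is that decidability of the \emph{validity} problem is neither needed nor assumed; model-checking alone, applied to the explicitly constructed $\mathcal{M}_{\mathbf c}$, does all the work.
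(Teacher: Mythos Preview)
Your proposal is correct and follows essentially the same approach as the paper: construct the canonical finite model with the prescribed region cardinalities (all other zones empty) and reduce membership in $\mathcal{V}$ to a single model-checking query, which is decidable by hypothesis. Your write-up is somewhat more detailed than the paper's brief sketch, but the argument is the same.
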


\begin{proof}[Proof sketch] Suppose $\varphi$ defines $n$-ary numerical predicate $\mathcal{V}$. To check whether a given tuple $[m_1,\dots,m_n]$ of numbers is in $\mathcal{V}$ it suffices to consider the (unique) model with these cardinalities for the ``zones'' given by state descriptions $S_1,\dots,S_n$, with all other zones empty. By assumption, it is decidable  whether $\varphi$ is true in this model. %Let a formula $\varphi$ define an arithmetical predicate $P(\textbf{n})$ with some assignment of coding state descriptions $S_n$ to the numerical variables \textbf{n} in their given order. By the above definition, $P(\textbf{n}$ holds iff  $\varphi$  holds in any model where the coding state descriptions have the listed cardinalities \textbf{n}. But this is the case iff $P(\textbf{n}$ holds \emph{in the specific model} where the coding state descriptions have the listed cardinalities while all other state descriptions have empty denotations. Then the decidability of model-checking for $\varphi$  yields the result. 
\end{proof}

%Of course, this result does not settle which decidable arithmetical predicates are captured by our logics as their strength increases, an issue we leave for further investigation.

 However, other notions of definable can be motivated as well. For instance:
 \begin{definition}[Trace definability] \label{def:trace} Let us say that $\varphi$ \emph{trace defines} $\mathcal{V}$ just in case the ``trace'' of $\varphi$ is $\mathcal{V}$: for every tuple $[m_1,\dots,m_n]$ in $\mathcal{V}$ there is \emph{some} model $\mathcal{M}$ such that $\mathcal{M} \vDash \varphi$ with $m_i = |S_i|_{\mathcal{M}}$ for all $i \leq n$. \end{definition} %Much of our subsequent discussion would not be affected by this change, but we restrict attention to Def. \ref{define:define} for now. 
 In contrast with direct definability, a trace-definable arithmetical predicate $P$ holds if some finite model makes its defining formula $\varphi$ true, where the non-coding state descriptions may now define non-empty subsets of the domain in a way that is crucial to the truth of $\varphi$. Given this, it is easy to see the following for our logics, and again the larger class mentioned earlier.

\begin{fact} All trace-definable arithmetical predicates are recursively enumerable.  \label{fact2} \end{fact}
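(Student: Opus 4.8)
The plan is to exhibit a semi-decision procedure enumerating $\mathcal{V}$, in the same spirit as the decision procedure behind Fact~\ref{fact1}, but now \emph{searching} for a witnessing model rather than checking a single canonical one. Fix a sentence $\varphi$ that trace-defines $\mathcal{V}$, say over predicates $\mathbf{P}$, and let $S_1,\dots,S_n$ be the tracked state descriptions among the $2^{|\mathbf{P}|}$ state descriptions over $\mathbf{P}$. The first point to record is that, by permutation invariance (\S\ref{section:coreprinciples}), the truth value of $\varphi$ in a model depends only on the tuple of cardinalities it assigns to these finitely many state descriptions; consequently a model is presentable, up to $\varphi$-equivalence, by a finite tuple of cardinalities, and the class of such presentations is effectively enumerable --- the cardinalities being natural numbers in the finitary systems $\mathsf{L}^\phi(\#)$, and ranging over $C_{\aleph_\omega} = \mathbb{N}\cup\{\aleph_k : k<\omega\}$ in the infinitary systems $\mathsf{MFO}(\#)$ and $\mathsf{MSO}(\#)$, still a countable and effectively enumerable set.

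Given this, the procedure is a straightforward dovetailing. I would enumerate the candidate model-presentations $\mathcal{M}_0,\mathcal{M}_1,\dots$; for each $\mathcal{M}_j$ I would run the model-checking algorithm --- decidable by hypothesis --- to test $\mathcal{M}_j \vDash \varphi$; and whenever this succeeds and all tracked regions $S_1,\dots,S_n$ are finite in $\mathcal{M}_j$, I would output $[\,|S_1|_{\mathcal{M}_j},\dots,|S_n|_{\mathcal{M}_j}\,]$. By the definition of trace-definability (Definition~\ref{def:trace}), the set of tuples so produced is exactly $\{[\,|S_1|_{\mathcal{M}},\dots,|S_n|_{\mathcal{M}}\,] : \mathcal{M}\vDash\varphi\}\cap\mathbb{N}^n = \mathcal{V}$, so $\mathcal{V}$ is recursively enumerable. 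One cannot in general do better, since already for $\mathsf{FO}^\phi(\#)$ the underlying finite-satisfiability problem fails to be co-recursively enumerable, by Trakhtenbrot's theorem.

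The only step that is not bookkeeping is the claim that every tuple in the trace is witnessed by a model drawn from the effectively enumerable candidate class --- equivalently, that the cardinalities of the \emph{untracked} regions need not grow beyond control. For the finitary systems this is immediate, as all models there are finite. For $\mathsf{MFO}(\#)$ and $\mathsf{MSO}(\#)$ it is precisely the content of the reduction to additive cardinal arithmetic over $\langle C_{\aleph_\omega};+\rangle$ from \S\ref{section:soinf}: a sentence of $\mathcal{L}_\#^2$, conjoined with the first-order assertions fixing $|S_i| = m_i$ for $i\leq n$, is satisfiable if and only if the associated additive formula has a solution within $C_{\aleph_\omega}$, so a witnessing model whose region cardinalities all lie in the countable set $C_{\aleph_\omega}$ always exists. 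This is the one place where the specific structure of the systems, rather than bare decidability of model-checking, is used; for a counting logic presented only abstractly one would instead fold this boundedness into the hypotheses, requiring that satisfiability (with imposed region cardinalities) always be witnessed within a fixed effectively enumerable family of models.
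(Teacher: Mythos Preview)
Your argument is correct and matches the paper's intended approach: the paper does not spell out a proof, merely remarking that ``it is easy to see'' the fact given decidable model-checking, with the implicit idea being exactly your enumerate-and-check procedure over (finitely presented) models. You actually go beyond the paper by treating the infinitary systems $\mathsf{MFO}(\#)$ and $\mathsf{MSO}(\#)$ with care---observing that the reduction to additive cardinal arithmetic over $C_{\aleph_\omega}$ guarantees a witness within an effectively enumerable class---whereas the paper's passing reference to ``some finite model'' suggests it is tacitly thinking of the finitary systems $\mathsf{L}^\phi(\#)$; your final paragraph correctly flags that the bare hypothesis of decidable model-checking is not by itself enough without such a boundedness assumption on the witnessing class.
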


The distinction between these two notions of definability has no effect on the above arithmetical definability results for the counting logics $\mathsf{MFO}(\#)$ and $\mathsf{MSO}(\#)$. The reason is, essentially, that the first-order theory of  $\langle\mathbb{N};+\rangle$ enjoys \emph{quantifier elimination}, where unbounded existential quantifiers over numbers can be replaced by bounded ones in suitable normal forms (systems of modulo-equalities, or semilinear forms). For instance, existentially quantifying over a semi-linear predicate in some variables again has an equivalent semi-linear definition. Put differently, recursive enumerability reduces to decidability in additive arithmetic. 

However, the two notions of definability diverge when we move to richer counting logics that also involve multiplication, where quantifier elimination no longer holds. We now turn to an extension of our counting logics where multiplication enters.

\section{Counting Sequences} \label{section:sequences} We have so far considered a base monadic system, $\mathsf{MFO}^\phi(\#)$, and a second-order extension, $\mathsf{MSO}^\phi(\#)$, both of which are essentially restricted to reasoning about \emph{sums} of numbers. The same theme carries over to the setting of infinite models, with $\mathsf{MFO}(\#)$ and $\mathsf{MSO}(\#)$. 
%Numerous applications of counting of course employ multiplication in some form. Reasoning with `many' is a typical example. What natural extension of these systems would encompass at least some multiplicative reasoning?
These previous systems involve unary variable binding operators, which count \emph{sets} of objects. But it is also very natural from a logical point of view to count \emph{sequences} of objects. Indeed, polyadic quantifiers are  ubiquitous across natural language; cf. \S\ref{section:gq} below. We now consider such an extension, essentially moving from sets to products of sets. We would like to understand what additional arithmetical capacity this affords. 

Let $\mathcal{L}^1_{\sharp}$ be the first-order monadic language with polyadic counting terms $\sharp_{\mathbf{x}}\varphi$, where $\mathbf{x} = x_1,\dots,x_k$ is a sequence of variables, which may appear in $\varphi$.  Then: \vspace{0.5ex}\begin{eqnarray*} \mathcal{M},s \vDash \sharp_{\mathbf{x}}\varphi \succsim \sharp_{\mathbf{y}}\psi & \mbox{ iff }  & |\{\mathbf{d} \in D^n: \mathcal{M},s^{\mathbf{x}}_{\mathbf{d}} \models \varphi\}| \geq |\{\mathbf{d} \in D^m: \mathcal{M},s^{\mathbf{y}}_{\mathbf{d}} \models \psi\}|. \end{eqnarray*} \vspace{0.5ex} Over finite models let us call the resulting system $\mathsf{MFO}^{\phi}(\sharp)$, and $\mathsf{MFO}(\sharp)$ for the general case.

It is known that polyadic counting over full first-order logic is more expressive than unary counting (i.e., our $\mathsf{FO}^\phi(\#)$; see, e.g., \citealt{Otto}, Example 4.13). In our monadic fragment this is particularly dramatic, as shown by the following example. 
\begin{example} Consider the earlier `Many $Q$s are $P$', defined in (\ref{eq:many}) and repeated here: \begin{eqnarray*}
\#_x\big(P(x) \wedge Q(x)\big) \times \#_x \top & \succ & \#_x P(x) \times \#_x Q(x).
\end{eqnarray*} We can express this as follows: $$ \sharp_{x,y} \big(P(x) \wedge Q(x)\big) \succ \sharp_{x,y} \big(P(x) \wedge Q(y)\big).  $$ In a finite model, the term $\sharp_{x,y} \big(P(x) \wedge Q(x)\big)$ gives us the product of the model's total cardinality and the region in which $P$ and $Q$ both hold, while the term on the right gives us the product of cardinalities for $P$ and $Q$. \end{example}
 Evidently $\mathsf{MFO}^{\phi}(\sharp)$ incorporates some reasoning about \emph{multiplication}. 
\begin{example} We can encode Pythagorean triples of cardinalities for state-descriptions $S_1,S_2,S_3$, i.e., the statement that $|S_1|^2 + |S_2|^2 = |S_3|^2$:
$$ \sharp_{x,y}\big((S_1(x) \wedge S_1(y)) \vee (S_2(x) \wedge S_2(y))\big) \approx \sharp_{x,y}(S_3(x) \wedge S_3(y)). $$ The multiplication again comes from taking products, while the addition in this example arises from disjunction, just as in our initial system $\mathsf{MFO}^\phi(\#)$. \end{example} 
The next examples involves a different combination of multiplication and addition:
\begin{example} \label{ex:complicatedone} This sentence expresses the constraint that $|P|\times 2 = |Q|^3 + 2$. 
\begin{gather*}
     \exists x,y.\Big(\neg (Q(x) \vee Q(y)) \wedge x \neq y \wedge \sharp_{u,v}\big( P(u) \wedge (v=x \vee v=y)  \big)\approx \\ \sharp_{z,u,v} \big((Q(z) \wedge Q(u) \wedge Q(v)) \vee z=u=v=x \vee z=u=v=y   \big)   \Big)
\end{gather*} Note the use of variables $x,y$ for both $\sharp$ terms. In the first, $\sharp_{u,v}\big( P(u) \wedge (v=x \vee v=y)  \big)$, we simply want to multiply the cardinality of $P$ by $2$---the fact that $Q$ holds of neither $x$ nor $y$ does not matter here. In the second $\sharp$ term we consider all triples of points satisfying $Q$, i.e., $|Q|\times|Q|\times|Q|$-many points, and we add two points $x,y$---here it is important that $Q$ holds of neither, since this guarantees that we indeed add $2$ to the product $|Q|^3$ in the second $\sharp$ term. 
\end{example}
For a visualization, see Figure \ref{fig:multiply}. 
All of these examples certainly go beyond what can be expressed in $\mathsf{MFO}^\phi(\#)$, and even $\mathsf{MSO}^\phi(\#)$. What is the full scope of $\mathsf{MFO}^{\phi}(\sharp)$?

\begin{figure} \centering 
   \begin{tikzpicture}
\draw [black,fill=blue!05,label=m2] (-3,.5) ellipse (2.2cm and .3cm);
\filldraw [label=p1] (-3.25,.5) circle (1.25pt);
\filldraw [label=p2] (-2.75,.5) circle (1.25pt);
\filldraw [label=p3] (-2.25,.5) circle (1.25pt);
\filldraw [label=p4] (-1.75,.5) circle (1.25pt);
\filldraw [label=p5] (-1.25,.5) circle (1.25pt);
\filldraw [label=p6] (-3.75,.5) circle (1.25pt);
\filldraw [label=p7] (-4.25,.5) circle (1.25pt);
\filldraw [label=p8] (-4.75,.5) circle (1.25pt);
\filldraw [label=x] (-3.5,3) circle (1.25pt);
\filldraw [label=y] (-2.5,3) circle (1.25pt);
\node at (-4,3) {$x$};
\node at (-2,3) {$y$};
\node at (0,1.75) {\huge{}$=$};
\node at (-5.5,.5) {$P$};
\node at (1.25,.5) {$Q$};
\node at (4.8,2.7) {copy};
\node at (4.8,2.3) {of $Q$};
\draw[-,thick,blue!50] (-3.25,.5) -- (-3.5,3);
\draw[-,thick,blue!50] (-2.75,.5) -- (-3.5,3);
\draw[-,thick,blue!50] (-2.25,.5) -- (-3.5,3);
\draw[-,thick,blue!50] (-1.75,.5) -- (-3.5,3);
\draw[-,thick,blue!50] (-1.25,.5) -- (-3.5,3);
\draw[-,thick,blue!50] (-3.75,.5) -- (-3.5,3);
\draw[-,thick,blue!50] (-4.25,.5) -- (-3.5,3);
\draw[-,thick,blue!50] (-4.75,.5) -- (-3.5,3);
\draw[-,thick,blue!50] (-3.25,.5) -- (-2.5,3);
\draw[-,thick,blue!50] (-2.75,.5) -- (-2.5,3);
\draw[-,thick,blue!50] (-2.25,.5) -- (-2.5,3);
\draw[-,thick,blue!50] (-1.75,.5) -- (-2.5,3);
\draw[-,thick,blue!50] (-1.25,.5) -- (-2.5,3);
\draw[-,thick,blue!50] (-3.75,.5) -- (-2.5,3);
\draw[-,thick,blue!50] (-4.25,.5) -- (-2.5,3);
\draw[-,thick,blue!50] (-4.75,.5) -- (-2.5,3);
\draw [black,fill=red!06,label=m2] (3,.5) ellipse (1.5cm and .3cm);
\draw (3,3) ellipse (1.5cm and .3cm);
\filldraw  (3.35,.5) circle (1.25pt);
\filldraw  (2.65,.5) circle (1.25pt);
\filldraw  (4.05,.5) circle (1.25pt);
\filldraw  (1.95,.5) circle (1.25pt);
\filldraw  (3.35,3) circle (1.25pt);
\filldraw  (2.65,3) circle (1.25pt);
\filldraw  (1.95,3) circle (1.25pt);
\filldraw  (4.05,3) circle (1.25pt);
\draw[-,thick,red!50] (3.35,.5) -- (3.35,3);
\draw[-,thick,red!50] (3.35,.5) -- (2.65,3);
\draw[-,thick,red!50] (3.35,.5) -- (4.05,3);
\draw[-,thick,red!50] (3.35,.5) -- (1.95,3);
\draw[-,thick,red!50] (2.65,.5) -- (3.35,3);
\draw[-,thick,red!50] (2.65,.5) -- (2.65,3);
\draw[-,thick,red!50] (2.65,.5) -- (4.05,3);
\draw[-,thick,red!50] (2.65,.5) -- (1.95,3);
\draw[-,thick,red!50] (1.95,.5) -- (3.35,3);
\draw[-,thick,red!50] (1.95,.5) -- (2.65,3);
\draw[-,thick,red!50] (1.95,.5) -- (4.05,3);
\draw[-,thick,red!50] (1.95,.5) -- (1.95,3);
\draw[-,thick,red!50] (4.05,.5) -- (3.35,3);
\draw[-,thick,red!50] (4.05,.5) -- (2.65,3);
\draw[-,thick,red!50] (4.05,.5) -- (4.05,3);
\draw[-,thick,red!50] (4.05,.5) -- (1.95,3);
\node at (0,-.6) {$\exists x,y.\big(x \neq y \wedge  \sharp_{u,v}(P(u) \wedge (v=x \vee v=y)) \approx \sharp_{u,v} (Q(u) \wedge Q(v))  \big)$};
\end{tikzpicture} 
    \caption{A visualization of the formula expressing that  $2$ times the number of $P$ points (blue) is exactly the number of $Q$ points squared (red), i.e., $ |P|+|P| = |Q|^2$. The formula asserts that the blue lines are equal in number to the red lines. This is a simplified version of Example \ref{ex:complicatedone} and of the more general construction in Lemma \ref{lemma:diophantus}. In the case pictured, $|P|=8$ and $|Q|=4$.}\label{fig:multiply}
\end{figure}

\subsection{Diophantine Inequalities}

To start, consider any polynomial inequality  \begin{equation} m_1(\mathbf{v}) + \dots + m_k(\mathbf{v}) \geq m_1'(\mathbf{v}) + \dots + m_j'(\mathbf{v}), \label{in-diophantus}\end{equation} where $m_1,\dots,m_k,m'_1,\dots,m'_j$ are all monomials in variables $\mathbf{v} = \mathsf{v}_1,\dots,\mathsf{v}_n$. Each monomial $m(\mathbf{v})$ is of the form $a \mathsf{v}_1^{e_1}\dots \mathsf{v}_n^{e_n}$, with $a,e_1,\dots,e_n$ all natural numbers and $a>0$. 

We would like to show that sentences in $\mathsf{MFO}^{\phi}(\sharp)$ express all the Diophantine inequalities of type (\ref{in-diophantus}). The first result generalizes the observations above:
\begin{lemma} Every Diophantine inequality can be expressed in $\mathsf{MFO}^{\phi}(\sharp)$. \label{lemma:diophantus}
\end{lemma}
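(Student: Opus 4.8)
The plan is to reduce the general Diophantine inequality \eqref{in-diophantus} to a conjunction of simpler assertions, each of which we already know how to encode, and then to show how polyadic counting lets us glue these assertions together. The basic building blocks are three: (i) for a single state-description $S$, the term $\sharp_{x_1,\dots,x_e}\big(\bigwedge_{i\le e}S(x_i)\big)$ denotes $|S|^e$, the $e$-th power of the cardinality of the corresponding region; (ii) a coefficient $a$ can be simulated by introducing $a$ distinct ``tag'' points $z_1,\dots,z_a$ lying outside all the relevant state-descriptions and forming a disjunction of $a$ disjoint copies of the monomial, one attached to each tag (just as in Example \ref{ex:complicatedone}, where the factor $2$ was produced by two fresh points $x,y$); (iii) a sum of monomials is handled by a disjunction of pairwise-disjoint formulas inside a single $\sharp$ term, exactly the device already used for sums of regions in $\mathsf{MFO}^\phi(\#)$ and for $|S_1|^2+|S_2|^2$ in the Pythagorean example. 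The crux is to arrange all the monomials on the left-hand side to be counted by tuples of one fixed length $N$, and likewise all those on the right by tuples of one fixed length $N'$, so that a single comparison $\sharp_{\mathbf{x}}(\dots)\succsim\sharp_{\mathbf{y}}(\dots)$ captures \eqref{in-diophantus}.

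The key technical step is this padding/homogenization. If a monomial $a\,\mathsf{v}_1^{e_1}\cdots\mathsf{v}_n^{e_n}$ has total degree $d = e_1+\dots+e_n$ strictly less than the ambient tuple length $N$, then naively $\sharp$-counting the matching tuples gives $a\cdot|S_1|^{e_1}\cdots|S_n|^{e_n}$ times $|D|^{N-d}$, which is wrong. The fix is to force the ``extra'' $N-d$ coordinates to take a fixed value, e.g.\ to equal one of the designated tag points, so that those coordinates contribute a factor of $1$ rather than $|D|$; concretely one conjoins clauses of the form $x_i = z$ for a chosen tag $z$ to the disjunct for that monomial, while keeping the disjuncts for different monomials pairwise disjoint by using distinct tags or distinct patterns of equalities. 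One must be a little careful that the tags introduced (finitely many, at most $\max(\sum a_\ell,\sum a'_\ell)$ of them plus a handful of coordinate fillers) are all pairwise distinct and outside every state-description occurring in the inequality; this is arranged with an outer block of existential quantifiers together with a $\mathsf{diff}$ conjunct and negated state-description conjuncts, precisely as in Example \ref{ex:complicatedone}. The crucial soundness point, again mirroring that example, is that a tag lying outside all relevant regions really does add exactly $1$ and not more, because distinctness guarantees no double-counting across disjuncts.

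So the argument runs: first fix $N := \max_\ell \deg(m_\ell)$ and $N' := \max_\ell \deg(m'_\ell)$, and further set $M := \max(N,N')$, counting both sides with $M$-tuples by padding the shorter side's coordinates with tags as well (this keeps the two $\sharp$ terms comparable as cardinalities). Second, for each monomial build its disjunct as described: a conjunction of $S_i$-clauses realizing the powers, coordinate-equality clauses realizing the padding, and tag-equality clauses realizing the coefficient (the coefficient $a$ spawns $a$ parallel sub-disjuncts distinguished by which tag block they use). Third, take the disjunction over all monomials on each side; since everything is pairwise disjoint by construction, the $\sharp$ term evaluates to exactly the corresponding polynomial value in the intended model. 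Fourth, prepend the existential quantifiers binding all tags, with the $\mathsf{diff}$ and outside-the-regions conjuncts. Verifying \eqref{eq:df}-style correctness is then a direct computation of cardinalities.

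I expect the main obstacle to be purely bookkeeping rather than conceptual: getting the disjointness of all the disjuncts simultaneously right while the padding and coefficient gadgets interact, and making sure the single outer existential block of tags is shared consistently between the left and right $\sharp$ terms (as in the reuse of $x,y$ across both terms in Example \ref{ex:complicatedone}) without accidentally coupling monomials that should be independent. Once the gadget for one monomial is pinned down and its cardinality verified in isolation, assembling the full sentence and checking it defines \eqref{in-diophantus} is routine. A clean way to organize the write-up is to first prove the special case of a single monomial equality $|P|\times a = |S_1|^{e_1}\cdots|S_n|^{e_n}$ (essentially the picture in Figure \ref{fig:multiply}), then note that sums and the passage from equality to inequality are handled by disjunction and by the same $\top$-tricks already available in $\mathsf{MFO}^\phi(\#)$.
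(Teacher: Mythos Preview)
Your proposal is essentially the paper's own argument: existentially quantify a block of pairwise-distinct tag points, use one dedicated coordinate whose value ranges over a monomial's own tag block to realize the coefficient (and simultaneously to make different monomials' disjuncts disjoint), predicate the remaining coordinates with the appropriate state-descriptions to realize the powers, pad unused coordinates to homogenize degrees, and take a single $\sharp$-comparison of the two disjunctions.

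Two small points where you diverge from the paper are worth noting. First, the paper pads via $x_i = x_1$ (tie the extra coordinates to an already-constrained one), not via $x_i = z$ for a tag; more importantly, the paper does \emph{not} require the tags to lie outside the state-descriptions at all. Disjointness across disjuncts is secured entirely by giving each disjunct its own block of tags in the coefficient coordinate $x_0$, so the ``outside every state-description'' clause is unnecessary---and it can actually fail, since state-descriptions partition the domain, so if the inequality happens to mention all of them there is no room for your tags. Second, you do not address what happens when the domain is too small to host the required number of distinct tags; the paper closes this gap by observing that there are only finitely many solution tuples with $|S_1|+\dots+|S_n|$ below the tag count, and disjoining the main formula with explicit descriptions $|S_1|=b_1\wedge\dots\wedge|S_n|=b_n$ for each such small solution.
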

\begin{proof} Let $a^*$ be the sum of all the coefficients of $m_1,\dots,m_k,m_1',\dots,m_j'$, and let $e^*$ be the maximum over all the sums $\sum_{i \leq n} e_i$. Then our sentence will take the form: \begin{equation} \exists \mathbf{z}.\Big(\mathsf{diff}(\mathbf{z}) \wedge \sharp_{\mathbf{x}} \bigvee_{1\leq i \leq k} \alpha_i \succsim \sharp_{\mathbf{x}} \bigvee_{1 \leq i \leq j} \beta_i\Big), \label{eq:} \end{equation}\vspace{0.5ex} with $\mathbf{z} = z_1,\dots,z_{a^*}$ and $\mathbf{x} = x_0,x_1,\dots,x_{e^*}$. We need to ensure that each tuple of values for $\mathbf{x}$ satisfies at most one of the $\alpha_i$ formulas (and same for the $\beta_i$ formulas), and that each $\alpha_i$ contributes exactly $a \times |S_1|^{e_1}\times \dots \times |S_n|^{e_n}$ to the overall sum, when $m_i(\mathbf{v}) = a \mathsf{v}_1^{e_1}\dots \mathsf{v}_n^{e_n}$. To that end let $\alpha_i$ be the conjunction of the following formulas:\vspace{0.5ex} \begin{enumerate}[label=(\roman*)]
    \item $S_1(x_1) \wedge \dots \wedge S_1(x_{e_1})$, with a similar conjunct for each of $S_2,\dots,S_n$, predicating $e_2,\dots,e_n$ variables, respectively.\vspace{0.5mm}
    \item For any remaining $x$ up to $x_{e^*}$, include a conjunct $x= x_1$.\vspace{0.5mm}
    \item As a final conjunct: $(x_0 = z_{i_1} \vee \dots \vee x_0 = z_{i_a})$, for $a$ variables $z_{i_1},\dots,z_{i_a}$ from among $z_1,\dots,z_{a^*}$, guaranteed unique to this disjunct $\alpha_i$. \label{third-diop}
\end{enumerate}\vspace{0.5ex}

The last conjunct \ref{third-diop} ensures that each $\alpha_i$ contributes $a$ multiplied by the number of tuples satisfying $|S_1|^{e_1}\times \dots \times |S_n|^{e_n}$, since each such tuple appears with exactly $a$ (unique) values of $x_0$. Defining the $\beta_i$s analogously produces a formula whose models capture precisely the same solutions as (\ref{in-diophantus}), provided the sum of these numbers is at least $a^*$. 

There may of course be solutions that together add up to less than $a^*$, in which case (\ref{eq:}) will fail; however, there will be at most finitely many. For each such solution $b_1,\dots,b_n$ we can simply disjoin (\ref{eq:}) with the statement $|S_1|=b_1 \wedge \dots \wedge |S_n|=b_n$, the latter being easily definable (even in $\mathsf{MFO}^\phi(\#)$). 
\end{proof}

%What exactly are the sets of solutions to (Boolean combinations of) inequality statements of the form (\ref{in-diophantus})? 
Conjunctions of inequalities in (\ref{in-diophantus}) give us the well studied class of Diophantine equations. The Matiyasevich-Robinson-Davis-Putnam (MRDP) Theorem shows that there can be no decision procedure to determine whether a given Diophantine equation has a solution. So:
\begin{proposition} The satisfiability problem for $\mathsf{MFO}^{\phi}(\sharp)$---that is, the problem of determining whether a given formula in $\mathcal{L}^1_{\sharp}$ has a finite model---is undecidable. 
\end{proposition}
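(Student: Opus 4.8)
The plan is to reduce from the solvability problem for Diophantine equations, undecidable by the MRDP Theorem, to finite satisfiability in $\mathcal{L}^1_{\sharp}$. Given an arbitrary Diophantine equation, first rewrite it in ``positive'' shape $p(\mathbf{v}) = q(\mathbf{v})$, where $p$ and $q$ are sums of monomials with positive natural-number coefficients, by moving any monomial with a negative coefficient to the other side. This is an effective transformation that preserves the set of solutions in $\mathbb{N}^n$. Such an equation holds exactly when both $p(\mathbf{v}) \geq q(\mathbf{v})$ and $q(\mathbf{v}) \geq p(\mathbf{v})$ hold, so it suffices to handle a conjunction of two inequalities of the form (\ref{in-diophantus}).

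Next, fix predicates $\mathbf{P}$ with state-descriptions $S_1, \dots, S_n$ designated to stand for the unknowns $\mathsf{v}_1, \dots, \mathsf{v}_n$. By Lemma \ref{lemma:diophantus}, each of $p \geq q$ and $q \geq p$ is defined, in the sense of Definition \ref{define:define}, by some $\mathcal{L}^1_{\sharp}$ sentence; call these $\varphi_{\geq}$ and $\varphi_{\leq}$. After alpha-renaming the auxiliary existentially bound variables of one of them so the two do not clash, set $\varphi := \varphi_{\geq} \wedge \varphi_{\leq}$. Since both conjuncts refer to the same state-descriptions $S_1, \dots, S_n$, the sentence $\varphi$ defines the intersection of the two solution sets, i.e.\ precisely the set of natural-number solutions of $p = q$.

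Finally, $\varphi$ has a finite model iff $p = q$ has a solution in $\mathbb{N}^n$: if $[b_1, \dots, b_n]$ is a solution, take the finite model in which $S_i$ has cardinality $b_i$ and all other state-descriptions are empty, which satisfies $\varphi$ by the definability property; conversely, any finite model $\mathcal{M}$ of $\varphi$ yields the solution $[|S_1|_{\mathcal{M}}, \dots, |S_n|_{\mathcal{M}}]$. Thus the map sending a Diophantine equation to $\varphi$ is a computable reduction from Hilbert's Tenth Problem, and finite satisfiability of $\mathcal{L}^1_{\sharp}$ is undecidable.

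Essentially all the substance has already been carried out in Lemma \ref{lemma:diophantus}; the only points needing care are the reduction of an equation to a pair of positive-coefficient inequalities and the routine bookkeeping of conjoining the two defining sentences over a shared set of state-descriptions while keeping auxiliary variables disjoint. One could additionally note that invoking sharper forms of MRDP the same argument gives undecidability already for the existential fragment, or for sentences of bounded $\sharp$-depth, but this is not needed for the statement as given.
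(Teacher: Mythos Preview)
Your proposal is correct and follows essentially the same approach as the paper: both reduce Hilbert's Tenth Problem via Lemma~\ref{lemma:diophantus}, encoding a Diophantine equation as the conjunction of the two inequalities $p(\mathbf{v}) \geq q(\mathbf{v})$ and $q(\mathbf{v}) \geq p(\mathbf{v})$ and then invoking MRDP. The paper's proof is a single sentence, whereas you spell out the positive-coefficient normalization, the variable-renaming, and the two directions of the model/solution correspondence; this extra care is welcome but does not constitute a different route.
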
 
Moreover, while it is possible to enumerate the satisfiable formulas in an effective way, the valid sentences of $\mathsf{MFO}^{\phi}(\sharp)$---those whose negations define equations with no solutions---are not computably enumerable. Therefore:
\begin{proposition} $\mathsf{MFO}^{\phi}(\sharp)$ is not computably axiomatizable. 
\end{proposition}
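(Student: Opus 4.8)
The plan is to reduce this to the (already-established) failure of recursive enumerability of the validities, via the standard principle that any computable axiomatization yields a computably enumerable theory. Concretely: suppose $\mathsf{MFO}^{\phi}(\sharp)$ had a computable axiomatization, i.e.\ a decidable (or merely c.e.) axiom set together with a decidable proof relation. Then one could dovetail through all finite proof-objects and output their conclusions, so the set of theorems — by assumption exactly the set of valid $\mathcal{L}^1_\sharp$-sentences — would be computably enumerable. Hence it suffices to show that the validities of $\mathsf{MFO}^{\phi}(\sharp)$ are \emph{not} c.e.

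For that step I would spell out the reduction already sketched above the statement. By Lemma~\ref{lemma:diophantus} every Diophantine inequality of the form (\ref{in-diophantus}) is expressible, and conjoining such sentences expresses arbitrary systems of Diophantine equations; this gives a \emph{computable} map $E \mapsto \varphi_E$ sending each Diophantine equation $E$ (in variables matched to state-descriptions $S_1,\dots,S_n$) to a sentence $\varphi_E$ that has a finite model iff $E$ has a solution in the natural numbers. Consequently $\neg\varphi_E$ is valid in $\mathsf{MFO}^{\phi}(\sharp)$ iff $E$ is \emph{unsolvable}. By the MRDP theorem the set of unsolvable Diophantine equations is not c.e.\ (it is $\Pi^0_1$-complete). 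Were the validities of $\mathsf{MFO}^{\phi}(\sharp)$ c.e., composing a semi-decision procedure for them with the computable map $E \mapsto \neg\varphi_E$ would semi-decide unsolvability of Diophantine equations — a contradiction. So the validities are not c.e., and by the first paragraph $\mathsf{MFO}^{\phi}(\sharp)$ admits no computable axiomatization.

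It is worth recording the complementary positive fact to make the picture sharp: the \emph{satisfiable} sentences of $\mathsf{MFO}^{\phi}(\sharp)$ \emph{are} c.e.\ (enumerate finite models and model-check, using decidability of model-checking), so the validities are genuinely co-c.e.\ but not c.e.; this asymmetry is precisely what rules out axiomatizability as opposed to merely decidability. There is no serious obstacle here beyond care in two places. First, one must confirm that $E \mapsto \varphi_E$ is genuinely computable — immediate from the uniformity of the construction in Lemma~\ref{lemma:diophantus}, including the finitely many ``small solution'' disjuncts, which are themselves produced effectively. Second, one should read ``computably axiomatizable'' in the standard sense that forces an r.e.\ theorem set (Craig's trick converts an r.e.\ axiom set to a recursive one, so this covers both conventions). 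The only conceptual point to state explicitly is the direction of the reduction: it is \emph{un}solvability of $E$ that corresponds to validity of $\neg\varphi_E$, so what we actually invoke is the $\Pi^0_1$-hardness of the Diophantine decision problem, not its (symmetric) r.e.-completeness.
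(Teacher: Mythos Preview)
Your argument is correct and is essentially the same as the paper's: the paper's proof is just the sentence preceding the proposition, observing that valid sentences are exactly those whose negations define Diophantine equations with no solutions and hence (by MRDP) are not computably enumerable. You have simply unpacked that sentence carefully, making the computability of the reduction $E\mapsto\varphi_E$ and the standard ``computable axiomatization $\Rightarrow$ c.e.\ theorems'' step explicit.
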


%states that the solutions to such equations are exactly the \emph{computably enumerable} (c.e.) sets. We can thus conclude:
%\begin{proposition} $\mathsf{MFO}^{\phi}(\sharp)$ can define any computably enumerable set.
%\end{proposition} It follows from this, for instance, that there is a formula in $\mathsf{MFO}^{\phi}(\sharp)$ that encodes exponentiation, $|P|=|Q|^{|R|}$, this being a computable three-place relation. But it also means that there will be formulas, even in a single predicate letter, whose class of models is not a decidable set. Consider any uncomputable c.e. set, say, the halting set $K$. There is a formula in one predicate variable $\varphi_K(P)$, such that the models of $\varphi_K(\top)$ are precisely those whose size is a member of $K$. So deciding whether $\varphi_K(\top)$ is true in a model would allow deciding $K$. Hence:

\subsection{Normal Forms} In the direction of a normal form for $\mathsf{MFO}^{\phi}(\sharp)$, a first observation is that a version of the invariance principle (\ref{invariance-main}) from \S\ref{section:normalform} holds in the present setting as well:\vspace{1mm} \begin{equation*} \exists \mathbf{x} \big(\alpha^{\mathbf{y}}(\mathbf{x}) \wedge \varphi(\mathbf{x})\big) \rightarrow \sharp_{\mathbf{x}} \big( \alpha^{\mathbf{y}}(\mathbf{x}) \wedge \varphi(\mathbf{x})\big) \approx \sharp_{\mathbf{x}} \alpha^{\mathbf{y}}(\mathbf{x}),
    \label{invariance:multiple}
\end{equation*}\vspace{1mm} where now $\alpha^{\mathbf{y}}(\mathbf{x})$ is a complete description of the list $\mathbf{x}$ of variables (relative to $\mathbf{y}$). 

By an analogous argument we can then show that every formula in $\mathcal{L}^1_{\sharp}$ is equivalent to one with no embedded $\sharp$ comparisons. Similar to Lemma \ref{lemma:depth1}, any $\sharp$ comparison embedded within a $\sharp$ comparison can be removed at the expense of a string of existential quantifiers.  

As in Theorem \ref{normal}, we have: 

\begin{theorem} The definable sets of $\mathsf{MFO}^{\phi}(\sharp)$ are exactly those definable by Boolean combinations of Diophantine inequalities. \label{thm:mfosharp}
\end{theorem}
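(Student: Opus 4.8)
The plan is to mirror the two-phase structure established for $\mathsf{MFO}^\phi(\#)$ in Theorem \ref{normal} and for $\mathsf{MSO}^\phi(\#)$ in Theorem \ref{thm:semi-lin}: first reduce an arbitrary $\mathcal{L}^1_\sharp$ sentence to a ``normal form'' consisting of Boolean combinations of depth-$1$ polyadic count comparisons, then show (a) each such normal form defines a Boolean combination of Diophantine inequalities, and (b) conversely, by Lemma \ref{lemma:diophantus}, every Boolean combination of Diophantine inequalities is already definable. Direction (b) is essentially done — Lemma \ref{lemma:diophantus} gives every single Diophantine inequality, and $\mathcal{L}^1_\sharp$ closes under Booleans — so the real work is direction (a).

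For the reduction to normal form, I would first invoke the polyadic invariance principle (\ref{invariance:multiple}) stated just above, which lets us pull first-order quantifiers out of the scope of a $\sharp$-term and, by the case analysis exactly as in Lemma \ref{lemma:depth1}, strip any $\sharp$-comparison nested inside another $\sharp$-comparison, trading each level of embedding for a block of unembedded existential quantifiers. So every sentence becomes a Boolean combination of formulas $\exists \mathbf{y}.\big(\alpha(\mathbf{y}) \wedge \bigwedge_t \gamma_t\big)$ where each $\gamma_t$ is a depth-$1$ polyadic count comparison $\sharp_{\mathbf{x}}\varphi \succsim \sharp_{\mathbf{x'}}\psi$ with $\varphi,\psi$ quantifier-free and $\#$-free. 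The second step is to analyze such a depth-$1$ comparison relative to a complete description $\alpha(\mathbf{y})$ of the individual variables $\mathbf{y}$: putting $\varphi$ in disjunctive normal form over state-descriptions of each coordinate of $\mathbf{x}$ together with the equalities tying coordinates of $\mathbf{x}$ to each other and to the $\mathbf{y}$'s, each disjunct contributes a monomial in the cardinalities $|S_1|,\dots,|S_{2^n}|$ — a coordinate forced into region $S_i$ (and distinct from all named points) multiplies by $|S_i|$, a coordinate forced equal to a fixed named variable or to an earlier coordinate multiplies by $1$. Summing over the (finitely many, pairwise-exclusive after refining the DNF) disjuncts gives a polynomial with nonnegative integer coefficients; the value of $\sharp_{\mathbf{x}}\varphi$ is exactly that polynomial evaluated at the region cardinalities, possibly with small constant corrections coming from the named variables $\mathbf{y}$. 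Hence $\gamma_t$, in context $\alpha(\mathbf{y})$, asserts a Diophantine inequality $P(|S_1|,\dots) \geq P'(|S_1|,\dots) + c$, and the outermost $\exists \mathbf{y}$ with its $\alpha(\mathbf{y})$ descriptions only affects the small additive constants $c$ (how many named points sit in each region), so existentially quantifying them out yields a finite disjunction over those constants — still a Boolean combination of Diophantine inequalities. Tracking the exact range of the constants, exactly as the bound $m \le 2k$ is tracked in Theorem \ref{normal}, is what makes this a genuine normal-form statement rather than a mere definability equivalence.

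The main obstacle I anticipate is bookkeeping in the second step: making precise how the named individual variables $\mathbf{y}$ interact with the polyadic count. Unlike the monadic case, a single coordinate $x_i$ can be constrained both by a state-description and by equalities $x_i = y_j$ or $x_i = x_{i'}$, and different disjuncts of the DNF can ``collapse'' coordinates in different ways; one must argue that, after refining to a pairwise-disjoint DNF, the contribution of each disjunct is cleanly multiplicative in the region cardinalities, and that overlaps between the $\mathbf{y}$-points and the regions only ever perturb the count by a bounded constant (compare the $+2k$ phenomenon in Figure \ref{fig:variables} and Example \ref{ex:complicatedone}, where placing points outside $Q$ lets one add exactly $2$ to a product). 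Once that combinatorial lemma is in hand, closure under Booleans and the projection step for $\exists \mathbf{y}$ are routine, and combining with Lemma \ref{lemma:diophantus} closes both directions.
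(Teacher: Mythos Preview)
Your proposal is correct and follows essentially the same two-phase strategy as the paper's proof sketch: reduce to depth-$1$ via the polyadic invariance principle (\ref{invariance:multiple}), then analyze depth-$1$ comparisons in context $\alpha(\mathbf{y})$ as polynomial constraints on region cardinalities, closing under Booleans and first-order $\exists$. One small inaccuracy worth flagging: a complete description $\alpha^{\mathbf{y}}(\mathbf{x})$ with several mutually distinct coordinates in the same region $S_i$ contributes a \emph{falling factorial} $(|S_i|-c)(|S_i|-c-1)\cdots$ rather than a pure monomial $|S_i|^k$, so the effect of the named $\mathbf{y}$'s is a lower-degree polynomial correction rather than merely an additive constant $c$ --- but since falling factorials expand to integer polynomials, this does not leave the Diophantine format or alter the structure of your argument.
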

\begin{proof}[Proof Sketch] We want to show more generally that every formula of $\mathcal{L}^1_{\sharp}$ in free variables $\mathbf{y}$ is equivalent to a disjunction \begin{equation} \bigvee \big(\alpha(\mathbf{y}) \wedge (\sigma)_{\alpha(\mathbf{y})}\big),
    \label{eq:variable-normal}
\end{equation} where $\alpha(\mathbf{y})$ ranges over possible descriptions of $\mathbf{y}$, and $\sigma$ is a conjunction of (strict and weak) Diophantine inequalities (\ref{in-diophantus}). Specifically, each disjunct is such that, for all $s$: \begin{eqnarray} \mathcal{M},s \vDash \alpha(\mathbf{y}) \wedge  (\sigma)_{\alpha(\mathbf{y})} & \Rightarrow & \mathcal{M}\mbox{ satisfies the description }\sigma. \label{eq:normalform-equiv1}
\end{eqnarray}

Here in the preliminary reduction to formulas with  single-count depth, the earlier method appealing to Automorphism Invariance works essentially as before, be it that the invariance now refers to tuples of objects in regions, leading to the extraposition of cases involving a prefix of one or more existential quantifiers for tuples of objects.

Next, as in the proof of Theorem \ref{normal}, we show that every formula is equivalent to one of the form (\ref{eq:variable-normal}) satisfying (\ref{eq:normalform-equiv1}) by induction on the quantifier depth of formulas. In the base case, with no quantifiers and just a single $\sharp$-comparison, our normal forms will be disjunctions of conjunctions $\alpha(\mathbf{y}) \wedge (\sigma)_{\alpha(\mathbf{y})}$ where $(\sigma)_{\alpha(\mathbf{y})}$ takes the form: \begin{equation} \bigwedge \Big(  \sharp_{\mathbf{x}} \bigvee \alpha_i^{\mathbf{y}}(\mathbf{x}) \succsim \sharp_{\mathbf{x}} \bigvee \alpha_j^{\mathbf{y}}(\mathbf{x})  \Big) . \nonumber
\end{equation} It is straightforward to check that each such disjunct corresponds to a set of Diophantine inequality constraints satisfying (\ref{eq:normalform-equiv1}). 

The inductive case is essentially as in the proof of Theorem \ref{normal}, reducing to the claim that normal forms in (\ref{eq:variable-normal}) are closed under existential quantification. The Diophantine form arises as follows here: products of sizes of regions are needed because of the multiple counting, the exponents stay fixed by the arity of the multiple count operators in the formula, and apart from this, a fixed tuple of objects distributed over regions just contributes fixed numerical factors which do not go beyond the Diophantine format.
\end{proof}

\begin{remark} \label{remark:trac} Recall the notion of trace definability given in Def. \ref{def:trace}. In light of Theorem \ref{thm:mfosharp} and the MRDP Theorem, it is easy to see that the trace definable sets in $\mathsf{MFO}^\phi(\sharp)$ are exactly the recursively enumerable sets. In other words, in light of Fact \ref{fact2}, $\mathsf{MFO}^\phi(\sharp)$ is already as expressive as it can be when it comes to systems whose model-checking problem is decidable. 
\end{remark}

\subsection{Second-Order Extensions} The jump from $\mathsf{MFO}^\phi(\#)$ to $\mathsf{MSO}^\phi(\#)$, incorporating second-order quantification, was relatively minor. Arithmetically speaking, it simply allowed ``filling out'' the class of linear inequalities we were able to encode. Given that $\mathsf{MFO}^{\phi}(\sharp)$ already defines all Diophantine sets (Lemma \ref{lemma:diophantus}), and predicates defined in any of these systems will be decidable (Fact \ref{fact1}), it is an interesting question what class of arithmetical predicates will be defined by the second order extension $\mathsf{MSO}^{\phi}(\sharp)$.\footnote{\emph{Note:} The current section corrects an earlier statement in the published version of this paper.}  

Recall that the trace definable sets in $\mathsf{MSO}^{\phi}(\sharp)$ will again be the recursive enumerable sets (Remark \ref{remark:trac}). The more difficult question is what the \emph{directly definable} predicates are. In particular, how close might $\mathsf{MSO}^{\phi}(\sharp)$ take us toward the set of all decidable arithmetical predicates? It is well-known that the decidable arithmetical predicates cannot have a reursively enumerable set of indices (a simple diagonal argument will show this), and this precludes the existence of a syntactic format that captures all decidable arithmetical properties. Still there are some natural syntactic subclasses. One of the most well-studied is the first-order language $\Delta_0$ of `bounded arithmetic', which contains all equalities between polynomial terms, Boolean operations, and bounded numerical quantifiers of the form $\exists x < t. \varphi(x, \textbf{y})$ where $t$ is a polynomial term. Here are a few observations and conjectures.

First note that the sizes of the subsets of the object domain over which a second-order quantifier ranges are bounded by the sum of the sizes for the coding state descriptions of the variables \textbf{y}, as we can model-check the definition for a given predicate in the unique model described earlier where are all non-coding state descriptions have an empty denotation (recall the proof of Fact \ref{fact1}). This suggests the following:

\begin{conjecture} The arithmetical predicates that are directly definable in $\mathsf{MSO}^\phi(\sharp)$ can be defined in $\Delta_0$.  \end{conjecture}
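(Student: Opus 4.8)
The plan is to reduce any directly definable predicate of $\mathsf{MSO}^\phi(\sharp)$ to a $\Delta_0$ formula by combining the prenex normal form, the Diophantine normal form of Theorem~\ref{thm:mfosharp}, and automorphism invariance. First I would establish the $\sharp$-analogue of Lemma~\ref{lemma:prenex}: the tuple invariance principle lets us pull any second-order quantifier out of a $\sharp$-term, and a first-order universal scoping over a second-order quantifier is rewritten as a second-order universal restricted to singletons, exactly as in the $\#$ case. Hence every $\mathcal{L}^2_\sharp$ sentence $\varphi$ over $\mathbf{P}=\{P_1,\dots,P_\ell\}$ is equivalent to a prenex sentence $Q_1X_1\cdots Q_rX_r.\,\psi(\mathbf{P},X_1,\dots,X_r)$ whose matrix $\psi$ is first-order. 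Treating $X_1,\dots,X_r$ as predicate constants, $\psi$ is an $\mathcal{L}^1_\sharp$ sentence over the $\ell+r$ predicates $\mathbf{P},X_1,\dots,X_r$, so by Theorem~\ref{thm:mfosharp} it defines a Boolean combination $D(\mathbf{t})$ of Diophantine inequalities in the variables $\mathbf{t}=(t_J)_J$ indexed by the $2^{\ell+r}$ state-descriptions over $\mathbf{P},X_1,\dots,X_r$. Since $D$ is built from polynomial (in)equalities by Boolean operations, it is already a $\Delta_0$ formula.

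The heart of the argument is trading the second-order quantifiers for bounded number quantifiers. Fix the coding state-descriptions $S_1,\dots,S_n$ and the unique model $\mathcal{M}_{\mathbf{m}}$ with $|S_i|=m_i$ and all non-coding $\mathbf{P}$-zones empty (as in the proof of Fact~\ref{fact1}). Peel the quantifiers off from the inside out: having fixed $A_1,\dots,A_{j-1}$ as interpretations of $X_1,\dots,X_{j-1}$, these partition each zone $S_i$ into at most $2^{j-1}$ cells. By automorphism invariance — monadic models admit every permutation fixing each $P_i$ and each $A_h$ setwise, hence arbitrary permutations inside each cell — the truth value in $\mathcal{M}_{\mathbf{m}}$ of any subformula mentioning only $\mathbf{P},X_1,\dots,X_r$ depends on the set $A_j$ assigned to $X_j$ only through its \emph{profile}: the tuple of numbers $|A_j\cap C|$ for each current cell $C$, with $0\le|A_j\cap C|\le|C|$. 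So $Q_jX_j$ is faithfully simulated by $Q_j$ ranging over all profiles, a block of bounded first-order number quantifiers whose bounds are the cell sizes, themselves $\Delta_0$ terms in the previously introduced variables and in $m_1,\dots,m_n$, and in particular bounded by the polynomial term $m_1+\cdots+m_n$. After all $r$ quantifiers, we have variables $(c_{i,K})_{i\le n,\,K\subseteq\{1,\dots,r\}}$ for the full Venn partition of each $S_i$, constrained by $\sum_{K}c_{i,K}=m_i$; each region size $t_J$ equals one of these $c_{i,K}$ (or $0$ for the zones outside the coding region). Substituting these expressions into $D$ and prefixing the bounded quantifier blocks yields a $\Delta_0$ formula $\chi(m_1,\dots,m_n)$ with $\mathcal{M}_{\mathbf{m}}\models\varphi$ iff $\chi(m_1,\dots,m_n)$ holds, so $\chi$ defines the predicate.

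The main obstacle I expect is the faithfulness of the second-order elimination under quantifier alternation: one must argue carefully that, relative to the refinement already fixed by $A_1,\dots,A_{j-1}$, every subset with a given profile is obtainable from every other by an automorphism fixing $A_1,\dots,A_{j-1}$ (so that a $\forall X_j$, not merely an $\exists X_j$, collapses to a quantifier over profiles), and that all bounds introduced along the way stay polynomial in the \emph{input} variables $m_1,\dots,m_n$ rather than growing — which they do not, since cell sizes never exceed the domain size $m_1+\cdots+m_n$, and the $\sharp$-terms (whose values can be polynomially large) are entirely absorbed into the Diophantine matrix $D$ by Theorem~\ref{thm:mfosharp}. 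A secondary point is the $\sharp$-version of the prenex lemma, but that is routine given the tuple invariance principle already recorded. If these go through, the conjecture follows.
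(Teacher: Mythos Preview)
The paper does not prove this statement: it is labeled a \emph{Conjecture}, and the surrounding text offers only a one-line motivation, namely that in the canonical model of Fact~\ref{fact1} the second-order quantifiers range over subsets of a domain of size $m_1+\cdots+m_n$. The paper then immediately moves on, even remarking that ``presumably, we need only a restricted fragment of the full language $\Delta_0$.'' So there is no proof in the paper to compare against.

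Your proposal takes the paper's hint and turns it into what appears to be a complete argument. The three ingredients---a $\sharp$-analogue of Lemma~\ref{lemma:prenex}, Theorem~\ref{thm:mfosharp} for the first-order matrix, and the automorphism-based replacement of each $Q_jX_j$ by a bounded block of number quantifiers over profiles---all look sound. In particular, the point you single out as the main obstacle (faithfulness under quantifier alternation) does go through: given the partition fixed by $A_1,\dots,A_{j-1}$, any two candidate sets for $X_j$ with the same profile are exchanged by a permutation that fixes every cell setwise, hence is an automorphism of $(\mathcal{M}_{\mathbf m},A_1,\dots,A_{j-1})$ and preserves the truth of the entire remaining subformula $Q_{j+1}X_{j+1}\cdots\psi$, whatever its quantifier structure. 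The bounds introduced are all cell sizes, hence at most $m_1+\cdots+m_n$; the polynomial blow-up from the $\sharp$-arities lives only in the Diophantine matrix $D$, never in a quantifier bound. The $\sharp$-prenex step is indeed routine via the tuple invariance principle the paper records before Theorem~\ref{thm:mfosharp}.

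In short: you have sketched a proof where the paper only conjectures. Your route is exactly the one the paper's hint gestures at, but the paper stops well short of the profile translation and the alternation argument that you supply.
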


Presumably, we need only a restricted fragment of the full language $\Delta_0$.

It seems clear that venturing further into $\Delta_0$ requires expressive resources beyond $\mathsf{MSO}(\sharp)$. For instance, an arithmetical predicate $\exists x < y^{2}: x + y > z$ may not be bounded in $y + z$ because of its quadratic term. To define a predicate with a bound like this, we seem to need a non-monadic second-order counting logic beyond $\mathsf{MSO}(\sharp)$ which can also quantify over \emph{binary relations}, giving us the required exponent 2 without having to increase the domain size of a model. Higher fixed exponents in polynomial terms will require  higher predicate arities.

%Next, it is known that $\Delta_0$ can define exponentiation (REFERENCE), and we conjecture that exponentiation is not definable even in higher-arity extensions of $\mathsf{MSO}(\sharp)$. 

We leave a precise syntactic determination of the arithmetical defining power of $\mathsf{MSO}^\phi(\sharp)$ and its higher-order extensions as an interesting problem for further investigation.

\subsection{Infinitary Counting} For both systems, $\mathsf{MFO}^\phi(\sharp)$ and $\mathsf{MSO}^\phi(\sharp)$, we can consider their more general versions, $\mathsf{MFO}(\sharp)$ and $\mathsf{MSO}(\sharp)$, where we allow infinite models. It is shown in Appendix \ref{app:infinityaddition} that, similar to the purely additive case, cardinal arithmetic (say, up to $\aleph_{\omega}$) with addition and multiplication separates cleanly into the finitary and infinitary components, with the infinitary component effectively reducing to the first-order theory of $\langle \mathbb{N};> \rangle$.  Meanwhile, $\mathsf{MFO}(\sharp)$ and $\mathsf{MSO}(\sharp)$ can also express multiplicative relationships among cardinal numbers. %A similar (but in fact simpler) analysis to that given for $\mathsf{MSO}(\#)$ shows that $\mathsf{MFO}(\sharp)$ defines exactly the Boolean combinations of diophantine inequalities over cardinals less than $\aleph_{\omega}$, while $\mathsf{MSO}(\sharp)$ defines some (possibly larger) class of decidable relations. 

\section{An Alternative Route: Explicit Arithmetical Operators} \label{section:explicitarithmetic}

The sequence of systems so far studied was motivated primarily by natural operations in logic, viz. second-order quantification and polyadicity. We were then able to calibrate the arithmetical content of these operations over our base monadic system $\mathsf{MFO}(\#)$. Another approach to extending $\mathsf{MFO}(\#)$ in the spirit of logic and counting would rather strengthen the counting component in natural ways, in particular, by   allowing complex terms built directly out of arithmetical operations. Instead of comparisons involving terms like $\#_x\varphi$ we might allow comparing, for instance, sums of terms $\#_x\varphi + \#_y\psi$, and in general allow inequalities $\mathbf{t}_1 \succsim \mathbf{t}_2$ between complex terms. We can then study the consequences of different choices of complex term building operators. Most salient are of course addition and multiplication, and for these it turns out that, speaking abstractly, we would have arrived at the same systems. 

\subsection{Addition}

Let $\mathsf{MFO}(\#,+)$ be the system that results by allowing arbitrary finite sums of basic $\#$ terms. In other words we allow terms of the form $\#_{x_1}\varphi_1 + \dots + \#_{x_n}\varphi_n$. We already know that $\mathsf{MSO}(\#)$ can express all such inequalities. Conversely, the normal form result for $\mathsf{MSO}(\#)$ by means of linear inequalities shows that this system and  $\mathsf{MFO}(\#,+)$ are in fact equally expressive when it comes to defining relations on cardinal numbers.

Note also that the numerical reasoning involved in Fourier-Motzkin  (Remark \ref{rem:fourier}) can be transcribed into this language without any further ado. For instance, we can encode the crucial step \ref{fm-3} by a simple scheme: $$\big(|S_1| \succsim |S_2| + |S_3| \wedge |S_4|+|S_3| \succsim |S_2|\big) \rightarrow |S_1|+|S_4| \succsim |S_2|+|S_2|.$$ Thus, similar to analogous work on (rational) linear programming \citep{Fagin}, we could codify the steps of the algorithm into axioms of a formal system.  

\subsection{Multiplication}

From an arithmetical point of view, it is natural to allow arbitrary finite \emph{products} of basic $\#$-terms as well. How would such a system relate to our systems for counting sequences, such as $\mathsf{MFO}^\phi(\sharp)$ or $\mathsf{MSO}^\phi(\sharp)$? Needless to say, if we had explicit multiplication and addition we would be able to encode all arithmetical relations. %, which would give the same expressive power as $\mathsf{MSO}^\phi(\sharp)$ (thanks to Theorem \ref{thm:fullarithmetic}).

Similar to the case of $\mathsf{MSO}^\phi(\#)$, even without explicit addition we can simulate addition if we avail ourselves of second-order quantification. Indeed, let $\mathsf{MSO}^\phi(\#,\times)$ be the second-order monadic fragment with products of $\#$-terms (in fact, binary products suffice). Echoing observations dating back to \cite{Skolem}, we can thereby encode arbitrary Diophantine inequalities. Indeed, consider any such
%A useful observation, which essentially goes back to \cite{Skolem}, is that every Diophantine equation over the natural numbers can be described as a system of simple equations $\alpha\times \beta = \gamma$ and $\alpha + \beta = \gamma$, for $\alpha,\beta,\gamma$ either natural numbers or variables. Given a Diophantine equation of the form
\begin{equation} m_1(\mathbf{v}) + \dots + m_k(\mathbf{v}) \geq m_1'(\mathbf{v}) + \dots + m_j'(\mathbf{v}), \label{in-diophantus-2}\end{equation} over variables $\mathbf{v}$ corresponding to state-descriptions over $\mathbf{P}$. To express (\ref{in-diophantus-2}) in $\mathsf{MSO}^\phi(\#,\times)$ we introduce $k+j$ predicate variables $X_1,\dots,X_{k+j}$ and consider the statement that all $X_i$ are disjoint but that $\#(X_1 \vee \dots \vee X_k) \succsim \#(X_{k+1} \vee \dots \vee X_{k+j})$. Each monomial $m_i(\mathbf{v})$ can clearly be expressed as a product of $\#$-terms (possibly using first-order quantification) in the original variables $\mathbf{P}$, so we set each of these equal to the corresponding term $\# X_i$. %As usual, we disjoin all of this with an explicit description of the finitely many tuples in the set of solutions to (\ref{in-diophantus-2}), up to the lower bound determined by our formula. %Closing off under the Booleans gives us an alternative characterization of the same set $\Sigma^*_1$ of definable sets (Theorem \ref{l3-ce}).
To define the same set of solutions as (\ref{in-diophantus-2}) (in the sense of Definition \ref{define:define}, so that the formula includes no free predicate variables), we existentially quantify all the variables $X_1,\dots,X_{k+j}$. 
%Given second-order quantification we can repeat the same analysis as with $\mathsf{MSO}^\phi(\sharp)$ to obtain not just the Diophantine sets, but again all arithmetical sets: the state-descriptions over $\mathbf{P}$ can themselves be quantified arbitrarily. Hence, this system $\mathsf{MSO}^\phi(\#,\times)$ is precisely equivalent to $\mathsf{MSO}^\phi(\sharp)$. Analogously to $\mathsf{MSO}^\phi(\#,\times)$, restricting to vectors of length $2$ suffices. 

We leave more fine-grained analysis of fragments of these systems (e.g., the purely first-order fragment) for future explorations. 

\subsection{Other Arithmetical Operations} Aside from addition and multiplication, we could naturally consider a host of other common arithmetical functions and relations. As an example, unlike addition and multiplication, \emph{exponentiation} does not trivialize in the infinitary setting. Indeed, whereas in the finitary setting exponentiation is definable from addition and multiplication (e.g., by G\"{o}del's famous $\beta$ function), the Generalized Continuum Hypothesis can already be stated succinctly in $\mathsf{MSO}(\#)$ with exponentiation:\vspace{0.5ex} $$\forall X,Y,Z.\big( |X|\approx \mathbf{2}^{|Y|} \wedge |Y|\succsim \aleph_0 \wedge |Z|\succ |Y| \rightarrow |Z| \succsim |X|\big),\vspace{0.5ex} $$ where $\mathbf{2}$ abbreviates a set with cardinality two. %Thus, while $\mathsf{MSO}(\#)$ remains decidable and relatively simple, its extension with exponentiation 
It could be illuminating to study the properties of such a system across different models of set theory.

Another natural example is the relation of \emph{divisibility}, which also arises in the study of natural language quantifiers and automata hierarchies (see \S\ref{section:automata} and in particular Proposition \ref{prop:aut}\ref{prop:div}). A non-trivial observation in the finite case (due to Julia Robinson) is that first-order logic with divisibility and the successor function already provide the full suite of arithmetically definable relations \citep{Robinson}. At the same time, the \emph{existential fragment} of Presburger Arithmetic with divisibility is known to be decidable \citep{Lipschitz}, which leaves open the possibility that some of our systems may too remain relatively well behaved (recall Remark \ref{remark:soc}). We defer these and further explicit arithmetical excursions for another occasion. 
\subsection{Interim Summary} %Table \ref{table:overview2} lists the logical systems we have studied so far. 
In the present monadic setting, we assess each system's ability to reason about counting by analyzing the arithmetical content of its family of definable relations. All of these systems speak about unary predicates and their Boolean combinations, but we have been most interested in the abstract relations over \emph{cardinal numbers} that sentences in these systems can define, essentially taking cardinalities of state-descriptions (or more generally, non-overlapping predicates) as numerical variables. We have seen that the landscape here is quite rich, naturally calibrated by familiar first-order arithmetical languages. 

With this grasp of the pure monadic fragment, we now move on to consider well-behaved fragments employing \emph{relational} reasoning.

\section{Modal Logic of Binary Relations} \label{section:modal}

We started with adding counting operators to the full language of first-order logic, and found a system $\mathsf{FO}(\#)$ with very high complexity. We then moved our base level to monadic fragments, which were decidable and allowed us to see combinations of logic and counting at work in more controlled settings. Even so, many simple intuitive examples of reasoning with numerical aspects go further than this, and involve binary relations.

\begin{example} The well-known Pigeonhole Principle says that, if we put $n$ objects into $k < n$ boxes, then at least one box must contain two or more objects. For all particular values of $k, n$, this principle can be expressed in monadic first-order logic using unary predicates for boxes (recall (\ref{eq:fophp})). But for a generic formulation, we need to go to binary relations, which admit of the following elegant statement.  Consider any binary relation $R$ whose domain has a larger cardinality than its range. Then at least one object must have two or more predecessors in the relation. In formal notation, $\#_x\exists y. Rxy \succ \#_x\exists y. Ryx$ implies $\exists x. \exists^{\geq 2} y. Ryx$.
\end{example}

In this light, it makes sense to study count versions of  fragments of $\mathsf{FO}(\#)$ that allow for some reference to binary relations, though without running into the high complexity noted earlier with the full language $\mathcal{L}_\#$. To this end, we will explore some count versions  of {\it modal languages} in some detail, starting with a simplest case, and returning to further extensions suggested by the Pigeonhole Principle later. For the basic notions and results of modal logic needed in this section, we will refer to the literature at appropriate places.

%\vspace{1ex}

\subsection{Language and Semantics}  The language of \emph{propositional modal logic with counting}, $\mathcal{L}_\#^{\mathsf{ml}}$, has a syntax defined inductively as follows:\vspace{0.1ex}
$$\varphi \quad :=  \quad p \;\; \mid \;\; \neg \varphi \;\; \mid \;\; \varphi \wedge \psi \;\;\mid \;\; \#\varphi \succsim \#\psi \;. \vspace{0.1ex}$$
%with two mutually recursive components:
%\begin{itemize}
%\item Formulas \quad\quad\quad\quad\, $p \mid \neg\varphi\mid \varphi  \land\psi\mid \#\varphi \succsim  \#\psi \vspace{0.5ex}$ 
%\item  Numerical terms \quad $\#\varphi$
%\end{itemize}
The \emph{depth} of formulas is defined recursively as for our earlier logics, with standard clauses for atoms $p$ and Booleans, while $\mathsf{d}(\#\varphi \succsim  \#\psi) = \mbox{max}(\mathsf{d}(\varphi), \mathsf{d}(\psi)) + 1$.

The semantics of this language uses standard modal relational models $\mathfrak{M} = (W, R, V)$. At points in these models, we define truth of formulas, and term values in a mutual recursion. For a point $s$ we write $R_s = \{t: Rst\}$ for its  $R$-successors. 

Here are the two semantic key clauses: \vspace{.1in} 
\begin{itemize}
\item $\semantics{\#\varphi}^{\mathfrak{M}, s} = | R_s \cap \semantics{\varphi}^{\mathfrak{M}} |$ \vspace{.1in}
\item $\mathfrak{M}$, $s \models \#\varphi \succsim  \#\psi $ \, iff \, 
$\semantics{\#\varphi}^{\mathfrak{M}, s}  \geq \semantics{\#\psi}^{\mathfrak{M}, s}$ \vspace{.1in}
\end{itemize}
Given this, we define an \emph{existential modality} $\Diamond\varphi$ as $\#\varphi \succ \#\bot$, 
and using negation we can then also define its universal dual $\Box\varphi$. There is also some definability for the Booleans, as we saw with $\mathsf{MFO(\#)}$, but we will let this rest here. Call the resulting system $\mathsf{ML}(\#)$. As before, we denote the logic interpreted over finite models by $\mathsf{ML}^\phi(\#)$.

\begin{remark} As was the case with the variety of quantifiers in $\mathsf{MFO(\#)}$, there are also further natural counting modalities such as `in most successors', `in almost all successors', but we will not study their logic separately here.\end{remark}

As for expressive power, iterated counting in this simple modal language can produce non-trivial assertions. The reader might consider the formula $\#(\#\neg p \succ \# p) \succ \#(\#p \succsim \#\neg p)$, and determine what this says numerically, for instance, on finite trees. An example model is depicted in Figure \ref{fig:modal}. One can also enforce infinity of some sets of successors. %E.g., the modal formula $\#p \approx \#(p \lor q) \land \Diamond q$ requires an infinity of successors satisfying $p$.  TI (8/15/21): I changed this, since I believe the example above does not work.
E.g., the modal formula $\Diamond q \wedge \#(p \wedge \neg q) \approx \#(p \vee q)$ requires an infinity of successors satisfying $p$.

 \begin{figure} \centering 
   \begin{tikzpicture}
   
%   \draw [fill=red!08,draw=white] (-.5,1) ellipse (.8cm and .25cm);
   
%   \draw [fill=green!15,draw=white] (1,1) ellipse (.25cm and .25cm);
  
  \node (s1) at (0,0) [circle,draw=black,scale=0.5] {};
\node (s2) at (-1,1) [circle,draw=black,scale=0.5] {};
\node (s3) at (0,1) [circle,draw=black,scale=0.5] {};
\node (s4) at (1,1) [circle,draw=black,scale=0.5] {};
\node (s5) at (-2,2) [circle,draw=black,scale=0.5,fill=blue!50] {};
\node (s6) at (-1.5,2) [circle,draw=black,scale=0.5] {};
\node (s7) at (-1,2) [circle,draw=black,scale=0.5] {};
\node (s8) at (-.35,2) [circle,draw=black,scale=0.5] {};
\node (s9) at (.35,2) [circle,draw=black,scale=0.5] {};
%\node (s10) at (.5,2) [circle,draw=black,scale=0.5] {};

\node (s11) at (1,2) [circle,draw=black,scale=0.5,fill=blue!50] {};
\node (s12) at (1.5,2) [circle,draw=black,scale=0.5,fill=blue!50] {};
\node (s13) at (2,2) [circle,draw=black,scale=0.5] {};

\path (s1) edge[->] (s2);
\path (s1) edge[->] (s3);
\path (s1) edge[->] (s4);

\path (s2) edge[->] (s5);
\path (s2) edge[->] (s6);
\path (s2) edge[->] (s7);

\path (s3) edge[->] (s8);
\path (s3) edge[->] (s9);
%\path (s3) edge[->] (s10);

\path (s4) edge[->] (s11);
\path (s4) edge[->] (s12);
\path (s4) edge[->] (s13);
\end{tikzpicture} 
    \caption{An example model in which $\#(\#\neg p \succ \# p) \succ \#(\#p \succsim \#\neg p)$ holds at the root point. The points where $p$ holds are colored blue. The number of successors with more non-$p$ successors than $p$ successors is greater than the number of successors with at least as many $p$ successors as non-$p$ successors.}\label{fig:modal}
\end{figure}
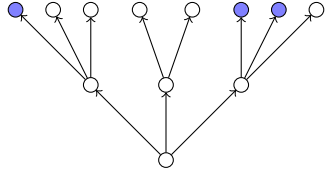

%For later reference, Let us say that $\mathfrak{M}$ is \emph{image-finite} if $R_s$ is finite for all points $s$ in $\mathfrak{M}$. {\bf Define also $\mathfrak{M} \mid n$ somewhere around here? }

\subsection{Some Basic Model Theory}

Here are some invariance properties of our modal counting language that are useful for studying its expressive power.

A \emph{generated submodel} of a modal model is a submodel that is closed under taking $R$-successors (see, e.g., \citealt{Blackburn2001}). Given the ``forward-looking'' nature of the modal counting language along the order $R$, the following is a counterpart of the analogous invariance property for the basic modal language.

\begin{proposition} \mbox{(a)} Formulas of $\mathcal{L}_\#^{\mathsf{ml}}$ are invariant for generated submodels, \mbox{(b)} Terms of $\mathcal{L}_\#^{\mathsf{ml}}$  have the same value in generated submodels.
\end{proposition}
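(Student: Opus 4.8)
The plan is to prove both parts simultaneously by induction on the structure of formulas and terms of $\mathcal{L}_\#^{\mathsf{ml}}$, exploiting the fact that the two semantic clauses — truth of formulas and values of terms — are defined by a single mutual recursion. Fix a modal model $\mathfrak{M} = (W,R,V)$ and a generated submodel $\mathfrak{M}' = (W',R',V')$, and fix an arbitrary point $s \in W'$. The claim to establish by induction is: (a) for every formula $\varphi$, we have $\mathfrak{M},s \models \varphi$ iff $\mathfrak{M}',s \models \varphi$; and (b) for every term $\#\varphi$, we have $\semantics{\#\varphi}^{\mathfrak{M},s} = \semantics{\#\varphi}^{\mathfrak{M}',s}$.

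First I would record the key structural observation about generated submodels: since $W'$ is closed under $R$-successors and $R' = R \cap (W'\times W')$, for any $s \in W'$ we have $R_s = R'_s$, i.e., the set of $R$-successors of $s$ computed in $\mathfrak{M}$ already lies inside $W'$ and coincides with the set of $R'$-successors computed in $\mathfrak{M}'$. This is the single fact that drives the count clause. Then the induction proceeds through the clauses of the grammar $\varphi ::= p \mid \neg\varphi \mid \varphi\wedge\psi \mid \#\varphi\succsim\#\psi$. The atomic case $p$ holds because $V'(p) = V(p)\cap W'$ and $s \in W'$. The Boolean cases $\neg\varphi$ and $\varphi\wedge\psi$ are immediate from the induction hypothesis for (a) on the subformulas. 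The term case: to show $\semantics{\#\varphi}^{\mathfrak{M},s} = \semantics{\#\varphi}^{\mathfrak{M}',s}$, unfold the definition to $|R_s \cap \semantics{\varphi}^{\mathfrak{M}}|$ and $|R'_s \cap \semantics{\varphi}^{\mathfrak{M}'}|$; using $R_s = R'_s \subseteq W'$, it suffices that $\semantics{\varphi}^{\mathfrak{M}}$ and $\semantics{\varphi}^{\mathfrak{M}'}$ agree on points of $W'$, which is exactly the induction hypothesis for (a) applied at each point $t \in R_s$ (note $t \in W'$). Hence the two sets are literally equal, so their cardinalities agree — and this works uniformly for finite and infinite cardinalities, so no special care is needed for $\mathsf{ML}^\phi(\#)$ versus $\mathsf{ML}(\#)$. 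Finally, for the comparison formula $\#\varphi\succsim\#\psi$, part (a) follows from part (b) applied to the two subterms: the truth condition is a numerical comparison of values that we have just shown to be identical in the two models.

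I do not expect any genuine obstacle here; the proposition is a routine locality/invariance argument of the kind standard for modal logic, and the only thing to be careful about is the bookkeeping of the mutual recursion — making sure that the inductive hypothesis is stated as a conjunction of (a) and (b) over all subformulas and subterms of strictly smaller complexity, so that the term clause may invoke (a) on its immediate subformula and the comparison clause may invoke (b) on its immediate subterms. One should also remark that the forward-directedness of the language is what makes generated submodels (closed under successors, not predecessors) the right notion: a $\#$-term at $s$ only ever inspects $R_s$ and, recursively, successors of those points, all of which remain inside a generated submodel. If desired, one can phrase the whole thing as: every formula and term of $\mathcal{L}_\#^{\mathsf{ml}}$ is ``local'' in the sense that its value at $s$ depends only on the submodel generated by $s$, from which invariance under passing to any generated submodel containing $s$ is immediate.
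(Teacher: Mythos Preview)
Your proposal is correct and follows exactly the approach the paper indicates: a straightforward mutual induction on formulas and numerical terms. You have supplied all the details the paper omits (the key observation $R_s = R'_s$ for $s \in W'$, and the need to quantify the induction hypothesis over all points of $W'$ so that the term clause may invoke (a) at each successor), and these are handled correctly.
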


\begin{proof} A straightforward mutual induction on formulas and numerical terms. \end{proof}

The  Finite Depth Property of modal logic  also goes through, where ``finite depth'' refers to the following cut-off versions of our models: $\mathfrak{M}|_n, s$ is the submodel of $\mathfrak{M}$ consisting of only those points that can be reached from $s$ in at most $n$ relational steps.

\begin{proposition} For any model $\mathfrak{M}$ and $\mathcal{L}_\#^{\mathsf{ml}}$-formula $\varphi$, $\mathfrak{M}, s \models \varphi$ iff  $\mathfrak{M}|_{\mathsf{d}(\varphi)}, s \models \varphi$. %, where $n = \mathsf{d}(\varphi)$ is the depth of $\varphi$. 
\end{proposition}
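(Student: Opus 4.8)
The plan is to prove the Finite Depth Property by a straightforward mutual induction on the complexity of formulas and terms of $\mathcal{L}_\#^{\mathsf{ml}}$, but with the induction hypothesis suitably strengthened so that it speaks about \emph{all} points and \emph{all} depths simultaneously. Concretely, I would prove the following stronger pair of claims by simultaneous induction on $\varphi$: for every model $\mathfrak{M}$, every point $s$, and every $n \geq \mathsf{d}(\varphi)$, we have (a) $\mathfrak{M},s \models \varphi$ iff $\mathfrak{M}|_n, s \models \varphi$, and (b) if $\varphi$ occurs as the argument of a count term, then $\semantics{\#\varphi}^{\mathfrak{M},s} = \semantics{\#\varphi}^{\mathfrak{M}|_n, s}$ whenever $n \geq \mathsf{d}(\varphi) + 1$. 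The reason to carry the ``all $n \geq \mathsf{d}(\varphi)$'' formulation rather than just $n = \mathsf{d}(\varphi)$ is that the recursion naturally drops the depth by varying amounts at each step (the two arguments of a $\succsim$ comparison may have different depths), so one needs the flexibility of the inequality to line the induction hypotheses up.

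The base case is $\varphi = p$: here $\mathsf{d}(p) = 0$, and since $\mathfrak{M}|_0$ contains $s$ itself with its valuation intact, $\mathfrak{M}, s \models p$ iff $\mathfrak{M}|_0, s \models p$, and a fortiori for any $n \geq 0$. The Boolean cases $\neg \varphi$ and $\varphi \wedge \psi$ are immediate from the induction hypothesis together with $\mathsf{d}(\neg\varphi) = \mathsf{d}(\varphi)$ and $\mathsf{d}(\varphi \wedge \psi) = \max(\mathsf{d}(\varphi), \mathsf{d}(\psi))$, since if $n$ bounds the max it bounds each. The key case is $\varphi = (\#\chi \succsim \#\theta)$, where $\mathsf{d}(\varphi) = \max(\mathsf{d}(\chi), \mathsf{d}(\theta)) + 1$. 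Here the crucial observation is that, since $\mathfrak{M}|_n, s$ is a submodel sharing the point $s$, we have $R_s^{\mathfrak{M}|_n} = R_s^{\mathfrak{M}}$ as long as $n \geq 1$ (every $R$-successor of $s$ is reachable in one step, hence retained); and moreover for each such successor $t$, the submodel of $\mathfrak{M}|_n$ generated by $t$ contains exactly the points reachable from $t$ in $\leq n-1$ steps, i.e.\ it coincides with $\mathfrak{M}|_{n-1}$ evaluated at $t$. Thus $\semantics{\#\chi}^{\mathfrak{M},s} = |R_s \cap \semantics{\chi}^{\mathfrak{M}}|$, and the membership $t \in \semantics{\chi}^{\mathfrak{M}}$ for $t \in R_s$ depends, by generated-submodel invariance (the preceding Proposition) applied inside, only on the $\leq n-1$-neighborhood of $t$ when $n - 1 \geq \mathsf{d}(\chi)$, i.e.\ when $n \geq \mathsf{d}(\chi) + 1$; since $n \geq \mathsf{d}(\varphi) \geq \mathsf{d}(\chi) + 1$, the induction hypothesis for $\chi$ (applied at each $t$ with bound $n-1$) gives $\semantics{\#\chi}^{\mathfrak{M},s} = \semantics{\#\chi}^{\mathfrak{M}|_n, s}$, and likewise for $\theta$. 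Comparing the two equal cardinalities on each side yields $\mathfrak{M},s \models \varphi$ iff $\mathfrak{M}|_n, s \models \varphi$.

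The main thing to get right — and the only place where care is needed — is bookkeeping the off-by-one between the depth of a count \emph{term} and the depth of the \emph{formula} in which it sits: evaluating $\#\chi$ at $s$ requires knowing $\chi$ at the successors of $s$, which ``uses up'' one relational step, so a count comparison of nominal depth $d+1$ genuinely only probes depth-$d$ information one step out. Making the induction hypothesis quantify over all sufficiently large $n$ rather than a single value absorbs this slack cleanly. With that in place, the statement of the Proposition is the special case $n = \mathsf{d}(\varphi)$ of clause (a). No further machinery is needed; the generated-submodel invariance already proved does the localization work, and everything else is routine.
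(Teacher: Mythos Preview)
Your overall strategy---mutual induction on formula complexity with the hypothesis strengthened to all $n \geq \mathsf{d}(\varphi)$ and quantified over all models and points---is correct and is exactly the standard route; the paper itself states the result without proof, treating it as routine.

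There is, however, one inaccurate step in the count-comparison case. You assert that for a successor $t$ of $s$, ``the submodel of $\mathfrak{M}|_n$ generated by $t$ contains exactly the points reachable from $t$ in $\leq n-1$ steps,'' i.e.\ coincides with the depth-$(n{-}1)$ cut-off at $t$. This fails whenever there are shortcuts from $s$: points more than $n-1$ steps from $t$ can still lie in $\mathfrak{M}|_n$ and remain reachable from $t$ there. Concretely, take $sRt$, $tRv$, $sRv$, $vRw$. Then $\mathfrak{M}|_2$ (cut off at $s$) contains $w$, and the generated submodel at $t$ inside $\mathfrak{M}|_2$ is $\{t,v,w\}$; but the depth-$1$ cut-off at $t$ in $\mathfrak{M}$ is only $\{t,v\}$. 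Since you use this identification to transfer the induction hypothesis from $t$ in $\mathfrak{M}$ to $t$ in $\mathfrak{M}|_n$, the argument as written has a gap.

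The repair is easy and uses precisely the generality you built in. Apply the induction hypothesis for $\chi$ \emph{twice}: once to the model $\mathfrak{M}$ at $t$, and once to the model $\mathfrak{M}|_n$ at $t$ (legitimate, since your hypothesis ranges over all models). This reduces both $\mathfrak{M},t\models\chi$ and $\mathfrak{M}|_n,t\models\chi$ to the truth of $\chi$ in the respective depth-$(n{-}1)$ cut-offs at $t$. Now observe that these two cut-offs coincide: any path of length $\leq n-1$ from $t$ in $\mathfrak{M}$ lies entirely within $\mathfrak{M}|_n$, since each point along it is at distance $\leq n$ from $s$. With this correction the proof goes through.
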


The following invariance property refers to the standard \emph{tree unraveling}  of arbitrary relational models yielding  tree-like models in basic modal logic \citep{Blackburn2001}.

\begin{proposition} (a) Formulas of $\mathcal{L}_\#^{\mathsf{ml}}$ are invariant for tree unraveling under the map taking finite branches to their end-points, (b) Terms of $\mathcal{L}_\#^{\mathsf{ml}}$  have the same value under tree unraveling at points related by this same map.
\end{proposition}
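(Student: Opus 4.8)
The plan is to prove invariance under tree unraveling by a mutual induction on formulas and numerical terms, exactly parallel to the generated-submodel argument, with the key point being that the unraveling map restricts to a bijection between successor sets.

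First I would fix notation. Let $\mathfrak{M} = (W,R,V)$ and let $\mathfrak{M}^* = (W^*, R^*, V^*)$ be its tree unraveling: the points of $W^*$ are finite $R$-paths $(s_0, s_1, \dots, s_n)$ starting from the designated root, $R^*$ relates a path to each of its one-step extensions, and $V^*$ copies the valuation along the last coordinate. Write $f\colon W^* \to W$ for the map sending a finite branch to its end-point $s_n$; this is a bounded morphism (p-morphism). The crucial observation is the \emph{back-and-forth} property of $f$ sharpened to a counting statement: for any path $\sigma$ with end-point $s = f(\sigma)$, the map $f$ restricts to a \emph{bijection} from $R^*_\sigma = \{\tau : R^*\sigma\tau\}$ onto $R_s$. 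This is immediate from the definition of the unraveling, since the one-step extensions of $\sigma$ are in one-to-one correspondence with the elements of $R_s$. It is this bijection, rather than mere surjectivity, that counting requires.

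Next I carry out the induction, proving simultaneously (a) $\mathfrak{M}^*, \sigma \models \varphi$ iff $\mathfrak{M}, f(\sigma) \models \varphi$, and (b) $\semantics{\#\varphi}^{\mathfrak{M}^*,\sigma} = \semantics{\#\varphi}^{\mathfrak{M}, f(\sigma)}$, for every path $\sigma$. The atomic case for (a) holds because $V^*$ is defined via $f$; the Boolean cases are routine. For a count comparison $\#\varphi \succsim \#\psi$, claim (a) reduces immediately to claim (b) for the subformulas $\varphi$ and $\psi$ together with the definition of $\succsim$. So the heart is claim (b): by definition $\semantics{\#\varphi}^{\mathfrak{M}^*,\sigma} = |R^*_\sigma \cap \semantics{\varphi}^{\mathfrak{M}^*}|$. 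By the inductive hypothesis (a) applied to $\varphi$ at every $\tau \in R^*_\sigma$, we have $\tau \in \semantics{\varphi}^{\mathfrak{M}^*}$ iff $f(\tau) \in \semantics{\varphi}^{\mathfrak{M}}$; hence $f$ restricts to a bijection $R^*_\sigma \cap \semantics{\varphi}^{\mathfrak{M}^*} \to R_s \cap \semantics{\varphi}^{\mathfrak{M}}$, and these two sets therefore have the same cardinality (finite or infinite alike), which is exactly $\semantics{\#\varphi}^{\mathfrak{M},s}$. This closes the induction.

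The main obstacle, such as it is, is purely bookkeeping: making sure the bijection on successor sets is set up so that it matches the version of tree unraveling under which the designated point of $\mathfrak{M}^*$ is the length-one (or length-zero) path at the root, and checking that the statement of the proposition — ``invariant under the map taking finite branches to their end-points'' — is the one being used for the valuation as well. Once the successor-set bijection is in hand, everything else is the same mutual induction as in the generated-submodel and finite-depth propositions, and no new idea is needed; in particular nothing depends on finiteness of the model or of the successor sets, so the result holds for $\mathsf{ML}(\#)$ and $\mathsf{ML}^\phi(\#)$ alike (noting that the unraveling of a finite model may be infinite, but its cut-offs $\mathfrak{M}^*|_n$ are finite, which combined with the Finite Depth Property suffices in the finitary setting).
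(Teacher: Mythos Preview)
Your proof is correct and follows essentially the same approach as the paper: a mutual induction on formulas and terms, with the one-to-one correspondence between immediate successors of a branch and successors of its end-point doing all the work. The paper's proof is just a one-line sketch of exactly this argument, so your write-up simply makes explicit what the paper leaves implicit.
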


\begin{proof} The proof is a straightforward induction on formulas and numerical terms, using the fact that the immediate successors of a branch in the tree are in one-one correspondence with the successors of the end-point in the original model.  \end{proof}

Finally define \emph{duplication} of a tree as making copies of all immediate successors of the root: each successor $t$ splits into $t_1, t_2$ each heading a disjoint copy of the original subtree at $t$. This construction can also be defined for models in general, and it can also be iterated going down the tree \citep{vanBenthem1998}, but we will not use this generality here.

\begin{proposition} $\mathcal{L}_\#^{\mathsf{ml}}$-formulas at the root  are invariant for  tree duplication. \end{proposition}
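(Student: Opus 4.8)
The plan is to prove this by the same mutual-induction strategy used for the earlier invariance results (generated submodels, tree unraveling), since tree duplication is just another structure-preserving operation on the root's successor-set. Let $\mathfrak{T}$ be a tree with root $r$, and let $\mathfrak{T}'$ be its duplication: each immediate successor $t$ of $r$ in $\mathfrak{T}$ is replaced by two successors $t_1, t_2$, with $t_i$ heading a disjoint isomorphic copy of the subtree rooted at $t$. There is an obvious surjection $h$ from the successors of $r$ in $\mathfrak{T}'$ onto the successors of $r$ in $\mathfrak{T}$, sending both $t_1$ and $t_2$ to $t$, and more generally $h$ extends to a map on all of $\mathfrak{T}'$ that is a p-morphism (bounded morphism) onto $\mathfrak{T}$; in particular, for each successor $t_i$ of $r$ in $\mathfrak{T}'$, the subtree at $t_i$ is isomorphic to the subtree at $t=h(t_i)$ in $\mathfrak{T}$.

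First I would record the key structural fact: for any $\mathcal{L}_\#^{\mathsf{ml}}$-formula $\psi$, and any non-root point $s$ of $\mathfrak{T}'$, we have $\mathfrak{T}', s \models \psi$ iff $\mathfrak{T}, h(s) \models \psi$, and likewise $\semantics{\#\psi}^{\mathfrak{T}',s} = \semantics{\#\psi}^{\mathfrak{T},h(s)}$. This holds because $s$ lies inside one of the disjoint copies, which is isomorphic (as a pointed model) to the corresponding subtree of $\mathfrak{T}$; alternatively it follows from invariance under generated submodels together with the isomorphism of the relevant subtrees. Second, I would handle the root: the only place the two models differ is in the successor-set of $r$. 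For a term $\#\psi$ evaluated at $r$, we have $\semantics{\#\psi}^{\mathfrak{T}',r} = |\{u \in R'_r : \mathfrak{T}', u \models \psi\}|$. By the structural fact, $u \in R'_r$ satisfies $\psi$ in $\mathfrak{T}'$ iff $h(u)$ satisfies $\psi$ in $\mathfrak{T}$, and since $h$ restricted to $R'_r$ is exactly two-to-one onto $R_r$, we get $\semantics{\#\psi}^{\mathfrak{T}',r} = 2 \cdot \semantics{\#\psi}^{\mathfrak{T},r}$.

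Now the proof of the proposition itself is immediate by induction on $\varphi$: atoms and Booleans are trivial at the root (the valuation and the truth of propositional combinations depend only on the point $r$ itself, which is unchanged, using $\semantics{\#\bot} = 0 \cdot 2 = 0$ type bookkeeping only if needed), and for a count comparison $\#\varphi_1 \succsim \#\varphi_2$ at $r$ we have
\[
\semantics{\#\varphi_1}^{\mathfrak{T}',r} = 2\,\semantics{\#\varphi_1}^{\mathfrak{T},r} \ \geq\ 2\,\semantics{\#\varphi_2}^{\mathfrak{T},r} = \semantics{\#\varphi_2}^{\mathfrak{T}',r}
\]
iff $\semantics{\#\varphi_1}^{\mathfrak{T},r} \geq \semantics{\#\varphi_2}^{\mathfrak{T},r}$, i.e. iff $\mathfrak{T},r \models \#\varphi_1 \succsim \#\varphi_2$. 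The doubling factor cancels on both sides of the comparison, which is exactly why the comparison-based language (as opposed to one with absolute cardinality constants) is insensitive to duplication — worth a one-line remark, echoing the earlier justification for taking $\succsim$ rather than $\approx$-to-a-constant as primitive.

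The only genuinely delicate point, and the one I would be most careful about, is making the subtree-isomorphism argument airtight: one must be sure that duplication is defined so that the subtree hanging off $t_i$ in $\mathfrak{T}'$ really is an isomorphic \emph{disjoint} copy (so that evaluating any formula or term there gives the same value as at $t$ in $\mathfrak{T}$), and that the surjection $h$ on the root's successors is genuinely two-to-one. Given the definition quoted just before the proposition, this is clean, but if one wanted the iterated / general-model version of duplication (mentioned in passing via the reference to \citealt{vanBenthem1998}) one would instead invoke the p-morphism invariance lemma for $\mathcal{L}_\#^{\mathsf{ml}}$; since the statement here is only about the root of a tree, the elementary subtree-isomorphism route suffices and I would take it. I would close by noting the contrast: a term like $\#\top$ is \emph{not} preserved (it doubles), so it is only formulas, and only comparisons of terms, that survive duplication.
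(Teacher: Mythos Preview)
Your proof is correct and follows essentially the same approach as the paper: the key observation in both is that the count terms at the root get multiplied by a fixed factor (two), and cardinality comparisons are preserved under taking multiples. The paper's proof is a single sentence to this effect, whereas you have spelled out the supporting subtree-isomorphism and doubling arguments in full; your added remark that term values themselves are \emph{not} preserved (only their comparisons) is a nice clarification the paper leaves implicit.
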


\begin{proof} The crucial cases are the numerical modal comparison statements $\#\varphi \succsim  \#\psi$  of our language, and these are obviously closed under taking multiples. \end{proof}

These invariance properties put limits on expressive power. For instance, our counting logic $\mathsf{ML(\#)}$ does not contain the well-known system of \emph{graded modal logic} that describes specific finite numbers of successors \citep{Fine}.

\begin{corollary} \label{cor:grade} The  graded modality \emph{`in at most one successor'} is not definable in $\mathsf{ML(\#)}$, \\as it is not invariant under tree duplication.
\end{corollary}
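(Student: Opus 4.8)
The plan is to exhibit two pointed tree models that agree on all $\mathcal{L}_\#^{\mathsf{ml}}$-formulas but disagree on the graded modality ``in at most one successor,'' using the invariance under tree duplication (the last Proposition) as the separating tool. Concretely, I would take the simplest witness: let $\mathfrak{M}$ be the tree whose root $s$ has exactly one successor $t$, with $t$ a dead end (no successors), and all propositional letters false everywhere. Then ``in at most one successor'' holds at $s$ in $\mathfrak{M}$. Now let $\mathfrak{M}'$ be the duplication of $\mathfrak{M}$ at the root: $s$ now has two successors $t_1,t_2$, each a dead end with all letters false. In $\mathfrak{M}'$ the root has two successors, so ``in at most one successor'' fails at $s$.

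The key step is to verify that $\mathfrak{M},s$ and $\mathfrak{M}',s$ satisfy exactly the same $\mathcal{L}_\#^{\mathsf{ml}}$-formulas. This is immediate from the invariance Proposition stated just above the corollary: $\mathcal{L}_\#^{\mathsf{ml}}$-formulas at the root are invariant for tree duplication, and $\mathfrak{M}'$ is by construction the duplication of the tree $\mathfrak{M}$ at its root. Hence no formula of $\mathcal{L}_\#^{\mathsf{ml}}$ can distinguish $\mathfrak{M},s$ from $\mathfrak{M}',s$. If the graded modality $\Diamond^{\leq 1}\top$ (``in at most one successor, $\top$ holds,'' equivalently ``at most one successor'') were definable in $\mathsf{ML}(\#)$ by some formula $\chi$, then $\chi$ would be an $\mathcal{L}_\#^{\mathsf{ml}}$-formula true at $s$ in $\mathfrak{M}$ and false at $s$ in $\mathfrak{M}'$, contradicting invariance.

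There is no real obstacle here; the only point requiring a moment's care is to make sure the chosen models are genuinely related by the duplication construction exactly as defined in the paper (duplication makes two disjoint copies of the subtree rooted at each immediate successor of the root), and that the distinguishing property is one the graded modality is supposed to express. One should also note that the argument is robust: it works equally over $\mathsf{ML}^\phi(\#)$ since both models are finite, and the same duplication trick rules out any graded modality $\Diamond^{\leq n}$ for fixed $n$ by duplicating enough times (take a root with $n+1$ dead-end successors and duplicate). I would present just the minimal two-model instance and cite the invariance Proposition.
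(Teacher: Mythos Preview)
Your proposal is correct and matches the paper's intended argument exactly: the corollary is stated with its justification built in (``as it is not invariant under tree duplication''), and the paper offers no further proof beyond the preceding invariance Proposition. You have simply made explicit the obvious witnessing models (one successor versus its duplication with two), which is precisely what the paper leaves to the reader.
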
 In fact, $\mathsf{ML(\#)}$ and graded modal logic are incomparable in expressive power; see \cite{DEMRI2010233} for a more powerful system that subsumes both.

 \subsection{Bisimulation}  Behind the above preservation facts lies a general notion of {\it bisimulation} for $\mathcal{L}_\#^{\mathsf{ml}}$. For convenience, we  define this to be a standard modal bisimulation \citep{Blackburn2001} satisfying a further requirement of cardinality comparison between sets satisfying a structural property matching modal definability.
 
\begin{definition}[$\#$-bisimulation] \label{def:bisim} Let $Z$ be a  modal bisimulation between two points in two models $\mathfrak{M}$,  $\mathfrak{N}$ satisfying the usual conditions of (a) atomic harmony for proposition letters at $Z$-connected points, and (b) the standard back and forth clauses for matching relational successors of $Z$-connected points. 

Next, we define an auxiliary relation $\sim_Z$ between points in $\mathfrak{M}$ as follows: $x \sim_Z y$ iff for some $z \in \mathfrak{N}$: $x Z z$ and $y Z z$. The relation $\sim_Z$ in the model $\mathfrak{N}$ is defined likewise. Now, $Z$ is a \emph{$\#$-bisimulation} if the following comparative cardinality conditions hold. \vspace{0.5ex}

\begin{enumerate}[label=(\alph*)] \item \label{bisim-11} Whenever $s Z t$ and $X, Y$ are $\sim_Z$-closed sets of successors of $s$ with $X \succsim Y$ in our cardinality sense, then $Z[X] \cap R_t \succsim Z[Y] \cap R_t$.\footnote{Here, $Z[X]$ is the set $\{y \in \mathfrak{N}: xZy\mbox{ for some }x\in X.\}$.} \item \label{bisim-22}\vspace{1mm}
The same requirement in the opposite direction. \end{enumerate} 
\end{definition}

See Figure \ref{fig:bisim}. Note: in Clause \ref{bisim-11}, we mean that the sets $X, Y$ are $\sim_Z$-closed with respect to successors of $s$, not necessarily in the whole model $\mathfrak{M}$, and likewise in Clause \ref{bisim-22}. 

To understand what the map $Z[X] \cap R_t$ does, note that  $R_t - Z[X]$\, =\, $Z[R_s -X]$, given the $\sim_Z$-closedness of $X$ and the fact that $Z$ is a modal bisimulation. 
\begin{proposition} Formulas of $\mathcal{L}_\#^{\mathsf{ml}}$ are invariant for $\#$-bisimulation.
\end{proposition}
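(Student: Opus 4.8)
The plan is to prove invariance by a mutual induction on the structure of $\mathcal{L}_\#^{\mathsf{ml}}$-formulas and numerical terms, simultaneously establishing two statements: (i) if $sZt$ then $\mathfrak{M},s \models \varphi$ iff $\mathfrak{N},t \models \varphi$, and (ii) if $sZt$ then $\semantics{\#\varphi}^{\mathfrak{M},s} = \semantics{\#\varphi}^{\mathfrak{N},t}$ — or rather, since infinite cardinalities are in play and cardinalities need not be literally equal, the correct invariant to carry is the \emph{comparative} one: for all formulas $\varphi,\psi$ in the induction and all $\sim_Z$-closed sets, the relevant cardinality comparisons are preserved. The atomic case is immediate from atomic harmony. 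The Boolean cases $\neg\varphi$ and $\varphi \wedge \psi$ are routine, using the inductive hypothesis for (i).

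The heart of the argument is the case $\#\varphi \succsim \#\psi$. First I would observe, using the inductive hypothesis (i) applied to $\varphi$ (resp.\ $\psi$), together with the fact that $Z$ is a modal bisimulation, that the successor-set $X := R_s \cap \semantics{\varphi}^{\mathfrak{M}}$ is $\sim_Z$-closed relative to $R_s$: if $x \in X$ and $x \sim_Z x'$ with $x' \in R_s$, then $x,x'$ are both $Z$-connected to a common point of $\mathfrak{N}$, so by (i) $x' \models \varphi$ as well, whence $x' \in X$. Likewise $Y := R_s \cap \semantics{\psi}^{\mathfrak{M}}$ is $\sim_Z$-closed. Moreover, using the back-and-forth conditions of modal bisimulation together with (i), one checks that $Z[X] \cap R_t = R_t \cap \semantics{\varphi}^{\mathfrak{N}}$ and $Z[Y] \cap R_t = R_t \cap \semantics{\psi}^{\mathfrak{N}}$; that is, the $Z$-image of the $\varphi$-successors of $s$, intersected with $R_t$, is exactly the set of $\varphi$-successors of $t$. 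Now suppose $\mathfrak{M},s \models \#\varphi \succsim \#\psi$, i.e.\ $X \succsim Y$. By Clause \ref{bisim-11} of the definition of $\#$-bisimulation, $Z[X] \cap R_t \succsim Z[Y] \cap R_t$, which by the identifications just made says $\semantics{\#\varphi}^{\mathfrak{N},t} \geq \semantics{\#\psi}^{\mathfrak{N},t}$, i.e.\ $\mathfrak{N},t \models \#\varphi \succsim \#\psi$. The converse direction uses Clause \ref{bisim-22} symmetrically. Formally the "term" part of the mutual induction is subsumed here: we never need term \emph{values} to agree, only that the induced successor-sets are $\sim_Z$-related in the way Clauses \ref{bisim-11}--\ref{bisim-22} demand, so it is cleanest to carry (i) alone by induction and invoke the bisimulation clauses directly at the comparison step.

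The step I expect to be the main obstacle — or at least the one requiring care — is verifying the set identity $Z[X] \cap R_t = R_t \cap \semantics{\varphi}^{\mathfrak{N}}$ and, dually, that $R_t - Z[X] = Z[R_s - X]$ as noted just before the proposition. The inclusion $Z[X]\cap R_t \subseteq R_t \cap \semantics{\varphi}^{\mathfrak{N}}$ is straightforward from (i). For the reverse inclusion one needs the \emph{back} clause of the modal bisimulation: any $\varphi$-successor $t'$ of $t$ has a $Z$-matching successor $s'$ of $s$, which by (i) satisfies $\varphi$, hence lies in $X$, so $t' \in Z[X]$. The subtlety is purely that one must keep track that all of this is happening \emph{relative to the successor sets} $R_s, R_t$ rather than globally — exactly the point flagged in the remark following Definition \ref{def:bisim} — but no genuinely new idea is needed beyond unwinding the definitions. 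I would close by noting that invariance of the derived modalities $\Diamond\varphi, \Box\varphi$ follows immediately since they are defined abbreviations.
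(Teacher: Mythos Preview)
Your proposal is correct and follows essentially the same approach as the paper's proof: induct on formulas, use the inductive hypothesis to verify that the $\varphi$- and $\psi$-successor sets are $\sim_Z$-closed, apply the cardinality-comparison clause of $\#$-bisimulation, and then identify $Z[X]\cap R_t$ with the $\varphi$-successors of $t$ via the back-and-forth conditions. The only cosmetic difference is that the paper establishes the set identity by showing $R_t - Z[X]$ consists of $\neg\varphi$-points (using $R_t - Z[X] = Z[R_s - X]$), whereas you argue the reverse inclusion directly via the back clause; these are equivalent unpackings of the same fact.
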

\begin{proof} The only non-routine part of the inductive argument is checking that $\#$-bisimulations preserve truth values of atomic formulas $\#\varphi \succsim \#\psi$ both ways for points $s, t$ with $s Z t$. 

To see this, first note that the set of all $\varphi$-successors of a point $s$ in a model $\mathfrak{M}$ satisfies the closure condition for $\sim_Z$ (using the inductive assumption on bisimulation invariance for the formula $\varphi$), and the same is true for the set of $\psi$-successors. We apply the comparison clause for our $\#$-bisimulation to these sets $X, Y$ and get that $Z[X] \, \cap \, R_t\, \succsim \,Z[Y] \,\cap \, R_t$ in $\mathfrak{N}$. 

Next, we show that $Z[X]$ is the set of successors of $t$ satisfying  $ \varphi$. By definition, each point in $Z[X]$ is $Z$-connected to some point in $X$, and so it satisfies $\varphi$ by the inductive hypothesis. Moreover, each point in $R_t - Z[X]$ was $Z$-connected to some point in $R_s - X$, and again by the inductive hypothesis, it then fails to satisfy $\varphi$. The same reasoning works for $Y$ and $\psi$. It follows that $\#\varphi \succsim \#\psi$ is true at $t$ in $\mathfrak{N}$.

Given the symmetry in the above comparative clause for a $\#$-bisimulation, the argument also works in the opposite direction.
\end{proof}

 \begin{figure} \centering 
   \begin{tikzpicture}
   
   \draw [draw=blue!90] (-.375,1) ellipse (.7cm and .25cm);
   
   \draw [draw=red!80] (.75,1) ellipse (.25cm and .25cm);
   
      \draw [draw=blue!90] (3.6,1) ellipse (1.1cm and .25cm);
      
\draw [draw=red!80] (5.2,1) ellipse (.25cm and .25cm);
  
  \node (s1) at (0,0) [circle,draw=black,scale=0.5] {};
\node (s2) at (-.75,1) [circle,draw=black,scale=0.5,fill=blue!50] {};
\node (s3) at (0,1) [circle,draw=black,scale=0.5,fill=blue!50] {};
\node (s4) at (.75,1) [circle,draw=black,scale=0.5] {};

\node (s0) at (0,-.7) {$\mathfrak{M}$};

\path (s1) edge[->] (s2);
\path (s1) edge[->] (s3);
\path (s1) edge[->] (s4);

  \node (t1) at (4,0) [circle,draw=black,scale=0.5] {};
\node (t2) at (2.8,1) [circle,draw=black,scale=0.5,fill=blue!50] {};
\node (t3) at (3.6,1) [circle,draw=black,scale=0.5,fill=blue!50] {};
\node (t4) at (4.4,1) [circle,draw=black,scale=0.5,fill=blue!50] {};
\node (t5) at (5.2,1) [circle,draw=black,scale=0.5] {};

\path (t1) edge[->] (t2);
\path (t1) edge[->] (t3);
\path (t1) edge[->] (t4);
\path (t1) edge[->] (t5);

\node (t0) at (4,-.7) {$\mathfrak{N}$};

\path (s1) edge[dotted,thick,bend right] (t1);
\path (s2) edge[dotted,thick,bend left] (t4);
\path (s3) edge[dotted,thick,bend left] (t4);
\path (s3) edge[dotted,thick,bend left] (t3);
\path (s3) edge[dotted,thick,bend left] (t2);
\path (s4) edge[dotted,thick,bend right] (t5);

\vspace{1ex}
\end{tikzpicture} 
%\vspace{1ex}
    \caption{An ordinary modal bisimulation $Z$ between $\mathfrak{M}$ and $\mathfrak{N}$ is depicted by the dotted line. In both of these models the root point has four $\sim_Z$-closed sets of successors: the empty set, the whole set, and the two sets encircled in blue and in red. To be a $\#$-bisimulation (Definition \ref{def:bisim}), the same ordering of these sets by cardinality must hold in each, as it does here.}\label{fig:bisim}
\end{figure}
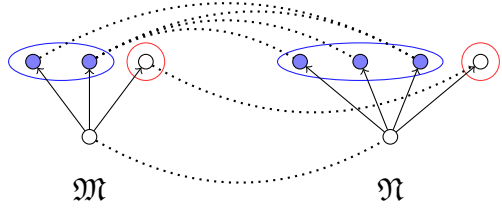

\vspace{1ex}

Bisimulation invariance can be used to show that certain notions are not definable. 

\begin{example} Infinity of a set of successors is not definable in $\mathsf{ML}(\#)$. Consider two models: one with a root and one successor, the other with a root with infinitely many successors. All proposition letters are true at all points. Connecting the two roots while also connecting all successors across the models is easily seen to be a bisimulation in the above sense.
\end{example}

As in general modal logic, converse results require additional conditions. We formulate two versions, starting  with a Hennessy-Milner result for ``image-finite'' models where each point has only finitely many relational successors.

\begin{proposition} On points in two image-finite modal relational models, the relation $E$ of $\mathcal{L}_\#^{\mathsf{ml}}$-equivalence is a $\#$-bisimulation.\end{proposition}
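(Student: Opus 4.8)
The plan is to show that the relation $E$ of $\mathcal{L}_\#^{\mathsf{ml}}$-equivalence between points in two image-finite models $\mathfrak{M}, \mathfrak{N}$ satisfies all the clauses of Definition \ref{def:bisim}. The standard modal conditions — atomic harmony and the back-and-forth clauses — are established exactly as in the classical Hennessy–Milner theorem, using image-finiteness: if $s E t$ but some successor $s'$ of $s$ has no $E$-related successor of $t$, then since $t$ has finitely many successors $t_1,\dots,t_n$, for each $t_i$ there is a formula $\chi_i$ true at $s'$ but false at $t_i$, and $\Diamond(\chi_1 \wedge \dots \wedge \chi_n)$ (recall $\Diamond$ is definable) distinguishes $s$ from $t$, a contradiction. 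So $E$ restricted to the relevant points is an ordinary modal bisimulation.

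The real work is the comparative cardinality clause. Fix $s E t$ and let $X, Y$ be $\sim_E$-closed sets of successors of $s$ with $|X| \geq |Y|$. I first observe that, because the models are image-finite, the equivalence relation $E$ on the (finite) successor sets $R_s$ and $R_t$ has finitely many classes, and by the Hennessy–Milner property each $E$-class is characterized by a single modal formula (a finite conjunction of distinguishing formulas, negated as needed). A $\sim_E$-closed set of successors of $s$ is exactly a union of such $E$-classes, hence is definable by a disjunction $\delta$ of these characteristic formulas; moreover $Z[X] \cap R_t$ (with $Z = E$) is precisely the set of $R_t$-successors satisfying that same $\delta$. Now pick formulas $\delta_X, \delta_Y$ defining $X$ and $Y$ among the successors of $s$. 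The statement $\#\delta_X \succsim \#\delta_Y$ is true at $s$ (it just says $|X| \geq |Y|$), so by $s E t$ it is true at $t$, which says exactly $|E[X] \cap R_t| \geq |E[Y] \cap R_t|$. That is clause \ref{bisim-11}; clause \ref{bisim-22} is symmetric. Hence $E$ is a $\#$-bisimulation.

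The main obstacle — and the only step requiring care — is the claim that each $\sim_E$-closed set of successors is \emph{uniformly definable} by one modal formula, and that this same formula picks out the corresponding set on the other side. This is where image-finiteness is essential: it guarantees finitely many $E$-classes among successors, each finitely characterizable, so that $\sim_E$-closed sets and their $E$-images are captured by honest formulas of $\mathcal{L}_\#^{\mathsf{ml}}$ rather than infinitary ones. Once this definability is in hand, the cardinality clause reduces to a single instance of $\#$-comparison equivalence between $E$-related points, so no separate combinatorial argument about matching cardinalities is needed. (For the later, non-image-finite version one would instead need a saturation hypothesis to recover enough definability, but here image-finiteness does all the work.)
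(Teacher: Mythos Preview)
Your proof is correct and follows essentially the same approach as the paper: establish the ordinary bisimulation clauses by the classical Hennessy--Milner argument, then use image-finiteness to obtain a single formula defining any $\sim_E$-closed successor set, and transfer the cardinality inequality via the corresponding $\#$-comparison. The paper makes one step a bit more explicit than you do: your claim that a $\sim_E$-closed set of successors of $s$ is a union of $E$-classes uses the forth clause (to find a witness $z \in R_t$ showing that $E$-equivalent successors of $s$ are also $\sim_E$-related), and the paper spells this out, but you have already established the forth clause so the step is sound.
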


\begin{proof} By a standard argument from the modal literature, since $\mathcal{L}_\#^{\mathsf{ml}}$ contains the basic modal language, we have that $E$ is an ordinary modal bisimulation. 

Now for the set-comparison clause.
Start with $s E t$. We first show that any  $\sim_E$-closed set $X$ of successors of $s$ %with $|X| \geq |Y|$ 
is definable among the successors of $s$. This follows by a well-known model-theoretic definability argument if we can  show that this set is closed under $\mathsf{ML}(\#)$-equivalence in the finite set of successors of $s$. But the latter fact can be seen as follows. Suppose that in $\mathfrak{M}$, $x \in X$ is  $\mathcal{L}_\#^{\mathsf{ml}}$-equivalent to $x'$ in $R_s$. By the ordinary forth clause for a modal bisimulation, $x$ is $E$-related to some $u$ in $R_t$ in $\mathfrak{N}$. But then $x'$, too, is $E$-related to $u$, and by  the assumed $\sim_E$-closure,
$x'$ must be in $X$. 

Next, assume that $|X| \geq |Y|$ is true in $\mathfrak{M}$ at $s$. Given the preceding observation, this  shows in the truth of some formula $\#\alpha \succsim \#\beta$ at $s$ where $\alpha$ defines $X$ and $\beta$ defines $Y$. Given the definition of $E$, this formula will also be true at $t$ in $\mathfrak{N}$, and then it suffices to note, using the above definability in $\mathfrak{M}$ plus the inductive hypothesis, that the set of $\alpha$-successors of $t$ is just $E[X] \cap R_t$, and likewise for the $\beta$-successors. \end{proof}

%Now, for the required sets U, V of successors of t in N, we just take E[X] \cap R_t and E[Y] \cap R_t. Claim: E[X] is the set of successors of t that satisfy \alpha. [If u in E[X], then it is ML(#)-equivalent to some point in X, and hence it satisfies \alpha. If u is a sucessor of t that is not in E[X], then it must be E-connected to some successor of s (here we use again that E is already a standard modal bisimulation), which can then only be in R_s – X, and by the definition of E, this means that u satisfies \neg\alpha.] The proof is the same for E[Y] and \beta. Now, since s, t were ML(#)-equivalent, t satisfies #x\alpha ≤ #x.\beta in the model N, and we are done: U ≤ V.

Still, the common assumption of image-finiteness runs counter to the fact that $\mathsf{ML}(\#)$ can also compare infinite cardinalities among successors. To reach a perfect correspondence we can employ another device, passing to an  \emph{infinitary} modal language, allowing conjunctions and disjunctions of arbitrary sets of formulas. Call this language $\mathcal{L}_\#^{\infty\mathsf{ml}}$.

\begin{theorem} The following are equivalent: (a) There exists a $\mathsf{ML}(\#)$-bisimulation connecting $\mathfrak{M}, s$ to $\mathfrak{N}, t$, (b) $\mathfrak{M}, s$ and $\mathfrak{N}, t$ satisfy the same formulas of $\mathcal{L}_\#^{\infty\mathsf{ml}}$.
\end{theorem}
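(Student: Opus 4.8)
The plan is to prove the two directions separately, exactly as in the classical Hennessy–Milner/Karp-style characterization of (infinitary) modal equivalence by bisimulation. The direction (a) $\Rightarrow$ (b) is essentially already done: the proof that $\mathcal{L}_\#^{\mathsf{ml}}$-formulas are invariant for $\#$-bisimulation given just above the theorem lifts verbatim to $\mathcal{L}_\#^{\infty\mathsf{ml}}$, since the only new cases are infinitary conjunctions and disjunctions, and invariance is trivially preserved under arbitrary Boolean combinations. So the work is all in (b) $\Rightarrow$ (a): assuming $\mathfrak{M},s$ and $\mathfrak{N},t$ satisfy the same $\mathcal{L}_\#^{\infty\mathsf{ml}}$-formulas, exhibit a $\#$-bisimulation $Z$ relating them.

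First I would take the obvious candidate: let $Z$ be the relation of $\mathcal{L}_\#^{\infty\mathsf{ml}}$-equivalence between points of $\mathfrak{M}$ and points of $\mathfrak{N}$. Atomic harmony is immediate. For the ordinary back-and-forth clauses, the standard argument works because in $\mathcal{L}_\#^{\infty\mathsf{ml}}$ every point has a \emph{characteristic formula}: for each point $x$, the (possibly infinitary) conjunction of all $\mathcal{L}_\#^{\infty\mathsf{ml}}$-formulas true at $x$ defines exactly the $\mathcal{L}_\#^{\infty\mathsf{ml}}$-equivalence class of $x$. Using $\Diamond$ (definable as $\#\varphi \succ \#\bot$) applied to characteristic formulas, if $sZt$ and $s$ has an $R$-successor $x$, then $\mathfrak{M},s \models \Diamond \chi_x$ where $\chi_x$ characterizes $x$; by equivalence $\mathfrak{N},t \models \Diamond\chi_x$, yielding an $R$-successor $y$ of $t$ with $x Z y$. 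The forth clause is symmetric.

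The crux is verifying the comparative-cardinality clauses \ref{bisim-11} and \ref{bisim-22} of Definition \ref{def:bisim}. Fix $sZt$ and a $\sim_Z$-closed set $X$ of successors of $s$. The key observation is that such an $X$ is \emph{definable} among the successors of $s$ by an $\mathcal{L}_\#^{\infty\mathsf{ml}}$-formula: because $X$ is $\sim_Z$-closed, it is a union of $\mathcal{L}_\#^{\infty\mathsf{ml}}$-equivalence classes of successors of $s$, so the disjunction $\alpha_X := \bigvee_{x \in X} \chi_x$ of characteristic formulas holds at exactly the successors of $s$ lying in $X$ (a successor outside $X$ is not equivalent to any point of $X$, by $\sim_Z$-closedness together with the back-and-forth clauses already established). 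Similarly $Y$ is defined by $\alpha_Y$. Now if $|X| \succsim |Y|$ in $\mathfrak{M}$ at $s$, then $\mathfrak{M},s \models \#\alpha_X \succsim \#\alpha_Y$; by equivalence this holds at $t$ in $\mathfrak{N}$; and one checks, using the back-and-forth clauses plus invariance of $\alpha_X,\alpha_Y$ (direction (a)$\Rightarrow$(b)), that the $\alpha_X$-successors of $t$ are exactly $Z[X] \cap R_t$ and the $\alpha_Y$-successors of $t$ are exactly $Z[Y] \cap R_t$. Hence $Z[X] \cap R_t \succsim Z[Y] \cap R_t$, which is clause \ref{bisim-11}; clause \ref{bisim-22} follows by symmetry.

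I expect the main obstacle — really the only point requiring care — to be the claim that a $\sim_Z$-closed set $X$ of successors of $s$ is cut out exactly by $\alpha_X$ among those successors: one must argue that no successor $x' \notin X$ of $s$ satisfies $\alpha_X$, i.e. that $x'$ is not $\mathcal{L}_\#^{\infty\mathsf{ml}}$-equivalent to any $x \in X$. This uses the definition of $\sim_Z$ (two successors of $s$ are $\sim_Z$-related iff they are $Z$-connected to a common point of $\mathfrak{N}$) together with the fact that $Z$ is $\mathcal{L}_\#^{\infty\mathsf{ml}}$-equivalence, so $\mathcal{L}_\#^{\infty\mathsf{ml}}$-equivalent successors of $s$ that are both related to points of $\mathfrak{N}$ are automatically $\sim_Z$-related — but here one needs the forth clause to guarantee every successor of $s$ is $Z$-related to \emph{some} successor of $t$, which is exactly what the characteristic-formula argument supplies. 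Once this bookkeeping is in place, everything else is the routine induction already exhibited in the text, and the theorem follows.
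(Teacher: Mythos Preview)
Your proposal is correct and follows essentially the same approach as the paper: take $Z$ to be $\mathcal{L}_\#^{\infty\mathsf{ml}}$-equivalence, verify it is an ordinary modal bisimulation via characteristic formulas, then show that $\sim_Z$-closed successor sets are definable by infinitary disjunctions so that cardinality comparisons transfer via the formula $\#\alpha_X \succsim \#\alpha_Y$. One small technical point to tighten: as written, your $\chi_x$ (the conjunction of \emph{all} $\mathcal{L}_\#^{\infty\mathsf{ml}}$-formulas true at $x$) may be a proper-class conjunction and hence not a formula of $\mathcal{L}_\#^{\infty\mathsf{ml}}$; the paper's emphasis on successor sets being \emph{sets} and hence ``small'' relative to the class of formulas is exactly the cue to restrict the conjunction to formulas distinguishing $x$ from the (set-many) successors of $s$ and $t$, which is all the argument actually needs.
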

\begin{proof} The inductive proof of the invariance assertion from (a) to (b) is essentially as before. From (b) to (a), we can use the earlier reasoning for image-finite models almost literally, noting now that in specific models, we only have \emph{sets of} successors, which are ``small'' w.r.t. the class of formulas of $\mathcal{L}_\#^{\infty\mathsf{ml}}$, and then using the closure of the latter  language under arbitrary set conjunctions and disjunctions. \end{proof}

\begin{remark} \label{remark:bisim} The bisimulation analysis presented here puts the crucial count comparisons of $\#$-languages in the back and forth clauses by brute force. A more refined notion of bisimulation might directly relate the {\it counting procedures} that underlie the comparative cardinality judgments in the two models. We leave this here as a further desideratum.\end{remark}

For a first study of standard modal themes like bisimulation and frame correspondence for $\mathsf{ML}(\#)$ and its  extensions, see \cite{FuZh23b}.

\subsection{Normal Forms for $\mathsf{ML}(\#)$}  The modal counting language admits syntactic normal forms that combine standard normal forms for modal logic with the numerical equational normal forms that we found   for $\mathsf{MFO(\#)}$. The idea is to start with the earlier state descriptions, and then describe inductively, for successors at increasing distance, which types occur with which multiplicity. In what follows, we fix a finite vocabulary of proposition letters.

\begin{definition} A \emph{0-type} is a complete conjunction of literals. An \emph{n+1-type} is a conjunction of a 0-type and a complete set of inequalities describing a linear order on count terms   $\#T$ that describe all unions of $n$-types.
\end{definition}

The inductive step in this definition makes sense because it is easy to show inductively that the set of $n$-types is finite for each $n$. 

To understand this definition, note that  modal types record inductively which types of lower rank are present and absent   on successors of given points. $\mathsf{ML}(\#)$ merely enriches this  to more precise numerical information.
\vspace{0.5ex}
\begin{fact}
Each formula of $\mathsf{ML}(\#)$ with count depth k is equivalent to a  disjunction of k-types. \vspace{0.2ex}
\end{fact}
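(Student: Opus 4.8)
The plan is to prove the statement by induction on the count depth $k$, mirroring the normal form arguments for $\mathsf{MFO}(\#)$ (Theorem \ref{normal}) and for the letterless and modal fragments, but now carrying along the tree-like structure of modal successor sets. First I would record the auxiliary fact, needed already in the definition, that for each $n$ the set of $n$-types is finite: a $0$-type is a choice of a literal for each of the finitely many proposition letters, and an $(n{+}1)$-type adds to a $0$-type a complete linear preorder on the finitely many count terms $\#T$ where $T$ ranges over unions of $n$-types, so by induction there are only finitely many possibilities. Consequently disjunctions of $k$-types are legitimate (finite) formulas of $\mathcal{L}_\#^{\mathsf{ml}}$.

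For the base case $k=0$, a count-depth-$0$ formula is a Boolean combination of proposition letters, hence equivalent to a disjunction of $0$-types (its disjunctive normal form over the literals). For the inductive step, suppose the claim holds for depth $k$. A formula $\varphi$ of count depth $k{+}1$ is a Boolean combination of proposition letters and count comparisons $\#\alpha \succsim \#\beta$ in which $\alpha,\beta$ have count depth at most $k$. By the inductive hypothesis each of $\alpha$ and $\beta$ is equivalent to a disjunction of $k$-types; since distinct $k$-types are mutually exclusive, $\#\alpha$ is then provably equal (as a term) to a sum $\#T_\alpha$ where $T_\alpha$ is the corresponding union of $k$-types, and likewise for $\beta$. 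Thus each atom $\#\alpha \succsim \#\beta$ is equivalent to a comparison $\#T_\alpha \succsim \#T_\beta$ between count terms over unions of $k$-types. Now, using the linearity principle (\ref{eq:comp}) and the fact that the finite family of all count terms $\#T$ over unions of $k$-types admits only finitely many complete linear orderings, I would argue that any such comparison, and hence any Boolean combination of them together with the $0$-type information, is equivalent to the disjunction of those complete $(k{+}1)$-type descriptions that entail it. Concretely: disjoin over all $(k{+}1)$-types $\tau$; a point of type $\tau$ fixes the $0$-type and the full linear order on the $\#T$'s, which determines the truth value of every $\#T_\alpha \succsim \#T_\beta$ and of every literal, hence of $\varphi$; keep exactly the $\tau$ that make $\varphi$ true. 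This yields $\varphi$ equivalent to a disjunction of $(k{+}1)$-types, completing the induction.

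The one point requiring care --- and the main obstacle --- is the passage from ``$\alpha$ is equivalent to a disjunction of $k$-types $\bigvee_i \tau_i$'' to ``$\#\alpha$ equals the sum-term $\#(\bigvee_i \tau_i)$ and this is a term over unions of $k$-types as demanded in Definition \ref{def:bisim}''. This needs the observations that (i) the $k$-types are pairwise inconsistent, so that counting a disjunction of them is literally the sum of the individual counts and in particular a single count term $\#T$ with $T$ a union of $k$-types; and (ii) the number of summands and the identities of the $k$-types are bounded independently of the model, so that an $(k{+}1)$-type, which quantifies over \emph{all} unions of $k$-types, indeed pins down the truth of every relevant comparison. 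Both are immediate from the finiteness of the type sets, but they are what makes the inductive bookkeeping go through. I would also note in passing that no appeal to the infinite/finite distinction is needed: the argument is uniform for $\mathsf{ML}(\#)$ and $\mathsf{ML}^\phi(\#)$, exactly as in Theorem \ref{normal}, since a complete linear order on the count terms is a well-defined notion regardless of whether the successor sets are finite.
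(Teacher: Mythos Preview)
Your proof is correct and follows essentially the same approach as the paper: induction on depth, with the base case handled by propositional DNF and the inductive step replacing the inner formulas of each comparison $\#\alpha \succsim \#\beta$ by disjunctions of $k$-types via the inductive hypothesis, then completing the resulting partial comparison data to full linear orders using (\ref{eq:comp}). You spell out more of the bookkeeping (finiteness of the type sets, mutual exclusivity of $k$-types justifying the replacement of $\#\alpha$ by a single $\#T_\alpha$) than the paper does, but the argument is the same; one small slip is that the definition you want to cite is the (unnumbered) definition of $n$-types, not Definition~\ref{def:bisim}.
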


\begin{proof}
For $k = 0$, this is the  disjunctive normal form of propositional logic. For the case $k+1$, a formula of this depth is equivalent to a Boolean combination of proposition letters and formulas $\# \varphi \succsim \# \psi$ with $\varphi,\psi$ of depth at most $k$. By the inductive hypothesis,  these formulas are equivalent to disjunctions of $k$-types. So, the whole formula is equivalent to a disjunction of conjunctions of such statements, where negations of comparison atoms can be replaced by strict inequalities.  Thus, a certain number of comparisons between regions is already given, and all we need to do is replace this formula by the disjunction of all completions to fill in comparisons between all regions, which is possible by  the linearity of $\succsim$.
\end{proof}

Here are two points of comparison with the earlier normal forms for monadic counting logic. First, we cannot compress our modal  normal forms further to depth 1 as we did for $\mathsf{ML}(\#)$ in \S\ref{section:basicmonadic}. Their simplicity is rather in that each level of counting refers to points farther away in the modal accessibility structure, so, intuitively, nested count information refers to different positions. Next, normal forms for the monadic counting language are ``loose'' in that they do not necessarily contain complete information about all regions in the model. The difference is slight, since, by the linearity of the cardinality order,  loose forms can be expanded to disjunctions of complete forms. (We implicitly invoked this fact already when stating Corollary \ref{undefinability} in \S\ref{section:basicmonadic}.) In the modal case, we could also allow loose forms, but we chose the complete version because of the following point.

\begin{remark} In modal logic, normal form results are often proved semantically, showing that a formula $\varphi$  of depth $k$ is equivalent to the disjunction of all $k$-types occurring in pointed models where $\varphi$ holds. This semantic argument involves a finite restriction of the notion of modal bisimulation to $k$ back-and-forth steps, and a similar argument can also be given with the more complex notion of bisimulation identified above.

\end{remark}

Normal forms are  related to \emph{Scott sentences} in infinitary languages, describing, up to suitable ordinal depths,  what syntactic types in the language are realized in a given model \citep{Scott1965}. Modal Scott sentences in $\mathcal{L}_\#^{\infty\mathsf{ml}}$  also include information on numbers of occurrences of types, and can define given pointed models up to bisimulation.
%Starting at depth 0, normal forms are disjunctions of atomic state descriptions. At depth $k+1$, normal forms are  the earlier normal forms for sentences of $\mathsf{MFO(\#)}$ with two differences: 

%\begin{itemize}
%\item[$\bullet$] In line with the simple local syntax of our modal language, there are no fixed natural numbers in the size comparisons, \vspace{1mm}
%\item[$\bullet$] Instead of  state descriptions as exhaustive conjunctions of literals with unary predicates, we  substitute modal normal forms of depth $k$

%\end{itemize}

%One can think of these normal forms as living in a mixed logic-counting syntax: i.e., there are some formulas, but also just literally sets of numerical inequalities.

%\begin{proposition} Each $\mathsf{ML(\#)}$-formula is equivalent to one in normal form.    \end{proposition}

%\begin{proof} The analysis here differs from that for $\mathsf{MFO(\#)}$, as we cannot trade in nested numerical comparisons for single ones plus additional existential quantifiers.  \end{proof}

%{\bf TI: This  sounds right, but I wonder if we should do this a bit more explicitly. Much of our previous reasoning crucially relied on ensuring no embedding. Could we  write down what these normal forms now look like explicitly? The next proposition would be easier to understand if these are exhibited more clearly.}

\begin{proposition} $\mathsf{ML(\#)}$ is decidable.  \label{prop:mldec}  \end{proposition}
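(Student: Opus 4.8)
\textbf{Proof plan for Proposition \ref{prop:mldec} ($\mathsf{ML}(\#)$ is decidable).}

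The plan is to obtain decidability as a corollary of the normal form analysis together with the finite depth property and a small model argument. First I would observe that the Finite Depth Property (proved above) reduces satisfiability of any formula $\varphi$ to satisfiability on tree models of depth at most $\mathsf{d}(\varphi)$, and by the tree unraveling invariance we may restrict attention to such trees. Given this, I would use the normal form result (each depth-$k$ formula is equivalent to a disjunction of $k$-types) to reduce satisfiability of $\varphi$ to the question of whether at least one $k$-type in that disjunction is realizable at the root of some model; since the set of $k$-types over a fixed finite vocabulary is finite and effectively enumerable, it suffices to show that realizability of a single $n$-type is decidable.

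The heart of the argument is then a bounded model property: I would show that if an $n$-type $\tau$ is realizable, it is realizable in a finite tree model whose branching at each level is bounded by a computable function of $n$ and the vocabulary size. This is proved by induction on $n$, going down the tree. An $(n{+}1)$-type $\tau$ specifies a $0$-type at the root plus a complete linear ordering of the count terms $\#T$ over all unions of $n$-types. Realizing $\tau$ at a point $s$ amounts to choosing multiplicities (finite cardinalities, or ``$\infty$'') for each of the finitely many $n$-types among the successors of $s$ so that the resulting comparisons of the sums $|T|$ match the prescribed linear order, and recursively realizing each chosen $n$-type at a successor. The matching of a prescribed linear order on sums of multiplicities is exactly a system of linear inequalities of the kind analyzed in \S\ref{section:basicmonadic} (with the extra device $\mathsf{s}\geq \mathsf{s}+1$ flagging the infinite case, as noted at the end of \S\ref{section:axioms}); such a system, if solvable, has a solution with each finite multiplicity bounded by a computable function of the number of $n$-types, and one can take the infinite multiplicities to be countably infinite. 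Feeding the inductive bound on the size of a tree realizing each $n$-type into this step yields an overall computable bound on the size of a finite tree (with possibly countably infinite branching only at the bottom when an infinity is forced, which can itself be truncated to a large finite number for pure satisfiability testing since only comparisons, not actual infinite cardinalities, matter for truth of $\tau$) realizing $\tau$.

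Having established this bounded model property, decidability follows immediately: to test satisfiability of $\varphi$, compute $k = \mathsf{d}(\varphi)$ and the bound $N(k)$ on model size, enumerate all pointed tree models of depth $\leq k$ and size $\leq N(k)$ over the vocabulary of $\varphi$, and model-check $\varphi$ on each (model-checking is clearly effective). I expect the main obstacle to be the careful bookkeeping in the inductive step: one must verify that the ``effective solvability with bounded solutions'' of the linear comparison systems interacts correctly with the recursive realization of $n$-types --- in particular that one does not need unboundedly many distinct successor-subtrees --- and that the treatment of forced-infinite successor sets can be uniformly replaced by sufficiently large finite sets without disturbing the truth value of any subformula of depth $\leq k$. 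Both points are routine given the normal form machinery and the fact that truth of count comparisons depends only on the ordering of the relevant cardinalities, but they are where the argument has to be made precise.
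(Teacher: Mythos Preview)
Your core recursive strategy---check the linear order on count terms for numerical satisfiability, then recursively check realizability of each $n$-type that must occur with positive multiplicity---is exactly the paper's approach, and that part is sound. The paper packages it directly as a recursive decision procedure (working ``outside in'': solve the inequality system at the root allowing infinite cardinalities via Fourier--Motzkin, then recurse on the types that receive positive count), appealing to generated-submodel invariance for the key fact that the two stages are independent: if a type is realizable at all, disjoint copies let you realize it with any prescribed multiplicity.

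The gap in your plan is the final reduction to \emph{finite} model enumeration via truncation of infinite branching. You flag this as a point needing care, but it is not routine---it is false. An $(n{+}1)$-type may demand $\#T_1 \approx \#(T_1\cup T_2)$ together with $\#T_2 \succ \#\bot$; this forces the $T_1$-multiplicity to be infinite, and replacing $\infty$ by any finite $N$ gives $\#(T_1\cup T_2)=N+|T_2|>N=\#T_1$, destroying the required equality. (Concretely, the paper's own example $\Diamond q \wedge \#(p\wedge\neg q)\approx\#(p\vee q)$ has no finite model.) So ``only the ordering of cardinalities matters'' is misleading here: the prescribed ordering can be one that no tuple of natural numbers realizes, and infinite branching need not occur ``only at the bottom.'' The fix is simply to drop the bounded-model/enumeration step and stop at the recursive algorithm you already described in your second paragraph, solving the inequality systems over cardinals (finite values together with $\infty$) exactly as the paper does.
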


\begin{proof} We show that SAT is decidable for normal forms. At depth 0, this is trivial. At depth $k + 1$, we proceed by means of the following pseudo-algorithm. 

Working outside in, we check, successively, that (a) the given atomic description for the root point is satisfiable, (b) the system of inequalities for the types of level $k$ occurring among the successors of the root is numerically satisfiable, say by the Fourier-Motzkin algorithm allowing infinite cardinalities as described in \S\ref{section:soinf}, and finally, (c) for each non-zero term that occurs in the solution in point (b), i.e., each relevant type of level $k$, we test for satisfiability again of these simpler types. 

This simple decision procedure is correct because stages (b) and (c) of the procedure are largely separate. If we can satisfy one of the types in stage (c) at all, then, by copying and taking disjoint subtrees, we can satisfy it at any desired number of successors for the root as described by the in equalities of stage (b). That no truth values are disturbed in this procedure is precisely the content of the earlier observations about invariance of modal counting formulas for generated submodels.
\end{proof}

\begin{remark} The preceding analysis is constructive, and it also contains information about the \emph{reasoning system} for $\mathsf{ML}(\#)$. We will not spell this out in further detail here.

Note also that, by results of \cite{DEMRI2010233} on a strictly larger modal fragment (their Thm. 1), Proposition \ref{prop:mldec} can in fact be sharpened to decidability in polynomial space. 
\end{remark}

\subsection{Language Extensions} 

Our modal language is still quite weak in some respects. For instance, unlike the system $\mathsf{MFO}(\#)$, it cannot talk about specific finite numbers of relevant objects: in our case, successors of the current point. In addition, we still lack the resources to express further features of the earlier-mentioned Pigeonhole Principle  
\begin{equation} \label{eq:rphp} \#_x\exists y. R(y,x) \succ \#_x\exists y. R(x,y) \rightarrow \exists x. \exists^{\geq 2} y. R(y,x). \end{equation}  
A modal rendering of the $\mathsf{FO}(\#)$-style  syntax with explicit variables  in this principle requires (a) numerical  \emph{graded} modalities, (b) both forward $\Diamond^{\rightarrow}$ and \emph{backward} $\Diamond^{\leftarrow}$ modalities along the ordering $R$, and also, (c) the notion of counting involved is not local to some current point, but involves a \emph{global} operator $\#_g$, referring to the whole domain. With these modal devices, the Pigeonhole Principle will come to look like this:
\begin{equation} \label{eq:rphp2}\#_g \Diamond^{\rightarrow}\top   \succ \#_g \Diamond^{\leftarrow}\top   \rightarrow E \Diamond^{\leftarrow, \geq 2} \top \end{equation}
where the ``existential modality'' $E$ is defined as having global count greater than 0. We briefly discuss these various extensions in turn. 

Adding graded modalities to  $\mathsf{ML}(\#)$ seems natural, so as to give the system the same expressive power as $\mathsf{MFO}(\#)$ over sets of successors. %However, the results obtained earlier will have to be redone, and in particular, we do not know if this logic is still decidable. 
In fact, \cite{DEMRI2010233} present a generalized graded modal logic with Presburger-like constraints on types of successors which extends our system $\mathsf{ML}(\#)$. Given our analysis of additive arithmetic in terms of second-order counting logics, there may be connections here with the second-order version of $\mathsf{ML}(\#)$ mentioned below in Example \ref{ex:problems}.

Next, adding backward modalities for the converse of the accessibility relation leads to a \emph{tense-logical} version of $\mathsf{ML}(\#)$. While such an extension seems straightforward, several earlier notions would need non-trivial adaptation. Moreover, the typical valid tense-logical principles $ \varphi \rightarrow \Box^{\rightarrow}\Diamond^{\leftarrow}\varphi,\varphi \rightarrow \Box^{\leftarrow}\Diamond^{\rightarrow}\varphi $ relating the two directions of $R$ suggest a more systematic analysis of the connections between counting in the forward and backward directions. 

Also of interest is adding global counting operators, which, as noted above, can define the usual global existential modality over the whole domain. Extending the well-known fact that standard modal $\mathsf{S5}$ provides  an alternative notation for monadic first-order logic without identity, we could also consider global counting as a device by itself, yielding another presumably simple modal counterpart  to the system $\mathsf{MFO}(\#)$.

Of course, beyond $\mathsf{MFO}(\#)$, the other systems considered in  earlier sections, too,  suggest modal extensions. For instance, an  analogue to the notion of \emph{multiple counting} in $\mathsf{MFO}(\sharp)$ might involve ``multi-dimensional'' modal counting logics \citep{MarxVenema}.  

%The  language $\mathsf{ML}(\#)$ does allow iterations, but it does not seem to allow  counting points arrived at by iterating accessibility steps. Consider a putative expression like $\#^{2}\top$ which counts the number of points reachable in two steps from the current point. It is easy to that this is not definable in $\mathsf{ML}(\#)$, and again, a proper extension would be needed. 
Perhaps more importantly in representing natural patterns of reasoning,  one can add \emph{second-order quantifiers} over sets, on the analogy of the earlier system  $\mathsf{MSO}(\#)$. This would  result in   a second-order version of  $\mathsf{ML}(\#)$ comparable to basic modal logic with quantifiers over propositions \citep{Fine1970}. In fact, if we add quantification over proposition letters to $\mathsf{ML}(\#)$ with \emph{global} counting, then this gives us an alternative, modal notation for sentences of $\mathsf{MSO}(\#)$, thanks to Theorem \ref{normal} and Lemma \ref{lemma:prenex}.\footnote{Strictly speaking we also need to add the statement $\#\varphi \approx 1$, expressing that there is exactly one $\varphi$-point.} By the same argument as in \S\ref{section:soinf}, such a system will be decidable (Corollary \ref{cor:mso}). One concrete use for it will be found in \S\ref{section:vocab}, when discussing quantifier constructions in natural language. 

\section{Generalizing the Counting Semantics} \label{section:beyond} The systems we studied in \S\ref{section:basicmonadic}-\S\ref{section:modal} all dealt with \emph{syntactic} fine-structure and tractable fragments of the natural, but excessively rich and complex system $\mathsf{FO}(\#)$. However, there is another, complementary means of recovering from intractability; that is to change the \emph{semantics} (cf. \citealt{vanBenthem2005}). In the present setting, at least two possibilities suggest themselves, each with their own motivation: (1) broaden the interpretation of $\#$-terms, so that they may denote elements of a more general class of algebraic structures, and (2) generalize the logical semantics in ways known to reduce complexity, e.g., by allowing variation in the space of allowable variable assignments \citep{Nemeti}. We discuss each in turn, with an emphasis on the generalized-value approach. In this more exploratory section, we will not provide the same level of detail as in our earlier presentation. \vspace{-0.5ex}

\subsection{Beyond Counting} We can break our  interpretation of count terms $\#_x\varphi$ into two steps. In a model $\mathcal{M}$ with domain $D$ and variable assignment $s$, we first consider the set $\semantics{\varphi}^{\mathcal{M},s}_x =\{d \in D: \mathcal{M}, s^x_d \models \varphi\}$. In the second step we map subsets $S \subseteq D$ of the domain to \emph{numbers}. Thus we have a map $f:\wp{(D)} \rightarrow  \{\kappa: |D| \geq \kappa\}$, with $S \mapsto |S|$. Ultimately we set\vspace{0.5ex} \begin{equation} \label{eq:Fvariation} \semantics{\#_x\varphi}^{\mathcal{M},s} = f(\semantics{\varphi}^{\mathcal{M},s}_x). \end{equation}\vspace{0.5ex}
We now want to consider generalizing Eq. (\ref{eq:Fvariation}) by allowing a broader class of functions $f:\wp{(D)} \rightarrow \mathbb{P}$, where $\mathbb{P} = (P;\geq)$ may be some other poset than a set of cardinal numbers. 

\subsection{Probability and Proportionality} \label{section:probprop}

As a first example, let $\mathbb{P} = ([0,1];\geq)$ be the real unit interval. (The rational interval $([0,1]\cap\mathbb{Q};\geq)$ would also suffice for much of what we will say.) Over finite models a  natural map to consider is the function $f:\wp{(D)} \rightarrow [0,1]$ sending $S$ to the ratio $|S|/|D|$. It is straightforward to verify that the valid reasoning principles in  the systems $\mathsf{MFO}^\phi(\#)$ and $\mathsf{MSO}^\phi(\#)$ will remain unchanged. The basic propositional and modal systems, $\mathsf{PL}^\phi(\#)$ and $\mathsf{ML}^{\phi}(\#)$, can also be given such a proportionality interpretation whereby $\semantics{\#\varphi}^{\mathfrak{M},s}$ is the proportion of (successor) points where $\varphi$ holds. 

On this interpretation, terms $\#_x\varphi$ (or $\#\varphi$) can be construed as specifying the \emph{probability} that $\varphi$ is satisfied, a connection between elementary logics of counting and  probability made explicit in \cite{Hoek1996b}. The probability measures obtained in this way are all \emph{regular} in that they assign every non-empty set non-zero probability (cf. \citealt{Ding2021}). 

What happens when we move to polyadic systems $\mathsf{MFO}(\sharp)$ and $\mathsf{MSO}(\sharp)$? Some of our work has natural analogues here. For instance, recall again our analysis of `Many $Q$s are $P$' (Eq. (\ref{eq:many})). Interpreted as a probabilistic statement about a measure $\mu$, this says, $\mu(P\mid Q) > \mu(P)$, i.e., that $Q$ ``confirms'' $P$ \citep{reichenbach:56}. However, the two interpretations---counting and proportion---no longer agree on general logical principles when we allow poladicity:
\begin{example} \label{example:probprop} Consider the formula
$$\exists y,z.\big(y \neq z \wedge \sharp_x(x= y \vee x = z) \approx \sharp_{x,w}(P(x) \wedge P(w))\big).$$ This is not satisfiable under our counting interpretation since it would require $|P| = \sqrt{2}$. By contrast, on the proportionality interpretation: it merely requires that $2 \times |D| = |P|^2$. \end{example}
This echoes a broader theme that reasoning about conditional probabilities already amounts to general reasoning about real fields \citep{Mosse,Ibeling}.

\subsection{Mass, Weight, and Abstract Values} \label{section:mass} Probability and proportionality are still clear quantitative numerical measures of sizes and ratios. However, generalizing beyond these, our logics also support more qualitative interpretations as calculi of ``weight'' or ``mass'' \citep{Link}. In the above two-step set up, we can think of terms $\#_x\varphi$ as denoting a collective entity in some intuitive sense, say, like in  the semantics of \emph{plural} expressions and mass terms in natural language. The values assigned to these might lie in some qualitative mereological algebra. The minimum needed for interpreting our $\#$-languages is then some pre-order on this mereological algebra, while further structure may come in the form of, not addition or multiplication, but \emph{fusion}, and perhaps other mereological primitives \citep{Lesniewski}. 

We will not pursue this more general perspective here, which deserves  a separate development of its own. Instead, we only note two changes from our earlier logical analysis.

\subsection{Non-classical Logics} Recall that in \S\ref{section:countinglogic}, we pointed at different logics, classical or non-classical, to come out of the counting component of our systems. In the current generalized setting, ways of inducing logical operations multiply. For instance, as long as $f(D) \geq f(\varnothing)$ and $f(\varnothing) \ngeq f(D)$, we will recover at least the classical Booleans in the same way as before via (\ref{implication}) and (\ref{negation}). However, if we merely drop the requirement that $f(D) \geq f(\varnothing)$, we can already  invalidate ``paradoxes of material implication'' such as $\varphi \rightarrow (\psi \rightarrow \varphi)$, while still validating some principles typical of relevant logics (see \citealt{Restall}), such as $\neg\neg \varphi \leftrightarrow \varphi$. We leave further exploration of this way of inducing logical systems to future work.

\subsection{Embedding into Multisorted $\mathsf{FO}$} Our second logical observation is more technical. With a generalized semantics, some of our earlier conclusions about system behavior and complexity need to be reconsidered. Provided we only stipulate \emph{first-order} conditions on the partial order $\mathbb{P}$ and on the map $f:\wp{(D)} \rightarrow \mathbb{P}$---both natural requirements in abstract mereological semantics---we can show that the set of valid principles for $\mathsf{FO}(\#)$ becomes \emph{computably enumerable}. The same style of  analysis also applies to the extended system $\mathsf{FO}(\sharp)$ which allows counting tuples. 

%{\bf Strictly speaking I suppose $\mathsf{MSO}(\#)$ and $\mathsf{MFO}(\sharp)$ are not fragments of $\mathsf{FO}(\#)$. Should we say something about this? We may want to state the result here for $\mathsf{FO}(\sharp)$, the polyadic version of $\mathsf{FO}(\#)$.}

\begin{theorem} 
Over generalized models, satisfiability in $\mathsf{FO}(\sharp)$ can be translated faithfully into satisfiability in an associated three-sorted first-order language.
\end{theorem}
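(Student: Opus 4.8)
The plan is to make explicit the two-step semantics of $\sharp$-terms from \S\ref{section:beyond} as a genuine three-sorted first-order structure, with one sort $D$ for the original domain, one sort $\mathbb{P}$ for the abstract mereological values, and a third sort $\mathcal{S}$ interpreted as (a family of) subsets of $D^k$ for the relevant arities $k$ — equivalently, one can use sequence-sorts indexed by the finite set of arities of counting operators occurring in the given $\mathsf{FO}(\sharp)$-sentence. The key point is that $\mathsf{FO}(\sharp)$-satisfiability over generalized models quantifies only over first-order objects and consults $f$ and the order $\succsim$ on $\mathbb{P}$; since the hypotheses on $\mathbb{P}$ and on $f:\wp(D^k)\to\mathbb{P}$ are stipulated to be first-order, nothing genuinely second-order is being asked of the models.

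First I would fix a finite $\mathsf{FO}(\sharp)$-sentence $\Phi$ and read off the finite set $A$ of arities $k$ such that $\sharp_{\mathbf{x}}$ with $|\mathbf{x}|=k$ occurs in $\Phi$. The target language has: sort symbols $\mathsf{d},\mathsf{p}$, and for each $k\in A$ a sort $\mathsf{set}_k$; the original relation symbols on $\mathsf{d}$; a membership predicate $\mathrm{in}_k(\mathbf{x},S)$ relating a $k$-tuple from $\mathsf{d}$ to an object of sort $\mathsf{set}_k$; a function symbol $F_k:\mathsf{set}_k\to\mathsf{p}$ for the abstract-value map; a binary relation $\succsim$ on $\mathsf{p}$; plus whatever further primitives (fusion, constants) the first-order theory of $\mathbb{P}$ mentions. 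The background theory $T$ consists of: the first-order axioms assumed on $(\mathbb{P},\succsim,\dots)$; the first-order conditions assumed on $f$ (now phrased via $F_k$ and $\mathrm{in}_k$); and, crucially, \emph{comprehension axioms} asserting that $\mathsf{set}_k$ contains enough sets — specifically, for each $\mathsf{FO}(\sharp)$-formula $\psi(\mathbf{x},\mathbf{z})$ with free first-order parameters $\mathbf{z}$, an axiom $\forall \mathbf{z}\,\exists S{:}\mathsf{set}_k\,\forall\mathbf{x}\,(\mathrm{in}_k(\mathbf{x},S)\leftrightarrow \psi^\dagger(\mathbf{x},\mathbf{z}))$, where $\psi^\dagger$ is the translation defined next. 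Since $\Phi$ has finitely many subformulas, only finitely many such instances are needed for the reduction to be effective.

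Next I would define the translation $(\cdot)^\dagger$ from $\mathsf{FO}(\sharp)$-formulas to the three-sorted language by recursion: atomic predicates and equalities go to themselves on sort $\mathsf{d}$; Booleans and first-order quantifiers (now relativized to sort $\mathsf{d}$) commute; and a count comparison $\sharp_{\mathbf{x}}\varphi \succsim \sharp_{\mathbf{y}}\vartheta$ translates to $\exists S{:}\mathsf{set}_{|\mathbf{x}|}\exists S'{:}\mathsf{set}_{|\mathbf{y}|}\big(\forall\mathbf{x}(\mathrm{in}(\mathbf{x},S)\leftrightarrow\varphi^\dagger)\wedge \forall\mathbf{y}(\mathrm{in}(\mathbf{y},S')\leftrightarrow\vartheta^\dagger)\wedge F(S)\succsim F(S')\big)$. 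The free variables of $\varphi,\vartheta$ other than those in $\mathbf{x},\mathbf{y}$ become parameters handled by the comprehension instances. I would then prove the correspondence: $\Phi$ has a generalized model iff $T\cup\{\Phi^\dagger\}$ has a three-sorted model. The forward direction is immediate — given a generalized model, take $\mathsf{set}_k$ to be \emph{all} subsets of $D^k$, interpret $F_k=f$, and every clause verifies by induction on formulas. The backward direction is the only subtle step: from a model of $T\cup\{\Phi^\dagger\}$ one reads off a domain $D$ (the $\mathsf{d}$-sort) with its relations and an abstract poset $\mathbb{P}$ with map $f$, but the $\mathsf{set}_k$-sort may contain only \emph{some} subsets of $D^k$, not all; one must check that this suffices for $\Phi$ to hold. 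This works because $\Phi$'s truth only ever demands the existence of sets named by its own subformulas, and those are exactly the ones guaranteed by the comprehension instances bundled into $T$ — so one defines $f$ on the remaining subsets arbitrarily (respecting the first-order constraints, which by a standard model-completion/extension argument can always be met) and truth of $\Phi^\dagger$ transfers back to truth of $\Phi$ in the resulting generalized model.

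The main obstacle, and the step deserving the most care, is precisely this backward direction: ensuring that a model of the three-sorted theory — whose set-sort is not genuinely powerset — still yields a bona fide generalized model of $\mathsf{FO}(\sharp)$. Two things must be checked. First, that the finitely many comprehension instances truly capture every set whose existence $\Phi$'s evaluation requires; this is a routine induction but must be stated carefully, tracking parameters. Second, that the partial map we recover on the named subsets extends to a total $f:\wp(D^k)\to\mathbb{P}$ satisfying the first-order conditions — here one invokes that those conditions are first-order and hence preserved under taking appropriate extensions, or more simply observes that in many natural axiomatizations $f$ on unnamed sets is unconstrained relative to what matters for $\Phi$. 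Once this is in place, computable enumerability of $\mathsf{FO}(\sharp)$-validity over generalized models follows, since $T$ is recursive, $(\cdot)^\dagger$ is computable, and many-sorted first-order validity is recursively enumerable by completeness — exactly the improvement over the $\Pi^1_2$-hardness of the standard semantics (Theorem \ref{pi12}) that the section advertises.
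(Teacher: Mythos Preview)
Your approach is essentially the paper's: three sorts (objects, ``sets''/predicates, values), an $\in$-style relation, a value map $F$, comprehension, and the same translation of $\sharp_{\mathbf{x}}\varphi\succsim\sharp_{\mathbf{y}}\vartheta$ via existentially quantified set-witnesses. The paper's sketch is terser and does not dwell on the backward direction you worry about; it simply adopts the Henkin-style view (``on the analogy of Henkin models for second-order logic'') in which the three-sorted structures \emph{are} the generalized models, so no extension of $f$ to all of $\wp(D^k)$ is required.

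One small omission worth flagging: the paper explicitly includes an \emph{Extensionality} axiom (predicate-objects with the same members are identical), which you do not list among the axioms of $T$. Without it---or the weaker condition that $F_k$ depends only on extension---two objects $S,S'$ of sort $\mathsf{set}_k$ with identical $\mathrm{in}_k$-extension could receive different $F_k$-values. Since each occurrence of a $\sharp$-term in $\Phi^\dagger$ gets its own existential witness, a three-sorted model could then satisfy $\Phi^\dagger$ by choosing different witnesses at different occurrences of the same definable set, without there being any single well-defined $f$ on extensions making $\Phi$ true in the generalized sense. Adding Extensionality to $T$ closes this gap; with that fix, your argument is correct and indeed more careful on the backward direction than the paper's own proof sketch.
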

\begin{proof}[Proof Sketch]
The idea here is as follows. The above generalized semantics works on three-sorted structures with a domain $D$ of objects, a domain $P$ of collectives or ``predicates'' for the denotations of $\#$-terms, and a value domain $V$, with a binary relation $E$ between objects and predicates, a function $f:P\rightarrow V$ sending predicates to values, and a binary relation $\geq$  on the value domain. 

We can state what is needed for this to work in first-order terms, on the analogy of Henkin models for second-order logic: (a) an \emph{Extensionality} principle stating that predicates standing in the $E$ relation to the same objects are the same, (b) a set of \emph{Comprehension} principles making sure that the domain of predicates is closed under definitions in our language with finitely many parameters.  

With this in place, we can  translate our $\#$-languages into this three-sorted first-order language. In particular, the $\mathsf{Tr}$-translation of an expression $\#_x \varphi \succsim \#_x \psi$ will read\vspace{0.5ex}   $$\exists p, q.\, \forall x \big(E(x,p)\, \leftrightarrow\, \mathsf{Tr}(\varphi)(x)\big) \,\land\, \forall x. \big(E(x,q) \leftrightarrow \mathsf{Tr}(\psi)(x)\big) \, \land\, f(p) \geq f(q).$$ 

\noindent The translation extends to multiple count operators, where available predicates can now have arbitrary finite arities, with natural closure properties  still described in first-order style.

Now it is straightforward to show that a formula $\varphi$ of our $\#$-language is satisfiable in abstract value semantics iff its translation $\mathsf{Tr}(\varphi)$ is satisfiable in a three-sorted model for the above effectively axiomatized first-order theory consisting of Extensionality and Comprehension.\vspace{-1ex}\end{proof}

%{\bf TI: Should the previous remark rather be put into a theorem environment?}

Incidentally, the same strategy can also  bring down the complexity of  second-order versions of our $\#$-languages, as we let second-order quantifiers range, Henkin-style, over the special set of available predicates in the above three-sorted models.

The shift to a first-order perspective  has noteworthy repercussions for the meta-properties of $\#$-logics. Consider the failure of \emph{Compactness} observed earlier (Proposition \ref{prop:compactness}): this property will hold now, because of the first-order reduction outlined above. 

\begin{example} It is of interest to see how this works with the standard counterexample to Compactness. The finitely satisfiable set $\{\neg \exists^\infty x. \top\} \cup \{ \exists^{\geq n}x.\top: n < \omega \}$  %  $\{G_n\}_{n \in N} \cup \{\exists x. ( \#_y y \neq x \approx \#_y y=y)\}$, where the formulas $G_n$ express that there exist more than $n$ objects in the domain. 
is not satisfiable in  standard cardinality semantics, but it is  in generalized semantics. Concretely, we take a language with only the identity predicate, dropping unary predicates for convenience. Now consider finite models $\mathcal{M}_n$ of all cardinalities $n$ and take an ultrapower over these with respect to a free ultrafilter. In the resulting model, all first-order properties of our finite models still hold, and we can say concretely how the generalized function $f$ works. The value domain will be an uncountable linear order consisting of one copy of the standard natural numbers followed by copies of the integers with, at the end, one copy of the negative integers. On finite subsets, $f$ gives sizes in the standard natural numbers, and on cofinite sets, it will give values in the final copy of the negative integers, counting down from the infinite largest element.  \end{example}

\subsection{Generalized Dependence  Semantics} There is also a more logic-oriented approach to lowering the complexity of our initial system $\mathsf{FO}(\#)$. In so-called \emph{generalized assignment semantics} for first-order logic, models come with a range of admissible or available assignments, where gaps in the full space of all functions from variables to objects encode \emph{dependencies} or correlations between variables \citep{Nemeti,BaltagvanBenthem}. The main truth condition is now that $\mathcal{M}, s \models \exists x \varphi$\, iff \,there exists some admissible assignment $t$ in the model which is equal to $s$ except for the value of $x$, and  such that    $\mathcal{M}, t \models  \varphi$.

It is known that the set of validities on generalized assignment models is decidable \citep{Nemeti}, while additional first-order principles impose further existential closure conditions on the admissible assignments, such as Church-Rosser style confluence properties that support the encoding of undecidable tiling problems, thus elucidating the assumptions underlying the undecidability of $\mathsf{FO}$ on standard models. Moreover, generalized assignment models  support various  decidable language extensions, such as polyadic tuple quantifiers \citep{vanBenthem2005}, and explicit atoms expressing functional dependence of variables $y$ on sets of variables $X$ \citep{BaltagvanBenthem}.

\vspace{1ex}

To extend the semantics of $\mathsf{FO}(\#)$ to generalized assignment models, we need a stipulation as to how we are going to \emph{count} in this setting. Various options may be considered given the richer environment of available vs. arbitrary assignments, but here is one that seems natural. At an assignment $s$ in a model $\mathcal{M}$, 

\vspace{1.5ex}

 \quad $\#^{X}_Y \varphi$ denotes the cardinality of the set of all tuples of values taken by the set $Y$ in
 
 \vspace{0.3ex}
 \quad those assignments in $\mathcal{M}$ that (a) agree with $s$ on their $X$-values, and (b) make $\varphi$ true. 

\vspace{1.5ex}

\noindent This counts ranges of values for some variables conditional on the current values of other fixed variables, leaving open  the status of yet other variables occurring in the formula $\varphi$. 

In terms of this count notion, for instance, the existential quantifier (and the dependence modalities of \citealt{BaltagvanBenthem}) can easily be defined in the style that was introduced in \S\ref{section:countinglogic}. Moreover, functional dependence of $y$ on $X$ can be written as    $\#^{X}_y \top(y) \approx \#^{\{x\}}_x \top(x)$. But we can also express other notions of correlation, up to forms of \emph{independence}. For instance, $\#^{X}_y \top(y) \approx \#^{\emptyset}_y \top(y)$ says that the local values of $X$ leave a value range for $y$ whose cardinality is equal to the total value range for $y$ in the model.

We submit that this combination of generalized assignment semantics and count terms is interesting, but exploring its natural open problems  is beyond the scope of this paper. 
%We conclude with just one our of many open problems. Is the logic $\mathsf{FO}(\#)$ computably axiomatizable, or even decidable, on generalized assignment models?

%{\bf TI: Maybe put these again in some `open problems' environment?}

\section{Generalized Quantifiers and Natural Language} \label{section:gq} The preceding section concludes our analysis of elementary combinations of logic and counting in terms of a standard hierarchy of designed formal systems. Let us now return to the setting of our Introduction, and take a look at how these issues manifest in natural language, the vehicle for our broader daily practices of logical reasoning and counting.

An obvious source for such a comparison is Generalized Quantifier Theory \citep{BarwiseCooper,vanBenthem1985,PetersWesterstahl}, an area where logic and counting have always co-existed, even though the field often places the emphasis on logic in the formal syntax, while the counting aspect resides in the semantics. We will develop this interface with more empirical practice in some detail, and show how quantification in natural language and the theory developed around it connect in interesting ways with the earlier systems. As it happens, new questions will arise both ways.

\vspace{0.5ex}

{\it A point of notation:} Throughout this section we will be using letters $A,B,C,\dots$ for subsets of a domain $D$. Following the semantic literature, we will use the same letters interchangeably as predicate symbols in a formal language, provided no confusion can arise.

%{\bf TI: I wonder if we want to put this part about tree of numbers earlier. Then this last section could introduce the computational/automata-theoretic angle. We might also add some discussion of counter automata here.}

\vspace{-1ex}

\subsection{Quantifier Expressions in Logical Semantics} \label{section:quantifiers}

\vspace{1ex}

There is a wide range of quantifier words and quantificational constructions in natural language. The quantifier vocabulary includes first-order expressions such as `all', `some', `no', but also numerals like `one', `two', or combinations like `all except two'. But there are also higher-order expressions such as `most', and expressions whose meaning is highly context-dependent such as `many', `enough', and so on. Moreover, the quantifier vocabulary includes comparative expressions such as `More $A$ than $B$ are $C$' or `As many $A$ as $B$ are $C$', or even `Twice as many $A$ as $B$ are $C$'.

For a  basic pattern, one usually takes the binary format $Q AB$ of $\langle 1,1 \rangle$ quantifiers, with $Q$ a quantifier expression and $A, B$  unary predicates denoting sets of objects. It is generally assumed that quantifiers in natural languages satisfy some universal constraints, such as,\vspace{0.5ex}
\begin{itemize}
    \item[] \emph{Conservativity} \quad $Q AB$ holds \,iff\, $Q A(A\cap B)$ holds.
\end{itemize}\vspace{0.5ex}
In what follows, in line with the literature, we will assume Conservativity throughout, though most of our results could be given more general formulations. 
Other widely assumed constraints  hold in many cases. An important one is \emph{Extension} saying that the  relation $Q AB$ does not depend on the total universe of objects inside which the sets $A, B$ are located. Finally, true quantifier expressions are purely numerical in the sense of satisfying the following constraint, which also played a key role in \S\ref{section:basicmonadic}-\ref{section:sequences}:\vspace{1ex}
\begin{itemize}
    \item[]  \emph{Permutation Invariance} \quad $Q AB$ holds iff $Q \pi[A]\pi[B]$ for any permutation $\pi:D\rightarrow D$.
\end{itemize}\vspace{1ex}
The total effect of these three  conditions ties quantifiers closely to counting. To specify the meaning of a quantifier expression $Q$, it suffices to list its acceptance behavior on all pairs of numbers $(a, b)$ where $a = |A - B|, b = |A \cap B |$ for some $A, B$ such that  $Q AB$ holds (see Figure \ref{fig:quant}(a)). Accordingly, Generalized Quantifier Theory studies quantifiers equally well in numerical terms as in logical ones. A typical tool in Generalized Quantifier Theory for visualizing this double perspective is the so-called {\it tree of numbers} for representing quantifiers graphically in terms of the pairs $(a,b)$ representing cardinalities $(|A-B|, |A\cap B|)$ (Figure \ref{fig:quant}(b)). 
Here a quantifier can be seen as a subset of the tree.  Further special properties of quantifiers then show up as geometrical patterns in the tree. For instance, if $Q$  is upward monotonic in its right-hand argument, $Q$ will be closed when moving upward along an upward diagonal line from any point where it holds. Upward monotonicity in the left-hand argument shows as acceptance of the %who 
sub-quadrant generated by points $(a, b)$ where $Q$ holds. These geometric descriptions lead to simple characterizations of all possible monotonic quantifiers, or all first-order definable quantifiers \citep{vanBenthem1985}.

\begin{remark} From our  perspective, the tree of numbers approach is interesting for its twists. It arises by passing from logical syntax to numerical content of quantifier expressions, but with that in place, it again geometrizes that numerical content, something that one might see as a further move to qualitative  geometric logic.\end{remark}

\begin{figure} \centering 
\subfigure [Effect of Conservativity, Extension, and Invariance] {
\begin{tikzpicture}[framed]
        \node at (0,1.25) {};
       \draw [fill=blue!10] (0,0) circle (1cm);
       \draw (1,0) circle (1cm);
       \node at (-1,-1.05) {$A$};
       \node at (2,-1.05) {$B$};
       \node at (-.45,0) {$a$};
       \node at (.5,0) {$b$};
       \node at (1.5,0) {$c$};
       \node at (-1.2,1.2) {$e$};
\end{tikzpicture}
   } \hspace{.25in} 
\subfigure [The tree of numbers $(a,b)$ with `all' highlighted in red] {
   \begin{tikzpicture}
   \node (a) at (5,5) {\footnotesize{} $0,0$};
   \node at (4.5,4.25) {\footnotesize{} $1,0$};
   \node at (5.5,4.25) {\footnotesize{} $0,1$};
      \node at (4,3.5) {\footnotesize{} $2,0$};
   \node at (5,3.5) {\footnotesize{} $1,1$};
      \node at (6,3.5) {\footnotesize{} $0,2$};
    \node at (3.5,2.75) {\footnotesize{} $3,0$};
    \node at (4.5,2.75) {\footnotesize{} $2,1$};
    \node (r) at (5.5,2.75) {\footnotesize{} $1,2$};
    \node (p) at (6.5,2.75) {\footnotesize{} $0,3$};
%    \node at (3.5,2.25) {$\vdots$};
    \node at (5,2.25) {$\vdots$};
  %  \node at (6.5,2.25) {$\vdots$};
  \pgfmathanglebetweenpoints{\pgfpointanchor{a}{center}}{\pgfpointanchor{p}{center}}
        \pgfmathsetmacro{\myangle}{\pgfmathresult}
        \node[fill=red!5, rounded corners=2pt, rotate fit=\myangle, fit=(a) (p)] {};
\node at (5,5) {\footnotesize{} $0,0$};
   \node at (5.5,4.25) {\footnotesize{} $0,1$};
         \node at (6,3.5) {\footnotesize{} $0,2$};
             \node at (6.5,2.75) {\footnotesize{} $0,3$};
\end{tikzpicture} 
}\hspace{.25in}
\subfigure [Arithmetical expressions for some binary quantifiers] {\begin{tabular}{|l | c|} \vspace{-1.25in} 
 &  \\ \hline
%\footnotesize{}Quantifier & \footnotesize{}Expression \\ \hline
\footnotesize{}`all' & \footnotesize{}$a=0$ \\
\footnotesize{}`some' & \footnotesize{}$b \neq 0$ \\
\footnotesize{}`at least two' & \footnotesize{}$b\neq 0 \wedge b\neq s0$ \\
\footnotesize{}`most' & \footnotesize{}$b>a$ \\
\footnotesize{}`even number of' & \footnotesize{}$\exists x. b=x+x$ \\
\footnotesize{}`many' & \footnotesize{}$b \times e > a \times c$ \\ \hline
\end{tabular} 
}
\caption{(a) Assuming Conservativity, Extension, and Invariance, we need only be concerned with $a=A-B$ and $b=A\cap B$. (b) The tree of numbers consists of all pairs $(a,b)$. Highlighted in light red are pairs in the quantifier `all'. (c) Examples of quantifiers and their arithmetic expressions. Note that, in addition to requiring multiplication, the quantifier `many' violates Extension.} \label{fig:quant}
\end{figure}

\vspace{-1ex}

\subsection{Linguistic  Vocabulary and $\#$-Logics} \label{section:vocab} For a start, the logical system $\mathsf{MFO}(\#)$ shows various analogies with the preceding style of analysis. First in terms of general constraints, it enjoys the following well-known logical property.

\begin{fact} \label{fact:relativization} $\mathcal{L}_\#^1$ is closed under  \emph{relativization} to definable subdomains. 
\end{fact}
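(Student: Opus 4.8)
The plan is to prove closure under relativization syntactically, by induction on formula structure, producing for each formula $\varphi$ and each ``new'' unary predicate $U$ (not occurring in $\varphi$) a relativized formula $\varphi^{U}$ such that, in any model $\mathcal{M}$ with $U^{\mathcal{M}} = A \subseteq D$, and for any assignment $s$ taking values in $A$, we have $\mathcal{M}, s \models \varphi^{U}$ iff $\mathcal{M}{\restriction}A, s \models \varphi$, where $\mathcal{M}{\restriction}A$ is the submodel with domain $A$ and all predicates restricted accordingly. The atomic cases are immediate: a predication $P(x_1,\dots,x_n)$ and an inequality $x \neq y$ are left unchanged, since their truth at an $A$-valued assignment depends only on the restricted interpretation. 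The Boolean cases $(\neg\varphi)^{U} = \neg(\varphi^{U})$ and $(\varphi \wedge \psi)^{U} = \varphi^{U} \wedge \psi^{U}$ are routine, given the inductive hypothesis that the relativized subformulas are again evaluated only at $A$-valued assignments.

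The two interesting cases are the first-order quantifier and the count comparison. For the quantifier we take the usual relativization $(\exists x.\varphi)^{U} = \exists x.\big(U(x) \wedge \varphi^{U}\big)$, which restricts the witness to $A$; the inductive hypothesis applies because the new assignment still sends $x$ into $A$. For a count comparison $\#_x\varphi \succsim \#_y\psi$, the key observation is that in $\mathcal{M}{\restriction}A$ the term $\#_x\varphi$ denotes $|\{d \in A : \mathcal{M}{\restriction}A, s^x_d \models \varphi\}|$, which by the inductive hypothesis equals $|\{d \in A : \mathcal{M}, s^x_d \models \varphi^{U}\}| = |\{d \in D : \mathcal{M}, s^x_d \models U(x) \wedge \varphi^{U}\}|$; so the right relativization is $(\#_x\varphi \succsim \#_y\psi)^{U} = \#_x\big(U(x) \wedge \varphi^{U}\big) \succsim \#_y\big(U(y) \wedge \psi^{U}\big)$. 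One must be slightly careful here that the variables $x,y$ bound by the $\#$ operators genuinely get ``relativized'': inside the scope of $\#_x$ the bound variable $x$ ranges over all of $D$, so the guard $U(x)$ is exactly what cuts it down to $A$, and the inductive hypothesis is then applied to $\varphi^{U}$ at assignments sending $x$ into $A$. Since $U$ does not occur in $\varphi$, no clashes arise, and the resulting formula $\varphi^{U}$ is again in $\mathcal{L}_\#^1$.

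Finally, to state the closure property in the form usually wanted, one notes that relativization as a syntactic operation witnesses that the class of $\mathcal{L}_\#^1$-definable relations (over a fixed vocabulary) is preserved when passing to a definable subdomain: a sentence $\varphi$ defining a property of $D$ yields a sentence $\varphi^{U}$ whose truth in $\mathcal{M}$ reflects the truth of $\varphi$ in $\mathcal{M}{\restriction}U^{\mathcal{M}}$. The proof is entirely routine formula induction; the only place demanding any attention — and the ``main obstacle'', such as it is — is bookkeeping the scoping of bound variables in the $\#$-terms, making sure the guard $U(\cdot)$ is inserted for the variable actually being counted and that the inductive hypothesis is invoked with the correct (restricted) assignments. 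No use of the normal form theorems or of linearity of $\succsim$ is needed; relativization is robust and holds for the finitary system $\mathsf{MFO}^\phi(\#)$ as well, by the same induction, since the submodel of a finite model is finite.
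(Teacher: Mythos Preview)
Your proposal is correct and takes essentially the same approach as the paper: the paper's proof consists of a single sentence giving exactly your key clause, namely that $\#_x\varphi \succsim \#_y\psi$ relativizes to $\#_x(A(x) \wedge (\varphi)^A) \succsim \#_y(A(y) \wedge (\psi)^A)$. You have simply spelled out the full induction and the semantic correctness argument that the paper leaves implicit.
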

\begin{proof} The crucial step of defining a relativization map $\varphi \mapsto (\varphi)^A$ is given by the transformation of $\#_x(\varphi) \succsim \#_y(\psi)$ into $\#_x(A(x) \wedge (\varphi)^A) \succsim \#_y(A(y) \wedge (\psi)^A)$. \end{proof}
This property allows us to explore valid reasoning principles in $\mathsf{MFO}(\#)$ that assume Conservativity and Extension, which did not yet come to the fore in our earlier analysis in \S\ref{section:basicmonadic}. An illustration is the following principle of \emph{Quantity}:
$$\big((\varphi)^A(B) \wedge \#(A-B) \approx \#(C-D) \wedge \#(A \cap B) \approx \#(C \cap D)\big) \rightarrow (\varphi)^C(D)$$
Here $(\varphi)^{A}(B)$ makes a pure cardinality statement about $B$ inside the set $A$ (a strong form of Conservativity) and the conclusion is that this statement would hold for any set $C$ with the same numerical behavior with respect to $D$. This expresses a form of Permutation Invariance in terms of validities in $\mathsf{MFO}(\#)$.

\vspace{1ex}

As for inference patterns for specific quantifiers, our Introduction highlighted  numerical syllogisms with first-order quantifiers, numerals, exceptive quantifiers `all except  at most $k$', and comparative quantifiers such as `most' or `more... than...'. Numerical syllogisms are often analyzed in practice using Venn Diagrams with number information written into the zones, as in Figure \ref{fig:quant}(a).  This representation  was the intuitive basis for the normal forms for the system $\mathsf{MFO(\#)}$ (recall Figure \ref{fig:variables}) which encodes all of the above reasoning.

%{\bf TI: The previous paragraph (after quantity) strikes me as a bit repetitive given what comes before and after. Perhaps we could cut here?}

These informal comparisons can be made precise by means of  definability results. Here is an illustration using the model-theoretic definability analysis presented  in \S\ref{section:mfodef}.

\binaryq*
%\begin{corollary}
%The sets of positive integers definable in $\mathcal{L}_\#^1$ are exactly the finite and the co-finite sets. \label{finite-co}
%\end{corollary}
\begin{proof}

Consider the elementary theory of $\langle\mathbb{N}\,;>\rangle$, the natural numbers with the binary relation ``greater than''. For simplicity, assume we also have (the definable) function symbols $0$ and $s$. This logic then has quantifier elimination: every formula $\alpha(x,y)$ is equivalent to a Boolean combination of equalities and inequalities between terms of the form $s^{n_1}(0)$, $s^{n_2}(x)$, or $s^{n_3}(y)$. The key to the theorem is  that these are also exactly the normal forms for the binary quantifiers definable in $\mathsf{MFO}(\#)$.

To make this more precise, we first characterize exactly the type $\langle 1, 1 \rangle$ quantifiers that can be defined by formulas $\varphi^A(A,B)$. By relativization (Fact \ref{fact:relativization}) we only have two relevant state descriptions, $A\cap B$ and $A-B$. By Theorem \ref{normal},  $\varphi^A(A,B)$ is equivalent to a disjunction of $m$-inequalities with constant numbers $m$ involving $\#(A-B)$ and $\#(A\cap B)$. We can assume that such inequalities involve no sums, as they would denote the size of the whole domain, and we can then eliminate these. Of the remaining cases, the statement $k \succsim \#(A-B)+\#(A\cap B)$ can be rewritten as a large disjunction over all ways of dividing $k$ or less between $\#(A-B)$ and $\#(A\cap B)$. Likewise, $\#(A-B ) + \#(A\cap B) \succsim k$ is rewritten as a large disjunction over all pairs adding up to $k$. The remaining cases are handled similarly. The result is a Boolean compound of inequalities that is expressible in the first-order theory of $\langle \mathbb{N}; >\rangle$.

Next, we map the language of formulas $\varphi^A(A,B)$ to the arithmetical language by employing two distinguished variables $x$ and $y$, corresponding to the $\mathcal{L}_\#^1$-terms $\#(A\cap B)$ and $\#(A-B)$, respectively. By the foregoing it is easy to see that every expression $\varphi^A(A,B)$ corresponds to an arithmetic formula $\alpha(x,y)$ in the two free variables $x,y$. In the other direction, each arithmetical formula $\alpha(x,y)$ of the form produced by quantifier elimination is easily seen to  be expressed by an appropriate $\mathcal{L}_\#^1$ formula.
\end{proof}

Thus, the first-order quantifiers $Q\, AB$ of natural language are definable. But of course $\mathsf{MFO}(\#)$ can define non-first-order quantifiers too, such as `Most $A$ are $B$'. The normal form format of \S\ref{section:basicmonadic} cuts across standard first-order/second-order boundaries.

\begin{remark} \label{remark:ton} The binary quantifiers definable in $\mathsf{MFO(\#)}$ can be classified algebraically in terms of our normal forms, but a more geometrical perspective is provided by  the tree of numbers. This is a discrete version of the usual representations of solution sets for systems of linear inequalities (cf. \citealt[p. 85]{Schrijver} for a systematic treatment). In this special case with just two numbers $a, b$, the  inequalities occurring in our normal forms reduce to the following simpler types:

\vspace{1ex}

(i) $a = k$, (ii) $a > k$, (iii) $a + b = k$, (iv) $a + b > k$, (v)  $a = b + k$, (vi) $a > b + k$,

plus all versions of these with $a$, $b$ interchanged.

\vspace{1ex}

\noindent To see why all these forms can occur, note that terms $T_i$ in normal forms are disjunctions of state descriptions, with the empty disjunction allowed. On the other hand, we can suppress some  possible forms such as $k > a$, since these are finite disjunctions of equalities. 

Now, in the tree of numbers, these types correspond with simple  geometrical patterns. (i) describes a right- or left-sloping \emph{diagonal line}, (ii) an infinite downward \emph{triangle}, (iii) a horizontal line, (iv) a \emph{trapezoid} below a horizontal line, (v) a vertical line, and (vi) a \emph{slice}: a left- or rightward half triangle.  Analyzing up to finite disjunctions, we  look at the intersections produced by these. Here, as in earlier definability arguments, we focus on  what happens beyond some finite tree level, as a finite number of  points above that level can  be dealt with by adding  explicit definitions in terms of intersecting lines. Next,   shapes can be simplified further in term of finite unions: a horizontal line is a finite set of points, a trapezoid is a finite union of triangles, a full triangle is a union of two slices. We are left with the following basic shapes (above a certain tree level): \emph{diagonal lines}, \emph{vertical lines} and \emph{slices}. Intersections of these can produce finite unions of (a) single points, (b) diagonal lines from some point onward,  (c) horizontal lines from some point onward, (d) slices from some point onward.  For instance, intersecting two slices with different orientations produces an infinite ``band'' extending vertically downward,  but this is a finite union of vertical lines. 

In all, we are left with finite disjunctions of the following $\mathsf{MFO}(\#)$-definable types of quantifiers: (a) `Exactly $k\, A$ are $B$ and exactly $m \,A$ are  not-$B$', (b) `There are at least $k$ A and exactly $m$ of these are $B$', `There are at least $k$ A and exactly $m$ of these are not-$B$', (c) `There are at least $k$ A and among these, there are equally many  $B$ and not-$B$', and (d) `There are at least $k$ A with at least $m$ $B$ among these, and fewer $B$ than non-$B$' (for a left-looking slice), and vice versa for the other case.

This geometric analysis extends that  for first-order quantifiers in \cite{vanBenthem1985}, where the only basic shapes needed are diagonal lines and triangles.

%(a line), $a > b+k$ (a half-quadrant) and their counterparts with $a, b$ interchanged. Boolean combinations create unions and intersections of these shapes. For a particular definable quantifier, there is an upper bound $N$ to the specific numbers $k$ occurring in the inequalities, and we are left with the following picture. Up to a diagonal line $a + b = N$ the quantifier can accept an arbitrary finite set of pairs. Starting at points $(a, b)$ on the line $a + b = N$, the quantifiers accept or reject in their totality: (i) vertical upward lines, (ii) horizontal rightward lines, (iii) vertical triangles of the form $ a \geq b \land a \geq m, b \geq n$,  (iv) horizontal triangles of the form $ a \leq b \land a \geq m, b \geq n$.  It is easy to picture what these patterns look like.
\end{remark}

\begin{remark} It should be noted that the preceding analysis is about quantifiers on finite domains only. While this restriction is often assumed in the semantics of natural language, a generalization to \emph{infinite models} would be of interest. For an extension of the tree of numbers representation to infinite cardinalities, cf. \cite{vanDeemter}.\end{remark}

%{\bf TI: This is a helpful remark, but I am not following the notation for the triangles here. }

Clearly, the infinitely many quantifiers definable in the above manner are not all realized in natural language, though they can drive an interesting search for examples and non-examples. For instance, the simple pattern $b > a +2$ seems to defy a simple unforced linguistic description, say, in terms of `most' and `except'. Of course, with enough words, one can always paraphrase what this says in artificial ways (cf. (\ref{at-least}) below), but we are interested here in the quantifiers that have actually been \emph{lexicalized} in natural languages (roughly in the sense of being ``morphosyntactically simple''; see, e.g.,  \citealt{KeenanPaperno}).

On the other hand, there are also realistic  quantifiers in natural language that our base system cannot express.

\begin{corollary} $\mathsf{MFO(\#)}$ cannot express the quantifier \emph{`An even number of $A$s are $B$'} or the proportionality quantifiers \emph{`At least $1/n$ of the $A$s are $B$'} for $n>2$. \label{corollary:proportion}
\end{corollary}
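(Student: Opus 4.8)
The plan is to deduce this corollary directly from Theorem~\ref{binary-quant} together with the undefinability machinery of Corollary~\ref{undefinability} (via the $\sim_{2k}$ relation). First I would recall that, by Theorem~\ref{binary-quant}, any binary quantifier definable in $\mathsf{MFO}^\phi(\#)$ must be expressible in the first-order theory of $\langle\mathbb{N};>\rangle$. So it suffices to show that neither ``an even number of $A$s are $B$'' (i.e.\ the set of pairs $(a,b)$ with $b$ even) nor ``at least $1/n$ of the $A$s are $B$'' for $n>2$ (i.e.\ the set of pairs with $nb \ge a+b$, equivalently $(n-1)b \ge a$) is first-order definable over $\langle\mathbb{N};>\rangle$. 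This is a classical fact: the first-order theory of $\langle\mathbb{N};>\rangle$ has quantifier elimination (after adding $0$ and $s$, as noted in the proof of Theorem~\ref{binary-quant}), so every definable unary predicate is a finite Boolean combination of conditions of the form $x=k$, $x>k$, $x<k$, hence is eventually constant (either eventually contains all numbers or eventually contains none); and every definable binary relation is a Boolean combination of $s^{m}(x) \mathrel{R} s^{m'}(y)$ and $s^m(x)\mathrel{R} s^{m'}(0)$, which cannot capture a ``periodic'' condition like parity of $b$ nor a ``linear ratio'' condition like $(n-1)b\ge a$.

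The cleanest way to present this, avoiding an appeal to quantifier elimination metatheory, is to argue directly with models and Corollary~\ref{undefinability}, exactly as in the proofs of Facts~\ref{fact:twice} and~\ref{fact:many}. For parity: suppose a sentence $\varphi$ of depth $k+1$ defined ``$|A\cap B|$ is even'' over the two state-descriptions $A\cap B$ and $A-B$. Take $\mathcal{M}$ with $|A\cap B|_\mathcal{M} = 2k$, $|A-B|_\mathcal{M}=0$ (so $\varphi$ holds), and $\mathcal{M}'$ with $|A\cap B|_{\mathcal{M}'} = 2k+1$, $|A-B|_{\mathcal{M}'}=0$ (so $\varphi$ fails); one checks $\mathcal{M}\sim_{2k}\mathcal{M}'$, since the two cardinalities differ by only $1$ and both sums involving them are equal, so no $m$-inequality with $m\le 2k$ can distinguish them. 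This contradicts Corollary~\ref{undefinability}. For the proportionality quantifier with $n>2$: suppose $\varphi$ of depth $k+1$ defined ``$(n-1)|A\cap B| \ge |A-B|$''. We want $\mathcal{M},\mathcal{M}'$ agreeing on all $m$-inequalities for $m\le 2k$ but disagreeing on the proportionality statement. Choosing, say, $|A\cap B| = k$ in both models, and $|A-B| = (n-1)k$ in $\mathcal{M}$ (statement holds, with equality) versus $|A-B| = (n-1)k + 2k + 1$ in $\mathcal{M}'$ (statement fails), we get a gap larger than $2k$ between $|A-B|$ and $(n-1)|A\cap B|$ in both directions; here it is useful that $n>2$ so that $(n-1)k \ge 2k \ge $ any relevant constant, ensuring the ``$\ge$'' side of the inequality is also robust to $2k$-perturbations in $\mathcal{M}$. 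One then verifies $\mathcal{M}\sim_{2k}\mathcal{M}'$ by checking all the $m$-inequalities between the two regions (and their sum, which differs, but the difference exceeds $2k$ so both models agree that the larger is more than $2k$ above the smaller), and invokes Corollary~\ref{undefinability} again.

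The main obstacle I anticipate is bookkeeping in the second case: one must be careful that the chosen cardinalities really do satisfy $|T_1|_\mathcal{M}\ge|T_2|_\mathcal{M}+m \iff |T_1|_{\mathcal{M}'}\ge|T_2|_{\mathcal{M}'}+m$ for \emph{all} pairs $T_1,T_2$ of (unions of) state-descriptions and all $m\le 2k$, not just for the two ``atomic'' regions. Since there are only two state-descriptions here, the relevant $T_i$ range over $\emptyset$, $\{A\cap B\}$, $\{A-B\}$, and $\{A\cap B, A-B\}$, so this is a finite check; the key point is to place the cardinalities so that every nontrivial difference between sums is either $0$ or strictly greater than $2k$ in both models simultaneously. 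I would organize the argument by first noting the $n=2$ boundary case (where ``at least $1/2$'' is just ``$b\ge a$'', which \emph{is} definable — indeed it is essentially ``most'' together with a tie, expressible via $\succsim$), thereby explaining the hypothesis $n>2$, and then carrying out the general construction with cardinalities scaled by $k$ so that the ``gap'' automatically dominates $2k$. A remark could also point out that this is a special case of the general fact that $\mathsf{MFO}^\phi(\#)$ cannot express any genuinely multiplicative or modular arithmetic relation, in contrast to $\mathsf{MSO}^\phi(\#)$ (which captures all semi-linear sets, hence modular conditions) and $\mathsf{MFO}^\phi(\sharp)$ (which captures multiplication).
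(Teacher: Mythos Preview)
Your primary approach—invoking Theorem~\ref{binary-quant} and then observing that neither parity of $b$ nor the ratio condition $(n-1)b\ge a$ (for $n>2$) is definable in the first-order theory of $\langle\mathbb{N};>\rangle$—is exactly the paper's approach: the corollary is stated immediately after Theorem~\ref{binary-quant} with no separate proof, as an immediate consequence.

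Your alternative direct route via Corollary~\ref{undefinability} is also sound in spirit, and the parity argument is correct as written. However, the specific cardinalities you choose for the proportionality case fail when $n=3$. With $b=|A\cap B|=k$ and $a=|A-B|=(n-1)k=2k$ in $\mathcal{M}$, the gap $a-b=(n-2)k=k$ is \emph{not} greater than $2k$; taking $T_1=\{A-B\}$, $T_2=\{A\cap B\}$, $m=2k$ then gives $|T_1|_\mathcal{M}\ge|T_2|_\mathcal{M}+m$ false in $\mathcal{M}$ (since $2k<3k$) but true in $\mathcal{M}'$ (since $4k+1\ge 3k$), so $\mathcal{M}\not\sim_{2k}\mathcal{M}'$. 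You correctly anticipate exactly this bookkeeping obstacle in your final paragraph, but the construction you actually write down does not clear it. The fix is the one you gesture at: scale up, e.g.\ set $b=3k$ in both models and $a=3(n-1)k$ in $\mathcal{M}$ versus $a=3(n-1)k+1$ in $\mathcal{M}'$. Then every nonzero pairwise difference among $0,b,a,a+b$ is at least $3k>2k$ in both models (using $n\ge 3$), the orderings coincide, and $\mathcal{M}\sim_{2k}\mathcal{M}'$ follows; meanwhile $(n-1)b\ge a$ holds with equality in $\mathcal{M}$ and fails by one in $\mathcal{M}'$.
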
 

These additional quantifiers require the resources of our second-order system $\mathsf{MSO(\#)}$, which overshoots considerably compared to natural language, as it can define all Presburger definable logical quantifiers. The following is a direction consequence of Theorem \ref{thm:semi-lin}:
\begin{corollary} The binary quantifiers definable in $\mathsf{MSO(\#)}$ are exactly those expressible in the first-order theory of $\langle \mathbb{N}\,; + \rangle$.
\end{corollary}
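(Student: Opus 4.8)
The plan is to prove the biconditional by combining the already-established machinery about $\mathsf{MSO}(\#)$ with the well-understood theory of semi-linear sets. The statement to be shown is that a binary quantifier $Q$ is definable in $\mathsf{MSO}(\#)$ (equivalently $\mathsf{MSO}^\phi(\#)$, since quantifiers in natural-language semantics are usually taken over finite domains) if and only if its acceptance set, viewed as a set of pairs $(a,b) = (|A-B|,|A\cap B|)$, is first-order definable over $\langle \mathbb{N};+\rangle$. The key input is Theorem~\ref{thm:semi-lin}, which says that the numerical relations definable in $\mathsf{MSO}^\phi(\#)$ are exactly the semi-linear sets, together with the classical fact (Ginsburg--Spanier) that the semi-linear subsets of $\mathbb{N}^n$ coincide with the subsets of $\mathbb{N}^n$ definable in Presburger Arithmetic, i.e.\ in the first-order theory of $\langle \mathbb{N};+\rangle$.

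First I would set up the correspondence between binary (type $\langle 1,1\rangle$) quantifiers and numerical relations precisely as in the proof of Theorem~\ref{binary-quant}: by Conservativity and Fact~\ref{fact:relativization}, a quantifier $Q$ is determined by its behaviour on the two state-descriptions $A\cap B$ and $A-B$, so defining $Q$ amounts to defining a set $\mathcal{V}\subseteq\mathbb{N}^2$ in the sense of Definition~\ref{define:define}, with the two coordinates reading off $|A\cap B|$ and $|A-B|$. (One should note, as in the remark after Theorem~\ref{binary-quant}, that state-descriptions over the two predicates $A,B$ also include $\overline{A}\cap\overline{B}$ and $\overline{A}\cap B$; but Extension lets us ignore the first, and Conservativity forces $Q$ not to depend on the size of $\overline{A}\cap B$ either, so the relevant projection is onto the two coordinates $(|A\cap B|,|A-B|)$ — a point worth stating carefully to avoid a spurious dependence on extra zones.)

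For the forward direction, suppose $Q$ is $\mathsf{MSO}^\phi(\#)$-definable by a sentence $\varphi$ in predicates $A,B$. By Theorem~\ref{thm:semi-lin}, $\varphi$ defines a semi-linear set $\mathcal{V}$ in the four state-descriptions over $A,B$; restricting to the sub-relation obtained by the Conservativity/Extension reduction above (a projection, and semi-linear sets are closed under projection) yields a semi-linear $\mathcal{V}'\subseteq\mathbb{N}^2$, hence a Presburger-definable one. Conversely, given any $\alpha(x,y)$ in the language of $\langle\mathbb{N};+\rangle$, the set it defines is semi-linear by Ginsburg--Spanier, and by Lemma~\ref{lemma:semi-linear} every semi-linear set is definable in $\mathsf{MSO}^\phi(\#)$; taking $n=2$ with $S_1 = A\wedge B$ and $S_2 = A\wedge\neg B$ and relativizing to $A$ via Fact~\ref{fact:relativization} gives an $\mathcal{L}_\#^2$ formula $\varphi^A(A,B)$ defining the corresponding quantifier. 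This closes the loop.

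The main obstacle I anticipate is not any deep technical point but rather the bookkeeping at the interface between ``numerical relations definable in $\mathsf{MSO}^\phi(\#)$'' (indexed by arbitrarily many state-descriptions, all of which must be interpreted as independent variables) and ``binary quantifier'' (which lives on just two numbers, after the conservativity reduction); one must check that the projection/relativization steps are sound in \emph{both} directions and that no arithmetical expressive power is gained or lost in passing between the two. The other thing to be careful about is the finite-versus-infinite model issue: the corollary as stated should be read in the finitary setting $\mathsf{MSO}^\phi(\#)$ (matching Theorem~\ref{thm:semi-lin} and the standing assumption in \S\ref{section:gq} that quantifiers live on finite domains), and I would flag this explicitly rather than let the ambient $\mathsf{MSO}(\#)$ notation create ambiguity. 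Everything else is a routine application of results already in hand.
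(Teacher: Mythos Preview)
Your proposal is correct and matches the paper's approach: the paper states the corollary as ``a direct consequence of Theorem~\ref{thm:semi-lin}'' with no further argument, and you have supplied exactly the missing bookkeeping (relativization to $A$ via Fact~\ref{fact:relativization}, the two-coordinate reduction, and the Ginsburg--Spanier identification of semi-linear with Presburger-definable). Your care about the finite-model reading and the passage from four state-descriptions to two is appropriate and more explicit than the paper itself; note that the cleanest route for the forward direction is to relativize to $A$ \emph{first} (as in the proof of Theorem~\ref{binary-quant}) so that Theorem~\ref{thm:semi-lin} already applies in a two-variable setting, rather than defining a four-variable semi-linear set and then projecting.
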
 

Here, the over-generation of the logic continues. Say, $2a = b$  says that  the number of $A$s that are $B$ equals twice the number of $A$s that are not, or rephrased: two thirds of the $A$s are $B$s. This is intelligible, but not part of natural basic quantifier vocabulary.

%{\bf TI: Does this not say something more like `a third of the As are Bs'? What would be a better example demonstrating overgeneration?}

Finally, our move to the richer system $\mathsf{MFO}(\sharp)$ and Diophantine arithmetic raises even further issues. Here is what we found earlier.

\begin{corollary} The quantifiers definable in $\mathsf{MFO}(\sharp)$ are exactly those expressible by Boolean combinations of Diophantne inequalities, i.e., in a small bounded fragment  of the complete first-order theory of $\langle \mathbb{N}\,; +,\times \rangle$. 
\end{corollary}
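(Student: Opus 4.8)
The corollary asserts that the binary (type $\langle 1,1\rangle$) quantifiers definable in $\mathsf{MFO}(\sharp)$ are exactly those definable by Boolean combinations of Diophantine inequalities of the form \eqref{in-diophantus}, interpreted as conditions on $a = |A - B|$ and $b = |A \cap B|$. This is an immediate specialization of Theorem~\ref{thm:mfosharp} together with the closure property of Fact~\ref{fact:relativization}. So the proof is short: it consists in (i) reducing the two-predicate case to a pair of numerical variables via relativization, and (ii) invoking Theorem~\ref{thm:mfosharp} to identify exactly which conditions on those two variables are expressible.

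\begin{proof}
By the same reasoning used in the proof of Theorem~\ref{binary-quant}, the question of which $\langle 1,1\rangle$ quantifiers $Q AB$ are definable in $\mathsf{MFO}(\sharp)$ reduces, via relativization (the $\mathcal{L}^1_\sharp$-analogue of Fact~\ref{fact:relativization}, whose proof transforms $\sharp_{\mathbf{x}}\varphi \succsim \sharp_{\mathbf{y}}\psi$ into the corresponding comparison with each counted variable guarded by $A$), to the question of which relations on the two numbers $a = |A-B|$ and $b = |A\cap B|$ are definable by a sentence $\varphi^A(A,B)$. Since every permutation-invariant, conservative quantifier satisfying Extension is determined by its behavior on such pairs $(a,b)$ (recall \S\ref{section:quantifiers}), a quantifier is $\mathsf{MFO}(\sharp)$-definable iff the corresponding subset of $\mathbb{N}^2$ is.

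Now apply Theorem~\ref{thm:mfosharp}: the definable sets of $\mathsf{MFO}^\phi(\sharp)$ (in the sense of Definition~\ref{define:define}) are exactly those definable by Boolean combinations of Diophantine inequalities \eqref{in-diophantus}. Specializing to the two state-descriptions $A\cap B$ and $A-B$, with numerical variables $b$ and $a$ respectively, the definable relations on $(a,b)$ are precisely the Boolean combinations of Diophantine inequalities in the two variables $a,b$. Conversely, Lemma~\ref{lemma:diophantus} shows every such Diophantine inequality is expressible, and $\mathcal{L}^1_\sharp$ closes under Booleans, so every Boolean combination of them is definable by some $\varphi^A(A,B)$. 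This establishes the claimed characterization. The final clause of the statement---that this is a small bounded fragment of the full first-order theory of $\langle\mathbb{N};+,\times\rangle$---follows because a Boolean combination of Diophantine inequalities is a quantifier-free formula of arithmetic in prenex-free form, hence in particular lies within, e.g., the $\Delta_0$ hierarchy with no nontrivial bounded quantifiers.
\end{proof}

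Essentially no step here is a genuine obstacle: the corollary is a packaging of Theorem~\ref{thm:mfosharp} restricted to two numerical parameters. The only point requiring a small amount of care is checking that relativization is available in $\mathcal{L}^1_\sharp$ and that it behaves correctly on polyadic count terms---i.e., that guarding each counted variable $x_i$ in $\sharp_{\mathbf{x}}\varphi$ by $A(x_i)$ indeed restricts the count to tuples drawn from $A$; this is routine but should be stated, since it is what licenses the reduction to the two variables $a$ and $b$. One might also remark, for the sake of the comparison with natural language being drawn in this subsection, that the resulting fragment still vastly over-generates relative to lexicalized quantifiers, just as in the earlier corollaries.
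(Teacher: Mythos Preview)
Your proposal is correct and matches the paper's approach. The paper gives no separate proof for this corollary at all: it simply introduces it with ``Here is what we found earlier,'' treating it as a direct restatement of Theorem~\ref{thm:mfosharp}. Your write-up is therefore more detailed than the paper's own treatment; in particular, your observation that relativization must be checked anew for polyadic count terms (guarding each bound variable of $\sharp_{\mathbf{x}}$ by $A$) is a genuine point that the paper leaves implicit, and it is indeed routine.
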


It has been suggested in \cite{vanBenthem1985} that the arithmetical content of linguistic quantifiers is essentially restricted to addition. In that case, multiplication would be irrelevant to understanding the linguistic quantifier repertoire. However, our current analysis throws doubts on this picture. The natural  meaning of `many' involved multiplication, and natural language does have resources for comparing proportions. Moreover, it does form \emph{pairs of objects} in basic syntax, witness naturally occurring relational phrases such as `who married whom'. The resulting counting of pairs or longer tuples suggests  connections with our multiple count logic $\mathsf{MFO}(\sharp)$. However, the formulas that we used to define multiplication have somewhat artificial variable binding patterns that need not occur in natural language. The multiplicative content of natural quantifier expressions remains to be determined.

Finally, while the preceding discussion was about basic \emph{quantifier vocabulary},  natural language also has more complex \emph{quantifier constructions}. Well-known constructions of logical interest are ``cumulative'' and ``branching quantifiers'' (see \citealt{PetersWesterstahl}). A particular construction worth highlighting here is the role of particles qualifying meanings of quantifier combinations. Consider a sentence like:
\begin{equation} \label{family} \mbox{`Every family has a different problem'.} \end{equation}
This is not just a simple $\forall\exists$ combination,  demanding the existence of some choice function from families to problems. The particle `different' requires that choice function to be one-to-one, more like our cardinality comparison statements. However, there is a crucial difference. In this case, the one-to-one function must lie {\it inside a given relation}, in our concrete sentence: the relation \emph{having}. This seems a case where natural language poses a challenge.
\begin{remark} Linguists have been well aware of these and related phenomena, and have advanced relatively complex machinery to handle the full range of attested patterns. See, e.g., \cite{Brasoveanu,Bumford} for two notable dynamic accounts.

Notably, such constructions come up in the context of probabilistic reasoning as well \citep{HHI2016}, in a way that reverberates elsewhere in natural language, witness modal language about  probability and likelihood \citep{HIForthcoming}. \end{remark} We suspect that this notion of ``guarded injection'' is not even definable in the strong counting logic  $\mathsf{FO}(\#)$. However, for finite cardinalities there is a connection with the weaker logics considered in this paper: in this case, a modal system.

The  \emph{Hall marriage theorem} in graph theory \citep{Hall} says that there is an injection from a  set $A$ into a set $B$ contained in a relation $R \subseteq A \times B$ iff for each subset $C$ of $A$, $|R[C]| \geq |C|$. But this can be used to give a simple definition of `different' sentences like (\ref{family}) in our modal logic with global counting and one second-order quantification over sets: 
\begin{example}
Let $F$ be the unary predicate for family, $G$ for problems, and suppose $\Diamond$ moves along the relation `$x$ is had by $y$'. Then the required definition is\begin{equation*}
    \forall X \big(X \subseteq F \rightarrow \# (G \wedge \Diamond X) \succsim \# X\big)
\end{equation*} 
where $X \subseteq F$ is shorthand for $\#\bot \succsim \#(X \wedge \neg F)$. \label{ex:problems}\end{example}

This concludes our brief comparison of quantifier expressions in natural language with the expressive resources of our $\#$-logics. Clearly, this is not so much a matter of proving theorems as of exploring empirical fit. The hierarchy in our system design may suggest patterns in the architecture of natural language, while, precisely when the fit is not evident, common constructions in natural language may pose non-trivial questions concerning logical systems. We have just provided some illustrations here, a deeper investigation  of linguistic versus logical architecture would require another paper.

\subsection{Varieties of Monotonicity Reasoning} \label{section:varmon}

Next we move from quantifier vocabulary to  inference patterns in natural language. \emph{Monotonicity inferences} arise when  occurrences of a predicate in positive syntactic position are replaced ``upward'' by occurrences of a predicate with a larger denotation, or when in negative position, ``downward'' by a predicate with a smaller extension \citep{vanBenthem1985,Sanchez91,IcardMoss2014}. Monotonicity inference works all across natural language for many kinds of quantifiers, but just as well for other numerical  expressions, witness a valid inference like `If more $A$ than $B$ are $C$, and all $A$ are $E$, then more $E$ than $B$ are $C$'.

Monotonicity with \emph{inclusion premises} is also a valid inference form in logical systems, and in particular, in the ones studied here. Let us mark  syntactic positions as follows in  formulas of $\mathsf{MFO}(\#)$.
An atomic formula $P(x)$ occurs positively in $P(x)$ itself, positive and negative occurrences keep their polarity in conjunctions and disjunctions, their polarity switches under negations, and finally, 
in atoms $\#_x\varphi \succsim \#_x \psi$, occurrences in $\psi$ switch  polarity, while those in $\varphi$ keep their polarity. It is easy to show the following:

\begin{proposition} Positive occurrences in formulas of $\mathsf{MFO(\#)}$ support valid upward monotonicity inferences, negative occurrences downward monotonicity inferences.\end{proposition}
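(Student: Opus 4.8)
The statement is the standard monotonicity/polarity lemma, and I would prove it by a simultaneous induction on the structure of $\mathsf{MFO}(\#)$-formulas, tracking formulas and count terms together. The precise claim to establish, for all models $\mathcal{M}$ and assignments $s$, is: if $\mathcal{M}' $ is like $\mathcal{M}$ except that the extension of some predicate $P$ has been enlarged (replace $P^{\mathcal{M}}$ by $P^{\mathcal{M}'} \supseteq P^{\mathcal{M}}$, all else fixed), then (i) for every formula $\varphi$, if $P$ occurs only positively in $\varphi$ then $\mathcal{M},s \models \varphi$ implies $\mathcal{M}',s\models\varphi$, and if $P$ occurs only negatively then $\mathcal{M}',s\models\varphi$ implies $\mathcal{M},s\models\varphi$; and (ii) for every count term $\#_x\varphi$, if $P$ occurs only positively in $\varphi$ then $\semantics{\#_x\varphi}^{\mathcal{M},s} \le \semantics{\#_x\varphi}^{\mathcal{M}',s}$, and if only negatively then the reverse inequality holds. (Of course $P$ may occur with both polarities, or not at all, in which case the monotone side that applies is the trivial one; and we handle a change in one predicate at a time, which suffices since a general ``upward replacement'' of a positively-occurring predicate by a larger one is exactly this.) The proposition then follows immediately: replacing a positive occurrence of $P(x)$ by $Q(x)$ with $Q^{\mathcal M}\supseteq P^{\mathcal M}$ is subsumed by enlarging a fresh copy of the relevant predicate.

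\textbf{Key inductive steps.} The atomic case $P(x)$: if $P$ occurs positively (i.e.\ this is an occurrence of the predicate being enlarged), then $d\in P^{\mathcal M}$ implies $d\in P^{\mathcal M'}$, giving the positive direction; if $P$ does not occur, both models agree. The clause $x\neq y$ contains no predicate, so it is preserved both ways. Conjunction and disjunction: polarities are inherited by subformulas, and $\models$ for $\wedge,\vee$ is monotone in each conjunct/disjunct, so the inductive hypotheses combine directly. Negation: a positive occurrence in $\neg\varphi$ is a negative occurrence in $\varphi$ and vice versa, and $\mathcal{M},s\models\neg\varphi$ iff $\mathcal M,s\not\models\varphi$, so the two directions swap exactly as the polarity bookkeeping demands. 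The existential quantifier $\exists x.\varphi$: polarities of $P$ inside are unchanged, and $\exists$ is monotone, so a positive $\varphi$ that gains truth at some witness under enlargement keeps $\exists x.\varphi$ true; the negative case is dual. The only genuinely interesting clause is the count comparison $\#_x\varphi\succsim\#_y\psi$. Here one first invokes part (ii) of the inductive hypothesis: a positive occurrence of $P$ in the whole atom is, by definition, positive in $\varphi$ and negative in $\psi$ (the polarity switch under $\succsim$ on the right argument), so under enlargement $\semantics{\#_x\varphi}$ can only increase while $\semantics{\#_y\psi}$ can only decrease; hence if $\semantics{\#_x\varphi}^{\mathcal M,s}\ge\semantics{\#_y\psi}^{\mathcal M,s}$ then $\semantics{\#_x\varphi}^{\mathcal M',s}\ge\semantics{\#_y\psi}^{\mathcal M',s}$, i.e.\ the atom is preserved upward. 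The negative case reverses all three inequalities. Finally, to close the mutual induction we must also verify (ii) for the term $\#_x\varphi$: the set $\{d : \mathcal M,s^x_d\models\varphi\}$ is, by the formula-level inductive hypothesis for $\varphi$ (with $P$ positive), contained in $\{d : \mathcal M',s^x_d\models\varphi\}$, so its cardinality does not decrease; the negative case gives the reverse containment.

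\textbf{Main obstacle.} There is no deep obstacle — the argument is routine — but the one point that requires care, and the place a careless proof goes wrong, is the bookkeeping of polarity through the asymmetric clause for $\succsim$: the right-hand argument flips polarity while the left does not, and this flip must be threaded consistently through the mutual induction between the formula claim (i) and the term claim (ii). Getting the quantifier equivalences right is trivial by comparison, and indeed if one prefers one can restrict attention to the austere fragment $\mathcal{L}_\#^-$ (predication, $\neq$, and $\succsim$) from \S\ref{section:countinglogic}, into which all connectives and $\exists$ are translated, shrinking the induction to just three base clauses plus the $\succsim$-clause; but since the proposition is stated for full $\mathsf{MFO}(\#)$ syntax it is cleaner to run the induction over that syntax directly, treating $\neg,\wedge,\vee,\exists$ as primitive.
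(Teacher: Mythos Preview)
Your proof is correct and is exactly the standard structural induction one would write; the paper in fact gives no proof at all, merely prefacing the proposition with ``It is easy to show the following.'' Your mutual induction on formulas and count terms, with the polarity flip on the right-hand argument of $\succsim$ carried through to the term-level cardinality inequality, is the natural way to unpack what the authors leave implicit.
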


\begin{remark} It seems likely that $\mathsf{MFO(\#)}$ also satisfies a \emph{Lyndon Theorem} to the effect that semantic monotonicity amounts to positive definability up to logical equivalence (see \citealt{vanBenthem1991,IcardMossTune2017}). Our normal forms contain all information necessary for a constructive proof of  this result. However, we leave this as an open problem. \end{remark}

The syntax of $\mathsf{MFO(\#)}$ in fact suggests two kinds of monotonicity reasoning: the usual one with inclusion  premises, but also one with \emph{cardinality premises}, in forms such as
$$\varphi(B) \, \text{and}\, \# A \succsim \#B \,\, \text{imply}  \,\,  \varphi(A).$$ As it happens, the  inductive clauses for positive and negative occurrences  work here as before, the crucial failure is the atomic clause, as premises $Bx, A \succsim B$ obviously do not imply $Ax$. Clearly, numerical monotonicity implies its set-theoretic variant, but the converse can fail. The quantifier `Some $B$ are $C$' is upward set-monotonic in its argument $B$, but obviously not numerically monotonic in $B$, since the larger set $A$ may be disjoint from $B$ and $C$.

\begin{remark} \label{remark:mon} Numerical monotonicity as stated here has some interesting features as a mixture of logic and counting. As a special case, if $\varphi(A)$ is true and we replace $A$ by a predicate $B$ of the same cardinality, then $\varphi(B)$ is true. This very strong insensitivity property intuitively separates $\varphi$ into some purely numerical assertion about $A$ plus an assertion that is not about $A$ at all. This may be provable as a preservation theorem for formulas in first-order logic, and for  $\mathsf{MFO(\#)}$, a complete characterization of numerically monotonic formulas may be provable through our normal forms. However, we end with one small observation. 

Consider binary quantifiers $Q$ definable in the logic $\mathsf{MFO(\#)}$. In some cases, the two kinds of monotonicity are close. For instance, if $Q\, AB$ is upward set-monotone in the argument $B$, then it is also upward cardinality-monotone in the following sense, restricted to the set $A$. If $Q\, AB$ and \emph{at least as many $A$ are $C$ as $B$}, then $Q\,AC$. The crucial property here is  \emph{Permutation Invariance}: given $A$, the quantifier $Q$ is fixed by the set of all sizes of subsets $B$ which it accepts, and set-monotonicity plus permutation invariance imply that these sizes are upward closed. The restriction to comparing inside $A$ is necessary here, since cardinal monotonicity w.r.t. $B$ for arbitrary larger $C$  not inside $A$ can easily fail.  The same failure occurs with upward set-monotonicity in the left-hand argument $A$, where a larger set $C$ may change the context of evaluation. Even so, permutation invariance does support a valid second-order inference pattern for left-upward set-monotonic quantifiers $Q$:  if $Q\, AB$ and 
  \emph{there are at least as many $C$ as $A$}, then $\exists C' \approx C.\, Q\, C'B$, where $C'$ can be taken to be any set equinumerous to $C$ that contains $A$, so that left-upward monotonicity applies to it. 

%{\bf TI: I must be missing something here. What if $Q\, AB$ is just `all $A$ are $A$' (so $B=A$)? Then if $C$ is strictly larger than $A$ there is no such $C'$.}

\end{remark}

%{\bf Suggestion: what if this were in the global modal setting and we only allowed Boolean combinations of comparisons? We can even expand this to allow the numerical terms typical of $\mathsf{MFO}(\#)$. In other words over normal forms of $\mathsf{MFO}(\#)$ we should be able to make a clear formulation. WILL THINK ABOUT IT.}

Cardinality monotonicity resembles monotonicity in numerical terms, where a variable $x$ occurs positively in $x$, retains its polarity across addition, multiplication, and the left hand side of inequalities $\mathbf{t}_1 \geq \mathbf{t}_2$, while switching polarity on the right hand side of these inequalities. Making this work using our normal forms takes some care though, since their numerical terms $T_i$ do not refer to sizes of predicates, as in the above, but of state descriptions.  A unified perspective on monotonicity in logical and arithmetical syntax has  been proposed in \cite{IcardMoss2014}. As for concrete examples,  \cite{vanBenthemLiu} note several different versions of set-based and size-based monotonicity inference that hold for the natural language expression `Many $A$ are $B$' that involve increasing or decreasing the size of relevant zones in the Venn diagram for $A, B$.

\begin{remark}[Natural logic] Monotonicity reasoning in natural language is an engine of ``natural logic'' \citep{vanBenthem1985,Sanchez91,Moss2015}: efficient forms of surface reasoning based on simple fragments and proof systems. Our $\#$-logics are more expressive than most of the calculi studied in this literature, and it would be of interest to locate natural logic fragments inside them (see, for example,  \citealt{Hartmann2008,Hartmann2009,Moss2016,moss_topal_2020,KisbyMoss}).
\end{remark}

\subsection{Dynamic Modalities} 

Monotonicity inference can also be viewed dynamically in terms of {\it model change}. One such change is internal to a current model: one merely changes the denotation of some predicate $A$ to a larger (or smaller) set $X$ of objects, turning the current $\mathcal{M}$ into a new model  $\mathcal{M}[A:= X]$. Other operations on models arise with different intuitive takes on what upward monotonicity inference is about. It could also mean that we add \emph{new objects} to the current model that satisfy the predicate $A$, in which case the relevant relation between models is extension. And this perspective can even be generalized. On the earlier analogy with monotonicity in numerical terms, since the latter stand for zones of the model in our normal forms, the replacement for, say, $x : = x + 1$ applies to \emph{regions} defined by state-descriptions, rather than single predicates.

In recent years, model change has been studied by adding dynamic modalities to logical languages, cf. the recent study of \cite{mlsr}. A standard example is $[!\varphi]\psi$ which says that $\psi$ is true after we \emph{relativize} the current model to the submodel of all objects satisfying $\varphi$. This fits the earlier discussion of Conservativity and Extension for quantifiers. Next, upward inclusion monotonicity in our first sense  suggests a modality $[+A]\psi$ which holds when $\psi$ is true in all models arising from the current one by increasing the denotation of $A$. Downward monotonicity may then refer to decreasing the denotation of $A$, or more drastically, to removing objects from the current model. The dynamic modality $[-\varphi]\psi$ for the latter model change says that each removal from the current model of an object satisfying $\varphi$ results in a model satisfying $\psi$.

\begin{proposition} $\mathsf{MFO(\#)}$ is closed under the dynamic modalities $[!\varphi]$ and $[-A]$. $\mathsf{MSO(\#)}$ is closed under the modality $[+A]$.
\end{proposition}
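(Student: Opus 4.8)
The plan is to prove, in each case, that the dynamic modality is \emph{eliminable}: for a base-logic formula $\psi$ we exhibit an equivalent base-logic formula for $[!\varphi]\psi$, $[-A]\psi$, and $[+A]\psi$, by recursion on $\psi$ (working with formulas with free variables throughout, so that the translations compose and nested modalities are handled automatically). Recall the intended semantics: $\mathfrak{M},s \models [!\varphi]\psi$ iff $\psi$ holds at the submodel of $\mathfrak{M}$ with domain $\{d : \mathfrak{M}, s^x_d \models \varphi\}$; $\mathfrak{M},s \models [-A]\psi$ iff for every $d$ with $\mathfrak{M}, s^x_d \models A(x)$ the submodel on $D \setminus \{d\}$ satisfies $\psi$; and $\mathfrak{M},s \models [+A]\psi$ iff every $\mathfrak{M}'$ agreeing with $\mathfrak{M}$ except that $A^{\mathfrak{M}} \subseteq A^{\mathfrak{M}'} \subseteq D$ satisfies $\psi$.

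For $[!\varphi]$ and $[-A]$ everything reduces to relativization (Fact \ref{fact:relativization}), and the first step is to record a uniform relativization operation. For an arbitrary one-free-variable formula $\theta(x)$ of $\mathcal{L}_\#^1$, define $(\psi)^\theta$ by: $(\alpha)^\theta = \alpha$ for atomic $\alpha$; commutation with Booleans; $(\exists x.\chi)^\theta = \exists x.\big(\theta(x) \wedge (\chi)^\theta\big)$ (renaming $x$ away from the free variables of $\theta$); and $(\#_x\chi_1 \succsim \#_y\chi_2)^\theta = \#_x\big(\theta(x) \wedge (\chi_1)^\theta\big) \succsim \#_y\big(\theta(y) \wedge (\chi_2)^\theta\big)$. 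A routine induction shows $\mathfrak{M},s \models (\psi)^\theta$ iff $\psi$ holds at the submodel of $\mathfrak{M}$ with domain $\{d : \mathfrak{M},s^x_d \models \theta\}$. The one conceptual point worth stressing is that $\theta$ itself is \emph{not} relativized: it only serves to carve out the new domain, which is computed once in the ambient model, so $\theta$ may be any $\mathcal{L}_\#^1$-formula, including one with $\#$-terms and quantifiers. Then $[!\varphi]\psi \equiv (\psi)^\varphi$, and $[-A]\psi \equiv \forall z.\big(A(z) \rightarrow (\psi)^{x \neq z}\big)$ with $z$ fresh, since removing the single element named by $z$ is exactly relativizing to the set defined by $x \neq z$. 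Both translations stay in $\mathcal{L}_\#^1$, giving closure of $\mathsf{MFO}(\#)$.

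For $[+A]$ the relativization trick does not directly apply, because the increment $A^{\mathfrak{M}'} \setminus A^{\mathfrak{M}}$ is an arbitrary, non-definable subset of $D$; here one round of monadic second-order quantification suffices. Let $\psi[A:=X]$ be the purely syntactic substitution of a fresh predicate variable $X$ for every occurrence of the predicate symbol $A$. Then I would take
\[
[+A]\psi \;\equiv\; \forall X.\Big( \forall x\big(A(x) \rightarrow X(x)\big) \;\rightarrow\; \psi[A:=X] \Big),
\]
a legitimate $\mathcal{L}_\#^2$-formula. Correctness follows because the models $\mathfrak{M}'$ obtained by increasing the denotation of $A$ correspond bijectively to subsets $X \supseteq A^{\mathfrak{M}}$ of $D$ via $X = A^{\mathfrak{M}'}$, together with the easy induction that $\mathfrak{M}',s \models \psi$ iff $\mathfrak{M}, s^X_{A^{\mathfrak{M}'}} \models \psi[A:=X]$. (If ``increasing'' is meant strictly, one adds the conjunct $\exists x\big(X(x) \wedge \neg A(x)\big)$ to the antecedent.)

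I do not expect a deep obstacle here; the work is entirely in the relativization lemma, already available as Fact \ref{fact:relativization}, plus the observation that $[+A]$'s non-definable increment both forces and is handled by a single second-order quantifier. The only delicate points, which I would flag rather than belabor, are bookkeeping: the usual care with variable capture in the relativization recursion; and the degenerate case of $[-A]$ when $D = \{d\}$ and $A(d)$ holds, so that the removed model is empty. Under the standard convention forbidding empty structures this warrants a side remark, but in any case $(\psi)^{x\neq z}$ evaluates $\psi$ exactly as it would be evaluated over an empty domain (universals vacuously true, existentials false, $\#$-terms zero), so $\forall z.\big(A(z) \rightarrow (\psi)^{x\neq z}\big)$ returns the intended truth value regardless.
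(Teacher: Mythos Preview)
Your argument is correct, and for $[!\varphi]$ and $[+A]$ it coincides with the paper's: compositional relativization for the former, and a single universal second-order quantifier over the extension for the latter. Where you diverge is the $[-A]$ clause. The paper appeals to inspection of the normal forms of Theorem~\ref{normal} (following \cite{mlsr}), whereas you observe directly that removing a single element $d$ is the same as relativizing to the definable predicate $x \neq z$ with $z$ naming $d$, so that $[-A]\psi \equiv \forall z.\big(A(z) \to (\psi)^{x\neq z}\big)$ sits inside $\mathcal{L}_\#^1$ by Fact~\ref{fact:relativization}. This is more elementary and avoids any detour through normal forms; the only cost is that it leans on the general relativization lemma for parametrized predicates $\theta(x,z)$, which you correctly note goes through because $\theta$ is evaluated in the ambient model. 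The normal-form route, by contrast, gives more explicit arithmetical control over how the numerical constants in the inequalities shift under object removal, which can be useful for finer analyses (e.g., tracking depth or deriving axioms), but is not needed for the bare closure statement.
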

\begin{proof} The case of relativization can be dealt with by providing axioms that recursively analyze the possible syntactic shapes of the formula $\psi$. The proof for the deletion modality is by inspection of normal forms in a manner similar to that used in \cite[Thm. 6.1]{mlsr}. $\mathsf{MFO(\#)}$ is not closed under the predicate extension modality, since it can define having an even number of points with some property, but it does have a straightforward definition in the second-order $\mathsf{MSO(\#)}$. 
%by inspection of normal forms. The predicate extension modality for monotonicity is straightforwardly definable in the second-order logic $\mathsf{MSO(\#)}$. 
\end{proof}

Similar closure results can be obtained in our monadic $\#$-logics for dynamic modalities describing the effects of adding an object to the current model. 

\begin{remark} \label{remark:mlsr}
Another source for model change occurred with the discussion of  counting in modal languages in \S\ref{section:modal}. Instead of adding explicit numerical information like in graded modal languages, one can also count by ``setting  aside'' objects and then perhaps replacing them, removing or adding objects to a current model. For instance, having at  least $k+1$ successors with property $p$ is  definable using the deletion modality as $[-\top]\dots (k  \mbox{ times}) \dots[-\top]\Diamond p$ and thus \emph{counting in the syntax}. There is a link here with Remark \ref{remark:bisim} about possible finer notions of modal bisimulation that analyze counting procedures. A typical way of comparing sizes between two sets  picks an object in one set plus an  object from the other set, and then puts these two objects aside, iterating the process. But keeping track of  effects of removals of matched objects is exactly what  $\mathsf{MLSR}$-style bisimulations do (see \citealt{mlsr}). 
\end{remark}

Technical topics like dynamic modalities may seem far from natural language. But the distance is not that great. Natural language contain many verbs of \emph{fact change} that fit this setting. Indeed, \cite{LiuMonotone} give samples of logical reasoning in the ancient Chinese tradition 
that involve monotonicity inferences with dynamic verbs  such as  `increase'. 

\subsection{Semantic Automata} \label{section:automata}
Our final topic comes again from Generalized Quantifier Theory, and it brings one more entanglement of logic and counting. There is a natural way of classifying quantifiers in terms of the associated \emph{verification procedures} and determining their complexity in the Automata Hierarchy \citep{vanBenthem1985}. The word `count' is of course polysemous between a verbal use (the act of counting) and a nominal use (the total counted), and here the focus is on the former, dynamic aspects of counting.

 \emph{Semantic automata} read strings of symbols $\mathtt{a}, \mathtt{b}$ standing for types of relevant objects encountered when traversing a finite domain (Figure \ref{fig:quant}(a)). That is, each element of $A-B$ corresponds to an occurrence of $\mathtt{a}$ in the string, while each element of $A \cap B$ corresponds to an occurrence of $\mathtt{b}$. The automaton reads the string and accepts precisely when the pair $(a,b)$ is in the quantifier. These automata, and the complexity jumps predicted by them for quantifier denotations, have also been studied as models for the mixture of quantifier reasoning in the brain and cognitive sciences (see \citealt{Szymanik} for an overview).

 \begin{example} This acyclic finite automaton in Figure \ref{fig:aut} recognizes the quantifier `exactly one'. It accepts any pair $(a,1)$ with $a\geq 0$, and no other pairs. That is, there should be exactly one element in $A \cap B$; more or fewer should lead to non-acceptance. \end{example}
 
  \tikzset{every loop/.style={min distance=10mm,looseness=10}}
 
 \begin{figure} \centering 
   \begin{tikzpicture}
  
  \node (s1) at (0,0) [circle,draw=black] {};
\node (s2) at (3,0) [circle,draw=black,fill] {};
\node (s3) at (-1,0) {};
\node (s4) at (6,0) [circle,draw=black] {};
  \path (s1) edge[->,thick] (s2);
  \node (h1) at (0, 1.1) {\small{$\mathtt{a}$}};
  
    \node (h2) at (3, 1.1) {\small{$\mathtt{a}$}};
    
    \node (h3) at (1.5,-.3) {\small{$\mathtt{b}$}};
        \node (h4) at (6, 1.1) {\small{$\mathtt{a}, \mathtt{b}$}};
        
        \node (h5) at (4.5,-.3) {\small{$\mathtt{b}$}};
  
  \path (s1) edge[->,thick,loop above] (s1);
  
  \path (s3) edge[->,thick] (s1);
  \path (s2) edge[->,thick] (s4);
   \path (s2) edge[->,thick,loop above] (s2);
     \path (s4) edge[->,thick,loop above] (s4);
\end{tikzpicture} 
    \caption{Acyclic finite automaton recognizing   `exactly one'. The machine begins in the left-most state, and the middle is the only accepting state.}\label{fig:aut}
\end{figure}
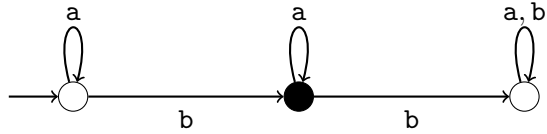

Moreover, familiar operations on quantifiers, such as \emph{iteration}, correspond systematically to natural operations on standard classes of automata \citep{Steinert}. We list some known results on the subject:

\begin{proposition}\label{prop:aut}\begin{enumerate}[label=(\alph*)] \item The first-order definable binary quantifiers are exactly those that are recognized by acyclic finite automata \citep{vanBenthem1985}.
  \item \label{prop:div} Finite automata with non-trivial cycles can recognize \emph{`An even number of A are B'} and related periodic quantifiers. In fact, finite automata recognize precisely the quantifiers definable in first-order logic with \emph{divisibility} \citep{Mostwoski1998}.
  \item The binary quantifier \emph{`most'} and related proportionality quantifiers are not computable by finite automata, but they are computable by pushdown store automata. In fact, pushdown automata recognize precisely the quantifiers definable in additive \emph{Presburger Arithmetic}, i.e., the \emph{semi-linear} sets \citep{vanBenthem1985}.
\end{enumerate}
\end{proposition}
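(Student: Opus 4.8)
The common backbone is the dictionary between $\langle 1,1\rangle$ quantifiers and subsets of $\mathbb{N}^2$. Under Conservativity, Extension and Permutation Invariance a quantifier $Q$ is determined by the set $\widehat{Q} = \{(a,b) : QAB \text{ for some } A,B \text{ with } |A-B|=a,\ |A\cap B|=b\}$, and a semantic automaton \emph{recognizes} $Q$ exactly when its input language over $\{\mathtt a, \mathtt b\}$ is permutation-closed and its set of accepted Parikh vectors is $\widehat{Q}$. So each clause reduces to: which subsets of $\mathbb{N}^2$ arise as the Parikh image of a permutation-closed language recognized in the relevant automaton class, matched against a logically defined class. I would organize the proof around this translation and treat the three automaton classes in increasing order of power.

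For (a), the plan is to show that for a permutation-closed language recognized by an acyclic finite automaton (self-loops allowed, but no directed cycle of length $\geq 2$) with $N$ states, the state reached after reading $\mathtt a^{a}\mathtt b^{b}$ depends only on $\min(a,N)$ and $\min(b,N)$: once $N$ consecutive $\mathtt a$-steps have been taken the run is trapped among self-loops, so further $\mathtt a$'s change nothing, and permutation-closure lets us assume all $\mathtt a$'s precede all $\mathtt b$'s. Hence $\widehat Q$ is a Boolean combination of threshold sets $\{a \geq k\}$, $\{b\geq k\}$, $\{a+b\geq k\}$, which is precisely the van Benthem characterization of first-order definable $\langle 1,1\rangle$ quantifiers \citep{vanBenthem1985} (recoverable from the ``triangles and diagonal lines'' special case of the geometric analysis in Remark \ref{remark:ton}). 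Conversely, given such a threshold normal form, I would build the automaton whose states are $\{0,\dots,k\}^2$ recording the two counts capped at the threshold $k$, incrementing on $\mathtt a$ resp.\ $\mathtt b$ and self-looping once a coordinate reaches $k$; this is acyclic up to self-loops, permutation-invariant by construction, and accepts exactly $\widehat Q$.

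For (b), first exhibit the two-state cycle on $\mathtt b$ (self-loop on $\mathtt a$) recognizing $\{b \equiv 0 \bmod 2\}$, showing that nontrivial cycles contribute the modular quantifiers. The core claim is then that a permutation-closed language is regular iff $\widehat Q$ is a finite Boolean combination of threshold sets together with single-variable congruence sets $\{a \equiv i \bmod m\}$, $\{b\equiv i\bmod m\}$: the $(\Leftarrow)$ direction is a product-automaton construction as in (a) together with mod-$m$ counters, while $(\Rightarrow)$ follows from a Myhill--Nerode / pumping analysis showing that for a commutative regular language the run-state after $\mathtt a^a\mathtt b^b$ is ultimately periodic in $a$ and in $b$ with period and threshold bounded by the number of states. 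Identifying this class with first-order logic with divisibility is exactly Mostowski's theorem \citep{Mostwoski1998}, whose argument I would cite rather than redo, noting that $m \mid x$ is what encodes the congruence conditions.

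For (c), non-recognizability of \emph{most} ($\widehat Q = \{b > a\}$) by a finite automaton is a short pigeonhole argument: among the runs on $\mathtt a^{0},\dots,\mathtt a^{N}$ two end in the same state, after $\mathtt a^{i}$ and $\mathtt a^{j}$ with $i<j$; then $\mathtt a^{i}\mathtt b^{i+1}$ is accepted but $\mathtt a^{j}\mathtt b^{i+1}$ (same tail from the same state) must be accepted too, contradicting $i+1 \le j$. A deterministic pushdown automaton does recognize it (push on $\mathtt a$, pop on $\mathtt b$, accept when the stack underflows), and the same idea iterated handles proportionality quantifiers. For the exact characterization, $(\Rightarrow)$ is Parikh's theorem: the input language of a pushdown automaton is context-free, hence has a semilinear Parikh image, so $\widehat Q$ is semilinear; $(\Leftarrow)$: every semilinear $\widehat Q$ is a finite union of linear sets, and a linear set $\{\,\mathbf b + \textstyle\sum_i n_i\mathbf u_i\,\}$ in $\mathbb N^2$ is the accepted-vector set of an explicit pushdown automaton (one counter on the stack enforces the single linear dependency, the rest being threshold and mod bookkeeping), with closure under finite union. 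Semilinear $=$ Presburger-definable is Ginsburg--Spanier \citep{Ginsburg}, yielding the stated equivalence with additive arithmetic.

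The main obstacle, in my estimation, is the bookkeeping forced by the mismatch between the \emph{ordered} strings an automaton actually reads and the \emph{order-insensitive} quantifier it is meant to compute: the reductions ``read all $\mathtt a$'s first'' and ``the run eventually becomes periodic or trapped'' are intuitively clear but must use the permutation-closure hypothesis carefully, especially in the converse constructions of (b) and (c), where one has to verify that the automaton one writes down is genuinely permutation-invariant and not merely Parikh-correct on a single canonical ordering; pinning down the precise first-order-with-divisibility fragment in (b) is the other delicate point, and there I would lean on the cited literature.
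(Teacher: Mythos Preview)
The paper does not actually prove this proposition: it is presented as a list of known results, each item carrying a citation (\citealt{vanBenthem1985}, \citealt{Mostwoski1998}), and the text immediately moves on to the new theorem on semi-rectilinear sets. So there is no ``paper's own proof'' to compare against; your sketch goes well beyond what the paper itself offers.

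That said, your plan is broadly sound and tracks the standard arguments. Two places merit tightening. In (a), the claim that the behavior depends on $\min(a,N)$ and $\min(b,N)$ is right for a deterministic acyclic automaton, but note that the threshold constraint $\{a+b\ge k\}$ you list is redundant (it is already a Boolean combination of the single-coordinate thresholds), and the converse automaton you build is fine but you should say why acceptance, not just the reached state, matches the first-order normal form. In (c), your one-line PDA for `most' (``push on $\mathtt a$, pop on $\mathtt b$, accept on underflow'') only works on the canonical ordering $\mathtt a^a\mathtt b^b$; for a genuine semantic automaton you need a signed-counter PDA (push a marker for the current majority symbol, pop when the other symbol arrives, switch marker type when the stack empties), and you correctly flag this permutation-invariance bookkeeping as the delicate point. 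The reliance on Parikh and Ginsburg--Spanier for the exact characterization in (c), and on Mostowski for (b), is appropriate and is exactly what the paper does by citation.
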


We thus see numerous deep connections with our earlier systems. Most obviously, we saw the semi-linear sets in our analysis of $\mathsf{MSO}^\phi(\#)$ (Theorem \ref{thm:semi-lin}). Proposition \ref{prop:aut} adds a further computational dimension to this characterization: the quantifiers definable in $\mathsf{MSO}^\phi(\#)$ are precisely those that can be verified by pushdown automata. The counting procedures required for verifying claims of $\mathsf{MSO}^\phi(\#)$ are those that can be carried out with a pushdown store. 

Identifying such a computational analogue for our other systems could also be illuminating. For instance, our initial system, $\mathsf{MFO}^\phi(\#)$, misses some quantifiers definable even  by finite automata---`an even number of' being an illustrative example (Corollary \ref{corollary:proportion})---while capturing some  quantifiers that demand unbounded memory such as `most' or `exactly half'. It also makes sense to interrogate the other direction. What systems combining logic and counting would capture the quantifiers recognizable by intermediate classes such as \emph{counter automata}, or even weaker classes like those recognizing \emph{subregular languages} (cf. \citealt{Graf})? We leave such questions for further analysis, but end here with a final observation tying together several of our earlier themes, including \emph{permutation invariance}. 

As we have seen as multiple points (\S\ref{section:coreprinciples}, \S \ref{section:quantifiers}, \S\ref{section:varmon}), the theme of permutation invariance is paramount in the analysis of logic and counting. Given this assumption for quantifiers, the corresponding formal languages will also be closed under permutations. For instance, if $\mathtt{ababa}$ appears in the quantifier language, so will $\mathtt{aaabb}$. This is a relatively exceptional property for sets of strings:  the permutation closures of languages accepted by finite automata and by pushdown automata actually coincide---as it happens, they characterize the semi-linear sets \citep{Parikh}. 
It is therefore of interest to understand the permutation closed (or ``commutative'') languages in their own right. Such languages have been studied since the beginning of formal language theory (e.g., \citealt{Eilenberg}). Here our question is the following: restricting to permutation closed languages, which semi-linear sets are also accepted by \emph{finite} automata? This would give us a way of calibrating the counting capacity of finite-state machines, relative to semi-linear sets. 

%{\bf TI: Perhaps we could add a word or two more of motivation here?}

With an alphabet of size two, recall that \emph{linear} sets (Definition \ref{def:linear}) are the solutions (for $\mathsf{v}_1,\mathsf{v}_2$) to equations given by constants $b_1,b_2,a_{1,1},\dots,a_{1,m},a_{2,1},\dots,a_{2,m}$: 
\begin{equation}
 \begin{pmatrix}
\;\mathsf{v}_1\;  \\
\mathsf{v}_2
\end{pmatrix} \hspace{.1in} = \hspace{.1in}
 \begin{pmatrix}
b_1   +  a_{1,1}\mathsf{u}_1+ \dots + a_{1,m}\mathsf{u}_m   \\
b_2   +  a_{2,1}\mathsf{u}_1+ \dots + a_{2,m}\mathsf{u}_m
\end{pmatrix} \label{eq:permsim}
\end{equation} for some choices of $\mathsf{u}_1,\dots,\mathsf{u}_m$; the semi-linear sets are finite unions of linear sets. 
\begin{definition} Let us call a set \emph{rectilinear} if it is of the form (\ref{eq:permsim}), but for all $i \leq m$ either $a_{1,i} = 0$ or $a_{2,i}=0$  (or both). A set is \emph{semi-rectilinear} if it is a finite union of rectilinear sets.
\end{definition}

It may be helpful to explain this notion in the earlier geometrical setting of the tree of numbers in Remark \ref{remark:ton} above. Linear forms in general can define both diagonal and horizontal lines, as well as more complex patterns such as triangles and slices. But there is a crucial difference. In order to produce a diagonal line, only one coordinate needs to be incremented, using a period $(0, i)$ or $(i, 0)$ with $i \neq 0$, but producing a horizontal line requires a simultaneous increment $(1, 1)$. This coordination is typically beyond the recognizing capacity of finite state machines. On the other hand, finite state machines are capable of performing counting tasks such as keeping track of cycles in the numbers of $\mathtt{a}$ (or of $\mathtt{b}$) read. This parity check can define quantifiers like `an even number of' which were beyond $\mathsf{MFO}(\#)$. The geometric meaning of these cycles shows in automorphisms between tree positions accepted by the quantifier whose precise nature is explained in the proof of the following result, which is our main offering.

%{\bf Discussion here of geometrical intuitions from ToN?}

%Against this background, our main result states:
\begin{theorem}
The binary quantifiers recognized by finite semantic automata are precisely those whose associated arithmetical definitions are semi-rectilinear.
\end{theorem}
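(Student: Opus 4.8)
The plan is to recast the statement in purely language-theoretic terms and then prove a two-way equivalence. By Permutation Invariance (the running assumption of this section), the set of $\{\mathtt a,\mathtt b\}$-strings that a semantic automaton for a quantifier $Q$ must accept is \emph{commutative}, i.e.\ closed under permuting letters, and equals $\pi^{-1}(\mathcal V_Q)$, where $\pi$ is the Parikh map $w\mapsto(|w|_{\mathtt a},|w|_{\mathtt b})$ and $\mathcal V_Q\subseteq\mathbb N^2$ is the arithmetical content of $Q$. Since ``recognized by a finite semantic automaton'' simply means this language is regular, the theorem amounts to: a commutative language $L\subseteq\{\mathtt a,\mathtt b\}^*$ is regular iff $\pi(L)$ is semi-rectilinear.

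For the easy direction (semi-rectilinear $\Rightarrow$ regular), regular languages are closed under finite union, so it suffices to treat a single rectilinear set. The defining condition that every period vector has a zero coordinate means the generators split into those that move only the first coordinate and those that move only the second, with independent multipliers; hence a rectilinear set is a \emph{product} $(b_1+M_1)\times(b_2+M_2)$, where each $M_i$ is a finitely generated submonoid of $(\mathbb N,+)$, so each $b_i+M_i$ is an ultimately periodic subset of $\mathbb N$ (eventually it is all multiples of $\gcd M_i$, by the Frobenius bound). A product $U_1\times U_2$ of two ultimately periodic sets is recognized by the product DFA of two counters: one tracking $|w|_{\mathtt a}$ up to its threshold and then modulo its period, treating $\mathtt b$ as a self-loop and accepting when the value lies in $U_1$, plus the symmetric machine for $|w|_{\mathtt b}$. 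Closing under union finishes this direction.

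For the substantive direction (regular $\Rightarrow$ semi-rectilinear), I would first observe that commutativity forces the Myhill--Nerode class $[w]$ to depend only on $\pi(w)$: if $\pi(w)=\pi(w')$ then for every $u$ the words $wu$ and $w'u$ have equal Parikh image, so $wu\in L\iff w'u\in L$. Writing $Q$ for the finite set of Nerode classes, there is then a well-defined map $g\colon\mathbb N^2\to Q$ with $g(0,0)=[\epsilon]$, $g(a+1,b)=\alpha(g(a,b))$, $g(a,b+1)=\beta(g(a,b))$, where $\alpha([w])=[w\mathtt a]$ and $\beta([w])=[w\mathtt b]$, and $\pi(L)=g^{-1}(F)$ for $F$ the accepting classes. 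Since $g(a+1,b+1)$ can be computed reading the last two letters in either order, $\alpha$ and $\beta$ commute on all of $Q$, so the transformation monoid $M=\langle\alpha,\beta\rangle\subseteq\mathrm{Maps}(Q,Q)$ is a \emph{finite commutative} monoid, and $g(a,b)=h(a,b)([\epsilon])$ for the homomorphism $h\colon\mathbb N^2\to M$, $h(a,b)=\alpha^a\beta^b$. As $Q$ is finite there are $p_1,c_1$ with $\alpha^{a}=\alpha^{a+c_1}$ on $Q$ for all $a\ge p_1$, and likewise $p_2,c_2$ for $\beta$; hence on each block obtained by fixing $a$ at a value below $p_1$ or at a residue class mod $c_1$ beyond, and similarly for $b$, the element $\alpha^a\beta^b$ is constant. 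Each such block is a singleton $\{(a_0,b_0)\}$, or $\{a_0\}\times(b_0+c_2\mathbb N)$, or $(a_0+c_1\mathbb N)\times\{b_0\}$, or $(a_0+c_1\mathbb N)\times(b_0+c_2\mathbb N)$ --- in every case rectilinear (base $(a_0,b_0)$, generators drawn from $(c_1,0)$ and $(0,c_2)$). Therefore each $g^{-1}(q)$ is the finite union of those blocks on which $h$ takes a value sending $[\epsilon]$ to $q$, hence semi-rectilinear, and $\pi(L)=\bigcup_{q\in F}g^{-1}(q)$ is semi-rectilinear.

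The main obstacle is this second direction, and within it the key point that \emph{commutativity of the language upgrades its Parikh image from merely semilinear (which it always is, by Parikh's theorem) to semi-rectilinear} --- concretely, that the syntactic transition monoid becomes commutative, after which the ``residue-block'' decomposition of $h^{-1}(m)$ is a finite but fiddly case analysis over tails versus cycles in each coordinate. The bookkeeping to double-check is precisely that the mixed blocks (a small value of $a$ together with a periodic class of $b$, and vice versa) yield \emph{rectilinear} rather than merely semilinear pieces; this is exactly where the axis-parallelness of period vectors is forced, matching the $\mathsf{MFO}(\#)$-versus-finite-automata gap (e.g.\ the parity quantifier `an even number of') discussed just before the statement.
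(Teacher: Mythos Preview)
Your proof is correct and, for the substantive direction (regular $\Rightarrow$ semi-rectilinear), is essentially the paper's argument in algebraic dress: both pass to the Myhill--Nerode automaton, use commutativity of the language to make the $\mathtt a$- and $\mathtt b$-transitions commute, and then partition $\mathbb N^2$ into finitely many rectilinear blocks by threshold-plus-period behaviour in each coordinate. You express this via the transition monoid and eventual periodicity of the powers $\alpha^a,\beta^b$; the paper draws the same thing as a grid $p$-morphism with four areas $\mathfrak A,\mathfrak B,\mathfrak C,\mathfrak D$ determined by the first recurring states along the two axes --- your four block types are exactly their four areas.

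For the easy direction your route diverges mildly from the paper's and is arguably cleaner: you note that a rectilinear set is a Cartesian product $(b_1+M_1)\times(b_2+M_2)$ of two ultimately periodic subsets of $\mathbb N$ and take the product of two cyclic counters, whereas the paper builds an explicit NFA on the rectangle $\{0,\dots,N_1\}\times\{0,\dots,N_2\}$ with increment and cycle-back moves and verifies correctness by an invariant. Both exploit the same point --- axis-parallel periods decouple the two coordinates --- so the difference is one of packaging rather than idea.
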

This theorem follows from results of \cite{Kanazawa} (see also \citealt{EHRENFEUCHT1983311}), but for completeness we offer a full proof in Appendix \ref{app:automata}. Needless to say, this is just the beginning of a study of counting procedures and their relation to semantic meanings, as a natural complement to the logic and counting entanglements studied in this paper.

\section{Cognitive Questions} \label{section:cognition} We encountered in the previous section some examples of interleaving logic and counting in natural language. This entanglement is very much on display in psychology and neuroscience as well. As pointed out by \cite{carey2009origin}, children first learn explicit numerical terms as examples of quantifiers, and work such as \cite{BARNER2009195} has shown a strong correlation in development between comprehension of number terms and comprehension of (logical) quantifiers.\footnote{The psychologist Piaget famously argued that children's understanding of number was built out of logical primitives (thus, another version of ``logicism''). Subsequent research has revealed a more subtle entanglement. with numerical primitives arising much earlier. See \cite{carey2009origin,Dehaene} for discussion.} Early learning about basic logical and numerical constructs is evidently intertwined, and as we have argued this continues even through more mature ``grassroots mathematics'' and ordinary reasoning practices. 

But how, more specifically, might the logical systems we have studied here relate to cognition? The fundamental primitives we have assumed in all of our logical systems are numerical comparisons such as $\#\varphi \succ \#\psi$ or $\#\varphi \approx \#\psi$. The ability to make such comparisons is present across a wide range of species, and appears to be available in human infants from birth (see \citealt{Feigenson,Dehaene}). 
Unsurprisingly, `more' emerges as one of the first quantificational phrases children learn, alongside plurals and `a'/`some' \citep{carey2009origin}. 
There is also evidence for basic operations like addition and subtraction in preverbal infants \citep{Feigenson}, and in adults, researchers have even uncovered distinct brain areas for encoding addition and for making numerical comparisons \citep{Dehaene}.  This all raises the  question of how, computationally speaking, numerical comparisons are made.

A prominent theme throughout the empirical literature is the distinction between reasoning about \emph{individuals} and their properties, and reasoning about \emph{collections} or \emph{ensembles} and their properties. To solve a concrete task such as determining whether there are more $A$s than $B$s there are at least three conceivable families of strategies: \begin{enumerate}
    \item \label{most-second} Match each $B$ one-to-one with an $A$ and check whether there are any $A$s left over.
    \item \label{most-first} Explicitly count the numbers $\#A$ and $\#B$ and compare those numbers.
    \item \label{most-third} Perceptually approximate  $\#A$ and $\#B$ and compare those approximations.
\end{enumerate} (\ref{most-second}) and (\ref{most-first}) both require enumerating through the relevant objects in an explicit way---much like the semantic automata discussed in the previous section---while (\ref{most-third}) bypasses any explicit enumeration or counting procedure, relying instead on fast, parallel perceptual processing (such as when we visually estimate the number of balls in a bin). Such an \emph{approximate number system} (ANS) is in fact ubiquitous and phylogenetically ancient \citep{Dehaene}.

Much experimental work has gone into distinguishing hypotheses like these in specific instances \citep{carey2009origin}. A striking example investigates the psychological representation of quantifier expressions in natural language \citep{Pietroski,Lidz2011,Knowlton2,Knowlton}. Consider, for instance, verifying a sentence like `Most of the dots are blue' (see Figure \ref{fig:dots}). Any of these strategies, (\ref{most-second}), (\ref{most-first}), or (\ref{most-third}), could in principle be used, where $A$ is ``blue dots'' and $B$ is something like ``non-blue dots'' (though see \citealt{Lidz2011}). \cite{Pietroski} present convincing evidence that people in fact employ a strategy more like (\ref{most-third}), with the counts $\#A$ and $\#B$ likely determined by the  ANS. Queries involving `more' can also invoke the ANS, though the method people use appears distinct from that for `most' \citep{Knowlton2}. In  further work, \cite{Knowlton} show that different English expressions for \emph{universal} quantification in fact elicit  different representations altogether: while `all' and `every' prompt representations of ensembles and their cardinalities, `each' seems to elicit an individual-level procedural strategy more like  semantic automata. 

 \begin{figure} \centering 
   \begin{tikzpicture}
   \draw [black,fill=gray!50] (0,0) rectangle (4,3);
  \node at (2,2.5) [circle,scale=0.5,fill=blue] {};
  \node at (1,.4) [circle,scale=0.7,fill=yellow] {};
  \node at (.4,1) [circle,scale=1,fill=yellow] {};
  \node at (2.1,1.9) [circle,scale=.4,fill=yellow] {};
 \node at (.4,.3) [circle,scale=0.65,fill=blue] {};
\node at (2.3,.2) [circle,scale=0.5,fill=blue] {};
\node at (.6,2) [circle,scale=1.2,fill=blue] {};
\node at (.9,1.4) [circle,scale=.9,fill=blue] {};
\node at (2.8,1.1) [circle,scale=.7,fill=yellow] {};
\node at (3.3,.3) [circle,scale=.85,fill=yellow] {};
\node at (3.7,2.44) [circle,scale=.5,fill=yellow] {};
\node at (3.6,2.8) [circle,scale=.6,fill=blue] {};
\node at (3.55,.7) [circle,scale=.95,fill=blue] {};
\node at (2,1.2) [circle,scale=1.5,fill=yellow] {};
\node at (1.6,.5) [circle,scale=1.2,fill=blue] {};
\node at (3,2.4) [circle,scale=.65,fill=blue] {};
\node at (3.2,2.1) [circle,scale=.55,fill=blue] {};
\node at (2.8,1.7) [circle,scale=.8,fill=blue] {};
\node at (.2,2.56) [circle,scale=.65,fill=blue] {};
\node at (.65,2.75) [circle,scale=.75,fill=blue] {};
\node at (2.7,.5) [circle,scale=.9,fill=yellow] {};
\node at (1.3,2.2) [circle,scale=.77,fill=yellow] {};
\node at (1.6,1.6) [circle,scale=.65,fill=yellow] {};
\node at (3.8,1.1) [circle,scale=.55,fill=blue] {};
\node at (3.4,1.4) [circle,scale=.8,fill=yellow] {};
\node at (3.7,1.8) [circle,scale=.95,fill=yellow] {};
\node at (1.35,2.78) [circle,scale=1,fill=blue] {};
\node at (2.4,2.68) [circle,scale=.68,fill=blue] {};
\node at (2.6,2.2) [circle,scale=.46,fill=blue] {};
\end{tikzpicture} 
    \caption{A display of dots, where experimental participants might be asked to determine whether, `Most of the dots are blue' or `There are more blue dots than yellow dots' (see, e.g., \citealt{Pietroski,Knowlton2}).}\label{fig:dots}
\end{figure}
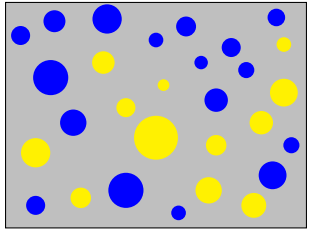

%Our monadic first-order system $\mathsf{MFO}(\#)$ and its extensions mix together reasoning about individuals and about groups of individuals, though in a way that is arguably compatible with all three strategies (\ref{most-second}), (\ref{most-first}), and (\ref{most-third}). C

Relating these tasks to our logical systems, consider a first-order term $\#_x\varphi$.\footnote{Recall that, given (\ref{uniform-replace}), we need only consider subformulas in $\varphi$ that mention $x$.} We think of $\varphi$ as describing the constraints that determine what is to be counted. The availability of any of these strategies,  (\ref{most-second}), (\ref{most-first}), or (\ref{most-third}),  depends on the extent to which the mind can ``filter'' by $\varphi$.\footnote{As a special case, there has been interest in understanding which organisms can reason with the number \emph{zero} (i.e., $\#_x x\neq x$). Recent work suggests that this is  within range for crows \citep{Kirschhock4889}.}\footnote{Relevant is also the amount of memory required in principle to implement each of these strategies. For instance, it is possible to implement (\ref{most-second}) with less memory than (\ref{most-first}), provided $A$ and $B$ are represented as lists. Thanks to an anonymous reviewer for calling attention to this dimension.} For instance, successful application of the approximate number system (\ref{most-third}) depends on specific perceptual qualities such as spatial or temporal contiguity \citep{Dehaene}, while application of (\ref{most-second}) depends on how easy it is to match pairs one-to-one without repetition.

A logical property that is distinctive of our monadic first-order system $\mathsf{MFO}(\#)$ and its extensions is that we allow a kind of ``quantifying in'' to terms like $\#_x\varphi$ (recall, e.g., Figure \ref{fig:variables}). %It is interesting to consider whether such operations are also natural from a cognitive perspective. 
Consider a query such as, \begin{equation} \label{at-least} \mbox{`There are at least }2\mbox{ more blue dots than yellow dots',} \end{equation} i.e., $\#B \succsim \#Y+2$. In $\mathsf{MFO}(\#)$ this is encoded naturally as $$\exists y_1,y_2. y_1 \neq y_2 \wedge B(y_1) \wedge B(y_2) \wedge \#_x(B(x) \wedge x\neq y_1 \wedge x \neq y_2) \succsim \#_x Y(x),$$ %or as $$\exists \mathbf{y}. \mathsf{diff}(\mathbf{y}) \wedge \bigwedge_{y \in \mathbf{y}} \neg B(y) \wedge \#_xA(x)  \succsim \#_x (B(x) \vee \bigvee_{y \in \mathbf{y}}).$$
whereby we ``remove'' two blue dots and then compare. Perhaps even more natural is the second-order version in $\mathsf{MSO}(\#)$ (with appropriate abbreviations as introduced earlier): \begin{equation} \label{subtraction} \exists Z. |Z|\approx \mathbf{2} \wedge Z \subseteq B \wedge \#_x\big(B(x) \wedge \neg Z(x)\big) \succsim \#_x Y(x). \end{equation} This essentially asks us to locate a subset of two blue dots and  \emph{subtract} those from the total number of blue dots before comparing. This type of predicate subtraction is consistent with observed patterns (e.g., \citealt{Lidz2011}), and while (\ref{subtraction}) does not yet specify a precise procedure, it seems an interesting question whether verification of sentences like (\ref{at-least}) would induce representations anything like (\ref{subtraction}). Exceptive phrases, such as `No one dared attempt the bonus question, except for a few of the best students', also seem to call for a means of ``removing'' subparts of a predicate (see, e.g., \citealt{PetersWesterstahl}, Chapter 8).

\vspace{1ex}

Moving beyond $\mathsf{MFO}(\#)$ and $\mathsf{MSO}(\#)$, what evidence is there for fundamental numerical representations involving polyadicity or multiplication? Of course, our running example of `many' (like its antonym `few') is exceedingly common, also appearing early in development, though there is still significant debate about how  these expressions should be analyzed \citep{Rett},\footnote{E.g., it has been suggested, based on examples like, ``his sins were many; his virtues were few'' \citep{Hoeksema}, that `many' should be understood grammatically not as a quantifier at all, but as an adjectival modifier.} and how closely they should be unified with their \emph{mass} counterparts like `much' and `little' (\citealt{Rothstein}; cf. our discussion in \S\ref{section:mass}). %From our perspective here, a natural question is whether \dots 

More direct evidence about polyadicity and multiplication comes from the surprising finding that 11-month infants can already compare proportions, for instance, preferring a ratio of $50/100$ to one of $100/500$ \citep{Denison}. Such phenomena appear consistent with a representation involving counts of pairs, perhaps like our $\mathsf{MFO}(\sharp)$, though it has also been suggested that the ANS can directly represent and compare rational numbers (see  \citealt{ClarkBeck}), which might look more like the probabilistic interpretation of our $\#$-terms described in \S\ref{section:probprop}. Teasing apart these different possibilities presents an exciting opportunity to interface between experimental inquiry and more theoretical explorations. 

As one last example of contacts between empirical cognitive science and the themes of this paper, let us return once again to the Pigeonhole Principle. In an experimental study of patterns resembling our opening example, repeated here:
\begin{itemize}
    \item[] \textit{Premise}: 20 farmers own at most 15 cows each.
    \item[] \textit{Conclusion}: At least 2 farmers own the exact same number of cows.
\end{itemize}
\cite{Sperber} found at most 30\% of participants realized that the conclusion definitely follows. The proposed explanation for this is that, to apply the Pigeonhole Principle we need to construe the numbers less than 15 as themselves forming categories, viz. ``the property of having exactly $k$ cows'' for $k\leq 15$. Thus, while each instance of the first-order encoding (\ref{eq:fophp}) of the Pigeonhole Principle may be clearly valid, realizing that the $P_i$ need to stand for these numerical predicates requires a further step of interpretation. 

Although the relational encodings of the Pigeonhole Principle---(\ref{eq:rphp}) and its modal variant (\ref{eq:rphp2})---enjoy an elegant generality lacking in the monadic formulation, the interpretive step from stimulus to representation is even more formidable here. The relation $Rxy$, meaning ``$x$ has $y$-many cows'', is not one that most people are accustomed to thinking about. The premise of (\ref{eq:rphp2}) then becomes something like, ``there are more cow-owners than numbers-of-cows-owned'', which again may not come so naturally or immediately to people. 

It is but a short way from reasoning puzzles and ``grassroots mathematics'' to even more subtle and abstract applications of such principles in more advanced topics. The Pigeonhole principle itself manifests throughout mathematics, often in surprising ways. %\footnote{See  \citealt{Jerabek} for a general logical treatment of the Pigeonhole Principle in the setting of proof complexity.} 
For instance, it is used in a simple proof of the Erd\H{o}s-Szekeres Theorem in graph theory \citep{Seidenberg1}, and the infinitary version of the principle (recall Eq. (\ref{eq:fophpinf}) above) for the case of $k=2$ appears in proofs of the well-known Bolzano-Weierstrass Theorem.\footnote{In short: dividing an interval containing an infinite sequence into two subintervals will guarantee infinitely many points inside at least one of these subintervals.} While the principle itself is straightforward enough, just as in the experiments by \cite{Sperber}, the difficulty is often in choosing the relevant predicates so as to see that it applies in the first place.

%For instance, the proof of the well-known  in calculus involves the infinitary Pigeonhole Principle (recall Eq. (\ref{eq:fophpinf}) above) for the case of $k=2$,\footnote{In short: dividing an interval containing an infinite sequence into two subintervals will guarantee infinitely many points inside at least one of these subintervals.} and even the finitary version manifests in surprising places, e.g., in a simple proof of the Erd\H{o}s-Szekeres Theorem \citep{Seidenberg1}. 

Once we turn to infinitary patterns in logic and counting, a whole additional array of cognitive questions arise. Chief among these is the question of how our initial conceptions of numbers and counting can be extended to accommodate basic infinitary reasoning.

Some researchers have suggested that the individual developmental stages in mastering the modern concept of infinity actually mirror the \emph{historical} development of the concept (see \citealt{MathEdu}, echoing a broader theme familiar from \citealt{PiagetGarcia}). From Galileo's bewilderment that infinite sets could be matched one-to-one with their proper subsets (and thus that, in our terminology, $\mathsf{s}=\mathsf{s}+1$ could be satisfiable), to Bolzano's explicit introduction of \emph{infinity} as a potential feature of any set that we can describe (thus giving clear meaning to our notation $\#\varphi$ when the $\varphi$s are unbounded), and eventually to ``Cantor's paradise'', children undergo a surprisingly similar sequence of transitions \citep{MathEdu}. It is intriguing to consider whether any of the systems studied here might correspond to intermediate ``way-stations'' in this development, capturing only a suitably restricted range of more intuitive infinitary patterns. Because our monadic and modal systems involve at most addition and multiplication, the infinitary patterns in these systems are less complex than their finitary counterparts. Whether this type of logical complexity could be brought to match intuitive cognitive complexity is worth investigating further. 

This concludes our brief tour of just a few salient points of contact with empirical issues in the cognitive sciences. A deeper foray into such contacts would undoubtedly reveal many further connections and opportunities. 

%We have briefly canvassed just a few examples of contacts with empirical issues in cognitive science, moving quickly from low-level, automatic ``number sense''---shared by newborns, bees, and crows alike---through basic quantificational reasoning and grassroots mathematics, to abstract reasoning on the cusp of more sophisticated mathematics. Needless to say, a deeper foray into such contacts would reveal many further connections and opportunities.  

\section{Conclusion}

This paper has presented a number of contributions to studying the interplay of \emph{logic and counting}, viewed as a basic phenomenon in human reasoning in its own right. In fact, we  encountered three perspectives on what it means to combine logic and counting. The main perspective  adopted here is one of consilience and synergistic co-existence. As a complement to the related bodies of research in the theory of generalized quantifiers \citep{BarwiseFeferman,PetersWesterstahl} and in computational logic \citep{Otto,Schweikardt}, we explored a hierarchy of  progressively richer formal systems exemplifying this perspective (summarized in Table \ref{table:overview}). A common theme running through all of these systems is the separation between logical reasoning patterns needed to derive meaningful \emph{normal forms}, and the varieties of numerical reasoning suggested by those normal forms.  The latter spanned from (fragments of) additive arithmetic to Diophantine inequalities and full elementary arithmetic, also encompassing basic counting along binary relations. In each case infinitary reasoning could be cleanly separated and, at least for the systems we considered here, revealed as a simplified version of the corresponding finitary patterns. Finally, we probed natural generalizations of these systems, obtained either by broadening the possible interpretations of numerical terms or by relaxing the logical semantics.

Parallel to this formal development, we explored entanglements between logic and counting in natural language and thought. Quantifier vocabulary alone provides a kind of microcosm illustrating many of our broader motifs, with rich logical, linguistic, psychological, and computational dimensions, all highlighting novel mixtures of logic and counting. We also touched on ontogenetically and phylogenetically more basic examples of ``number sense'', in addition to more sophisticated reasoning patterns on the cusp of mature mathematics, the famous Pigeonhole Principle being a paradigmatic instance.

Throughout these explorations the individual contributions of logic and of counting, while often still distinctly identifiable, nonetheless resist disentangelement. Take a system like $\mathsf{MFO}(\#)$, the starting point of our analysis. The count term $\#_x\varphi$ is assumed to denote a cardinal number, but \emph{under a logical description} specified  by $\varphi$. Meanwhile, a characteristically quantitative principle---permutation invariance---begets qualitative principles in the logical language such as (\ref{invariance-main}) and (\ref{uniform-replace}), which in turn allow for derivation of explicitly numerical normal forms that support familiar numerical algorithms. As \cite{Hilbert} once put it, ``a partly simultaneous development of the laws of logic and
 arithmetic is requisite'' (p. 347).

Similar patterns permeate our discussions around extensions of $\mathsf{MFO}(\#)$, and of the various empirical phenomena in language and cognition. Monotonicity inference, to take a typical example, operates at a level that abstracts away from logical or arithmetical details, for instance treating number lines and predicate hierarchies on a par.

The other two perspectives on logic and counting---less emphasized in the present treatment but historically at least as prominent---reflect an aspiration toward methodological purity. We briefly considered how much of logic could be extracted from ``pure'' counting. As we saw, classical logic emerges from remarkably austere numerical primitives, and non-classical systems can also be elicited. For instance, in place of the ``true'' universal quantifier $\neg \exists x. \neg \varphi$ we could entertain variants like $\#_x\varphi \approx \top \wedge \#_x\varphi \succ \#_x\neg\varphi$, which states that \emph{almost all} objects satisfy $\varphi$, except for a few that ``do not count''. In the other direction we considered some of the counting principles already implicit in (first-order) logical systems. The recurrent theme of \emph{counting in the syntax} is typical in this connection (developed further in Appendix \ref{appendix:syntax}). 

Even with the above exploration in place, the three angles on logic and counting pursued in this paper do not exhaust the rich and ubiquitous entanglement of logic and counting. To mention just one more instance, there are also natural and illuminating  \emph{computational} perspectives. We briefly explored one of these, in the form of a procedural semantics for logical expressions afforded by \emph{semantic automata} (\S\ref{section:automata}), that allow us to calibrate the counting content of meanings for quantifier expressions. But also more globally, we can measure the numerical content of an entire logical system in terms of the \emph{computational complexity} of its satisfiability problem. Indeed, there is a precise sense in which any NP-hard logical system---for instance, ordinary propositional logic---can be said to solve arbitrary integer programs, via a simple (viz. polynomial) SAT reduction. In a similar vein, any $\Sigma^1_1$-hard system---even one that is not overtly quantative such as first-order dynamic logic  \citep{Harel}---implicitly answers arbitrary arithmetical queries. This angle affords a relatively coarse-grained means of calibrating logical and numerical reasoning, and we have even seen in the present article how it would collapse expressively and intuitively distinct systems (e.g., $\mathsf{MFO}(\#)$ and $\mathsf{MSO}(\#)$). But entanglements via computational complexity can go even deeper, as seen in the methods of \emph{proof complexity} where logical encodings of numerical principles like Pigeonhole take center stage \citep{cook_reckhow_1979,Krajicek}. Research programs like this only reinforce the view of consilience and co-existence as a natural habitat.

In closing, it is important to acknowledge that reductive aspirations and methodological purity often originate from motivations that are not themselves logical or mathematical. The program of logicism, for instance, has been concerned with philosophical puzzles about the epistemology and metaphysics of ``number'' (e.g., \citealt{HaleWright}). Measurement theorists, meanwhile, have maintained that only ``qualitative (that is, nonnumerical) empirical laws'' have objective  significance, with numerical representations merely ``a matter of convention'', chosen for ``computational convenience'' \citep[pp. 12-13]{Krantz1971}. Whatever one's stance on these and other philosophical and methodological issues, we hope to have shown that the important borders and thresholds in understanding reasoning are not those between qualitative and quantitative reasoning, but between simple and complex combinations of logic and counting. Whatever we might lose in foundational purity by pursuing this path, we may gain a better understanding of human reasoning abilities in return.

\subsection*{Acknowledgements} For helpful feedback we would like to thank Xiaoxuan Fu, David Gonzalez, Erich Gr\"{a}del, Makoto Kanazawa, Phokion Kolaitis, Thomas Mayer, Paul Pietroski, Alexander Pruss, Stanislav Speranski, Rineke Verbrugge, Zhiguang Zhao, and audiences at the Nordic Online Logic Seminar, the UC Berkeley Logic Colloquium, the Stanford University Logic Seminar, and the Tsinghua University Logic Seminar. We are also grateful to the editors and referees at \emph{Bulletin of Symbolic Logic} for productive comments and suggestions. 

\bibliographystyle{apalike}
\bibliography{ms}

\newcommand{\SortNoop}[1]{}
\begin{thebibliography}{}

\bibitem[Ackermann, 1954]{Ackermann}
Ackermann, W. (1954).
\newblock {\em Solvable Cases of the Decision Problem}.
\newblock Studies in Logic and the Foundations of Mathematics. North Holland
  Publishing Company.

\bibitem[Antonelli, 2010]{antonelli2010}
Antonelli, G.~A. (2010).
\newblock Numerical abstraction via the {F}rege quantifier.
\newblock {\em Notre Dame Journal of Formal Logic}, 51(2):161--179.

\bibitem[Baader and De~Bortoli, 2019]{Baader}
Baader, F. and De~Bortoli, F. (2019).
\newblock On the expressive power of description logics with cardinality
  constraints on finite and infinite sets.
\newblock In Herzig, A. and Popescu, A., editors, {\em Frontiers of Combining
  Systems}, pages 203--219.

\bibitem[Bacchus, 1990]{Bacchus}
Bacchus, F. (1990).
\newblock {\em Representing and Reasoning with Probabilistic Knowledge}.
\newblock MIT Press.

\bibitem[Baltag and {\SortNoop{Benthem}}van~Benthem, 2021]{BaltagvanBenthem}
Baltag, A. and {\SortNoop{Benthem}}van~Benthem, J. (2021).
\newblock A simple logic of functional dependence.
\newblock {\em Journal of Philosophical Logic}.

\bibitem[Barcel\'{o} et~al., 2020]{Barcelo}
Barcel\'{o}, P., Kostylev, E.~V., Monet, M., P\'{e}rez, J., Reutter, J., and
  Silva, J.~P. (2020).
\newblock The logical expressiveness of graph neural networks.
\newblock In {\em Proceedings of the International Conference on Learning
  Representations (ICLR)}.

\bibitem[Barner et~al., 2009]{BARNER2009195}
Barner, D., Chow, K., and Yang, S.-J. (2009).
\newblock Finding one's meaning: A test of the relation between quantifiers and
  integers in language development.
\newblock {\em Cognitive Psychology}, 58(2):195--219.

\bibitem[Barwise and Cooper, 1981]{BarwiseCooper}
Barwise, J. and Cooper, R. (1981).
\newblock Generalized quantifiers and natural language.
\newblock {\em Linguistics and Philosophy}, 4(2):159--219.

\bibitem[Barwise and Feferman, 1985]{BarwiseFeferman}
Barwise, J. and Feferman, S. (1985).
\newblock {\em Model-Theoretic Logics}.
\newblock Association for Symbolic Logic.

\bibitem[Bednarczyk et~al., 2020]{Bednarczyk}
Bednarczyk, B., Demri, S., Fervari, R., and Mansutti, A. (2020).
\newblock Modal logics with composition on finite forests: Expressivity and
  complexity.
\newblock In {\em Proceedings of the 35th Annual ACM/IEEE Symposium on Logic in
  Computer Science}, page 167–180.

\bibitem[{\SortNoop{Benthem}}van~Benthem, 1986]{vanBenthem1985}
{\SortNoop{Benthem}}van~Benthem, J. (1986).
\newblock {\em Essays in Logical Semantics}.
\newblock Reidel, Dordrecht.

\bibitem[{\SortNoop{Benthem}}van~Benthem, 1991]{vanBenthem1991}
{\SortNoop{Benthem}}van~Benthem, J. (1991).
\newblock {\em Language in Action: Categories, Lambdas, and Dynamic Logic},
  volume 130 of {\em Studies in Logic}.
\newblock Elsevier, Amsterdam.

\bibitem[{\SortNoop{Benthem}}van~Benthem, 1998]{vanBenthem1998}
{\SortNoop{Benthem}}van~Benthem, J. (1998).
\newblock Program constructions that are safe for bisimulation.
\newblock {\em Studia Logica}, 60:311--330.

\bibitem[{\SortNoop{Benthem}}van~Benthem, 2005]{vanBenthem2005}
{\SortNoop{Benthem}}van~Benthem, J. (2005).
\newblock Guards, bounds, and generalized semantics.
\newblock {\em Journal of Logic, Language, and Information}, 14(3):263--279.

\bibitem[{\SortNoop{Benthem}}van~Benthem and Liu, 2020]{vanBenthemLiu}
{\SortNoop{Benthem}}van~Benthem, J. and Liu, F. (2020).
\newblock New logical perspectives on monotonicity.
\newblock In Deng, D., Liu, F., Liu, M., and Westerst{\aa}hl, D., editors, {\em
  Monotonicity in Logic and Language}. Springer.

\bibitem[{\SortNoop{Benthem}}van~Benthem et~al., 2020]{mlsr}
{\SortNoop{Benthem}}van~Benthem, J., Mierzewski, K., and Zaffora~Blando, F.
  (2020).
\newblock The modal logic of stepwise removal.
\newblock {\em The Review of Symbolic Logic}, page 1–28.

\bibitem[Blackburn et~al., 2001]{Blackburn2001}
Blackburn, P., de~Rijke, M., and Venema, Y. (2001).
\newblock {\em Modal Logic}.
\newblock Cambridge University Press, New York.

\bibitem[Borosh and Treybig, 1976]{borosh}
Borosh, I. and Treybig, L.~B. (1976).
\newblock Bounds on positive integral solutions of linear diophantine
  equations.
\newblock {\em Proceedings of the American Mathematical Society},
  55(2):299--304.

\bibitem[Brasoveanu, 2011]{Brasoveanu}
Brasoveanu, A. (2011).
\newblock Sentence-internal \emph{different} as quantifier-internal anaphora.
\newblock {\em Linguistics and Philosophy}, 34:93--168.

\bibitem[Bumford, 2015]{Bumford}
Bumford, D. (2015).
\newblock Incremental quantification and the dynamics of pair-list phenomena.
\newblock {\em Semantics and Pragmatics}, 8(9):1--70.

\bibitem[Burgess, 2010]{Burgess2010}
Burgess, J.~P. (2010).
\newblock Axiomatizing the logic of comparative probability.
\newblock {\em Notre Dame Journal of Formal Logic}, 51(1):119--126.

\bibitem[Cai et~al., 1992]{Immerman1992}
Cai, J.-Y., F\"{u}rer, M., and Immerman, N. (1992).
\newblock An optimal lower bound on the number of variables for graph
  identification.
\newblock {\em Combinatorica}, 12:389--410.

\bibitem[Carey, 2009]{carey2009origin}
Carey, S. (2009).
\newblock {\em The Origin of Concepts}.
\newblock Oxford University Press.

\bibitem[Carreiro et~al., 2021]{Carreiro}
Carreiro, F., Facchini, A., Venema, Y., and Zanasi, F. (2021).
\newblock Model theory of monadic predicate logic with the infinity quantifier.
\newblock {\em Archive for Mathematical Logic}.

\bibitem[Clarke and Beck, 2021]{ClarkBeck}
Clarke, S. and Beck, J. (2021).
\newblock The number sense represents (rational) numbers.
\newblock {\em Behavioral and Brain Sciences}, pages 1--57.

\bibitem[Cook and Reckhow, 1979]{cook_reckhow_1979}
Cook, S.~A. and Reckhow, R.~A. (1979).
\newblock The relative efficiency of propositional proof systems.
\newblock {\em Journal of Symbolic Logic}, 44(1):36–50.

\bibitem[Corcoran et~al., 1974]{Corcoran}
Corcoran, J., Frank, W., and Maloney, M. (1974).
\newblock String theory.
\newblock {\em The Journal of Symbolic Logic}, 39(4):625--637.

\bibitem[{\SortNoop{Deemter}}van~Deemter, 1984]{vanDeemter}
{\SortNoop{Deemter}}van~Deemter, K. (1984).
\newblock Generalized quantifiers: finite versus infinite.
\newblock In {\SortNoop{Benthem}}van~Benthem, J. and ter Meulen, A., editors,
  {\em Generalized Quantifiers in Natural Language}, pages 145--160. Foris.

\bibitem[Dehaene, 2011]{Dehaene}
Dehaene, S. (2011).
\newblock {\em The Number Sense}.
\newblock Oxford University Press.

\bibitem[Demri and Lugiez, 2010]{DEMRI2010233}
Demri, S. and Lugiez, D. (2010).
\newblock Complexity of modal logics with {P}resburger constraints.
\newblock {\em Journal of Applied Logic}, 8(3):233--252.

\bibitem[Denison and Xu, 2014]{Denison}
Denison, S. and Xu, F. (2014).
\newblock The origins of probabilistic inference in human infants.
\newblock {\em Cognition}, 130(3):335–347.

\bibitem[Ding et~al., 2020]{Ding2020}
Ding, Y., Harrison-Trainor, M., and Holliday, W.~H. (2020).
\newblock The logic of comparative cardinality.
\newblock {\em The Journal of Symbolic Logic}, 83(3):972--1005.

\bibitem[Ding et~al., 2021]{Ding2021}
Ding, Y., Holliday, W.~H., and Icard, T.~F. (2021).
\newblock Regularity for relative likelihood.
\newblock manuscript.

\bibitem[Ehrenfeucht et~al., 1983]{EHRENFEUCHT1983311}
Ehrenfeucht, A., Haussler, D., and Rozenberg, G. (1983).
\newblock On regularity of context-free languages.
\newblock {\em Theoretical Computer Science}, 27(3):311--332.

\bibitem[Eilenberg and Sch\"{u}tzenberger, 1969]{Eilenberg}
Eilenberg, S. and Sch\"{u}tzenberger, M.-P. (1969).
\newblock Rational sets in commutative monoids.
\newblock {\em Journal of Algebra}, 13(2):173--191.

\bibitem[Endrullis and Moss, 2019]{EndrullisMoss}
Endrullis, J. and Moss, L.~S. (2019).
\newblock Syllogistic logic with ``most''.
\newblock {\em Mathematical Structures in Computer Science}, 29(6):763--782.

\bibitem[Fagin et~al., 1990]{Fagin}
Fagin, R., Halpern, J.~Y., and Megiddo, N. (1990).
\newblock A logic for reasoning about probabilities.
\newblock {\em Information and Computation}, 87:78--128.

\bibitem[Feferman and Vaught, 1959]{FefermanVaught}
Feferman, S. and Vaught, R. (1959).
\newblock The first-order properties of products of algebraic systems.
\newblock {\em Fundamenta Mathematicae}, 47:57--103.

\bibitem[Feigenson et~al., 2003]{Feigenson}
Feigenson, L., Dehaene, S., and Spelke, E. (2003).
\newblock Core systems of number.
\newblock {\em Trends in Cognitive Sciences}, 8(7):307--314.

\bibitem[Fine, 1970]{Fine1970}
Fine, K. (1970).
\newblock Propositional quantifiers in modal logic.
\newblock {\em Theoria}, 36:336--346.

\bibitem[Fine, 1972]{Fine}
Fine, K. (1972).
\newblock {In so many possible worlds.}
\newblock {\em Notre Dame Journal of Formal Logic}, 13(4):516--520.

\bibitem[Fu and Zhao, 2023]{FuZh23b}
Fu, X. and Zhao, Z. (2023).
\newblock Modal logic with counting: Definability, semilinear sets and
  correspondence theory.
\newblock Unpublished manuscript, China University of Political Science and
  Law, Beijing and School of Mathematics and Statistics, Taishan University.

\bibitem[G\"{a}rdenfors, 1975]{Gardenfors1975}
G\"{a}rdenfors, P. (1975).
\newblock Qualitative probability as an intensional logic.
\newblock {\em Journal of Philosophical Logic}, 4(2):171--185.

\bibitem[Ginsburg and Spanier, 1966]{Ginsburg}
Ginsburg, S. and Spanier, E.~H. (1966).
\newblock {Semigroups, Presburger formulas, and languages.}
\newblock {\em Pacific Journal of Mathematics}, 16(2):285 -- 296.

\bibitem[Gr\"{a}del et~al., 1997]{GradelOtto}
Gr\"{a}del, E., Otto, M., and Rosen, E. (1997).
\newblock Two-variable logic with counting is decidable.
\newblock LICS '97. IEEE Computer Society.

\bibitem[Gr\"{a}del et~al., 1999]{Gradel1999}
Gr\"{a}del, E., Otto, M., and Rosen, E. (1999).
\newblock Undecidability results on two-variable logics.
\newblock {\em Archive for Mathematical Logic}, 38:313--353.

\bibitem[Graf, 2019]{Graf}
Graf, T. (2019).
\newblock A subregular bound on the complexity of lexical quantifiers.
\newblock In Schl\"{o}der, J.~J., McHugh, D., and Roelofsen, F., editors, {\em
  Proceedings of the 22nd Amsterdam Colloquium}, pages 455--464.

\bibitem[Grumbach and Tollu, 1995]{Grumbach}
Grumbach, S. and Tollu, C. (1995).
\newblock On the expressive power of counting.
\newblock {\em Theoretical Computer Science}, 149:67--99.

\bibitem[Grzegorczyk, 2005]{Grz}
Grzegorczyk, A. (2005).
\newblock Undecidability without arithmetization.
\newblock {\em Studia Logica}, (79):163–230.

\bibitem[Hale and Wright, 2001]{HaleWright}
Hale, B. and Wright, C. (2001).
\newblock {\em The Reason's Proper Study: Essays towards a Neo-{F}regean
  Philosophy of Mathematics}.
\newblock Oxford University Press.

\bibitem[Hall, 1935]{Hall}
Hall, P. (1935).
\newblock On representatives of subsets.
\newblock {\em Journal of the London Mathematical Society}, 10(1):26--30.

\bibitem[Halpern, 1990]{Halpern}
Halpern, J.~Y. (1990).
\newblock An analysis of first-order logics of probability.
\newblock {\em Artificial Intelligence}, 46:311--350.

\bibitem[Harel, 1985]{Harel}
Harel, D. (1985).
\newblock Recurring dominoes: Making the highly undecidable highly
  understandable.
\newblock {\em Annals of Discrete Mathematics}, 24:51--72.

\bibitem[Harrison-Trainor et~al., 2018]{HHI2016}
Harrison-Trainor, M., Holliday, W.~H., and Icard, T.~F. (2018).
\newblock Inferring probability comparisons.
\newblock {\em Mathematical Social Sciences}, 91:61--70.

\bibitem[Hartogs, 1915]{Hartogs}
Hartogs, F. (1915).
\newblock {\"{U}ber das Problem der Wohlordnung}.
\newblock {\em Mathematische Annalen}, 76:438--443.

\bibitem[Herre et~al., 1991]{hartig}
Herre, H., Krynicki, M., Pinus, A., and Väänänen, J. (1991).
\newblock The {H}\"{a}rtig quantifier: A survey.
\newblock {\em The Journal of Symbolic Logic}, 56(4):1153--1183.

\bibitem[Hilbert, 1905]{Hilbert}
Hilbert, D. (1905).
\newblock On the foundations of logic and arithmetic.
\newblock {\em The Monist}, 15(3):338--352.

\bibitem[Hoeksema, 1983]{Hoeksema}
Hoeksema, J. (1983).
\newblock Plurality and conjunction.
\newblock In ter Meulen, A., editor, {\em Studies in Model-Theoretic
  Semantics}, pages 63--83. Foris.

\bibitem[{\SortNoop{Hoek}}van~der Hoek, 1996]{Hoek1996b}
{\SortNoop{Hoek}}van~der Hoek, W. (1996).
\newblock Qualitative modalities.
\newblock {\em International Journal of Uncertainty, Fuzziness, and
  Knowledge-Based Systems}, 4(1):45--59.

\bibitem[{\SortNoop{Hoek}}van~der Hoek and de~Rijke, 1993]{Hoek1993}
{\SortNoop{Hoek}}van~der Hoek, W. and de~Rijke, M. (1993).
\newblock Generalized quantifier and modal logic.
\newblock {\em Journal of Logic, Language, and Information}, 2:19--58.

\bibitem[Hoffmann, 2019]{Hoffman}
Hoffmann, S. (2019).
\newblock Commutative regular languages -- properties and state complexity.
\newblock In {\'{C}}iri{\'{c}}, M., Droste, M., and Pin, J.-{\'E}., editors,
  {\em Algebraic Informatics}, pages 151--163. Springer.

\bibitem[Holliday and Icard, 2018]{HIForthcoming}
Holliday, W.~H. and Icard, T.~F. (2018).
\newblock Axiomatization in the meaning sciences.
\newblock In Ball, D. and Rabern, B., editors, {\em The Science of Meaning}.
  Oxford University Press.

\bibitem[Ibeling et~al., 2024]{Ibeling}
Ibeling, D., Icard, T., Mierzewski, K., and Moss\'{e}, M. (2024).
\newblock Probing the quantitative-qualitative divide in probabilistic
  reasoning.
\newblock {\em Annals of Pure and Applied Logic}, 175(9):103339.

\bibitem[Icard and Moss, 2014]{IcardMoss2014}
Icard, T.~F. and Moss, L.~S. (2014).
\newblock Recent progress on monotonicity.
\newblock {\em Linguistic Issues in Language Technology}, 9(7):167--194.

\bibitem[Icard et~al., 2017]{IcardMossTune2017}
Icard, T.~F., Moss, L.~S., and Tune, W. (2017).
\newblock A monotonicity calculus and its completeness.
\newblock In Kanazawa, M., de~Groote, P., and Sadrzadeh, M., editors, {\em
  Proceedings of the 15th Meeting on the Mathematics of Language}, pages
  75--87.

\bibitem[Kanazawa, 2013]{Kanazawa}
Kanazawa, M. (2013).
\newblock Monadic quantifiers recognized by deterministic pushdown automata.
\newblock In Aloni, M., Franke, M., and Roelofsen, F., editors, {\em
  Proceedings of the 19th Amsterdam Colloquium}, pages 139--146.

\bibitem[Karp, 1972]{Karp1972}
Karp, R.~M. (1972).
\newblock Reducibility among combinatorial problems.
\newblock In Miller, R.~E., Thatcher, J.~W., and Bohlinger, J.~D., editors,
  {\em Complexity of Computer Computations}, pages 85--103. Springer.

\bibitem[Keenan and Paperno, 2012]{KeenanPaperno}
Keenan, E. and Paperno, D. (2012).
\newblock Overview.
\newblock In {\em Handbook of Quantifiers in Natural Language}, volume~90 of
  {\em Studies in Linguistics and Philosophy}, pages 941--950. Springer.

\bibitem[Kiero\'{n}ski et~al., 2018]{Kieronski}
Kiero\'{n}ski, E., Pratt-Hartmann, I., and Tendera, L. (2018).
\newblock Two-variable logics with counting and semantic constraints.
\newblock {\em ACM SIGLOG News}, 5(3):22–43.

\bibitem[Kirschhock et~al., 2021]{Kirschhock4889}
Kirschhock, M.~E., Ditz, H.~M., and Nieder, A. (2021).
\newblock Behavioral and neuronal representation of numerosity zero in the
  crow.
\newblock {\em Journal of Neuroscience}, 41(22):4889--4896.

\bibitem[Kisby et~al., 2020]{KisbyMoss}
Kisby, C., Blanco, S.~A., Kruckman, A., and Moss, L.~S. (2020).
\newblock Logics for sizes with union or intersection.
\newblock In {\em Proceedings of AAAI}.

\bibitem[Knowlton et~al., 2021a]{Knowlton2}
Knowlton, T., Hunter, T., Odic, D., Wellwood, A., Halberda, J., Pietroski, P.,
  and Lidz, J. (2021a).
\newblock Linguistic meanings as cognitive instructions.
\newblock {\em Annals of the New York Academy of Sciences}.

\bibitem[Knowlton et~al., 2021b]{Knowlton}
Knowlton, T., Pietroski, P., Halberda, J., and Lidz, J. (2021b).
\newblock The mental representation of universal quantifiers.
\newblock {\em Linguistics and Philosophy}.
\newblock forthcoming.

\bibitem[Kraft et~al., 1959]{kps59}
Kraft, C.~H., Pratt, J.~W., and Seidenberg, A. (1959).
\newblock {Intuitive probability on finite sets}.
\newblock {\em The Annals of Mathematical Statistics}, 30(2):408--419.

\bibitem[Kraj\'{i}\v{c}ek, 2019]{Krajicek}
Kraj\'{i}\v{c}ek, J. (2019).
\newblock {\em Proof Complexity}.
\newblock Cambridge University Press.

\bibitem[Krantz et~al., 1971]{Krantz1971}
Krantz, D.~H., Luce, R.~D., Suppes, P., and Tversky, A. (1971).
\newblock {\em {Foundations of Measurement}}, volume~1.
\newblock Academic Press, New York.

\bibitem[Kuske and Schweikardt, 2017]{Kuske}
Kuske, D. and Schweikardt, N. (2017).
\newblock First-order logic with counting: At least, weak hanf normal forms
  always exist and can be computed!
\newblock In {\em Proceedings of the 32nd Annual ACM/IEEE Symposium on Logic in
  Computer Science}.

\bibitem[Lai et~al., 2016]{LaiEndrullisMoss}
Lai, T., Endrullis, J., and Moss, L.~S. (2016).
\newblock Majority digraphs.
\newblock {\em Proceedings of the American Mathematical Society},
  144(9):3701--3715.

\bibitem[Le\'{s}niewski, 1927]{Lesniewski}
Le\'{s}niewski, S. (1927).
\newblock O podstawach matematyki.
\newblock {\em Przegl\k{a}d Filozoficzny}, 30:164--206.

\bibitem[Lewis, 1980]{Lewis1980}
Lewis, H.~R. (1980).
\newblock Complexity results for classes of quantificational formulas.
\newblock {\em Journal of Computer and System Sciences}, 23(3):317--353.

\bibitem[Lidz et~al., 2011]{Lidz2011}
Lidz, J., Pietroski, P., Halberda, J., and Hunter, T. (2011).
\newblock Interface transparency and the psychosemantics of \emph{most}.
\newblock {\em Natural Language Semantics}, 19:227–256.

\bibitem[Lindstr\"{o}m, 1966]{Lindstrom}
Lindstr\"{o}m, P. (1966).
\newblock First order predicate logic with generalized quantifiers.
\newblock {\em Theoria}, 32(3):186--195.

\bibitem[Link, 1998]{Link}
Link, G. (1998).
\newblock {\em Algebraic Semantics in Language and Philosophy}.
\newblock Cambridge University Press.

\bibitem[Lipshitz, 1978]{Lipschitz}
Lipshitz, L. (1978).
\newblock The diophantine problem for addition and divisibility.
\newblock {\em Transactions of the American Mathematical Society},
  235:271--283.

\bibitem[Marx and Venema, 1997]{MarxVenema}
Marx, M. and Venema, Y. (1997).
\newblock {\em Multi-Dimensional Modal Logic}.
\newblock Springer.

\bibitem[Mayer, 2024]{Mayer}
Mayer, T.~L. (2024).
\newblock An investigation of the negationless fragment of the
  {R}escher-{H}{\"a}rtig quantifier.
\newblock In Meier, A. and Ortiz, M., editors, {\em Foundations of Information
  and Knowledge Systems}, pages 287--297.

\bibitem[Mercier et~al., 2017]{Sperber}
Mercier, H., Politzer, G., and Sperber, D. (2017).
\newblock What causes failure to apply the pigeonhole principle in simple
  reasoning problems?
\newblock {\em Thinking \& Reasoning}, 23(2):184--189.

\bibitem[Moreno and Waldegg, 1991]{MathEdu}
Moreno, L.~E. and Waldegg, G. (1991).
\newblock The conceptual evolution of actual mathematical infinity.
\newblock {\em Educational Studies in Mathematics}, 22(3):211--231.

\bibitem[Mortimer, 1975]{Mortimer}
Mortimer, M. (1975).
\newblock On languages with two variables.
\newblock {\em Mathematical Logic Quarterly}, 21(1):135--140.

\bibitem[Moss, 2015]{Moss2015}
Moss, L.~S. (2015).
\newblock Natural logic.
\newblock In {\em Handbook of Contemporary Semantic Theory, Second Edition},
  pages 646--681. Wiley-Blackwell.

\bibitem[Moss, 2016]{Moss2016}
Moss, L.~S. (2016).
\newblock Syllogistic logic with cardinality comparisons.
\newblock In Bimb{\'o}, K., editor, {\em J. Michael Dunn on Information Based
  Logics}, pages 391--415. Springer.

\bibitem[Moss and Topal, 2020]{moss_topal_2020}
Moss, L.~S. and Topal, S. (2020).
\newblock Syllogistic logic with cardinality comparisons, on infinite sets.
\newblock {\em The Review of Symbolic Logic}, 13(1):1–22.

\bibitem[Moss\'{e} et~al., 2024]{Mosse}
Moss\'{e}, M., Ibeling, D., and Icard, T. (2024).
\newblock Is causal reasoning harder than probabilistic reasoning?
\newblock {\em The Review of Symbolic Logic}, 17(1):106--131.

\bibitem[Mostowski and Tarski, 1949]{MostowskiTarski}
Mostowski, A. and Tarski, A. (1949).
\newblock Arithmetical classes and types of well-ordered systems.
\newblock {\em Bulletin of the American Mathematical Society}, (55):65.

\bibitem[Mostowski, 1998]{Mostwoski1998}
Mostowski, M. (1998).
\newblock Computational semantics for monadic quantifiers.
\newblock {\em Journal of Applied Non-Classical Logics}, 8:107--121.

\bibitem[N\'{e}meti, 1996]{Nemeti}
N\'{e}meti, I. (1996).
\newblock Fine-structure analysis of first-order logic.
\newblock In Marx, M., Masuch, M., and P\'{o}los, L., editors, {\em Arrow Logic
  and Multidimensional Logic}, pages 221--247. CSLI Publications.

\bibitem[Oppen, 1978]{OPPEN1978323}
Oppen, D.~C. (1978).
\newblock A $2^{2^{2^{pn}}}$ upper bound on the complexity of {P}resburger
  {A}rithmetic.
\newblock {\em Journal of Computer and System Sciences}, 16(3):323--332.

\bibitem[Otto, 1997]{Otto}
Otto, M. (1997).
\newblock {\em Bounded Variable Logics and Counting}.
\newblock Springer.

\bibitem[Parikh, 1966]{Parikh}
Parikh, R. (1966).
\newblock On context-free languages.
\newblock {\em Journal of the ACM}, 13(4):570--581.

\bibitem[Peters and Westerst\r{a}hl, 2006]{PetersWesterstahl}
Peters, S. and Westerst\r{a}hl, D. (2006).
\newblock {\em Quantifiers in Language and Logic}.
\newblock Oxford University Press.

\bibitem[Piaget and Garcia, 1983]{PiagetGarcia}
Piaget, J. and Garcia, R. (1983).
\newblock {\em Psychogen\`{e}se et Histoire des Sciences}.
\newblock Paris: Flammarion.

\bibitem[Pietroski et~al., 2009]{Pietroski}
Pietroski, P., Lidz, J., Hunter, T., and Halberda, J. (2009).
\newblock The meaning of ‘most’: Semantics, numerosity and psychology.
\newblock {\em Mind \& Language}, 24(5):554--585.

\bibitem[Pratt-Hartmann, 2005]{Pratt-Hartmann}
Pratt-Hartmann, I. (2005).
\newblock Complexity of the two-variable fragment with counting quantifiers.
\newblock {\em Journal of Logic, Language and Information}, 14(3):369--395.

\bibitem[Pratt-Hartmann, 2008]{Hartmann2008}
Pratt-Hartmann, I. (2008).
\newblock On the computational complexity of the numerically definite
  syllogistic and related logics.
\newblock {\em Bulletin of Symbolic Logic}, 14(1):1--28.

\bibitem[Pratt-Hartmann, 2009]{Hartmann2009}
Pratt-Hartmann, I. (2009).
\newblock No syllogisms for the numerical syllogistic.
\newblock In {\em Languages: From Formal to Natural}, volume 5533 of {\em
  LNCS}, pages 129--203. Springer.

\bibitem[Quine, 1946]{Quine}
Quine, W.~V. (1946).
\newblock Concatenation as a basis for arithmetic.
\newblock {\em The Journal of Symbolic Logic}, 11(4):105--114.

\bibitem[Reichenbach, 1956]{reichenbach:56}
Reichenbach, H. (1956).
\newblock {\em {The Direction of Time}}.
\newblock University of California Press, Berkeley.

\bibitem[Rescher, 1962]{rescher}
Rescher, N. (1962).
\newblock Plurality quantification.
\newblock {\em Journal of Symbolic Logic}, 27:373--374.

\bibitem[Restall, 2000]{Restall}
Restall, G. (2000).
\newblock {\em An Introduction to Substructural Logics}.
\newblock Routledge.

\bibitem[Rett, 2018]{Rett}
Rett, J. (2018).
\newblock The semantics of \emph{many}, \emph{much}, \emph{few}, and
  \emph{little}.
\newblock {\em Language and Linguistics Compass}, 12(1).

\bibitem[Robinson, 1949]{Robinson}
Robinson, J. (1949).
\newblock Definability and decision problems in arithmetic.
\newblock {\em Journal of Symbolic Logic}, 14(2):98--114.

\bibitem[Rothstein, 2010]{Rothstein}
Rothstein, S. (2010).
\newblock Counting and the mass/count distinction.
\newblock {\em Journal of Semantics}, 27(3):343--397.

\bibitem[S\'{a}nchez-Valencia, 1991]{Sanchez91}
S\'{a}nchez-Valencia, V. (1991).
\newblock {\em Studies on Natural Logic and Categorial Grammar}.
\newblock PhD thesis, Universiteit van Amsterdam.

\bibitem[Schrijver, 1998]{Schrijver}
Schrijver, A. (1998).
\newblock {\em Theory of Linear and Integer Programming}.
\newblock John Wiley \& Sons.

\bibitem[Schweikardt, 2005]{Schweikardt}
Schweikardt, N. (2005).
\newblock Arithmetic, first-order logic, and counting quantifiers.
\newblock {\em ACM Transactions on Compututational Logic}, 6(3):634–671.

\bibitem[Scott, 1965]{Scott1965}
Scott, D. (1965).
\newblock Logic with denumerably long formulas and finite strings of
  quantifiers.
\newblock In Addition, J., Henkin, L., and Tarski, A., editors, {\em The Theory
  of Models}, pages 329--341. North-Holland.

\bibitem[Seidenberg, 1959]{Seidenberg1}
Seidenberg, A. (1959).
\newblock A simple proof of a theorem of {Erd\H{o}s and Szekeres}.
\newblock {\em Journal of the London Mathematical Society}, s1-34(3):352.

\bibitem[{Sk\o lem}, 1938]{Skolem}
{Sk\o lem}, T. (1938).
\newblock {\em Diophantische Gleichungen}.
\newblock Ergebnisse der Mathematik und ihrer Grenzgebiete. Springer, Berlin.

\bibitem[Slomson, 1968]{Slomson}
Slomson, A. (1968).
\newblock The monadic fragment of predicate calculus with the {C}hang
  quantifier and equality.
\newblock In L{\"o}b, M.~H., editor, {\em Proceedings of the Summer School in
  Logic Leeds, 1967}, pages 279--301, Berlin, Heidelberg. Springer Berlin
  Heidelberg.

\bibitem[Steinert-Threlkeld and Icard, 2013]{Steinert}
Steinert-Threlkeld, S. and Icard, T.~F. (2013).
\newblock Iterating semantic automata.
\newblock {\em Linguistics and Philosophy}, 36(2):151--173.

\bibitem[Steinhorn, 1985]{STEINHORN1985161}
Steinhorn, C. (1985).
\newblock Borel structures for first-order and extended logics.
\newblock In Harrington, L., Morley, M., S\^{v}\^{e}drov, A., and Simpson, S.,
  editors, {\em Harvey Friedman's Research on the Foundations of Mathematics},
  volume 117 of {\em Studies in Logic and the Foundations of Mathematics},
  pages 161--178. Elsevier.

\bibitem[Sun and Liu, 2020]{LiuMonotone}
Sun, Z. and Liu, F. (2020).
\newblock The inference pattern \emph{Mou} in {M}ohist logic---a montonicity
  reasoning view.
\newblock {\em Roczniki Filozoficzne}, 68:257--270.

\bibitem[Szymanik, 2016]{Szymanik}
Szymanik, J. (2016).
\newblock {\em Quantifiers and Cognition: Logical and Computational
  Perspectives}.
\newblock Springer.

\bibitem[Tarski et~al., 1953]{Tarski}
Tarski, A., Mostowski, A., and Robinson, R.~M. (1953).
\newblock {\em Undecidable Theories}.
\newblock North-Holland Publishing Co.

\bibitem[V\"{a}\"{a}n\"{a}nen, 1977]{Vaananen}
V\"{a}\"{a}n\"{a}nen, J. (1977).
\newblock Remarks on generalized quantifiers and second-order logics.
\newblock In {\em Set Theory and Hierarchy Theory}, volume~14, pages 117--123.
  Prace Naukowe Instytutu Matematyki Politechniki Wroclawskiej, Wroclaw.

\bibitem[Visser, 2009]{Visser}
Visser, A. (2009).
\newblock Growing commas. a study of sequentiality and concatenation.
\newblock {\em Notre Dame Journal of Formal Logic}, 50(1):61 -- 85.

\bibitem[Westerst\r{a}hl, 1985]{Westerstahl1985}
Westerst\r{a}hl, D. (1985).
\newblock Logical constants in quantifier languages.
\newblock {\em Linguistics and Philosophy}, 8:387--413.

\end{thebibliography}

\newpage

\appendix

In these appendices we present some additional material that broadens the context for the main results of this paper.  Appendix \ref{app:related} is a survey of relevant literature. Appendices \ref{app:infinityquantifier}, \ref{app:infinityaddition}, and \ref{app:automata} present the details on some results mentioned in the main text, concerning infinity quantifiers and monadic second-order logic, infinitary addition and multiplication, and semantic automata, respectively. Finally, Appendix \ref{appendix:syntax} highlights an  intriguing interface  of logic and counting that we have largely ignored in this paper, namely, the historical tradition of results on the entanglement of the very syntax of logical systems and systems of arithmetic.

\renewcommand\thesection{\Alph{section}}

\section{Related Work on Logic and Counting} \label{app:related} 
As we have mentioned, there is a vast amount of important research on mixtures of logic and counting. Here we discuss logical systems  in the literature that bear a close relationship to the hierarchy of systems studied here (summarized in Tables \ref{table:overview} and \ref{table:overview2}).

\subsubsection*{Logics with Generalized Quantifiers} An expansive literature has explored adding generalized quantifiers to first-order logic (as well as other languages, including monadic first-order logic). The system $\mathsf{FO}(\#)$ has been studied explicitly in that literature \citep{hartig,antonelli2010,PetersWesterstahl}, and of course it is closely related to both the H\"{a}rtig quantifier, $\#_x\varphi \approx \#_x\psi$, and the strict version $\#_x\varphi \succ \#_x \psi$ introduced explicitly by \cite{Lindstrom}. 
Earlier, \cite{rescher} had considered a unary version, namely, 
$\#_x\varphi \succ \#_x\neg \varphi$. 

Work on the \emph{monadic} fragment of $\mathsf{FO}$ with generalized quantifiers dates back at least to \cite{Slomson}, who studied the Chang quantifier, $\#_x\varphi \approx \#_x\top$, in this context. We refer to \cite{PetersWesterstahl} for many other results and references in the area related to these particular generalized quantifiers, both for $\mathsf{FO}$ and for $\mathsf{MFO}$.

\subsubsection*{Computational Logic} Perhaps the largest body of work related to our systems comes from computation logic. A significant strand focuses on \emph{extensions} of $\mathsf{FO}(\#)$ and even of $\mathsf{FO}(\sharp)$, but interpreted over finite models (e.g., \citealt{Immerman1992,Grumbach,Schweikardt,Kuske}, among many others).  As discussed in Remark \ref{remark:finvar}, much is known about \emph{finite variable} fragments with counting quantifiers as well, though here most of the results are negative \citep{Otto,Gradel1999,Kieronski}. Back-and-forth games, similar in spirit to our $\#$-bisimulations (Definition \ref{def:bisim}), have also been explored (see, e.g., \citealt{Immerman1992,Otto}).

\subsubsection*{Syllogistic and Propositional Counting Logic} A number of weak fragment of $\mathsf{MFO}(\#)$ and even of $\mathsf{PL}(\#)$ have been studied as \emph{extended syllogistic systems}. For example, a whole series of papers charts the territory of small systems including `more than', `most', `at least $k$', and related operators \citep{Hartmann2008,Hartmann2009,Moss2016,LaiEndrullisMoss,EndrullisMoss,moss_topal_2020,KisbyMoss}. \cite{Hartmann2008} in particular explores $\mathsf{FO}(\#)$ with \emph{one} free variable, which is seen to be decidable. He also notes a natural probabilistic interpretation of the system. Locating precisely where these systems fit inside of our logics would be worthwile. Notably, many of them enjoy quite low complexity. 

Recent work by \cite{Ding2020} essentially deals with what we call $\mathsf{PL}(\#)$, interpreted over (possibly) infinite models.  As highlighted in Table \ref{table:overview2}, the main difference between $\mathsf{PL}(\#)$ and sentences in $\mathsf{MFO}(\#)$ is the ability of the latter to express inequalities with numerical bounds. An important instance is $\mathsf{s} \geq  \mathsf{s} +1$, showing that $\mathsf{MFO}(\#)$, unlike $\mathsf{PL}(\#)$, can characterize the infinite predicates. However, the higher expressive power of numerical bounds also marks an important distinction in the valid principles. 

For instance, the main principle in one of the axiomatizations from \cite{Ding2020} employs a type of \emph{polarization rule} \citep{kps59,Burgess2010}. Adapted to our setting, provided the predicate $P$ occurs nowhere in $\varphi$ or $\psi$, the rule would say: \begin{quote} From $\#_x\big(\varphi \wedge P(x)\big)\approx \#_x\big(\varphi \wedge \neg P(x)\big) \rightarrow \psi$, infer $\psi$. \hfill (\emph{Polarization}) \end{quote} Polarization is not admissible even in our basic system $\mathsf{MFO}(\#)$. It implies, amongst other things, that consistent formulas can also be made true while duplicating the size of all regions. This is true for sets of inequalities without numerical bounds, but not for the ones expressible in $\mathsf{MFO}(\#)$.
%Note that the premise enforces an event number of $\varphi$'s, so we could let $\psi$ be, e.g., $\neg \#_x\varphi \approx \mathsf{1}$, which is not valid if $\varphi$ is, say, $Q(x)$. 
As discussed in \S\ref{section:axioms}, it remains to be seen whether a more intricate polarization rule for $\mathsf{MFO}(\#)$ would support a ``purely logical'' axiomatization. 

%At the same time, we do have valid rules that appear similar to polarization. For instance, assuming $P$ occurs nowhere in $\varphi$, $\psi$, $\alpha$, or $\beta$: \begin{quote} From $\big(\varphi \wedge \#_x(\alpha(x) \vee P(x)) \succsim \#_x\beta(x) \big) \rightarrow \psi$ infer $\varphi \rightarrow \psi$. \end{quote} While polarization depends on a language being invariant under taking disjoint unions (as $\mathsf{PL}(\#)$ is), this rule will be admissible in a system like $\mathsf{MFO}(\#)$ due to 
%{\bf Do we want to include this last paragraph or just end after the previous one? }

\subsubsection*{Probability Logic} We mentioned a connection with probability logic in \S\ref{section:probprop}, namely, the systems $\mathsf{PL}^\phi(\#)$, $\mathsf{ML}^\phi(\#)$, $\mathsf{MFO}^\phi(\#)$, and $\mathsf{MSO}^\phi(\#)$ can all be interpreted probabilistically without any further ado, viz. proportionality. Under that interpretation, $\mathsf{PL}^\phi(\#)$ is indistinguishable from the propositional probability logic considered in \cite{Hoek1996b}, which is equivalent to the system studied earlier by \cite{Gardenfors1975}, provided the latter is restricted to \emph{regular} probability measures, i.e., those assigning all non-empty sets strictly positive probability. $\mathsf{MSO}^\phi(\#)$ is easily seen to be equally expressive as the probability logic with linear inequalities studied by \cite{Fagin}, again under the assumption of regularity. For discussion of regularity in probability logic, see \cite{Ding2021}. 

A very strong probability logic was studied in \cite{Bacchus} and \cite{Halpern}, allowing inequalities between sums and products of terms $\pi_\mathbf{x}\varphi$ (cf. \S\ref{section:explicitarithmetic}). While our polyadic terms $\sharp_\mathbf{x} \varphi$ in $\mathcal{L}_{\sharp}^1$ and $\mathcal{L}_{\sharp}^2$ are interpreted as cardinalities of Cartesian products, these terms $\pi_\mathbf{x}\varphi$ are interpreted directly as products of probabilities, which in general leads to a different set of principles (cf. Example \ref{example:probprop}). Quantifiers over term variables are also allowed. Unsurprisingly, these languages are highly undecidable, although decidable fragments can be found, e.g., by allowing only monadic predicates and eliminating variable equality \citep{Halpern}.

\subsubsection*{Graded Modal Logic} In the areas of modal and description logics, a number of authors (since \citealt{Fine}) have considered graded modal logics involving unary modalities like $\Diamond^{\geq k}$. We mentioned that $\mathsf{ML}(\#)$ cannot express these modalities (Corollary \ref{cor:grade}), but of course the reverse is also true: the binary modality $\succsim$ is beyond the expressive capacity of graded modal logic. A broad study, with connections to generalized quantifiers, appears in \cite{Hoek1993}. More recently, some researchers have probed the precise counting capacity of such systems, employing notions of count-bisimulations as well (see, e.g., \citealt{Baader}). There has also been study of related logical systems that are expressively equivalent to, but more complex than, graded modal logic \citep{Bednarczyk}, as well as natural expressive extensions that remain of relatively low complexity \citep{DEMRI2010233}. Emerging connections between graded modal logic and classes of \emph{graph neural networks} \citep{Barcelo} promise yet further dimensions to our subject.

\section{The Infinity Quantifer and Monadic Second-Order Logic} \label{app:infinityquantifier}
Let \textsf{MFO}$^\infty$ be monadic first order logic with an infinity quantifier (simply the language $\mathcal{L}_\#^1$ without $\#$-formulas but with $\exists^\infty$ added), and let \textsf{WMSO} be weak monadic second order logic (quantification only over finite sets). It turns out \textsf{MFO}$^\infty$ and \textsf{WMSO} are expressively equivalent. A version of this result without equality is due to \cite{Vaananen}, and here we describe the result with equality. To translate \textsf{MFO}$^\infty$ into \textsf{WMSO} the only interesting case is $(\exists^\infty y. \varphi)^* = \forall X.\exists y.\big(\neg X(y) \wedge (\varphi)^*\big)$. In the other direction, \textsf{MFO}$^\infty$ possesses a normal form result \citep[Thm 3.15]{Carreiro} whereby every sentence is equivalent to a disjunction of existentially quantified formulas of the form: $$\mathsf{diff}(\mathbf{x}) \wedge \bigwedge \tau(x_i) \wedge \forall z. (\mathsf{diff}(\mathbf{x},z) \rightarrow \bigvee \sigma(z)) \wedge \bigwedge \exists^\infty y.\rho(y) \wedge \forall^\infty y. \bigvee \upsilon(y).$$ Supposing that $X$ is one of our monadic predicates, assuming it can only take on finite sets as values, the above is equivalent to one of the form: \begin{eqnarray*} & & \alpha(\mathbf{x}) \wedge \forall z. \big(\mathsf{diff}(\mathbf{x},z) \rightarrow (X(z) \rightarrow \psi(z)) \wedge (\neg X(z) \rightarrow \chi(z))\big) \\
& & \wedge \bigwedge \exists^\infty y.(\neg X(z) \wedge \rho(y)) \wedge \forall^\infty y. (\neg X(y) \rightarrow \varphi(y)). \end{eqnarray*} Because $\exists X$ commutes with $\exists \mathbf{y}$ and disjunction, we need only consider what happens when appending $\exists X$ to this formula. This is evidently equivalent to another formula with no occurrences of $X$ at all: $$\alpha'(\mathbf{x}) \wedge \forall z. \big(\mathsf{diff}(\mathbf{x},z) \rightarrow (\psi(x) \vee \chi(x))\big) \wedge \bigwedge \exists^\infty y.(\rho(y) \wedge \chi(y)) \wedge \forall^\infty y.(\varphi(y) \wedge \chi(y)).$$ This concludes the argument for the other direction.

\section{Cardinal Arithmetic: Quantifier Elimination and Separation} \label{app:infinityaddition}

Consider the elementary theory of the structure $\mathcal{C} = \langle C_{\aleph_\omega};+\rangle$, that is, the first-order theory of addition on cardinal numbers less than $\aleph_\omega$. As in ordinary Presburger Arithmetic, $\{0\}$, $s$, $\equiv_n$ and $>$ are all definable in this structure, where $s$ is the function that takes a cardinal number to the next largest cardinal number, %\footnote{Do we need the axiom of choice to guarantee successor is well-defined?} 
and $\equiv_n$ is congruence mod $n$, for $1<n<\omega$. Note that $\{\aleph_0\}$ is also definable. Assume we have all of these constants, functions, and relations in the signature, so we are considering  $\mathcal{C}^+ = \langle C_{\aleph_\omega};0,\aleph_0,s,\{\equiv_n\}_{1<n<\omega},>,+\rangle$. 

%\subsection{Normal Forms for Quantifier-Free Fragment} \label{appendix:normalformsqf}

We first derive a normal form for the quantifier-free fragment. By propositional reasoning we assume a disjunction of conjunctions of atomic formulas: \begin{eqnarray*}
\mathsf{t} & = & \mathsf{u} \\
\mathsf{t} & \equiv_m & \mathsf{u} \\
\mathsf{t} & > & \mathsf{u} 
\end{eqnarray*} and also by propositional reasoning we can assume that every disjunct includes a conjunct $x < \aleph_0$ or $x \geq \aleph_0$, for every variable $x$ appearing in the disjunct. This allows us to separate the atomic formulas into those involving ``finite'' terms and those involving ``infinite'' terms: the successor function of course takes (in)finite to (in)finite cardinals, and infinite terms absorb finite terms in sums. Furthermore, if either $\mathsf{t}$ or $\mathsf{u}$ contains an infinite term, then we can assume without loss that both $\mathsf{t}$ and $\mathsf{u}$ contain only infinite terms, since otherwise all three types of atomic formulas trivialize. In other words, we have obtained a normal form characterized by disjunctions of conjunctions which include statements about which variables are finite/infinite, a set of statements describing the finite terms, and a set of statements describing the infinite terms.

The finite component can, as usual, be further regimented so that the three types of atomic statements involve sums of terms of the form $s^k(0)$ and $s^k(x)$ for $k\geq0$ and $x$ a variable. This is because of the law $s(x+y)=x+s(y)$. As usual, models of these  conjunctions are effectively solutions to linear programs. 

For the infinite component, successor in fact distributes over addition, that is, $s(x+y) = s(x)+s(y)$, which allows a similar regimentation. More regimentation is possible. First note that $\equiv_n$ can everywhere be replaced by $=$. But we can also eliminate all sums. For instance, $\mathsf{t}=\mathsf{u}+\mathsf{v}$ is equivalent to the disjunction $(\mathsf{t}=\mathsf{u} \wedge \mathsf{u}\geq \mathsf{v}) \vee (\mathsf{t}=\mathsf{v} \wedge \mathsf{v} > \mathsf{u})$. The same reduction works for strict inequalities. 

Thus, the component describing the infinite terms simply contains conjuncts of the form $x = s^k(y)$, $x>s^k(y)$, $x = \aleph_k$, and $x > \aleph_k$, for $k\geq 0$. %Which such formulas are satisfiable? This is of course precisely those that are satisfiable under the usual interpretation in $\langle \mathbb{N}^+;s,> \rangle$, an easily decidable set. 
There is a trivial isomorphism %(assuming choice?) 
from $\langle \mathbb{N};0,s,> \rangle$ onto %$\langle C_{\aleph_{\omega}} - C_{\aleph_0};s,> \rangle = 
$\langle \{\aleph_k\}_{k \in \mathbb{N}};\aleph_0,s,>\rangle$ sending $k$ to $\aleph_k$. This shows that the definable subsets of infinite cardinals coincides with the definable sets of indices in $\mathbb{N}$, viz. the finite and co-finite sets. This of course also easily establishes the decidability of determining whether a quantifier-free formula in the original language is satisfiable. Summarizing:
\begin{proposition} \label{addition-normalform} Every first-order quantifier-free formula is equivalent over the structure $\mathcal{C}^+ = \langle C_{\aleph_\omega};0,\aleph_0,s,\{\equiv_n\}_{1<n<\omega},>,+\rangle$ to a disjunction of conjunctions, specifying:\end{proposition}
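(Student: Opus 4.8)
The plan is to push a quantifier-free formula through successive syntactic normalizations, reading off the three promised components (call them $\delta$, $\phi$, $\iota$, mirroring the notation of Proposition~\ref{cardinal:normalform}) at the end. First I would put the formula into disjunctive normal form by ordinary propositional reasoning, and eliminate negations at the atomic level: $\neg(\mathsf{t}=\mathsf{u})$ becomes $\mathsf{t}>\mathsf{u}\vee\mathsf{u}>\mathsf{t}$, $\neg(\mathsf{t}>\mathsf{u})$ becomes $\mathsf{u}>\mathsf{t}\vee\mathsf{u}=\mathsf{t}$, and a negated congruence $\neg(\mathsf{t}\equiv_m\mathsf{u})$ is a finite disjunction of congruence atoms. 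This leaves a disjunction of conjunctions of positive atoms of the three listed shapes. For each conjunct and each variable $x$ occurring in it, I would then split the disjunct by conjoining either $x<\aleph_0$ or $x\geq\aleph_0$; distributing over the resulting finitely many cases yields a conjunct $\delta$ fixing the finite/infinite status of every variable.

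Next I would use this classification to separate the atoms into a finite and an infinite block. The key arithmetical facts are that $s$ preserves finiteness in each direction and that in a sum $\mathsf{t}+\mathsf{u}$ an infinite summand absorbs a finite one (since $\kappa+\lambda=\max(\kappa,\lambda)$ for infinite cardinals); hence the finite/infinite status of every compound term is determined by $\delta$. Any atom comparing a term that $\delta$ forces to be infinite with one it forces to be finite then has a truth value fixed by $\delta$ alone, so it can be replaced by $\top$ or $\bot$ and discarded (or it collapses the whole disjunct). What remains in each surviving disjunct is a conjunction of atoms purely among finite terms together with a conjunction of atoms purely among infinite terms.

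For the finite block I would regiment using $s(\mathsf{x}+\mathsf{y})=\mathsf{x}+s(\mathsf{y})$ to pull successors to the front and collect like terms, so that each atom becomes a linear equation, congruence, or strict inequality among expressions $s^k(0)$ and $s^k(x)$; this block is exactly a system of Presburger constraints, whose solution set is the linear set $\phi$. For the infinite block I would exploit that $s$ distributes over addition on infinite cardinals, $s(\mathsf{x}+\mathsf{y})=s(\mathsf{x})+s(\mathsf{y})$, and that $n\cdot\kappa=\kappa$ for infinite $\kappa$, which degenerates the congruences so that each $\equiv_n$ atom reduces to an equality; I would then eliminate every remaining sum via the equivalence $\mathsf{t}=\mathsf{u}+\mathsf{v}\Leftrightarrow(\mathsf{t}=\mathsf{u}\wedge\mathsf{u}\geq\mathsf{v})\vee(\mathsf{t}=\mathsf{v}\wedge\mathsf{v}>\mathsf{u})$ and its strict-inequality analogue, distributing out to a disjunction. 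The surviving atoms all have the form $x=s^k(y)$, $x>s^k(y)$, $x=\aleph_k$, or $x>\aleph_k$, and this is the component $\iota$. Collecting $\delta\wedge\phi\wedge\iota$ over all surviving disjuncts gives the claimed normal form.

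I expect the main obstacle to be the careful treatment of the mixed and congruence atoms: one must verify that the term-level finite/infinite classification really is determined by $\delta$, that every mixed-classification comparison (including the congruences) is genuinely decided by $\delta$, and that $\equiv_n$ collapses as claimed on the infinite cardinals --- this last point relying essentially on absorption ($n\cdot\kappa=\kappa$) rather than on any honest divisibility. Everything else is routine propositional and linear-arithmetic bookkeeping, and the isomorphism $\langle\mathbb{N};0,s,>\rangle\cong\langle\{\aleph_k\}_{k\in\mathbb{N}};\aleph_0,s,>\rangle$ is needed only for the subsequent decidability remark, not for the normal form itself.
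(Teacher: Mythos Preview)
Your proposal is correct and follows essentially the same argument as the paper: DNF, case-split each variable as finite or infinite, observe that mixed atoms trivialize, regiment the finite block via $s(x+y)=x+s(y)$ into Presburger constraints, and regiment the infinite block via $s(x+y)=s(x)+s(y)$, degeneration of $\equiv_n$, and the sum-elimination equivalence $\mathsf{t}=\mathsf{u}+\mathsf{v}\Leftrightarrow(\mathsf{t}=\mathsf{u}\wedge\mathsf{u}\geq\mathsf{v})\vee(\mathsf{t}=\mathsf{v}\wedge\mathsf{v}>\mathsf{u})$. You are in fact slightly more explicit than the paper in spelling out the elimination of negated atoms and the reason congruences trivialize (via $n\cdot\kappa=\kappa$); note, incidentally, that this absorption argument actually shows $\equiv_n$ becomes trivially \emph{true} on infinite cardinals rather than equality, but this is the same looseness present in the paper and does not affect the normal form.
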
 \begin{enumerate}
    \item which variables in that disjunct are finite or infinite
    \item for the finite component a description of a linear set, and
    \item for the infinite component a description of a set of infinite cardinals using $0,s,>$ over the aleph-number indices. 
\end{enumerate}
\begin{corollary} The quantifier-free theory of $\mathcal{C}^+$ is decidable. 
\end{corollary}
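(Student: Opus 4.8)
The plan is to derive this as an immediate consequence of the normal form in Proposition~\ref{addition-normalform}, once we observe that the construction producing that normal form is effective. First I would check that, given any quantifier-free formula $\varphi$ in the signature of $\mathcal{C}^+$, one can compute an equivalent disjunction $\bigvee_i (\delta_i \wedge \phi_i \wedge \iota_i)$ of the promised shape: the propositional manipulations, the case split on finiteness of each variable, the regimentation of the finite terms into sums of terms $s^k(x)$ using $s(x+y) = x + s(y)$, the distribution $s(x+y) = s(x) + s(y)$ on the infinite side, the replacement of each $\equiv_n$ by $=$ among infinite variables, and the elimination of sums of infinite terms via $\mathsf{t} = \mathsf{u} + \mathsf{v} \leftrightarrow (\mathsf{t} = \mathsf{u} \wedge \mathsf{u} \geq \mathsf{v}) \vee (\mathsf{t} = \mathsf{v} \wedge \mathsf{v} > \mathsf{u})$ are each a terminating syntactic rewrite. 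Since $\varphi$ is satisfiable in $\mathcal{C}^+$ iff some disjunct is, it suffices to decide satisfiability of a single conjunction $\delta \wedge \phi \wedge \iota$.

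Next I would note that, once $\delta$ has fixed the sort of each variable, the finite constraints $\phi$ and the infinite constraints $\iota$ share no variables, so $\delta \wedge \phi \wedge \iota$ is satisfiable iff $\phi$ and $\iota$ are each satisfiable. The finite part $\phi$ describes a linear set, i.e.\ a finite system of linear equations, congruences, and strict inequalities over $\mathbb{N}$; satisfiability of such a system is decidable, e.g.\ by decidability of Presburger Arithmetic, or by the integer-programming results discussed in connection with Theorem~\ref{normal} (\S\ref{integer}). The infinite part $\iota$ is a conjunction of atoms of the forms $x = s^k(y)$, $x > s^k(y)$, $x = \aleph_k$, $x > \aleph_k$; under the order isomorphism $\langle \mathbb{N};0,s,>\rangle \cong \langle \{\aleph_k\}_{k\in\mathbb{N}};\aleph_0,s,>\rangle$ sending $k \mapsto \aleph_k$, asking whether $\iota$ has a solution among infinite cardinals below $\aleph_\omega$ is the same as asking whether a finite conjunction of successor-and-order constraints has a solution in $\mathbb{N}$ --- a decidable constraint problem (indeed solvable quickly, as it amounts to fixing one $\aleph$-index per variable consistently with finitely many difference and order requirements).

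Combining these, the decision procedure for $\varphi$ runs the effective normal-form transformation, then for each disjunct checks satisfiability of its finite and infinite components separately, and reports ``satisfiable'' iff some disjunct passes both checks. For the decidability of the quantifier-free \emph{theory} of $\mathcal{C}^+$, i.e.\ the set of true quantifier-free sentences, one then observes that a quantifier-free sentence $\sigma$ is valid iff $\neg\sigma$ is unsatisfiable; alternatively, in the sentence case each disjunct of the normal form is a conjunction of ground atomic facts about terms built from $0$ and $\aleph_0$, whose truth is settled by elementary inspection ($s^k(0) = s^l(0)$ iff $k = l$, $s^k(\aleph_i) > s^l(\aleph_j)$ iff $i > j$ or ($i = j$ and $k > l$), a $0$-term is never equal to or above an $\aleph$-term, and so on). I do not foresee a genuine obstacle: the only point needing care is confirming that the cascade of rewrites behind Proposition~\ref{addition-normalform} is genuinely algorithmic --- which it is --- and that the two residual satisfiability problems are individually decidable, which is classical.
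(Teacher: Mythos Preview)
Your proposal is correct and follows essentially the same approach as the paper: the corollary is stated with no separate proof, the argument being implicit in the preceding text, which notes that the normal form separates into a finite Presburger component and an infinite component that, via the isomorphism $\langle \mathbb{N};0,s,>\rangle \cong \langle \{\aleph_k\}_{k\in\mathbb{N}};\aleph_0,s,>\rangle$, reduces to a decidable order-and-successor problem. You simply spell out in more detail what the paper leaves as ``of course also easily establishes the decidability''.
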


What about the full first-order theory of $\mathcal{C}$? As in ordinary Presburger Arithmetic, this theory does not admit quantifier elimination. But the theory of $\mathcal{C}^+$, in the augmented language, does. Consider a formula $\exists x.\theta$, where $\theta$ is in normal form (Proposition \ref{addition-normalform}), i.e., $\theta$ is a conjunction $\delta\wedge\iota\wedge\phi$, where $\delta$ is a description of which variables are (in)finite, $\iota$ describes the infinite terms, and $\phi$ describes the finite terms. In our normal form $x$ does not appear in both $\iota$ and $\phi$, so $\exists x. \theta$ simplifies to either $\exists x.\iota$ or $\exists x.\phi$, where $\iota$ and $\phi$ are assumed to involve only infinite or finite terms, respectively. In the latter case we can perform the quantifier elimination as usual in additive arithmetic, reducing $\exists x.\phi$ to a quantifier free statement using $0,s,>,+$, and the congruence relations $\equiv_m$.

In the former case we want to show that we can reduce $\exists x. \iota$ to a quantifer-free form using only $\aleph_0$, $s$, and $>$. In fact, this proceeds exactly as the quantifier elimination procedure for $\langle \mathbb{N};0,s,> \rangle$: the isomorphism between the latter structure and %$\langle C_{\aleph_{\omega}} - C_{\aleph_0};s,> \rangle = 
$\langle \{\aleph_k\}_{k \in \mathbb{N}};\aleph_0,s,>\rangle$ shows they have the same quantificational theory as well.

Having shown quantifier elimination for $\mathcal{C}^+$, this establishes: \begin{theorem} \label{thm:decidablecardinal}
The first-order theory of $\mathcal{C}$ is decidable.
\end{theorem}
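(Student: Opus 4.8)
The plan is to derive decidability of $\mathrm{Th}(\mathcal{C})$ from a quantifier-elimination result for the definitional expansion $\mathcal{C}^+ = \langle C_{\aleph_\omega};0,\aleph_0,s,\{\equiv_n\}_{1<n<\omega},>,+\rangle$. First I would observe that each of $0$, $\aleph_0$, $s$, $>$, and every congruence $\equiv_n$ is first-order definable from $+$ alone over $C_{\aleph_\omega}$ — exactly as in ordinary Presburger arithmetic, with $\aleph_0$ picked out, say, as the least $\kappa$ satisfying $\kappa = \kappa+\kappa$ and $\kappa \neq 0$ — so $\mathcal{C}$ and $\mathcal{C}^+$ have the same definable relations and $\mathrm{Th}(\mathcal{C})$ and $\mathrm{Th}(\mathcal{C}^+)$ are effectively interreducible by substituting the defining formulas for the new symbols. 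Hence it suffices to show $\mathrm{Th}(\mathcal{C}^+)$ is decidable, and for that it suffices to establish quantifier elimination for $\mathcal{C}^+$, since (by the Corollary following Proposition \ref{addition-normalform}) the truth of a quantifier-free sentence in this signature is effectively checkable.

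For the quantifier-elimination step I would run the usual induction, reducing to the elimination of a single existential quantifier $\exists x.\theta$ with $\theta$ quantifier-free and already placed in the normal form of Proposition \ref{addition-normalform}: a disjunction of conjunctions $\delta \wedge \iota \wedge \phi$, where $\delta$ records which variables are finite and which infinite, $\phi$ is a finite linear-arithmetic description of the finite terms, and $\iota$ describes the infinite terms using only $0$, $s$, $>$ over aleph-indices. Since $\exists x$ distributes over the disjunction and, in this normal form, $x$ occurs either only among the finite terms or only among the infinite ones, the problem splits: $\exists x.\theta$ becomes $\delta' \wedge (\exists x.\phi)$ or $\delta' \wedge (\exists x.\iota)$, with $\delta'$ the residue of $\delta$ on the remaining variables. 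In the finite case $\exists x.\phi$ is eliminated by the classical Presburger procedure (clearing coefficients via the $\equiv_n$, taking greatest lower and least upper bounds), giving a quantifier-free formula in $0,s,>,+,\{\equiv_n\}$. In the infinite case I would invoke the isomorphism $k \mapsto \aleph_k$ from $\langle \mathbb{N};0,s,> \rangle$ onto $\langle \{\aleph_k\}_{k\in\mathbb{N}};\aleph_0,s,>\rangle$ noted in the text: $\iota$ is (the transport of) a formula of the successor theory, whose quantifier elimination is elementary, and pulling the resulting quantifier-free formula back along the isomorphism yields a quantifier-free formula over $\aleph_0,s,>$. Re-conjoining with $\delta'$ gives the desired quantifier-free equivalent.

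The main — in fact essentially the only — delicate point is making the ``split'' rigorous, i.e. justifying that the normal form of Proposition \ref{addition-normalform} genuinely confines each quantified variable to a single side. This rests on the facts that $s$ preserves both finiteness and infinitude, that an infinite summand absorbs any finite summand, and that an atomic formula one of whose sides contains an infinite term trivializes unless the other side is infinite as well; given these, the bookkeeping of the finite/infinite tags in $\delta$ under quantification is routine and the two sub-procedures are off-the-shelf. Assembling the pieces, $\mathcal{C}^+$ admits effective quantifier elimination, so $\mathrm{Th}(\mathcal{C}^+)$ is decidable, and therefore so is $\mathrm{Th}(\mathcal{C})$, which proves Theorem \ref{thm:decidablecardinal}.
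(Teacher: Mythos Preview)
Your proposal is correct and follows essentially the same route as the paper: reduce to the definitional expansion $\mathcal{C}^+$, put the matrix into the separated normal form of Proposition~\ref{addition-normalform}, distribute the existential over the disjunction, and then eliminate $\exists x$ either by the classical Presburger procedure (when $x$ is declared finite) or by the elementary quantifier elimination for $\langle \mathbb{N};0,s,>\rangle$ transported along the isomorphism $k\mapsto\aleph_k$ (when $x$ is declared infinite). Your treatment is in fact slightly more explicit than the paper's about the interreducibility of $\mathrm{Th}(\mathcal{C})$ and $\mathrm{Th}(\mathcal{C}^+)$ and about the bookkeeping in $\delta$ under quantification, but the argument is the same.
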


We now show essentially the same result for full first-order arithmetic over cardinals. That is, let $\langle C_{\aleph_{\omega}}; +,\times\rangle$ be the structure of cardinal numbers less than $\aleph_{\omega}$ under addition and multiplication. The first-order theory of this structure is of course undecidable, but it is easy to see that this is only due to the substructure $\langle \mathbb{N}; +,\times\rangle$. As before, this substructure is definable in the sense that a term $\mathsf{t}$ denotes a natural number if and only if $\mathsf{t}+1>\mathsf{t}$. Indeed, by the same argument as above, any formula will be equivalent to a disjunction of conjunctions $\delta \wedge \iota \wedge \phi$, where $\delta$ specifies which terms are (in)finite, $\iota$ involves the infinite terms, and $\phi$ the finite terms. 

The $\phi$ component will be an arbitrary arithmetical formula, where quantifier elimination of course fails. But the $\iota$ component does allow for quantifier elimination. That is, we can consider the elementary theory of $\langle \{\aleph_{k}\}_{k \in \mathbb{N}}; +,\times\rangle$. The crucial step is the same as in the purely additive case: every equality statement $\mathsf{t}=\mathsf{u}+\mathsf{v}$ or $\mathsf{t}=\mathsf{u}\times \mathsf{v}$ is equivalent over this structure to the disjunction $(\mathsf{t}=\mathsf{u} \wedge \mathsf{u} \geq \mathsf{v}) \vee (\mathsf{t}=\mathsf{v} \wedge \mathsf{v} > \mathsf{u})$ (and similarly for strict inequalities between complex terms), implying that we can systematically eliminate both addition and multiplication. Thus, quantifier elimination for the language augmented with constant $\aleph_0$ and successor $s$ again follows from the fact that $\langle \mathbb{N};0,s,> \rangle$ admits it. 
\begin{theorem} Every formula in the language of first-order arithmetic is equivalent over $\langle C_{\aleph_{\omega}}; +,\times\rangle$ to a disjunction of conjunctions involving a finite and an infinite component. Moreover, the set of ``infinitary formulas'' (all of whose terms are declared infinite) possesses quantifier elimination and they define precisely the same relations over cardinals as the pure language of equality and strict inequality. 
\end{theorem}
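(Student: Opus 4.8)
The plan is to follow the same two-track strategy already used for the purely additive structure $\mathcal{C}=\langle C_{\aleph_\omega};+\rangle$ (Proposition~\ref{addition-normalform} and Theorem~\ref{thm:decidablecardinal}), the one new ingredient being that multiplication behaves on infinite cardinals exactly like addition: for infinite $\kappa,\lambda<\aleph_\omega$ one has $\kappa+\lambda=\kappa\cdot\lambda=\max(\kappa,\lambda)$, together with the absorption laws $\kappa+n=\kappa$ and $\kappa\cdot n=\kappa$ for $n\ge 1$ (while $\kappa\cdot 0=0$). I would argue by induction on the structure of a first-order arithmetic formula $\varphi$, establishing that $\varphi$ is equivalent over $\langle C_{\aleph_\omega};+,\times\rangle$ to a disjunction $\bigvee_i(\delta_i\wedge\iota_i\wedge\phi_i)$, where each $\delta_i$ is a conjunction of formulas $x+1>x$ and $\neg(x+1>x)$ declaring each variable finite or infinite, $\phi_i$ is a first-order arithmetic formula whose free variables are exactly those declared finite, and $\iota_i$ is a formula in the pure signature of equality and strict inequality (equivalently $\{\aleph_0,s,>\}$) whose free variables are exactly those declared infinite. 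The recursion over Boolean connectives is routine once the $\delta_i$ are taken to range over a fixed exhaustive, pairwise-inconsistent family of finiteness patterns: disjunction is immediate, and negation and conjunction are handled componentwise, using that both the $\{=,>\}$-fragment and full arithmetic are closed under Booleans. For an atomic formula $\mathsf{t}\ R\ \mathsf{u}$ with $R\in\{=,>\}$ and $\mathsf{t},\mathsf{u}$ polynomials, I would case-split on a finiteness pattern $\delta$ for its variables, and further on which monomials vanish because a finite factor is $0$; applying the identities above, each side either collapses to a finite polynomial in the finite nonzero variables or to a maximum of infinite variables, so the atom either trivializes (one side finite, the other infinite) or becomes a $\{=,>\}$-statement among infinite variables, or an arithmetic statement among finite variables.

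For the quantifier step I would push $\exists x$ into a disjunction $\bigvee_i\exists x\,(\delta_i\wedge\iota_i\wedge\phi_i)$ and split $\delta_i$ into its conjunct about $x$ and the remainder $\delta_i'$. Because, given $\delta_i$, the variable $x$ occurs only in $\phi_i$ (when declared finite) or only in $\iota_i$ (when declared infinite), the quantifier migrates into a single component, yielding $\delta_i'\wedge\iota_i\wedge(\exists x.\phi_i)$ or $\delta_i'\wedge(\exists x.\iota_i)\wedge\phi_i$. In the first case $\exists x.\phi_i$ is again an arithmetic formula over finite variables, so nothing more is needed. In the second case I would eliminate $x$ outright: first remove every occurrence of $+$ and $\times$ from $\iota_i$ via $\mathsf{t}=\mathsf{u}+\mathsf{v}\Leftrightarrow(\mathsf{t}=\mathsf{u}\wedge\mathsf{u}\ge\mathsf{v})\vee(\mathsf{t}=\mathsf{v}\wedge\mathsf{v}>\mathsf{u})$, the identical equivalence with $\times$ in place of $+$, and the analogues for strict inequalities, all valid since both operations are $\max$ on infinite cardinals; this leaves $\iota_i$ a formula over $\{\aleph_0,s,>\}$ (with $\aleph_0$ the least infinite cardinal). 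Then I invoke quantifier elimination for $\langle\mathbb{N};0,s,>\rangle$, transported along the isomorphism $k\mapsto\aleph_k$ of that structure onto $\langle\{\aleph_k\}_{k<\omega};\aleph_0,s,>\rangle$, to replace $\exists x.\iota_i$ by a quantifier-free $\{\aleph_0,s,>\}$-formula. This simultaneously yields the separation claim and the quantifier-elimination claim for infinitary formulas; and since $\aleph_0$ (the minimum) and $s$ (the immediate successor) are themselves first-order definable from $>$, the infinitary formulas define exactly the relations definable in the pure language of equality and strict inequality.

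The main obstacle I anticipate is bookkeeping rather than conceptual. Keeping the normal form genuinely closed under negation and conjunction requires that the $\delta_i$ form an exhaustive, pairwise-incompatible partition of finiteness patterns over the relevant variables, and that each quantified variable provably occurs in exactly one of $\iota$ or $\phi$ once its finiteness is fixed; the latter forces the careful sub-case analysis of vanishing monomials mentioned above, since a term like $\aleph_0\cdot x$ lives in the infinite component when $x\ge 1$ but in the finite component when $x=0$. Once those case distinctions are organized, everything reduces to the additive argument already carried out for $\mathcal{C}$ plus the single new observation that multiplication of infinite cardinals below $\aleph_\omega$ is $\max$.
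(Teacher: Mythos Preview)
Your proposal is correct and follows essentially the same route as the paper's (very brief) argument: separate into finite and infinite components by case-splitting on finiteness patterns, then reduce the infinite component to the pure order language via the identity $\kappa+\lambda=\kappa\cdot\lambda=\max(\kappa,\lambda)$ on infinite cardinals, and finally invoke quantifier elimination for $\langle\mathbb{N};0,s,>\rangle$ through the isomorphism $k\mapsto\aleph_k$. Your explicit handling of the absorption anomaly $\kappa\cdot 0=0$---via the sub-case split on which monomials vanish, so that a quantified variable provably lives in exactly one of $\iota$ or $\phi$---is a genuine detail that the paper's sketch elides entirely; your version is in this respect more complete than the paper's own proof.
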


\section{Finite Automata and Quantifier Recognition Procedures} \label{app:automata}
 \emph{Finite automata} are particularly simple counting devices, and in what follows, we will  determine what binary logical quantifiers this device can recognize. We first recall the main definitions and statement of the result from \S\ref{section:automata}. Linear sets are the solutions to equations 
%$b_1,b_2,a_{1,1},\dots,a_{1,m},a_{2,1},\dots,a_{2,m}$: 
\begin{equation}
 \begin{pmatrix}
\;v_1\;  \\
v_2
\end{pmatrix} \hspace{.1in} = \hspace{.1in}
 \begin{pmatrix}
b_1   +  i_{1,1}\mathsf{x}_1+ \dots + i_{1,m}\mathsf{x}_m   \\
b_2   +  i_{2,1}\mathsf{x}_1+ \dots + i_{2,m}\mathsf{x}_m
\end{pmatrix}
\end{equation} while rectilinear sets are those in which $i_{1,k} = 0$ or $i_{2,k}=0$  (or both) for all all $k \leq m$. %A set is \emph{semi-rectilinear} if it is a finite union of rectilinear sets.
Finally, a set is \emph{semi-rectilinear} if it is a finite union of rectilinear sets. For the purpose of this appendix we will notate the linear sets by $(v_1,v_2) + \mathsf{x}_1.(i_{1,1},i_{2,1})+\dots+\mathsf{x}_m.(i_{1,m},i_{2,m})$. So in this notation rectilinear sets can be seen as defined by forms $(v_1,v_2) + \mathsf{x}_k.(i_k,0) + \dots + \mathsf{x}_j.(0,i_j)$, which first lists the periods of 
type $(i, 0)$ and then those of type $(0, i)$. Our result states: 

%\vspace{1.5ex}

%As is usual with binary quantifiers in GQT, we restrict attention to two symbols.

%\emph{Linear sets} of pairs of numbers are definable with a linear form $(v_1, v_2) + x.(i, j)$ with a fixed `base vector' $(v_1, v_2)$ and a finite set of `periods' $x.(i, j)$ with $i, j \geq 0$ that can be repeated any finite number of times to produce members in the set. \emph{Semi-linear} sets are finite unions of linear sets.

%\begin{definition}   A linear form is \emph{rectilinear} if it only has periods of the forms (i, 0) and (0, j) with i, j $\neq$ 0. A set of numbers is \emph{semi-rectilinear} if it is a finite union of rectilinear sets.
%\end{definition}

%This notion allows for marginal cases such as $(2, 3)$ by itself, or $(2, 3) + x.(5, 0)$.

\begin{theorem} 	The following are equivalent  for permutation-closed languages $L$: \begin{enumerate}[label=(\alph*)]
  \item \label{pcl1} $L$ is regular, 
  \item \label{pcl2} The set of occurrence vectors for strings in $L$ is semi-rectilinear.
\end{enumerate}
	\end{theorem}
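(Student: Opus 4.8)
The plan is to prove both directions, leaning on the structure of permutation-closed languages over a two-letter alphabet $\{\mathtt{a},\mathtt{b}\}$, where a language is entirely determined by its \emph{occurrence set} $\Omega(L) = \{(\#_\mathtt{a}(w), \#_\mathtt{b}(w)) : w \in L\} \subseteq \mathbb{N}^2$. By Parikh's theorem, for any regular (indeed context-free) $L$ the set $\Omega(L)$ is semi-linear, so the content of \ref{pcl1}$\Rightarrow$\ref{pcl2} is the sharper claim that for \emph{permutation-closed} regular languages the semi-linear set is actually semi-\emph{rectilinear}. For \ref{pcl2}$\Rightarrow$\ref{pcl1}, I would show that every rectilinear set is the occurrence set of a regular permutation-closed language, and then close under finite union (regular languages and permutation-closed languages are both closed under union, and $\Omega$ commutes with union).

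For \ref{pcl2}$\Rightarrow$\ref{pcl1}: given a rectilinear set $V = (b_1,b_2) + \sum_{k} \mathsf{x}_k.(i_k,0) + \sum_{j}\mathsf{x}_j.(0,i_j)$, let $d_\mathtt{a} = \gcd$ of the $\mathtt{a}$-periods $i_k$ and $d_\mathtt{b} = \gcd$ of the $\mathtt{b}$-periods $i_j$ (with the convention that an empty set of periods gives a "period" forcing the coordinate to be exactly its base value). Then the projection of $V$ onto the first coordinate is an ultimately periodic subset of $\mathbb{N}$ (every sufficiently large number $\equiv b_1 \pmod{d_\mathtt{a}}$ and $\geq$ some threshold), and likewise for the second coordinate, and crucially these two conditions are \emph{independent} precisely because no period mixes the two coordinates — that is the whole point of rectilinearity. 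A finite automaton can check "$\#_\mathtt{a}(w)$ lies in ultimately periodic set $P_\mathtt{a}$ AND $\#_\mathtt{b}(w)$ lies in ultimately periodic set $P_\mathtt{b}$" by running two independent modular-plus-threshold counters, one per letter; since $w$ is read letter by letter, each counter only advances on its own letter. (One must be a little careful that $\Omega(L) = V$ exactly, not just that $L$ accepts words with the right counts; but because $L$ is taken permutation-closed and $\Omega$ of "all words with $\#_\mathtt{a} \in P_\mathtt{a}$ and $\#_\mathtt{b} \in P_\mathtt{b}$" is exactly the product $P_\mathtt{a} \times P_\mathtt{b}$, and every rectilinear set is such a product of ultimately periodic sets, this matches.) Taking finite unions of these automata via the product construction handles semi-rectilinear sets.

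For \ref{pcl1}$\Rightarrow$\ref{pcl2}, the main obstacle — and the heart of the theorem — is showing that a regular permutation-closed language cannot realize the "coordinated" periods like $(1,1)$ that general semi-linear sets allow (a diagonal line, e.g. the quantifier `exactly half', which we already noted is not finite-state). I would argue as follows. Let $A$ be a DFA for $L$ with $N$ states. For a string $w$, permutation-closure means every rearrangement of $w$ is in $L$; in particular $\mathtt{a}^p \mathtt{b}^q \in L$ whenever $(p,q) \in \Omega(L)$, and conversely membership of any single arrangement forces $(p,q) \in \Omega(L)$. Now consider, for each residue $r < N!$ (or just each pair of states), pumping arguments on the $\mathtt{a}$-block and on the $\mathtt{b}$-block \emph{separately}: run $A$ on $\mathtt{a}^p$, find a cycle of some length $c_\mathtt{a} \le N$ in the $\mathtt{a}$-transitions, so $p$ can be increased by any multiple of $c_\mathtt{a}$ staying in the same state before reading $\mathtt{b}$'s; similarly a cycle $c_\mathtt{b}$ in the $\mathtt{b}$-transitions from any state. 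The upshot is a standard pumping lemma for $\Omega(L)$: there is a constant $M$ such that for $(p,q)\in\Omega(L)$ with $p \ge M$ one has $(p + c_\mathtt{a}\cdot t, q)\in\Omega(L)$ for all $t\ge 0$, and symmetrically for $q$. This says $\Omega(L)$, above a bounded region, is closed under adding the \emph{axis-parallel} periods $(c_\mathtt{a},0)$ and $(0,c_\mathtt{b})$. A careful bookkeeping argument — partitioning the "large" part of $\Omega(L)$ by the pair of states reached after the $\mathtt{a}$-block, which is finite information — then expresses $\Omega(L)$ as a finite union of sets each of which is a base point (or bounded base region) plus nonnegative combinations of purely-$\mathtt{a}$ periods and purely-$\mathtt{b}$ periods, i.e.\ exactly a semi-rectilinear set; the finitely many exceptional small points are absorbed as extra singleton rectilinear summands. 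I expect the delicate part to be the bookkeeping that upgrades "closed under axis-parallel pumping" to an actual semi-rectilinear \emph{decomposition} (as opposed to merely semi-linear), which is essentially the combinatorial content of the cited results of \cite{Kanazawa} and \cite{EHRENFEUCHT1983311}; I would model this step on the classical proof of Parikh's theorem, refining it to track that the pump cycles live in a single letter's transition structure.

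Finally, the statement about binary quantifiers follows immediately: a binary quantifier is recognized by a finite semantic automaton iff the string language it defines is regular, that language is permutation-closed by Permutation Invariance, and its occurrence set is by definition the arithmetical relation the quantifier defines — so the theorem about permutation-closed languages transfers verbatim to give the characterization in terms of semi-rectilinear arithmetical definitions.
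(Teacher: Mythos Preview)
Your argument is correct in outline, and both directions land, though the organization differs from the paper's. For \ref{pcl2}$\Rightarrow$\ref{pcl1} you exploit the key observation that a rectilinear set is literally a \emph{product} $P_\mathtt{a}\times P_\mathtt{b}$ of two one-dimensional linear (hence ultimately periodic) sets, so a product of two single-letter counting automata suffices; this is cleaner than the paper's route, which builds an explicit nondeterministic automaton on a grid of states with cycle transitions for each period and then verifies permutation invariance and correctness by an inductive invariant. For \ref{pcl1}$\Rightarrow$\ref{pcl2} the paper dispatches exactly the bookkeeping you flag as delicate: it passes to the Nerode automaton, which for a permutation-closed language is itself permutation-\emph{invariant}, so the map $(i,j)\mapsto S_{ij}$ (state reached after any word with $i$ $\mathtt{a}$'s and $j$ $\mathtt{b}$'s) is a $p$-morphism from the grid $\mathbb{N}\times\mathbb{N}$ onto the automaton. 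The grid then visibly decomposes into a finite corner $\mathfrak{A}$, two strips $\mathfrak{B},\mathfrak{C}$ periodic in one axis, and a doubly periodic quadrant $\mathfrak{D}$, and the accepting positions in each region are read off directly as rectilinear forms. Your pumping-on-$\mathtt{a}^p\mathtt{b}^q$ argument reaches the same decomposition (take $\pi_\mathtt{a}$ the eventual period of $\delta_\mathtt{a}$ from $q_0$, and $\pi_\mathtt{b}$ the lcm of eventual $\delta_\mathtt{b}$-periods from each state); the paper's geometric framing just makes the ``upgrade from closure to decomposition'' that worried you automatic rather than a separate step.
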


\begin{proof}  The idea of the proof is to associate semi-rectilinear forms with finite automata. In showing how this works, we  shall be using geometrical representations in a number of places which are like the tree of numbers for generalized quantifiers (\S\ref{section:quantifiers}), except for a rotation to the grid $\mathbb{N} \times \mathbb{N}$ which fits our purposes better. In fact, the terminology ``rectilinear'' was motivated by shapes in this grid. Also, we shall be using several well-known useful properties of finite automata, such as the closure under unions of the languages recognized, the fact that nondeterministic finite automata have the same recognizing power as deterministic ones, or the fact that the recognizing power of deterministic finite automata is not changed when we allow 0, 1 or more transitions for a symbol read in some states.

\subsubsection*{From \ref{pcl2} to \ref{pcl1}} It suffices to show the implication for rectilinear forms, since the permutation-closed regular languages are closed under taking unions. 

There are a few special cases here that are easily shown to be regular, namely, a single vector $(v_1, v_2)$, or such a vector plus one period $(i, 0)$ or $(0, i)$ with $i \neq 0$. Before starting the main proof, here is a warm-up example.

\begin{example}   The rectilinear form $(1, 2) + \mathsf{x}.(2, 0)$ matches the permutation-closed regular language of strings with an odd number of symbols $\mathtt{a}$ and two occurrences of  symbol $\mathtt{b}$. The following  finite automaton recognizes just these strings.
\begin{center}
\begin{tikzpicture}
\node (0) at (-1.25,0) {};
\node (00) at (0,0) {$(0,0)$};
\node (10) at (0,-1.25) {$(1,0)$};
\node (11) at (1.75,-1.25) {$(1,1)$};
\node (12) [draw=black,fill=gray!4] at (3.5,-1.25) {$(1,2)$};
\node (01) at (1.75,0) {$(0,1)$};
\node (02) at (3.5,0) {$(0,2)$};
\node (20) at (0,-2.5) {$(2,0)$};
\node (21) at (1.75,-2.5) {$(2,1)$};
\node (22) at (3.5,-2.5) {$(2,2)$};
\path (0) edge[->,thick,double] (00);
\path (00) edge[->,thick] (10);
\path (00) edge[->,thick] (01);
\path (01) edge[->,thick] (02);
\path (01) edge[->,thick] (11);
\path (10) edge[->,thick] (11);
\path (11) edge[->,thick] (12);
\path (02) edge[->,thick] (12);
\path (20) edge[->,thick] (21);
\path (21) edge[->,thick] (22);
\path (10) edge[->,thick,bend right] (20);
\path (20) edge[->,thick,bend right] (10);
\path (11) edge[->,thick,bend right] (21);
\path (21) edge[->,thick,bend right] (11);
\path (12) edge[->,thick,bend right] (22);
\path (22) edge[->,thick,bend right] (12);
\end{tikzpicture}
\end{center}
Horizontal arrows are for $\mathtt{b}$-moves, vertical arrows for $\mathtt{a}$-moves, rightmost states allow no $\mathtt{b}$-moves, the starting state is $(0, 0)$, and the only accepting state is $(1, 2)$. Here are two illustrations. (a) It is easy to see that a state $(i, j)$ can only be visited after having seen $j$ occurrences of  $\mathtt{b}$ plus a number of occurrences for  $\mathtt{a}$ that equals $i$ plus some multiple of 2 (reflecting the available cyclic detours). (b) A correct string such as  $\mathtt{a}^{5} \mathtt{b} \mathtt{a}^{3} \mathtt{b}\mathtt{a}^{5} $ can be recognized by first cycling through $(1, 0)$ and $(2, 0)$ ending in $(1, 0)$, then moving to $(1, 1)$, then cycling through  $(1, 1)$ and $(2, 1)$ ending in $(2,1)$, then moving to $(2, 2)$, and finally cycling through $(2, 2)$ and $(1, 2)$ ending in $(1, 2)$. The general principle should be clear. Taken together, (a) and (b) show that the automaton recognizes the given language. Incidentally, the automaton is not unique. The preceding reasoning would yield the same conclusion if we had allowed cycling between the top and middle layers of the state transition diagram.

\end{example}

%\vspace{1.5ex}

%		{\bf (0, 0)}  $\rightarrow$    (0, 1)   $\rightarrow$	  (0, 2)

%\vspace{1ex}			         	
%	 	    \quad $\downarrow$\quad\quad\quad\, $\downarrow$\quad\quad\quad\quad$\downarrow$	
		    
%		    \vspace{1ex}	
%		(1, 0)   $\rightarrow$   (1, 1)   $\rightarrow$	  {\bf (1, 2)	}
		
%		\vspace{1ex}	          	
 %              	   \quad $\uparrow$$\downarrow$\quad\quad\quad$\uparrow$$\downarrow$   \quad\quad\quad$\uparrow$$\downarrow$	          	
	   
%	   \vspace{1ex}	
%		(2, 0)   $\rightarrow$   (2, 1)    $\rightarrow$    (2, 2)	              

%\vspace{1.5ex}
Next, consider a general rectilinear form 
\begin{eqnarray*} F  & = &    (v_1, v_2) + \mathsf{x}_k.(i_k,0) + \dots + \mathsf{x}_j.(0,i_j)    
\end{eqnarray*}
Let $N_1$ be the sum of $v_1$ plus the maximum of all numbers $ i_k$ occurring to the left in periods of $F$, while $N_2$ is defined likewise using the right-hand side of the pairs occurring in $F$.  Now we define a non-deterministic partial finite automaton $\mathcal{S}$:\vspace{0.5ex}
\begin{itemize}
  \item States are all pairs $(u, v)$ with $u \leq N_1, v \leq N_2$, \vspace{1mm}
    \item The only recognizing state is $(v_1, v_2)$, \vspace{1mm}
     \item The transition function is defined as follows, with two types of moves:\vspace{1mm}
     \begin{enumerate}[label=\Roman*.]
         \item \label{rom1} from $(x, y)$ via reading $\mathtt{a}$ to $(x+1, y)$, if $(x+1, y)$ is a state, \\ and analogously for reading $\mathtt{b}$,\vspace{1mm}
         \item \label{rom2} an $\mathtt{a}$-move from state $(x + i - 1, y)$ to $(x , y)$, if the period $(i, 0)$ occurs 
	in $F$. Likewise for $\mathtt{b}$-moves and periods $(0, j)$.\vspace{0.5ex}
     \end{enumerate} 
\end{itemize}

We say that an automaton $\mathcal{S}$ is \emph{permutation invariant} if, whenever reading a string 
	X can drive $\mathcal{S}$ from state $S$ to state $T$,  any permuted version of X can also 
	drive $\mathcal{S}$ from state $S$ to state $T$.
The following can be shown by direct inspection of  the above-defined transitions.

\begin{fact} 	The automaton $\mathcal{S}$ is permutation invariant. \end{fact}

\begin{proof} It suffices to show that $\mathtt{a}$ and $\mathtt{b}$ transitions can be interchanged at an input state without changing the output state. This is easily established by considering the various combinations of Type \ref{rom1} transitions and Type \ref{rom2} transitions.
\end{proof}
\begin{lemma} The following assertions are equivalent: \begin{enumerate}[label=(\roman*)]
    \item \label{lemma:rec1} String $X$ is recognized by the above-defined automaton $\mathcal{S}$,
    \item \label{lemma:rec2} the occurrence numbers 
	for $\mathtt{a}, \mathtt{b}$ in $X$ are in the set defined by the rectilinear form $F$.
\end{enumerate}\end{lemma}

\begin{proof}   	\emph{From \ref{lemma:rec1} to \ref{lemma:rec2}}. Suppose that a string $X$ drives $\mathcal{S}$ from the starting state to the accepting state $(v_1, v_2)$.  We prove the following stronger invariance statement by induction on the length of finite strings:

\vspace{1.5ex}

\noindent{\bf Claim.}	\, If string $X$ drives $\mathcal{S}$ to state $(x, y)$, then the occurrence numbers in $X$  are generated by $(x, y)$ plus a (possibly empty) finite sum of periods occurring  in the  rectilinear form $F$.

\begin{proof}[Proof of Claim] The claim is clear for the empty string at the starting state $(0, 0)$. (Here we use the fact that our automaton $\mathcal{S}$ as defined above has no $\epsilon$-moves except the identity.)

The inductive step is by inspecting  possible transitions. We discuss $\mathtt{a}$-moves only, $\mathtt{b}$-moves are similar. (a) Suppose that $X\mathtt{a}$ drives the initial state of $\mathcal{S}$ to $(x, y)$, and then moves to $(x+1, y)$ by reading the final $\mathtt{a}$. By the inductive hypothesis about $X$, the occurrence numbers  match the stated description at the state $(x, y)$. But then the occurrence numbers for $X\mathtt{a}$ satisfy that same description with respect to $(x+1, y)$. (b) Now suppose that $X\mathtt{a}$ first reaches $(x + i - 1, y)$ in $\mathcal{S}$, and then moves to $(x, y)$ by reading the final $\mathtt{a}$. By the inductive hypothesis, the occurrence numbers in $X$ match the stated description at $(x + i - 1, y)$. But then, since by the definition of $\mathcal{S}$ there is a period $(i, 0)$ in $F$ allowing a cyclic move, the occurrence numbers for $X\mathtt{a}$ satisfy the stated description at the state $(x, y)$. 	\end{proof}

In particular, once the accepting state is reached, the string must have a pair of occurrence numbers in the given rectilinear set.	

\vspace{1ex}

	\emph{From \ref{lemma:rec2} to \ref{lemma:rec1}}. Let string $X$ have occurrence numbers in the given rectilinear set, with particular values for the period variables $x$. By the permutation-invariance of the automaton $\mathcal{S}$, the string $X$ will be recognized iff the following permuted version is recognized: ``first $v_1$ symbols $\mathtt{a}$, then $v_2$ symbols $\mathtt{b}$ (i), then the remaining symbols $\mathtt{a}$ followed by the remaining $\mathtt{b}$ (ii).'' Part (i) of this sequence takes us to the recognizing state $(v_1, v_2)$. The symbols in the final Part (ii) can be discounted by making the appropriate looping moves corresponding to admissible periods, always returning toward $(v_1, v_2)$.    \end{proof}

\subsubsection*{From \ref{pcl1} to \ref{pcl2}}  Consider any permutation-closed regular language $\mathcal{L}$. First, we produce a suitable automaton to work with in the rest of the proof.

\begin{fact}  	 $\mathcal{L}$ is recognized by a permutation-invariant deterministic finite automaton $\mathcal{S}$.
\end{fact}

\begin{proof}  Consider the standard Nerode construction for regular languages, where two strings are called equivalent if they send the same continuations to accepting states. A recognizing deterministic finite automaton for the language has the equivalence classes for its states, and a transition function plus  accepting states defined in an obvious manner. Now, it suffices to note the simple fact that, if the regular language we start with is itself permutation-closed, then the Nerode automaton is permutation-invariant in the earlier sense. \end{proof}

%In the new quotient automaton, if a string $X$ drives the  automaton from state $Y^{\sim}$ to $YX^{\sim}$, then so does any permuted version $\pi[X]$ of $X$. For, by the preceding definition, and given that our language is permutation-closed, $YX \sim Y\pi[X].$ 	     

The permutation invariance allows us to define, for each pair of numbers $(i, j)$, a unique state $S_{ij}$ that $\mathcal{S}$ will reach from its starting state when presented with any string with these occurrence numbers. We call $(i, j)$ accepting iff $S_{ij}$ is. While not strictly necessary for what follows, it is helpful to think of our two structures abstractly as two  bimodal relational models: $\mathcal{S}$ and its ``grid unraveling'' $\mathbb{N} \times \mathbb{N}$ which carries two commuting functions ``moving one step up'' and ``moving one step right''. Then the following connection arises:

\begin{fact}	$S_{ij}$ is a modal $p$-morphism from the grid $\mathbb{N} \times \mathbb{N}$ to the automaton $\mathcal{S}$.\end{fact}

We can therefore consider the grid model $\mathbb{N} \times \mathbb{N}$ as an automaton that is equivalent to $\mathcal{S}$ in an obvious sense, and analyze its geometrical shape.  

%\begin{figure}
\begin{center}
\begin{tikzpicture}[framed]
\node at (-.75,3.5) {$\mathtt{b}$};
\node at (2.5,3.75) {$\vdots$};
\node at (5.5,-.75) {$\mathtt{a}$};
\node at (5.5,1.5) {$\dots$};
\node at (-.25,1.25) {\footnotesize{}$T$};
\node at (-.25,2) {\footnotesize{}$T$};
\node at (-.25,2.75) {\footnotesize{}$T$};
\node at (2.5,-.25) {\footnotesize{}$S$};
\node at (3.5,-.25) {\footnotesize{}$S$};
\node at (4.5,-.25) {\footnotesize{}$S$};
 \path (0,0) edge[-] (5,0);
 \path (0,0) edge[-] (0,3);
 \path (2.5,0) edge[-] (2.5,3);
 \path (3.5,0) edge[-] (3.5,3);
 \path (4.5,0) edge[-] (4.5,3);
  \path (0,1.25) edge[-] (5,1.25);
  \path (0,2) edge[-] (5,2);
\path (0,2.75) edge[-] (5,2.75);
\node at (1.35,.65) {$\mathfrak{A}$};
\node at (3,.65) {$\mathfrak{B}$};
\node at (4,.65) {\textcolor{gray!50}{$\mathfrak{B}$}};
\node at (1.35,1.6) {$\mathfrak{C}$};
\node at (1.35,2.4) {\textcolor{gray!50}{$\mathfrak{C}$}};
\node at (3,1.6) {$\mathfrak{D}$};
\node at (3,2.4) {\textcolor{gray!50}{$\mathfrak{D}$}};
\node at (4,2.4) {\textcolor{gray!50}{$\mathfrak{D}$}};
\node at (4,1.6) {\textcolor{gray!50}{$\mathfrak{D}$}};
\end{tikzpicture} \end{center}
%\end{figure}
\subsubsection*{Explanation of the grid automaton} The two  symbols $\mathtt{a},\mathtt{b}$ represent the functions in this grid model. The state $S$ is the first recurring state as we start reading symbols $\mathtt{a}$ only from the starting state. Each interval from $S$ to $S$ on the bottom row is then the same. And the same is true for their matching intervals on horizontal rows higher up, as these arise from applying the function $\mathtt{b}$ the same number of times to identical states. In particular, the rectangles toward the right in the area $\mathfrak{B}$ are all the same. The same analysis works for the first recurring state $T$ on the left w.r.t. the $\mathfrak{C}$ area. Next, the area $\mathfrak{A}$ can have arbitrary state content, but it is finite, since non-recurring state sequences are bounded in length by the size of the given automaton $\mathcal{S}$. Finally, the rectangle $\mathfrak{D}$ is very special. All its corner points must be the same (given their origins from the $S$ and the $T$ intervals), and $\mathfrak{D}$ will then repeat to fill the whole remaining quadrant of $\mathbb{N} \times \mathbb{N}$ with identical copies of itself.  \vspace{1ex}

Now consider any recognizing state $U$ in $\mathcal{S}$. Its occurrences in  above grid can be described as follows, area by area in the diagram. The typical features of rectilinear forms now emerge. In area $\mathfrak{A}$: a finite disjunction of descriptions of single  vectors. In area $\mathfrak{B}$: a finite disjunction of occurrences of $U$ in the first rectangle, plus periods $\mathsf{x}.(k, 0)$ where $k$ is the length of the first interval from $S$ to $S$. For area $\mathfrak{B}$ the enumeration is analogous with a period $\mathsf{x}.(0, l)$ for moving upward. Finally, for area $\mathfrak{D}$, all occurrences of $U$ in its quadrant are described by a finite disjunction of their occurrences in the first generating rectangle while allowing both periods $\mathsf{x}.(k, 0)$ and $\mathsf{x}.(0, l)$. In particular, no ``oblique'' periods $\mathsf{x}.(i, j)$ (like the period $\mathsf{x}.(1, 1)$ used in defining  the non-regular quantifier `most')  are needed for this enumeration.

\vspace{1ex}

The preceding descriptions, taken disjunctively over all occurrences of accepting states in the grid, shows that the permutation-closed language recognized by the given automaton $\mathcal{S}$ has a semi-rectilinear description.        
\end{proof}

The earlier-mentioned characterization of first-order quantifiers \citep{vanBenthem1985} is a special case, 
where the crucial area $\mathfrak{D}$ collapses to one state whose behavior then extends downward. As for generalizations, the result probably also holds for arbitrary \emph{finite alphabets}, given the affinities of our treatment with the graph-theoretic analysis of permutation-closed regular languages over arbitrary alphabets in \cite{Hoffman} (cf. \citealt{EHRENFEUCHT1983311}). See in addition \cite{Kanazawa}, who also gives an arithmetical description of the permutation-invariant languages recognized by pushdown automata.

Here are a few questions raised by our results and proof method. In terms of other formats, what is the structure of the special \emph{regular expressions} that describe permutation-invariant finite automata, and what algebraic laws govern their manipulation? Rectilinear forms amount to a flattening of nested iterations to just one level, which is reminiscent of the flattening of nested count terms in  the normal forms for $\mathsf{MFO(\#)}$. Also, could the modal perspective in the above proof yield further insights? In particular,  the use  of the grid $\mathbb{N} \times \mathbb{N}$ might be significant, in that its decoration with a finite set of states is a form of a \emph{tiling}, while modal logics of tiling problems have high complexity.
Next, connecting back to our  counting logics, another natural question is this. Are the above results reflected in \emph{arithmetical definability} results for finite-state quantifiers, whether in terms of the inequalities in normal forms for $\mathsf{MFO(\#)}$ or directly in the first-order language of Presburger Arithmetic? Finally, our counting logics  typically allow for \emph{infinite cardinalities}. Can the above automata analysis be extended to  infinite cardinalities, perhaps using B\"{u}chi automata for infinite strings?

\section{Logical Syntax and Counting} \label{appendix:syntax}
In addition to the mixtures of logic and counting discussed in this paper, here is one more perspective, with a long history. Working with a logical system presupposes an understanding of its \emph{syntax}. But syntax is a combinatorial entity, and syntactic manipulations are very close to computing. We saw hints of this whenever we encountered \emph{counting in the syntax} (e.g.,  Example \ref{example:php}, Remark \ref{remark:mlsr}). But the connection goes much deeper.  Counting and arithmetic start as soon as we introduce a logical system, even in defining the set of well-formed expressions of the language, not to mention in our specifications for what counts as a legal \emph{proof derivation}. This potentially ``vicious circle'' was already emphasized by \cite{Hilbert} toward the very beginning of modern logic: ``In the usual exposition of the laws of logic certain fundamental concepts of arithmetic are already employed, for example the concept of the aggregate, in part also the concept of number'' (p. 347).

Subsequently work revealed a deep and precise sense in which syntax and counting are indeed two sides of the same coin. For instance, echoing related ideas from Tarski, Hermes, L\"{o}b, and others, \cite{Quine} showed that the first-order theory of the natural numbers (i.e., ``true arithmetic'') is in fact bi-interpretable with the first-order theory of \emph{concatenation} of strings (i.e., the theory of semigroups). That is, the theory of $+$ and $\times$ over the natural numbers is essentially the same as the theory of a concatenation operator $\smile$ over strings. % built from an alphabet of size at least two.

To see the intuition for this, and also to connect this theme with other themes in the present work, consider the laws of concatenation over an alphabet of size one, consisting just of $a$. Let $\varepsilon$ be the empty string. It is easy to check that the following principles are all valid. 
\begin{enumerate}
    \item $\neg x \smile a = \varepsilon$
    \item $x \smile a = y \smile a \rightarrow x=y$
    \item $x \smile \varepsilon = x$
    \item $x \smile (y \smile a) = (x \smile y) \smile a$
    \item Induction: $\varphi(\varepsilon) \rightarrow \forall x (\varphi(x) \rightarrow \varphi(x \smile a)) \rightarrow \varphi(x)$
\end{enumerate}\vspace{0.5ex}
As it happens, interpreting $a$ as $1$, $\varepsilon$ as $0$, and $\smile$ as $+$, these principles completely axiomatize Presburger Arithmetic (they are precisely what you need to run the argument for quantifier elimination), the system we have met so often in this paper under different guises. Intuitively, the laws of addition are just the laws of concatenation for unary notations. What Quine showed is that, perhaps more surprisingly, the correspondence extends to full arithmetic as long as we have at least one more symbol. Similar results have also been shown for second-order number theory and second-order theories of strings (e.g., \citealt{Corcoran}).

More recently, \cite{Grz} has demonstrated that a very weak theory of concatenation can even replace axiomatic theories of arithmetic in the celebrated proof that  ``sufficiently strong'' theories are both undecidable and incomplete. Remarkably, this allows G\"{o}del-style arguments but with no detour through arithmetization of syntax (and thus no use of the Chinese remainder theorem, and so on). 
Later on, \cite{Visser} proved that Grzegorczyk's theory of concatenation is in fact \emph{essentially undecidable} (in the sense of \citealt{Tarski}) by showing it is mutually interpretable with Robinson's Arithmetic. These papers and the ensuing literature contain a wealth of further results on this rich topic, adding yet another dimension to the  interplay between logic and counting.

\end{document}